\newcounter{denseversion}
\newcounter{comments}
\newcounter{authorcounter}
\newcounter{adresscounter}
\def\title#1{\gdef\@title{#1}}
\def\@title{}
\def\subtitle#1{\gdef\@subtitle{#1}}
\def\@subtitle{}
\def\authortagsused{0}
\def\adresstag#1{\if!#1!\else$^{\;#1\;}$\fi}
\def\@authorsep#1{
  \ifnum\value{authorcounter}=#1 and \else\unskip, \fi
}
\renewcommand{\author}[2][]{
  \stepcounter{authorcounter}
  \if!#1!\else\gdef\authortagsused{1}\fi
  \ifnum\value{authorcounter}=1
    \def\@authorstringa{#2\adresstag{#1}}
    \def\@authorstringb{#2}
    \def\@authorstringc{#2\adresstag{#1}}
  \else
    \ifnum\value{authorcounter}=2
      \g@addto@macro\@authorstringa{\@authorsep{2}#2\adresstag{#1}}
      \g@addto@macro\@authorstringb{\@authorsep{2}#2}
      \g@addto@macro\@authorstringc{\\#2\adresstag{#1}}
    \else
      \g@addto@macro\@authorstringa{\@authorsep{3}#2\adresstag{#1}}
      \g@addto@macro\@authorstringb{\@authorsep{3}#2}
      \g@addto@macro\@authorstringc{\\#2\adresstag{#1}}
    \fi
  \fi}
\def\@author{\ifnum\value{denseversion}=0\@authorstringa\else\@authorstringb\fi}
\def\@adressstringa{}
\def\@adressstringb{}
\newcommand{\adress}[2][]{
  \stepcounter{adresscounter}
  \ifnum\value{adresscounter}=1
    \g@addto@macro\@adressstringa{\ifnum\authortagsused=0\def\br{\\}\else\def\br{, }\fi\adresstag{#1}#2}
    \g@addto@macro\@adressstringb{\def\br{\\}\adresstag{#1}\parbox[t]{14cm}{#2}}
  \else
    \g@addto@macro\@adressstringa{\\[\bigskipamount]\adresstag{#1}#2}
    \g@addto@macro\@adressstringb{\\[\medskipamount]\adresstag{#1}\parbox[t]{14cm}{#2}}
  \fi}
\def\preprint#1{\gdef\@preprint{#1}}
\def\@preprint{}
\def\keywords#1{\gdef\@keywords{#1}}
\def\@keywords{}
\def\msc#1{\gdef\@msc{#1}}
\def\@msc{}
\def\email#1{
   \gdef\@email{#1}
   \g@addto@macro\@authorstringc{ {\it (#1)}}}
\def\@email{}
\def\dedication#1{\gdef\@dedication{#1}}
\def\@dedication{}
\def\mybaselinestretch#1{
  \gdef\@mybaselinestretch{#1}
  \renewcommand{\baselinestretch}{\@mybaselinestretch}}
\def\myparskip#1{
  \gdef\@myparskip{#1}
  %\setenumstandard
  \setlength{\parskip}{\@myparskip}}
\newlength{\@listleftmargin}
\def\setenumstandard{
  \setlist{leftmargin=\@listleftmargin,itemsep=0pt,topsep=0pt,partopsep=0pt,parsep=\@myparskip}
  \setlist[enumerate]{align=left,labelsep=*,leftmargin=\@listleftmargin,itemsep=0pt,topsep=0pt,partopsep=0pt,parsep=\@myparskip}
}
\def\denseversion{
  \setcounter{denseversion}{1}
  \newgeometry{left=3cm,right=3cm,top=3cm}
  \mybaselinestretch{1.1}
  \myparskip{0.8ex}
  \normalfont
  \def\possiblelinebreak{}
  \fancyfoot[C]{\itshape{--$\,\,$\thepage$\,\,$--}}}
\def\possiblelinebreak{\\}
\renewcommand{\emph}[1]{\def\reserved@a{it}\ifx\f@shape\reserved@a\ul{#1}\else\textit{#1}\fi}
\def\setcrefnames{}
\newcommand{\mytableofcontents}{
   \ifnum\value{denseversion}=0
     \tableofcontents
     \setcrefnames %% \tableofcontents macht die \crefnames kaputt
   \else
     \renewcommand{\baselinestretch}{1.1}
     \setlength{\parskip}{0ex}
     \normalfont
     \begingroup
     \def\addvspace##1{\vskip0.4em}
     \tableofcontents
     \setcrefnames %% \tableofcontents macht die \crefnames kaputt
     \endgroup
     \renewcommand{\baselinestretch}{\@mybaselinestretch}
     \setlength{\parskip}{\@myparskip}
     \normalfont
   \fi}
\newlength{\zeilenlaenge}
\def\putindent#1{
  \settowidth{\zeilenlaenge}{#1}
  \ifnum\zeilenlaenge>\textwidth
    #1
  \else
    \noindent #1
  \fi
}
\def\pdfdaten{
  \hypersetup{
    pdftitle = {\@title},
    pdfauthor = {\@author},
    pdfkeywords = {\@keywords},    
    bookmarksopen = true,
    bookmarksopenlevel = 1
  }}  
\def\showkeywords{\begin{flushleft}\footnotesize\textbf{Keywords}: \@keywords\end{flushleft}}
\def\showmsc{\begin{flushleft}\footnotesize\textbf{MSC 2010}: \@msc\end{flushleft}}
\def\mytitle{}
\def\zmptitle{
  \begin{tabular}{cc}
    \begin{minipage}[c]{0.4\textwidth}
      \begin{flushleft}
        \includegraphics[width=110pt]{../../tex/zmp}
      \end{flushleft}  
    \end{minipage}&
    \begin{minipage}[c]{0.55\textwidth}
      \begin{flushright}
      {\small\sf\@preprint}
      \end{flushright}
    \end{minipage}
  \end{tabular}
  \vskip 2cm}
\def\maketitle{
  \pdfdaten
  \noindent
  \mytitle
  \begin{center}
    \LARGE\@title\\
    \if!\@subtitle!\else\smallskip\LARGE\@subtitle\\\fi
    \bigskip
    \if!\@author!\else\bigskip\large\@author\\\fi
    \ifnum\value{denseversion}=0
      \if!\@adressstringa!\else\bigskip\normalsize\@adressstringa\\\fi
      \if!\@email!\else\ifnum\value{authorcounter}=1\bigskip\normalsize\textit{\@email}\\\else\fi\fi
    \else
    \fi
    \if!\@dedication!\else\bigskip\normalsize{\@dedication}\\\fi
  \end{center}
  \ifnum\value{denseversion}=0\vskip 1.5cm\else\vskip0.5cm\fi}
\def\kobib#1{
  \begin{raggedright}
  \ifnum\value{denseversion}=0\else\small\fi
  \Oldbibliography{#1/kobib}
  \bibliographystyle{#1/kobib}
  \end{raggedright}
  \ifnum\value{denseversion}=0\else
      \noindent
      \if!\@authorstringc!\else
        \ifnum\authortagsused=0\ifnum\value{authorcounter}>1\normalsize\@authorstringc\\[\medskipamount]\else\fi\else\normalsize\@authorstringc\\[\medskipamount]\fi
      \fi
      \if!\@adressstringb!\else\normalsize\@adressstringb\\{}\fi
      \ifnum\authortagsused=0
        \ifnum\value{authorcounter}=1
          \if!\@email!\else\linebreak\normalsize\textit{\@email}\\{}\fi
        \else
        \fi
      \else
      \fi
  \fi}
\let\Oldbibliography\bibliography
\def\bibliography#1{
  \begin{raggedright}
  \ifnum\value{denseversion}=0\else\small\fi
  \Oldbibliography{#1}
  \end{raggedright}
  \ifnum\value{denseversion}=0\else
      \medskip
      \noindent
      \if!\@authorstringc!\else
        \ifnum\authortagsused=0\ifnum\value{authorcounter}>1\normalsize\@authorstringc\\[\medskipamount]\else\fi\else\normalsize\@authorstringc\\[\medskipamount]\fi
      \fi
      \if!\@adressstringb!\else\normalsize\@adressstringb\\{}\fi
      \ifnum\authortagsused=0
        \ifnum\value{authorcounter}=1
          \if!\@email!\else\linebreak\normalsize\textit{\@email}\\{}\fi
        \else
        \fi
      \else
      \fi
  \fi
}
\newenvironment{commentfigure}{\begin{comment}}{\end{comment}}
\newenvironment{sidewayscommentfigure}{\begin{minipage}}{\end{minipage}}
\newenvironment{displaycomment}{\begin{list}{}{\rightmargin=1cm\leftmargin=1cm}\item\sf\begin{small}\color{gray}}{\end{small}\end{list}}
\def\tocmark#1{}
\def\draftstamp#1{
  \def\tocmark##1{
    \ifnum\c@secnumdepth=0\section{##1}\fi
    \ifnum\c@secnumdepth=1\subsection{##1}\fi
    \ifnum\c@secnumdepth=2\subsubsection{##1}\fi
    \ifnum\c@secnumdepth=3\subsubsection{##1}\fi
  }
  \ifnum\value{comments}=0
    \gdef\@draft{DRAFT - Edited on \today\ by #1 - Comments are not displayed}
  \else
    \gdef\@draft{DRAFT - Edited on \today\ by #1 - Comments are displayed}
  \fi
  \fancyhead[C]{\footnotesize\tt\textcolor{red}{\@draft}}}
\def\skript{
  \renewenvironment{displaycomment}{}{}
  \ifnum\value{comments}=0
    \renewenvironment{example*}{\comment}{\endcomment}
    \renewenvironment{remark*}{\comment}{\endcomment}
  \else\fi
  \parindent=0mm        
}
\def\ul{\underline}
\def\Z {\mathbb{Z}}
\def\R {\mathbb{R}}
\def\C {\mathbb{C}}
\def\hc#1{\mathrm{h}_{#1}}
\def\h {\mathrm{H}}
\def\subset{\subseteq}
\renewenvironment{proof}[1][\nameProof]
  {\par\pushQED{\qed}%
   \normalfont \topsep6\p@\@plus6\p@\relax
   \trivlist
   \item[\hskip\labelsep
         \itshape
         #1\@addpunct{.}]
  \leavevmode}
  {\popQED\endtrivlist\@endpefalse}
\def\notebox#1#2{\begin{minipage}[b]{#1}\sloppy\renewcommand{\baselinestretch}{0.8}\footnotesize \begin{center}#2\end{center}\end{minipage}}
\def\mqquad{\hspace{-4em}}
\newcommand{\arr}[1][r]{\ar@<0.7ex>[#1]\ar@<-0.7ex>[#1]}
\newcommand{\arrr}[1][r]{\ar@<1.4ex>[#1]\ar[#1]\ar@<-1.4ex>[#1]}
\newlength{\myeqt} % Darin wird die Länge des übergebenen Textes abgespeichert
\newlength{\myeqs} % Darin wird die Länge des übergebenen Symbolds abgespeichert
\newlength{\myeqm} % Wieviel "klein" über die Breite des Symbolds hinausgehen darf
\newlength{\myeqn} % Die Standardbreite für große Boxen
\newcommand\symtext[3][\myeqn]{
  \settowidth{\myeqt}{#2}
  \settowidth{\myeqs}{$#3$}
  \addtolength{\myeqs}{\the\myeqm}
  \ifdim\myeqt>\myeqs
    %groß
    \stackrel{\hspace{-#1}\notebox{#1}{\medskip #2 \\ $\downarrow$\smallskip}\hspace{-#1}}{#3}
  \else
    %klein
    \stackrel{\text{#2}}{#3}
  \fi}
\def\brackets#1{\IfStrEq{#1}{-}{}{(#1)}}
\def\subindex#1{\IfStrEq{#1}{-}{}{_{#1}}}
\newlength{\myl}
\newcommand\sheaf[1]{\unitlength 0.1mm
  \settowidth{\myl}{$#1$}
  \addtolength{\myl}{-0.8mm}
  \begin{picture}(0,0)(0,0)
  \put(2,0){\text{\underline{\hspace{\myl}}}}
  \end{picture}#1\hspace{-0.15mm}}
\def\ddt#1#2#3{\left.\frac{\mathrm{d}^{\IfStrEq{#1}{1}{}{#1}}}{\mathrm{d}#2}\IfStrEq{#2}{#3}{\right.}{\right|_{#3}}}
\def\ev{\mathrm{ev}}
\newlength{\widthtmp}
\def\length#1{\settowidth{\widthtmp}{#1}\the\widthtmp}
\def\lli#1{{}_#1}
\definecolor{olivegreen}{rgb}{.33,.55,.18}
\newcommand{\ie}{i.e., }
\newcommand{\eg}{e.g., }
\def\pss#1{\prescript{}{#1}}
\renewcommand{\O}{\operatorname{O}}
\newcommand{\SO}{\operatorname{SO}}
\newcommand{\Spin}{\operatorname{Spin}}
\newcommand{\Pin}{\operatorname{Pin}}
\newcommand{\U}{\operatorname{U}}
\newcommand{\PU}{\operatorname{PU}}
\newcommand{\Cl}{\operatorname{Cl}}
\newcommand{\CCl}{\C\!\operatorname{l}}
\newcommand{\opp}{\operatorname{op}}
\newcommand{\pr}{\operatorname{pr}}
\newcommand{\lact}{\triangleright}
\newcommand{\ract}{\triangleleft}
\newcommand{\grp}{\operatorname{grpd}}
\newcommand{\id}{\operatorname{id}}
\newcommand{\unit}{\mathbf{1}}
\newcommand{\Isocat}{\mathscr{I}\mathrm{so}}
\newcommand{\Homcat}{\mathscr{H}\mathrm{om}}
\newcommand{\Endcat}{\mathscr{E}\!\mathrm{nd}}
\newcommand{\Hom}{\mathrm{Hom}}
\newcommand{\End}{\mathrm{End}}
\newcommand{\Aut}{\mathrm{Aut}}
\newcommand{\AUT}{\mathscr{A}\mathrm{ut}}
\newcommand{\Inn}{\mathrm{Inn}}
\newcommand{\Vect}{\mathscr{V}\mathrm{ect}}
\newcommand{\sVect}{\mathrm{s}\Vect}
\newcommand{\twoVect}{2\Vect}
\newcommand{\stwoVect}{\mathrm{s}\twoVect}
\newcommand{\stwoVectgrpd}[1]{\mathrm{Grpd}(\mathrm{s}\twoVect_{#1})}
\newcommand{\ssstwoVect}{\mathrm{ss\text{-}s}\twoVect}
\newcommand{\VectBdl}{\mathscr{V}\mathscr{B}\mathrm{dl}}
\newcommand{\LineBdl}{\mathscr{L}\mathscr{B}\mathrm{dl}}
\newcommand{\sVectBdl}{\mathrm{s}\VectBdl}
\newcommand{\sLineBdl}{\mathrm{s}\LineBdl}
\newcommand{\Bdl}[1]{#1\text{-}\mathscr{B}\mathrm{dl}}
\newcommand{\twoVectBdl}{2\VectBdl}
\newcommand{\twoVectBdlgrpd}[2]{\mathrm{Grpd}(2\VectBdl_{#1}\brackets{#2})}
\newcommand{\stwoVectBdl}{\mathrm{s}2\VectBdl}
\newcommand{\stwoVectBdlref}{\mathrm{s}2\VectBdl^{\mathrm{ref}}}
\newcommand{\ssstwoVectBdlref}{\mathrm{ss}\text{-}\mathrm{s}2\VectBdl^{\mathrm{ref}}}
\newcommand{\stwoVectBdlrefinv}{\mathrm{s}2\VectBdl^{\mathrm{inv}\text{-}\mathrm{ref}}}
\newcommand{\stwoVectBdlgrpd}[2]{\mathrm{Grpd}(\mathrm{s}2\VectBdl_{#1}\brackets{#2})}
\newcommand{\sstwoVectBdl}{\mathrm{ss}\text{-}2\VectBdl}
\newcommand{\ssstwoVectBdl}{\mathrm{ss\text{-}s}2\VectBdl}
\newcommand{\twoLineBdl}{2\LineBdl}
\newcommand{\stwoLineBdl}{\mathrm{s}2\LineBdl}
\newcommand{\sLineBdlgrpd}[2]{\mathrm{Grpd}(\sLineBdl_{#1}\brackets{#2})}
\newcommand{\BimodBdl}{\mathscr{B}\mathrm{im}\mathscr{B}\mathrm{dl}}
\newcommand{\sBimodBdl}{\mathrm{s}\BimodBdl}
\newcommand{\sBimodBdlimp}{\mathrm{s}\BimodBdl^{\mathrm{imp}}}
\newcommand{\Grb}{\mathscr{G}\mathrm{rb}}
\newcommand{\sGrb}{\mathrm{s}\Grb}
\newcommand{\sGrbref}{\mathrm{s}\Grb^{\mathrm{ref}}}
\newcommand{\Alg}{\Incl\mathrm{lg}}       
\newcommand{\Algbi}{\Alg^{\mathrm{bi}}}
\newcommand{\sAlg}{\mathrm{s}\Alg}    
\newcommand{\sAlggrpd}[1]{\mathrm{Grpd}(\mathrm{s}\Alg_{#1})}    
\newcommand{\sAlgbi}{\sAlg^{\mathrm{bi}}}
\newcommand{\csAlg}{\mathrm{cs}\Incl\mathrm{lg}}       
\newcommand{\cssAlg}{\mathrm{cs\text{-}s}\Alg}       
\newcommand{\ssAlg}{\mathrm{ss}\Alg}       
\newcommand{\sssAlg}{\mathrm{ss\text{-}s}\Alg}
\newcommand{\AlgBdl}{\Alg\mathscr{B}\mathrm{dl}}       
\newcommand{\sssAlgBdl}{\mathrm{ss\text{-}s}\AlgBdl}       
\newcommand{\ssAlgBdl}{\mathrm{ss}\AlgBdl}       
\newcommand{\AlgBdlbi}{\Alg\mathscr{B}\mathrm{dl}^{\mathrm{bi}}}
\newcommand{\sAlgBdlbi}{\sAlg\mathscr{B}\mathrm{dl}^{\mathrm{bi}}}
\newcommand{\sAlgBdlbigrpd}[2]{\mathrm{Grpd}(\sAlg\mathscr{B}\mathrm{dl}^{\mathrm{bi}}_{#1}\brackets{#2})} 
\newcommand{\AlgBdlbigrpd}[2]{\mathrm{Grpd}(\Alg\mathscr{B}\mathrm{dl}^{\mathrm{bi}}_{#1}\brackets{#2})} \newcommand{\sssAlgBdlbi}{\sssAlg\mathscr{B}\mathrm{dl}^{\mathrm{bi}}}
\newcommand{\ssAlgBdlbi}{\ssAlg\mathscr{B}\mathrm{dl}^{\mathrm{bi}}}      
\newcommand{\sAlgBdl}{\sAlg\mathscr{B}\mathrm{dl}}       
\newcommand{\sAlgBdlgrpd}[2]{\mathrm{Grpd}(\sAlg\mathscr{B}\mathrm{dl}_{#1}\brackets{#2})} 
\newcommand{\AlgBdlgrpd}[2]{\mathrm{Grpd}(\Alg\mathscr{B}\mathrm{dl}_{#1}\brackets{#2})}
\newcommand{\cssAlgBdlbi}{\cssAlg\mathscr{B}\mathrm{dl}^{\mathrm{bi}}} 
\newcommand{\csAlgBdlbi}{\csAlg\mathscr{B}\mathrm{dl}^{\mathrm{bi}}}       
\newcommand{\Mfd}{\mathscr{M}\mathrm{fd}}
\newcommand{\contrMfd}{c\Mfd}
\newcommand{\Incl}{\mathscr{A}}
\def\quot#1{``#1''}
\def\quand{\quad\text{ and }\quad}
\def\nameProof{Proof}
\def\mathscr#1{\EuScript{#1}}
\title{2-vector bundles}
\author[a]{Peter Kristel}
\email{peter.kristel@umanitoba.ca}
\author[b]{Matthias Ludewig}
\email{matthias.ludewig@mathematik.uni-regensburg.de}
\author[c]{Konrad Waldorf}
\email{konrad.waldorf@uni-greifswald.de}
\keywords{}
\begin{document}

\maketitle

\begin{abstract}
\noindent
We develop a ready-to-use comprehensive theory for (super) 2-vector bundles over smooth manifolds. It is based on the bicategory of (super) algebras, bimodules, and intertwiners as a model for 2-vector spaces. We discuss symmetric monoidal structures and the corresponding notions of dualizability, and  we derive a classification in terms of Cech cohomology with values in a crossed module. One important feature of our 2-vector bundles is that they contain bundle gerbes as well as ordinary algebra bundles as full sub-bicategories, and hence provide a unifying framework for these so far distinct objects. We provide several examples of isomorphisms between bundle gerbes and algebra bundles, coming from representation theory, twisted K-theory, and spin geometry.
%\showmsc
%\howkeywords
\end{abstract}

\mytableofcontents

\setsecnumdepth{1}

\section{Introduction}

In this paper, we develop a theory of 2-vector bundles. 
Just as a vector bundle over a manifold $X$ is a collection of vector spaces (the fibres), together with information on how these fit together to a bundle structure, a 2-vector bundle will be a geometric object whose fibres are 2-vector spaces.
Here, a 2-vector space is an object of a delooping bicategory of the category of vector spaces, i.e., a symmetric monoidal bicategory such that the endomorphism category of the monoidal unit is equivalent to a vector space category.

In this paper, we choose a certain bicategory of finite-dimensional algebras, finite-dimensional bimodules and intertwiners over $k=\R$ or $\C$, which we denote by $\twoVect_k$. In fact, we work throughout with a larger bicategory $\stwoVect_k$ of \emph{super} 2-vector spaces, where algebras and bimodules are $\Z_2$-graded and all intertwiners are grading preserving; this is crucial for several applications.
The idea to consider algebras as 2-vector spaces was brought up by Schreiber in discussions at the n-Category Caf\'e \cite{Schreiber2006,Schreiber2007}, also see \cite[\S A]{Schreiber2009} and \cite[\S 4.4]{schreiber2} for early references advocating this point of view. 
There are many other possible (mostly \quot{smaller}) choices, see the Appendix of \cite{Bartlett2015} for a \quot{bestiary of 2-vector spaces}; e.g.,  Kapranov-Voevodsky 2-vector spaces \cite{kapranov1}.  Another round of examples comes from various flavors of linear categories.
In particular, in the context of TQFTs, a frequently used model for 2-vector spaces are linear finitely semisimple abelian categories. 
Our reason to use the algebra/bimodule model of 2-vector spaces is that it is, for the most part, straightforward to turn algebras and bimodules into bundles.
Another reason is that many of the examples we have in mind are of this form.

In our previous article \cite{Kristel2022} we have described how to turn (finite-dimensional) algebras and bimodules into algebra \emph{bundles} and bimodule \emph{bundles} over smooth manifolds, in such a way that a bicategory $\sAlgBdlbi_k(X)$ is obtained.  
The composition in $\sAlgBdlbi_k(X)$ is the relative tensor product of bimodule bundles over an algebra bundle. The well-definedness of such a tensor product does not come for free and depends crucially on the admitted class of bimodules and the conditions one imposes for their local triviality. Our article \cite{Kristel2022} describes and solves these issues.
In \cite{Kristel2022} and in the present work, we stick to \emph{finite-dimensional} algebras and bimodules; in our subsequent article \cite{StringRep} we  also consider versions with von Neumann algebras. 

The bicategory $\sAlgBdlbi_k(X)$ of algebra bundles is   a preliminary version of 2-vector bundles. It is preliminary because algebra bundles do not satisfy bicategorical descent -- one cannot glue locally defined algebra bundles along invertible bimodule bundles to again obtain an algebra bundle. Another way to say this is that algebra bundles -- though locally trivial as bundles -- are not locally trivial in the correct bicategorical sense, i.e., as 2-vector bundles.
In more precise terms, we have shown in \cite{Kristel2022} that the presheaf of bicategories $\sAlgBdlbi_k$, which assigns to each smooth manifold $X$ the bicategory $\sAlgBdlbi_k(X)$, is only a pre-2-stack, but not a 2-stack.
However, every pre-2-stack can be 2-stackified using the plus construction of Nikolaus and Schweigert \cite{nikolaus2}, and this is precisely our definition (\cref{def:applplus}) of super 2-vector bundles:
\begin{equation*}
        \stwoVectBdl_k := (\sAlgBdlbi_k)^+.
\end{equation*}
In particular, our super 2-vector bundles then form a 2-stack, i.e., they do satisfy descent and are locally trivial as 2-vector bundles.

Concretely, a super 2-vector bundle $\mathscr{V}$ on a manifold $X$ consists of the following data: a surjective submersion $Y \to X$,
a super algebra bundle $\mathcal{A}$ over $Y$; a bundle $\mathcal{M}$ of invertible $\pr_1^*\!\mathcal{A}$-$\pr_2^*\!\mathcal{A}$-bimodules over $Y^{[2]}$; and an invertible even intertwiner 
\begin{equation*}
\mu : \pr_{23}^* \mathcal{M} \otimes_{\pr_{2}^*\mathcal{A}} \pr_{12}^*\mathcal{M} \to \pr_{13}^* \mathcal{M}
\end{equation*}
over $Y^{[3]}$, which  satisfies a coherence condition over $Y^{[4]}$.
Here, $Y^{[k]}$ denotes the $k$-fold fibre product of $Y$ with itself over $X$, which comes with projection maps $\pr_{i_1, \dots, i_k} : Y^{[l]} \to Y^{[k]}$. Schematically, we depict a super 2-vector bundle as
\begin{equation} \label{TwoVBDiagramIntro}
\mathscr{V}= \left ( 
\begin{aligned}
\xymatrix{
\mathcal{A} \ar[d] & \mathcal{M} \ar[d] & \mu \ar@{..}[d] & \footnotesize{\text{coherence}} \ar@{..}[d] \\ 
Y \ar[d] & Y^{[2]} \ar@<0.5ex>[l]^{\pr_2}\ar@<-0.5ex>[l]_{\pr_1} & Y^{[3]} \ar@<1ex>[l]\ar@<-1ex>[l]\ar[l] & Y^{[4]} \ar@<1.5ex>[l] \ar@<-1.5ex>[l] \ar@<0.5ex>[l] \ar@<-0.5ex>[l]\\ 
X
} 
\end{aligned}
\right )\text{.}
\end{equation}
We think of the surjective submersion $Y\to X$ in terms of a generalized open cover of $X$:
any open cover $(U_i)_{i \in I}$ of $X$ gives rise to a surjective submersion $Y = \coprod_{i \in I} U_i \to X$ (in fact, a local diffeomorphism); yet,  allowing general surjective submersions is often convenient in practice.
We emphasize that  every super 2-vector bundle has its individual surjective submersion $Y \to X$, and that in general it cannot be assumed to be trivial (i.e., $Y=X$).

The isomorphism class of the typical fibre of the algebra bundle $\mathcal{A}$ in \eqref{TwoVBDiagramIntro} is not an invariant under isomorphisms of super 2-vector bundles.
However, it turns out that its Morita class is an invariant, which we call the Morita class of the super 2-vector bundle $\mathscr{V}$; it is the higher analog of the rank of an ordinary vector bundle.

For each manifold $X$, we obtain a bicategory $\stwoVectBdl_k(X)$ of super 2-vector bundles on $X$, and one may ask for a classification, i.e., a description of the set of isomorphism classes.
In this article, we classify super 2-vector bundles of fixed Morita class $A$ in terms of the first \v{C}ech cohomology with values in the automorphism 2-group $\AUT(A)$ of $A$.
As a  Lie 2-group, the automorphism 2-group $\AUT(A)$ can be presented by a smooth crossed module $A^\times_0 \stackrel{\curvearrowleft}{\to} \Aut(A)$, where $A_0^\times$, the group of even units in $A$, includes into $\Aut(A)$ as inner automorphisms, and $\Aut(A)$ acts on $A_0^\times$ in the standard way.
Precisely, our classification result is the following; see \cref{th:class2vect} in the main text.

\begin{maintheorem}
        \label{th:main}
        Let $A$ be a Picard-surjective super algebra. Then, there is a canonical bijection
        \begin{equation*}
                \mathrm{h}_0\bigl(A\text{-}\stwoVectBdl(X)\bigr) \cong \check{\mathrm{H}}^1(X, \AUT(A))\text{,}
        \end{equation*}
        \ie super 2-vector bundles over X of Morita class A are classified by the \v{C}ech cohomology of X with values in $\AUT(A)$.
\end{maintheorem}

Here, $\hc 0$ denotes the set of isomorphism classes of objects in a bicategory. Moreover, we say that a super algebra $A$ is \emph{Picard-surjective} if the natural map $\Aut(A) \to \mathrm{Pic}(A)$ is surjective. We proved in \cite[Prop.~A2]{Kristel2022} that \emph{every} super algebra is Morita equivalent to a Picard-surjective one; hence, our classification result in fact applies to \emph{all} 2-vector bundles.

Though there is a well-defined tensor product between arbitrary 2-vector bundles, the bicategory $\stwoVectBdl_k(X)$ is in general not monoidal.
The problem lies already in the bicategory $\sAlgBdlbi_k(X)$ and has been observed and discussed in \cite{Kristel2022}.
It is rooted in the fact that the well-definedness of the \emph{relative} tensor product between bimodule bundles (the composition) requires a condition (\quot{implementing}) that is not preserved under the \emph{exterior} tensor product of bimodule bundles (the monoidal structure).
%On the basis of \cite{Kristel2022}, one has two options. 
In \cite{Kristel2022}, we suggest two options to solve this problem.
Either, one can take the underlying sub-bigroupoid $\stwoVectBdlgrpd kX$; this circumvents above problem because \emph{invertible} bimodule bundles are automatically implementing. Or, one can consider the full sub-bicategory $\ssstwoVectBdl_k(X)$ over all \emph{semisimple} super 2-vector bundles; \ie ones whose super algebra bundle $\mathcal{A}$ has semisimple fibres; here, the problem does not arise as bimodule bundles between \emph{semisimple} super algebras are automatically implementing.

The invertible objects in both symmetric monoidal bicategories (i.e., the objects with tensor inverses) provide a unified picture: we show that in both cases  a super 2-vector bundle is invertible if and only if its Morita class is a \emph{central simple super algebra} (\cref{prop:dualizability}). Invertible super 2-vector bundles will also be called \emph{super 2-line bundles}; these form a symmetric monoidal bicategory denoted $\stwoLineBdl_k(X)$.
Our classification specializes to the following; see \cref{th:classline2bundles} in the main text.

\begin{maincorollary}
\label{co:main}
\label{CorollaryIntro}
For any smooth manifold $X$, there is a canonical group isomorphism
\begin{equation*}
\mathrm{h}_0(\stwoLineBdl_k(X)) \cong\mathrm{H}^0(X,\mathrm{BW}_k) \times \mathrm{H}^1(X,\Z_2) \times \check{\mathrm{H}}^2(X,\underline{k}^{\times}).
\end{equation*} 
\end{maincorollary}

Here $\mathrm{BW}_k$ is the Brauer-Wall group for the field $k=\R$ or $\C$, and the first factor on the right hand side specifies the Morita class of the super 2-line bundle on each connected component. 
This result was previously obtained by Mertsch in his PhD thesis \cite{Mertsch2020} by a direct computation, whereas we compute the \v Cech cohomology group $\check{\mathrm{H}}^1(X, \AUT(A))$ in case of a central simple super algebra and then deduce the result from \cref{th:main}.

We also study weaker notions of invertibility in symmetric monoidal bicategories, most importantly \emph{full dualizability}.
While in the bigroupoid $\stwoVectBdlgrpd kX$ dualizability, full dualizability, and invertibility are the same, we show (\cref{prop:dualizability}) that in $\ssstwoVectBdl_k(X)$ all objects, i.e., all semisimple super 2-vector bundles are fully dualizable. In this sense, full dualizability corresponds to semisimplicity, just as it is the case for 2-vector \emph{spaces} \cite{Bartlett2015}.

An important feature of our framework is that both super \emph{algebra bundles} and (super) \emph{bundle gerbes} give rise to super 2-vector bundles. This can be seen directly using the above description in \eqref{TwoVBDiagramIntro}: 
for an algebra bundle $\mathcal{A}$ on $X$, one just takes $Y = X$ and $\mathcal{M} = \mathcal{A}$ to get a 2-vector bundle, while for a bundle gerbe over $k$ with surjective submersion $Y \to X$, one just takes $\mathcal{A} = \underline{k}$, the trivial algebra bundle over $Y$ with typical fibre $k$. In particular, super 2-vector bundles provide a framework in which isomorphisms between super algebra bundles and bundle gerbes can be discussed.
Thus, questions like \quot{When is a bundle gerbe a bundle of algebras?} (see \cite{Pavlov}) obtain a well-defined meaning (and answer: if and only if its Dixmier-Douady class is torsion, see \cref{co:bundlegerbesandalgebrabundles}).
Throughout this paper, we provide a variety of results about the correlation between super 2-line bundles, super algebra bundles, and super bundle gerbes; see \cref{sec:bundlegerbes,sec:inclusionofalgebrabundles,sec:classificationof2linebundles}. \Cref{fig:venn} shows schematically a summary of these results. 
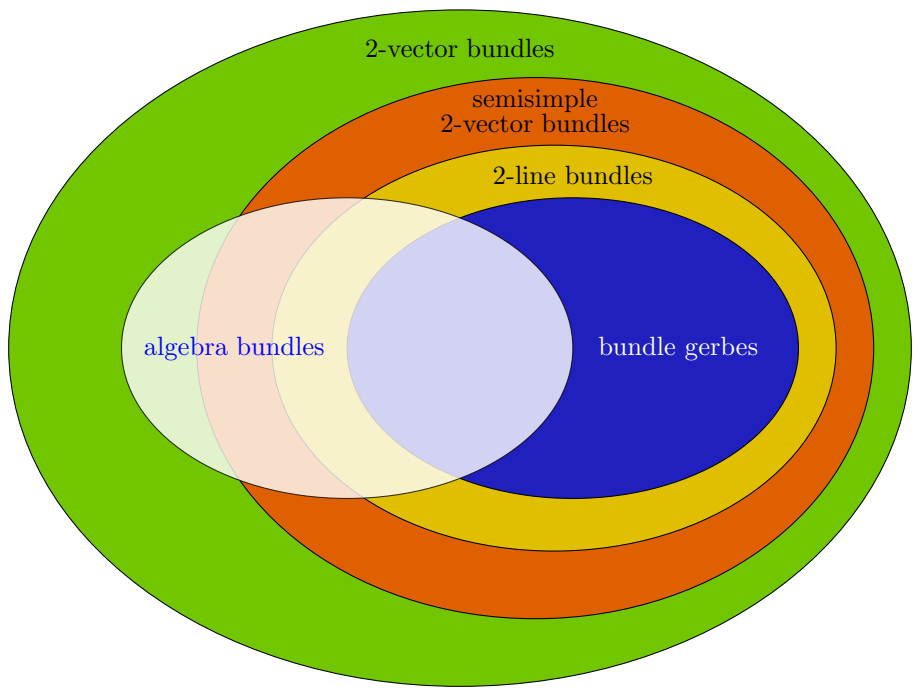
\begin{figure}[h]
\begin{center}
\begin{tikzpicture} 
[set/.style = {draw,
    text opacity = 1}]
 
\draw[set,fill={rgb:red,2;green,5;yellow,2}]    (0,0) ellipse (6cm and 4.5cm);
\draw[set,fill={rgb:red,5;green,1;yellow,2},opacity = 1]      (1,0) ellipse (4.5cm and 3.6cm);
\draw[set,fill={rgb:red,2;green,1;yellow,5},opacity = 1]   (1.25,0) ellipse (3.75cm and 2.7cm);
\draw[set,fill={rgb:red,1;green,1;blue,6},opacity = 1]   (1.5,0) ellipse (3cm and 2cm);
\draw[set,fill=white,opacity = 0.8] (-1.5,0) ellipse (3cm and 2cm);

\node at (0,4)  {2-vector bundles};
\node at (-3,0)[color=blue]  {algebra bundles};
\node at (2.9,0) [color=white]  {bundle gerbes};
\node at (1.5,2.3)   {2-line bundles};
\node at (1,3.3)   {semisimple};
\node at (1,3)   {2-vector bundles};

\end{tikzpicture}
\end{center}
\caption{A Venn diagram relating various sorts of 2-vector bundles and bundle gerbes. The intersection between algebra bundles and bundle gerbes consists of those algebra bundles whose fibres are Morita equivalent to the ground field, and at the same time of all bundle gerbes whose Dixmier-Douady class is torsion. The intersection between algebra bundles and 2-line bundles consists of all algebra bundles whose fibres are central and simple. 
}
\label{fig:venn}
\end{figure}

A geometric example of the situation that a bundle gerbe is isomorphic to an algebra bundle is the following: an oriented Riemannian manifold $X^d$ canonically carries the \emph{spin lifting gerbe} $\mathscr{G}_{\SO_d}^{\Spin_d}$, which is a real line bundle gerbe. 
This bundle gerbe is trivial if and only if $X$ admits a spin structure, and a trivialization is precisely a choice of spin structure. 
On the other hand, one can apply the Clifford algebra construction to the tangent bundle, which yields a bundle $\Cl(TX)$ of real super algebras over $X$. 
Both the spin lifting gerbe and the Clifford algebra bundle give rise to 2-vector bundles, and they may be compared as such. In fact, we prove that
\begin{equation*}
  \mathscr{G}_{\SO_d}^{\Spin_d} \cong \Cl(TX) \otimes \Cl_{-d}
\end{equation*}
as 2-vector bundles, where an isomorphism is provided by the twisted spinor bundle.
We derive this from a general statement (\cref{Thm:CanonicalIsomorphism:Representation}) about lifting gerbes and representations, and also prove some variations concerning complex scalars and non-oriented manifolds.

We remark that precursors to our super 2-vector bundles have appeared in several places, but the presentation as the 2-stackification of a presheaf of bicategories (together with the consequential fact that they form a 2-stack) is new in the present paper.
For example, in \cite{Pennig2011}, Pennig defines \quot{Morita bundle gerbes} with respect to a unital C$^{*}$-algebra $A$. 
If $A$ is finite-dimensional, this essentially reduces to our (ungraded) complex 2-vector bundles of Morita class $A$.
Pennig does not define a bicategory (let alone a 2-stack) but only considers {stable} equivalence classes.
He also classifies Morita bundle gerbes by the \v{C}ech cohomology with coefficients in a C$^{*}$-algebraic version of the 2-group $\AUT(A)$, and our classification theory was strongly motivated by Pennig's ideas and results.
It is worth pointing out that, in spite of the similarities to Pennig's approach, our methods are more conceptual as we use the modern 2-stack-theoretical framework. 
Moreover, we go a bit further, for instance, with the discussion of the tensor product of algebras under this classification.

In \cite{Ershov2016} Ershov adapts Pennig's definition to a finite-dimensional setting. Ershov's Morita bundle gerbes coincide with our ungraded complex 2-line bundles of Morita class $\C$, see \cref{re:moritaclass:c} for further comparison.
\begin{comment}
However, Ershov requires a fixed surjective submersion $Y \to M$, and the bicategory he constructs contains no morphisms that allow to change it. In fact, every Morita bundle gerbe is isomorphic to an ordinary bundle gerbe, this is our \cref{lem:bundlegerbesand2vectorbundles} or \cite[Prop.~2.10]{Ershov2016}. \end{comment}
In his lecture notes \cite{Freed2012} Freed sketches the definition of a 2-category of \emph{invertible algebra bundles} over topological groupoids $\mathcal{G}$, which coincides with (a continuous version of) our notion of super 2-line bundles if $\mathcal{G}$ is the groupoid obtained from a cover $Y \to X$, \ie where $\mathcal{G}_0 = Y$ and $\mathcal{G}_1 = Y^{[2]}$. 
Freed then obtains a classification result similar to \Cref{CorollaryIntro}, using homotopy-theoretical methods. Finally, in \cite{baas1} Baas, Dundas, and Rognes propose a definition of a \quot{charted 2-vector bundle} based on a category of 2-vector spaces that is equivalent to Kapranov-Voevodsky 2-vector spaces. 
In \cite{nikolaus2} Nikolaus and Schweigert recast charted 2-vector bundles in term of 2-stackification, showing that charted 2-vector bundles form a 2-stack. Nonetheless, we do not see any direct relation between these charted 2-vector bundles and our 2-vector bundles, and their framework does not seem to capture the geometric examples we have in mind.

\paragraph{Acknowledgements.}
We thank the Erwin Schr\"odinger International Institute for Mathematics and Physics for hosting the program \quot{Higher Structures and Field Theory} in summer 2020, at which some results of this article have been presented.
PK and KW gratefully acknowledge support from the German Research Foundation under project code WA 3300/1-1.
PK gratefully acknowledges support from the Pacific Institute for the Mathematical Sciences in the form of a postdoctoral fellowship.
ML gratefully acknowledges support from CRC 1085 ``Higher Invariants'', funded by the DFG.
We would like to thank Severin Bunk, Christoph Schweigert and Danny Stevenson for helpful discussions.

%=============================================================================

\setsecnumdepth{2}

\section{2-vector bundles}

\label{sec:super2vectorbundles}

The goal of this section is to introduce the 2-stack of 2-vector bundles in several flavours (e.g., real/complex, ungraded/graded).
We explain first what our models for 2-vector spaces are, and then turn these into bundles over manifolds. This results in a variety of pre-2-stacks which may be viewed as \quot{preliminary 2-vector bundles}.
We then apply the 2-stackification procedure of Nikolaus-Schweigert \cite{nikolaus2} to construct 2-stacks of 2-vector bundles of various flavours.

\subsection{The bicategory of 2-vector spaces} \label{Section2VectorSpaces}

In order to define 2-vector bundles, one first has to settle on a good notion of 2-vector \emph{spaces}. In general, $2$-vector spaces should form a symmetric monoidal bicategory $\mathscr{V}$ such that the monoidal category $\Endcat_{\mathscr{V}}( \mathbf{1})$ of endomorphisms of the monoidal unit $\mathbf{1}$ is isomorphic to some category of vector spaces (\eg finite-dimensional ones) over a field $k$. We describe in this section our choice of a sub-bicategory $\stwoVect_k$ of the symmetric monoidal bicategory $\sAlgbi_k$ of finite-dimensional super algebras over $k$, where $k$ is either $\R$ or $\C$.
We start by recalling the basics of the bicategory $\sAlgbi_k$.

The objects of $\sAlgbi_k$ are monoid objects in the symmetric monoidal category $\sVect_k$ of finite-dimensional super vector spaces; in more detail, these are {finite-dimensional}, $\Z_2$-graded, unital, associative algebras $A$ over $k$ (in short: super algebras). We will use the notation $A=A_0 \oplus A_1$ to denote the graded components.
If $A$ and $B$ are super algebras, then the 1-morphisms $A \to B$ in $\sAlgbi_k$ are $\Z_2$-graded, finitely generated $B$-$A$-bimodules (we will just say \emph{bimodules}).
The 2-morphisms are bimodule intertwiners, and are always required to be parity-preserving.

\begin{comment}
\begin{remark} \label{RemarkFinDimBanach}
One reason to restrict to finite-dimensional algebras and bimodules is that their underlying vector spaces have a canonical topology and smooth structure, which makes it straightforward to put a smooth structure on the underlying algebra bundles and bimodule bundles, and to talk about smoothness in general.
\\
We will also use the fact that finite-dimensional algebras always admit submultiplicative norms (norms satisfying $\|ab\| \leq \|a\|\|b\|$), in other words, they admit the structure of a Banach algebra.
Such a norm can be obtained for example by choosing a scalar product on $A$ and then embed $A$ into the Banach algebra $\End(A)$ of vector space endomorphisms by sending $a \mapsto \ell_a$, where $\ell_a : b \mapsto ab$ is the left multiplication operator.
We remark that since $\End(A)$ is a $C^*$-algebra, such a norm also satisfies the $C^*$-identity, but $A$ is not necessarily $*$-closed when viewed as subset of $\End(A)$.
\end{remark}
\end{comment}

If $M:B \to A$ and $N:C \to B$ are 1-morphisms (\ie $M$ is a $A$-$B$-bimodule and $N$ is a $B$-$C$-bimodule), then the composition in $\sAlgbi_k$ is the relative tensor product $M \otimes_B N$ over $B$, which gives a $A$-$C$-bimodule and hence a 1-morphism $C \to A$.
The reason that 1-morphisms from $B$ to $A$ are taken to be $A$-$B$-bimodules (as opposed to $B$-$A$-bimodules), is that we then have $M\circ N = M \otimes_B N$ for the composition of 1-morphisms in our 2-category; consequently we do not have to distinguish between the symbols $\circ$ and $\otimes$. 
The \emph{identity bimodule} of a super algebra $A$ is $A$, considered as an $A$-$A$-bimodule in the obvious way.
An $A$-$B$-bimodule $M$ is \emph{invertible} if there exists a $B$-$A$-bimodule $N$ such that $M \otimes_B N \cong A$ and $N \otimes_A M \cong B$.
Two super algebras $A$, $B$ are isomorphic in the bicategory $\sAlgbi_k$ if and only if there exists an invertible $A$-$B$-bimodule $M$; this relation is usually called \emph{Morita equivalence}.

\begin{remark}
        The bicategory $\Algbi_k$ of (non-super) algebras, (non-super) bimodules and intertwiners forms a (non-full) sub-bicategory, by declaring algebras and bimodules to be purely even. That way,  the ungraded situation is included in our discussion.
\end{remark}

We recall that to any $A$-$B$-bimodule $M$ one can associate another $A$-$B$-bimodule $\Pi M$ with the graded components swapped, see \cite[\S 2.1]{Kristel2022}.
We also recall the usual way of twisting bimodules by super algebra homomorphisms (\ie even algebra homomorphisms). 
Given a super algebra homomorphism $\phi:A' \to A$ and an $A$-$B$-bimodule $M$, there is an $A'$-$B$ bimodule $\pss{\phi} M$ with underlying vector space $M$, the right $B$-action as before and the left $A'$-action induced along $\phi$. 
Further, if $\psi: B' \rightarrow B$ is another super algebra homomorphism, then we obtain a $A$-$B'$-bimodule $M_\psi$ in a similar way. 
Both twistings can be performed simultaneously, resulting in an $A'$-$B'$-bimodule $\pss{\phi}M_{\psi}$.
If $N$ is another $A'$-$B'$-bimodule, then an intertwiner $u: N \to \pss{\phi}M_{\psi}$ is the same as an {intertwiner along $\phi$ and $\psi$}, \ie an even linear map satisfying 
\begin{equation} \label{IntertwiningCondition}
u(a \lact n \ract b) = \phi(a)\lact u(n) \ract \psi(b), \qquad n\in N, ~~ a\in A', ~~b\in B'.
\end{equation} 
We refer to \cite[Lem.~2.1.3]{Kristel2022} for a summary of further properties of this construction.

For any super algebra $A$, one may consider the Picard group, $\mathrm{Pic}(A)$.
By definition, its elements are the isomorphism classes of invertible $A$-$A$-bimodules, and its group multiplication is given by the relative tensor product over $A$.
If $\phi \in \Aut(A)$, then the bimodule $A_{\phi}$ is invertible.
Moreover, $A_{\phi} \otimes_{A} A_{\psi} \cong A_{\phi \circ \psi}$.
Thus, the assignment $\phi \mapsto A_{\phi}$ induces a group homomorphism
\begin{equation}\label{AutPicMap}
        \Aut(A) \rightarrow \mathrm{Pic}(A).
\end{equation}
A super algebra is called \emph{Picard-surjective}, if every invertible  $A$-$A$-bimodule $M$ is isomorphic to one of the form $A_\phi$ for some $\phi \in \Aut(A)$.
In other words, the map \eqref{AutPicMap} is surjective.

As working with bicategories can be involved due to non-strictness, it is often convenient -- when possible -- to reduce to a one-categorical context.
This leads to the notion of a \emph{framed bicategory}: a bicategory $\mathscr{B}$ together with a category $\mathscr{C}$ with the same objects and a functor $\mathscr{C} \to \mathscr{B}$ such that (a) it is the identity on the level of objects and (b) the image of every morphism of $\mathscr{C}$ has a right adjoint in $\mathscr{B}$.
Assigning to a super algebra homomorphism $\varphi:A \to B$ the $B$-$A$-bimodule $B_{\varphi}$ establishes such a framing
\begin{equation}
\label{eq:framing2vect}
\sAlg_k \to \sAlgbi_k
\end{equation}
for the bicategory of super algebras; see \cite[Lem.~2.1.6]{Kristel2022} for a more detailed discussion.

Another important feature of the bicategory $\sAlgbi_k$ is that it is symmetric monoidal in the sense of Schommer-Pries \cite[Definition 2.3]{pries1}. The monoidal structure is the \emph{graded} tensor product of super algebras, respectively, the exterior graded tensor product on bimodules over $k$. 
%
\begin{comment}
Let $M$ be an $A$-$B$-bimodule, and let $M'$ be an $A'$-$B'$-bimodule. Then the exterior tensor product $M \otimes M'$ is $(A\otimes A')$-$(B\otimes B')$-bimodule in the following way:
\begin{align*}
(a \otimes a') \lact (m\otimes m') \ract (b \otimes b') := (-1)^{|a'||m|+|b||m'|+|b||a'|}(a\lact m \ract b)\otimes (a'\lact m'\ract b')\text{.}
\end{align*}
Further, the identity is graded linear map
\begin{equation*}
\Pi(M \otimes M') \cong \Pi M \otimes M' \cong M \otimes \Pi M'\text{.}
\end{equation*}
It turns out that, just like in \cref{lem:Pi}, 
\begin{equation*}
\Pi (M \otimes M') \to M \otimes \Pi M': m \otimes m' \mapsto m \otimes m'
\end{equation*} 
is an intertwiner, and
\begin{equation*}
\Pi(M \otimes M') \to \Pi M \otimes M : m \otimes m' \mapsto (-1)^{|m'|}m \otimes m'
\end{equation*}
is an intertwiner. 
\\
Also, the identity induces an intertwiner
\begin{equation*}
A_{\varphi_A} \otimes B_{\varphi_B} \cong (A\otimes B)_{\varphi_A\otimes \varphi_B}\text{.}
\end{equation*}
\end{comment}
%
The tensor unit is the field $k$, considered as a trivially graded super algebra, $\unit := k$. This way, we obtain $\Endcat_{\sAlgbi_k}(\unit) \cong \sVect_k$ as symmetric monoidal categories, which qualifies the bicategory $\sAlgbi_k$ as a bicategory of 2-vector spaces.

\begin{remark}
\label{re:dualizable}
One can show that every super algebra $A$ is dualizable with respect to the symmetric monoidal structure of $\sAlgbi_k$. Furthermore, a super algebra is \emph{fully} dualizable if and only if it is semisimple (equivalently, separable), see \cite{Bartlett2015}. Finally, it is well-known that a super algebra is \emph{invertible} if it is central and simple.
The group of isomorphism classes of invertible objects in $\sAlgbi_k$ is called the \emph{Brauer-Wall group} of $k$. It is well known that $\mathrm{BW}_\R=\Z_8$ and $\mathrm{BW}_{\C}=\Z_2$, and that representatives are the real and complex Clifford algebras, $\Cl_n$ ($n=0,...,7$) and $\CCl_n$ ($n=0,1$), respectively.
\end{remark}

\label{SectionImplementingModules}

The passage from algebras to algebra bundles is straightforward, whereas the passage from bimodules to bimodule bundles needs extra care, see \cref{sec:algebrabundles}.
The crucial point is to achieve a well-defined relative tensor product of bimodule bundles over an algebra bundle. 
A locally trivial vector bundle structure on the fibrewise relative tensor product does
not come for free and requires some control of the bimodule actions.
This leads us to the definition of the sub-bicategory $\sVect_k \subset \sAlgbi_k$, which will be our choice of 2-vector spaces, and which we explain next.

We associate to any $A$-$B$-bimodule $M$ the group $I(M)$  of triples $(\phi,u,\psi)$  consisting of super algebra automorphisms $\phi\in \Aut(A)$ and $\psi\in \Aut(B)$ and of an invertible (even) intertwiner $u$ along $\phi$ and $\psi$, \ie $u$ satisfies the relation \eqref{IntertwiningCondition}.
It is straightforward to show \cite[\S 3.1]{Kristel2022} that $I(M)$ is a Lie subgroup of $\Aut(A) \times \mathrm{GL}(M) \times \Aut(B)$. We call $I(M)$ the \emph{group of implementers}, due to the fact that if $(\phi, u, \psi) \in I(M)$, then $u$ \emph{implements} the automorphisms $\phi$ and $\psi$, in the sense that \begin{equation} \label{ImplementingRelation}
\phi(a) \lact m = u^{-1}(a \lact u(m)), \qquad m \ract \psi(b) = u^{-1}(u(m) \ract b), \qquad a \in A, ~~m \in M, ~~b \in B\text{.}
\end{equation}
We will next consider the projection maps
\begin{equation} \label{SourceTargetMaps}
        p_\ell: I(M) \to \Aut(A)
        \quand
        p_r: I(M) \to \Aut(B)\text{,}
\end{equation} 
which are smooth group homomorphisms. The following definition has been introduced in \cite[Def.~3.1.3]{Kristel2022}. 

\begin{definition}[Implementing modules] \label{DefinitionImplementing}
Let $A$ and $B$ be super algebras and let $M$ be an $A$-$B$-bimodule.
Then, $M$ is called implementing if the maps $p_{\ell}$ and $p_r$ are open.
\end{definition}

We refer to \cite[\S 3.1]{Kristel2022} for some remarks concerning this definition.
Moreover, we infer from \cite{Kristel2022} the following results.

\begin{proposition}
\begin{enumerate}[(i)]

\item
\label{ExampleImplementing1} 
For any automorphisms $\phi,\psi\in\Aut(A)$, the bimodule $\pss\psi A_\phi$ is implementing.

\item
\label{PropositionInvertibleImplementing}
Every invertible bimodule is implementing.

\item
\label{RemarkHHSemisimple}
Every bimodule over semisimple algebras is implementing.

\item
\label{PropositionTensorProdImplementing}
The relative tensor product of implementing bimodules is implementing. 

\end{enumerate}
\end{proposition}

For examples of non-implementing bimodules, we refer again to \cite{Kristel2022}.
In particular, if $\phi,\psi:A\to B$ are (non-invertible) super algebra homomorphisms, it  happens that the bimodule $\pss \psi B_{\phi}$ is \emph{not} implementing.

Due to \cref{PropositionTensorProdImplementing} we are in position to define a sub-bicategory 
\begin{equation*}
\stwoVect_k\subset \sAlgbi_k
\end{equation*}
 with the same objects (super algebras) but only the \textit{implementing} bimodules as 1-morphisms (and all intertwiners between those as 2-morphisms). By \cref{PropositionInvertibleImplementing}, both bicategories have the same set of isomorphism classes of objects.
 In other words, two algebras are Morita equivalent if and only if they are isomorphic in any of these bicategories.
Moreover, by \cref{RemarkHHSemisimple}, we still have $\Endcat_{\stwoVect_k}(\unit) \cong \sVect_k$, so that $\stwoVect_k$ is a valid choice of a bicategory of super 2-vector spaces.
 One of the fundamental insights of our paper \cite{Kristel2022} was that in the context of algebra \textit{bundles}, it is the smaller bicategory $\stwoVect_k$ which succeeds as a model for 2-vector spaces in relation with 2-vector bundles; this is the topic of the next subsection.

Before we proceed, we note that the framing 
$\sAlg_k \to \sAlgbi_k$
of \cref{eq:framing2vect} does not co-restrict to the sub-bicategory $\stwoVect_k$, because the bimodule $B_{\varphi}$ is not necessarily implementing for all algebra homomorphisms $\varphi:A \to B$. One way to restore this is to restrict to the \emph{groupoid} $\sAlggrpd k$ of super algebras and super algebra \textit{iso}morphisms. Then, by \cref{ExampleImplementing1}, we obtain a new framing
\begin{equation}
\label{eq:framingimp}
\sAlggrpd k \to \stwoVect_k\text{.} 
\end{equation} 
Another option is to restrict to \emph{semisimple} super algebras, i.e., to the full subcategory $\sssAlg_k \subset \sAlg_k$ and to the full sub-bicategory $\ssstwoVect_k \subset \stwoVect_k$ over all semisimple super algebras. Then, by \cref{RemarkHHSemisimple} we obtain a framing
\begin{equation}
\label{eq:framingimp2}
\sssAlg_k \to \ssstwoVect_k\text{.} 
\end{equation}

Similarly, we note that the symmetric monoidal structure on $\sAlgbi_k$ does not restrict to $\stwoVect_k$, because the exterior tensor product of implementing bimodules need not be implementing \cite[Ex. 3.1.5 (4)]{Kristel2022}.
However, the symmetric monoidal structure restricts (by \cref{PropositionInvertibleImplementing}) to the sub-bigroupoid $\stwoVectgrpd k$ and (by \cref{RemarkHHSemisimple}) to the sub-bicategory $\ssstwoVect_k$, so that both of these bicategories are framed symmetric monoidal bicategories.

\subsection{Algebra bundles and bimodule bundles }
\label{sec:algebrabundles}

We will now set up a bundle version of our bicategory $\stwoVect_k$ of super 2-vector spaces over smooth manifolds, which will be a preliminary, yet incomplete, version of 2-vector bundles.
The larger bicategory $\sAlgbi_k$ does not admit such a bundle version as the relative tensor product of perfectly fine but non-implementing bimodule bundles does in general not admit a locally trivial vector bundle structure.

Let $X$ be a smooth manifold. The definition of super algebra bundles is standard, and the definition of bimodule bundles is taken from \cite[\S 4.1]{Kristel2022}.

\begin{definition}[Super algebra bundle]
A \emph{super algebra bundle over} $X$ is a smooth vector bundle $\pi:\mathcal{A} \to X$, with the structure of a super algebra on each fibre $\mathcal{A}_x$, $x \in X$, such that each point in $X$ has an open neighborhood $U \subset X$ for which there exists a super algebra $A$ and a diffeomorphism $\phi: \mathcal{A}|_U \to U \times A$ that preserves fibres and restricts to a super algebra isomorphism $\phi_x: \mathcal{A}_x \to A$ in each fibre over $x\in U$. 
A \emph{morphism between two super algebra bundles} is a vector bundle morphism that is a grading-preserving algebra homomorphism in each fibre. Super algebra bundles over $X$ and homomorphisms form a category $\sAlgBdl_k(X)$. 
\end{definition}

\begin{remark}
\label{re:typicalfibre}
\begin{enumerate}[(1)]

\item 
As defined, super algebra bundles do not necessarily have a single typical fibre, in the sense that the super algebras $A$ of all local trivializations could be chosen to be the same. However, a straightforward argument shows that the restriction of a super algebra bundle to a connected component of $X$ does have a single typical fibre. 

\item A \emph{trivial super algebra bundle} $\mathcal{A}$ over $X$ is given by a family $\{A_i\}_{i \in I}$ of super algebras, one for each connected component $X_i\subset X$, via $\mathcal{A}|_{X_i} := \underline{A}_i := X_i \times A_i$, the trivial algebra bundle with fibre $A_i$. 
Note that trivial super algebra bundles canonically pull back to trivial ones. 
\end{enumerate}
\end{remark} 

\begin{definition}[Bimodule bundle]
\label{def:bimodulebundle}
Let $\mathcal{A}$ and $\mathcal{B}$ be super algebra bundles over $X$. An \emph{$\mathcal{A}$-$\mathcal{B}$-bimodule bundle} is a super vector bundle $\mathcal{M}$ over $X$ with the structure of an $\mathcal{A}_x$-$\mathcal{B}_x$-bimodule in each fibre $\mathcal{M}_x$, such that each point $x \in X$ has an open neighborhood $U\subset X$ over which there exist local trivializations $\phi: \mathcal{A}|_U \to U \times A$ and $\psi:\mathcal{B}|_U \to U \times B$ (as super algebra bundles), an $A$-$B$-bimodule $M$, and a local trivialization $u: \mathcal{M}|_U \to U \times M$ (as a vector bundle) that is fibrewise an intertwiner along $\phi$ and $\psi$.
If $X$ can be covered by open sets supporting local trivializations with the same $A$, $B$, and $M$, then we say that $\mathcal{M}$ has \emph{typical fibre} the triple $(A,M,B)$.  \emph{Morphisms} between $\mathcal{A}$-$\mathcal{B}$-bimodule bundles are super vector bundle morphisms that are even intertwiners in each fibre (they will again be called intertwiners).
$\mathcal{A}$-$\mathcal{B}$-bimodule bundles and their morphisms form a category $\sBimodBdl_{\mathcal{A},\mathcal{B}}(X)$. 
\end{definition}

\begin{example} \label{ExampleTwistedModuleIso}
\begin{enumerate}[(1)]
\item 
Let $\phi:\mathcal{A} \to \mathcal{B}$ be an isomorphism of super algebra bundles over $X$. Then, there is a $\mathcal{B}$-$\mathcal{A}$-bimodule bundle $\mathcal{B}_\phi$, which over a point $x \in X$ has fibres $(\mathcal{B}_x)_{\phi_x}$. 
Its typical fibre is $(B,B_{\varphi},A)$, where $A$ and $B$ are typical fibres of $\mathcal{A}$ and $\mathcal{B}$, respectively, and $\varphi:A \to B$ is an arbitrary super algebra isomorphism, see \cite[Ex. 4.1.5]{Kristel2022}.
We remark that this does \emph{not} generalize to non-invertible super algebra bundle homomorphisms $\phi: \mathcal{A} \to \mathcal{B}$, see \cite[Ex. 4.3.3 \& 4.3.4]{Kristel2022}.

\item
If $\mathcal{A}$ and $\mathcal{B}$ are trivial super algebra bundles over $X$, defined by families $\{A_i\}_{i\in I}$ and $\{B_i\}_{i\in I}$ as in \cref{re:typicalfibre}, then a \emph{trivial $\mathcal{A}$-$\mathcal{B}$-bimodule bundle} is given by a family $\{M_i,\phi_i,\psi_i\}_{i\in I}$ with $A_i$-$B_i$-bimodules $M_i$ and smooth maps $\varphi_i : X_i \to \Aut(A_i)$ and $\psi_i:X_i \to \Aut(B_i)$, one for each connected component $X_i \subset X$, via $\mathcal{M}|_{X_i} := \lli{{\psi_i}}{\sheaf{M_i}}_{\,\phi_i}$. 
Every bimodule bundle is locally isomorphic to a trivial one. 

\end{enumerate}
\end{example}

For a deeper discussion of bimodule bundles we refer to \cite[\S 4.1]{Kristel2022}. It turns out that there is no relative tensor product of bimodule bundles as defined above (see \cref{sec:algebrabundles}, \cite[\S 4.2]{Kristel2022}). 
We proved in \cite[Thm.~4.2.6]{Kristel2022} that the following additional constraint fixes this problem.

\begin{definition}[Implementing bimodule bundle]
\label{def:implementingbimodulebundle}
Let $\mathcal{A}$ and $\mathcal{B}$ be super algebra bundles over $X$. An $\mathcal{A}$-$\mathcal{B}$-bimodule bundle $\mathcal{M}$ is called \textit{implementing} if all
fibres $\mathcal{M}_x$ are implementing. Implementing $\mathcal{A}$-$\mathcal{B}$-bimodule bundles and their morphisms form a full subcategory $\sBimodBdlimp_{\mathcal{A},\mathcal{B}}(X)$ of $\sBimodBdl_{\mathcal{A},\mathcal{B}}(X)$.
 \end{definition}

Let $\mathcal{A}$, $\mathcal{B}$, and $\mathcal{C}$ be super algebra bundles, let $\mathcal{M}$ be an implementing $\mathcal{A}$-$\mathcal{B}$-bimodule bundle and $\mathcal{N}$ an implementing $\mathcal{B}$-$\mathcal{C}$-bimodule bundle. In \cite[Prop.~4.2.3]{Kristel2022} we have constructed the relative tensor product $\mathcal{M} \otimes_{\mathcal{B}} \mathcal{N}$, which is an implementing $\mathcal{A}$-$\mathcal{C}$-bimodule bundle. Thus, we have a functor
\begin{equation} \label{Composition}
\sBimodBdl_{\mathcal{A},\mathcal{B}}^{\mathrm{imp}}(X) \times \sBimodBdl_{\mathcal{B},\mathcal{C}}^{\mathrm{imp}}(X) \to \sBimodBdl_{\mathcal{A},\mathcal{C}}^{\mathrm{imp}}(X)\text{.}
\end{equation}
It is unproblematic to construct associators and unitors for this tensor product functor, turning it into the composition of 1-morphisms in a bicategory whose objects are super algebra bundles. 

\begin{definition}[Bicategory of super algebra bundles] \label{DefinitionPrestwoVectBdl}
The \emph{bicategory  of super algebra bundles} over $X$, denoted $\sAlgBdlbi_k(X)$, has objects super algebra bundles over $X$, 1-morphisms implementing bimodule bundles over $X,$ and 2-morphisms even intertwiners.
The composition is given by the relative tensor product \eqref{Composition}.
\end{definition}

Super algebra bundles are a preliminary version of super 2-vector bundles:  they (only) form a \emph{pre-}2-stack \cite[Prop.~4.5.1]{Kristel2022}, while 
the true super 2-vector bundles that we define in \cref{sec:2stack2vec} form a 2-stack (\cref{ThmAllisStack}).

We recall from \cref{SectionImplementingModules} that the bicategory $\stwoVect_k$ has two interesting sub-bicategories, the sub-bicategory $\stwoVectgrpd k$ and the full sub-bicategory $\ssstwoVect_k$ over all semisimple super algebras. They are symmetric monoidal and come with (symmetric monoidal) framings
\begin{equation*}
\sAlggrpd k \to \stwoVectgrpd k
\quand
\sssAlg_k \to \ssstwoVect_k
\end{equation*}
discussed in \cref{eq:framingimp,eq:framingimp2}. We recall that $\stwoVect_k$ itself is not symmetric monoidal, and also not framed by $\sAlg_k$. 

This picture passes without changes from super 2-vector \emph{spaces} to the bicategory of super algebra  \emph{bundles}.
In the first case, we define the sub-bicategory 
\begin{equation*}
\sAlgBdlbigrpd kX \subset \sAlgBdlbi_k(X)
\end{equation*}
containing only the invertible bimodule bundles (note that by \cite[Lem.~4.2.8]{Kristel2022}, \quot{invertible} is equivalent to \quot{fibrewise invertible}) and the invertible intertwiners.  
Then, \cref{ExampleTwistedModuleIso} furnishes a functor
\begin{equation}
\label{eq:framing1}
\sAlgBdlgrpd kX \to \sAlgBdlbigrpd kX
\end{equation}
which is indeed a framing and symmetric monoidal \cite[Lem.~4.3.1]{Kristel2022}. For semisimple algebras, we first define the full sub-category $\sssAlgBdl_k(X)\subset \sAlgBdl_k(X)$ over all super algebra bundles with semisimple fibres, and similarly, the full sub-bicategory $\sssAlgBdlbi_k(X) \subset \sAlgBdlbi_k(X)$. If $\varphi:\mathcal{A} \to \mathcal{B}$ is a homomorphism of semisimple super algebra bundles, then by \cite[Prop.~4.3.5]{Kristel2022}, we obtain a well-defined $\mathcal{A}$-$\mathcal{B}$-bimodule bundle $\mathcal{B}_\varphi$, and hence, a functor
\begin{equation}
\label{eq:framing2}
\sssAlgBdl_k(X) \to \sssAlgBdlbi_k(X)\text{.} 
\end{equation}
By \cite[Cor. 4.3.6]{Kristel2022}, this is again a framing and symmetric monoidal. 
The following result is \cite[Cor. 4.4.4]{Kristel2022}.

\begin{proposition}
\label{prop:dualizabilityalgebrabundles}
The following table describes the dualizable, fully dualizable, and invertible objects in both symmetric monoidal categories of preliminary super 2-vector bundles: 
\begin{center}
\begin{tabular}{l|c|c|c}
 & dualizable & fully dualizable & invertible \\\hline
$\sAlgBdlbigrpd kX$ & central simple & central simple & central simple \\
$\sssAlgBdlbi_k(X)$ & all & all & central simple \\
\end{tabular}
\end{center}
\end{proposition}

\begin{remark}
\label{re:directsum}
Both symmetric monoidal bicategories $\sAlgBdlbigrpd kX$ and $\sssAlgBdlbi_k(X)$ have a second symmetric monoidal structure given by the direct sum of algebra bundles, and the exterior direct sum of bimodule bundles. The two monoidal structures are compatible with each other in the sense of distributive laws, and form a \emph{commutative rig bicategory}.
\end{remark}

We denote by
\begin{equation*}
\cssAlgBdlbi_k(X) \subseteq \sssAlgBdlbi_k(X)
\end{equation*}
the full sub-bicategory over all  super algebra bundles with central simple fibres. It will turn out to be a preliminary version of super 2-\emph{line} bundles.
Note that $\cssAlgBdlbi_k(X)$ is symmetric monoidal with the induced tensor product.
A result of Donovan-Karoubi \cite[Theorems 6 and 11]{DK70} shows the following.

\begin{theorem}
\label{th:classalg}
There is a canonical bijection
\begin{equation*}
\mathrm{h}_0(\cssAlgBdlbi_k(X)) \cong \mathrm{H}^0(X,\mathrm{BW}_k) \times {\mathrm{H}}^1(X,\Z_2) \times \mathrm{Tor}(\check{\mathrm{H}}^2(X,\underline{k}^{*}))\text{,}
\end{equation*}
where $\check{\mathrm{H}}^2$ denotes \v{C}ech cohomology, $\underline{k}^{*}$ is the sheaf of smooth $k^{*}$-valued functions and the last group is the torsion subgroup of $\check{\mathrm{H}}^2(X,\underline{k}^{*})$. Moreover, this bijection becomes an isomorphism of groups upon defining the group structure on the right hand side by 
\begin{equation} \label{GroupStructureDonovanKaroubi}
(\alpha_0,\alpha_1,\alpha_2) \cdot (\beta_0,\beta_1,\beta_2) := (\alpha_0+\beta_0,\alpha_1+\beta_1,(-1)^{\alpha_1\cup\beta_1}\alpha_2\beta_2)\text{.}
\end{equation}
\end{theorem}

\begin{remark}
\begin{enumerate}[(1)]

\item
The cup product is viewed here as a map
\begin{equation*}
{\mathrm{H}}^1(X,\Z_2) \times {\mathrm{H}}^1(X,\Z_2) \to {\mathrm{H}}^2(X,\Z_2) \to \mathrm{Tor}(\check{\mathrm{H}}^2(X,\underline{k}^{*})) 
\end{equation*} 
with the second arrow induced by the inclusion $\Z_2 \to k^{*}: x \mapsto (-1)^{x}$. 

\item 
We have $\mathrm{Tor}(\check{\mathrm{H}}^2(X,\underline{\C}^{*}))=\mathrm{Tor}(\mathrm{H}^3(X,\Z))$ and $\mathrm{Tor}(\check{\mathrm{H}}^2(X,\underline{\R}^{*}))=\mathrm{H}^2(X,\Z_2)$.

\end{enumerate}
\end{remark}
\begin{example}
Let $\mathcal{V}$ be a Riemannian vector bundle over $X$ with typical fibre a Euclidean vector space $V$, let $\Cl(\mathcal{V})$ be the associated bundle of Clifford algebras, and let $\CCl(\mathcal{V}) := \Cl(\mathcal{V}) \otimes_{\R}\C$ be its complexification. Both bundles, $\Cl(\mathcal{V})$ and $\CCl(\mathcal{V})$, are invertible by \cref{prop:dualizabilityalgebrabundles}, and their classes under the classification of \cref{th:classalg} are  $([\Cl(V)], w_1(\mathcal{V}),w_2(\mathcal{V}))$ and $([\CCl(V)], w_1(\mathcal{V}),W_3(\mathcal{V}))$ respectively, where $w_1$ and $w_2$ are the Stiefel-Whitney classes and $W_3$ is the third integral Stiefel-Whitney class \cite[Lemma 7]{DK70}.
\end{example}

\begin{remark}
Ungraded algebra bundles are again treated as a special case of super algebra bundles concentrated in even degrees. 
They form a bicategory $\AlgBdlbi_k(X)$.
The subcategories $\ssAlgBdlbi_k(X)$ and $\AlgBdlbigrpd kX$ are symmetric monoidal and framed by $\ssAlgBdl_k(X)$ and $\AlgBdlgrpd kX$, respectively. 
The invertible objects form a full sub-bicategory $\csAlgBdlbi_k(X)$. The ungraded version of \cref{th:classalg} was also proved by Donovan-Karoubi \cite[Theorems 3 and 8]{DK70}, and gives a group isomorphism
\begin{equation*}
\mathrm{h}_0(\csAlgBdlbi_k(X)) \cong \mathrm{H}^0(X,\mathrm{Br}_k)\times \mathrm{Tor}(\check{\mathrm{H}}^2(X,\underline{k}^{*}))\text{,}
\end{equation*}
with the direct product group structure on the right hand side.
Here, $\mathrm{Br}_k$ denotes the Brauer group of the field $k$.
\end{remark}

\subsection{The 2-stack of 2-vector bundles}
\label{sec:2stack2vec}

One of the most important features any notion of bundles should have, is that bundles can be glued together from locally defined pieces.
In other words, bundles satisfy \emph{descent}, or form a \emph{stack}. In a bicategorical setting, there is a corresponding notion of a \emph{2-stack}. 
Super algebra bundles and bimodule bundles as in  \cref{DefinitionPrestwoVectBdl} form a \emph{pre-2-stack}, but in general not a 2-stack. In this section, we will explain how to solve this issue.
For a general proper discussion of (pre-)2-stacks we refer to \cite{nikolaus2}.

First of all, super algebra bundles and bimodule bundles can be pulled back along smooth maps in a coherent fashion. This can be phrased by saying that the assignment 
\begin{equation*}
X\mapsto \sAlgBdlbi_k(X)
\end{equation*}
forms a \emph{presheaf of bicategories} over the category of smooth manifolds. We will denote this presheaf by $\sAlgBdlbi_k$.
It has two important sub-presheaves:
\begin{enumerate}

\item
The sub-presheaf $\sssAlgBdlbi_k$ where only the semisimple super algebra bundles are admitted. This is a presheaf of symmetric monoidal framed bicategories. 

\item
The further sub-presheaf $\cssAlgBdlbi_k$, where only the central simple super algebra bundles are admitted.

\end{enumerate}
Further, we have corresponding versions of ungraded presheaves of bicategories.

\begin{remark}
To make the notion of bicategorical descent a bit more explicit, let us look for the moment at the presheaf $\AlgBdlbi_{\C}$ of ungraded, complex algebra bundles, bimodule bundles, and intertwiners.
Consider an open cover $\{U_{\alpha} \}_{\alpha}$ of a smooth manifold $X$.
Suppose a family $\mathcal{A}_{\alpha}$ of algebra bundles over $U_{\alpha}$ is given, together with invertible $\mathcal{A}_{\beta}$-$\mathcal{A}_{\alpha}$-bimodule bundles $\mathcal{M}_{\alpha\beta}$ over $U_{\alpha}\cap U_{\beta}$, and invertible intertwiners $\mu_{\alpha\beta\gamma}: \mathcal{M}_{\beta\gamma} \otimes_{\mathcal{A}_{\beta}} \mathcal{M}_{\alpha\beta} \to \mathcal{M}_{\alpha\gamma}$ over $U_{\alpha}\cap U_{\beta}\cap U_{\gamma}$ that satisfy the obvious associativity condition over 4-fold intersections.
For the presheaf $\AlgBdlbi_{\C}$ to be a 2-stack, there must exist a globally defined algebra bundle $\mathcal{A}$ together with invertible $\mathcal{A}|_{U_\alpha}$-$\mathcal{A}_\alpha$-bimodule bundles $\mathcal{N}_\alpha$ and invertible intertwiners 
\begin{equation*}
  \varphi_{\alpha \beta} : \mathcal{N}_\beta \otimes_{\mathcal{A}_{\beta}} \mathcal{M}_{\alpha \beta} \longrightarrow \mathcal{N}_\alpha
\end{equation*}
that are compatible with the isomorphisms $\mu_{\alpha \beta \gamma}$ in an obvious way.
However, this is not necessarily the case (see \cref{re:notastack}).
Our current knowledge is the following:
\begin{itemize}

\item 
The presheaves $\cssAlgBdlbi_{\R}$ and $\csAlgBdlbi_{\R}$ are 2-stacks; this will be proved in \cref{co:csgastack}.

\item
The presheaves $\AlgBdlbi_{\C}$, $\sAlgBdlbi_{\C}$, $\csAlgBdlbi_{\C}$, and $\cssAlgBdlbi_{\C}$ are not  2-stacks; this will be proved in \cref{re:notastack}.

\item
Currently, we do not know whether or not the presheaves $\AlgBdlbi_{\R}$ and $\sAlgBdlbi_{\R}$ are 2-stacks.

\end{itemize}
To impose order on this chaos, we choose to 2-stackify \emph{all} these pre-2-stacks, even those that are 2-stacks already.
In those cases where we already have a 2-stack, 2-stackification gives an equivalent 2-stack with more objects, and it turns out that these additional objects are often useful.
\end{remark}

For the 2-stackification of presheaves $\mathscr{F}$ of bicategories we use the plus construction $\mathscr{F} \mapsto \mathscr{F}^{+}$ of Nikolaus-Schweigert \cite{nikolaus2}. 

\begin{definition}[2-vector bundle]
\label{def:applplus}
We define the following presheaves of bicategories of super 2-vector bundles:
\begin{itemize}

\item 
$\stwoVectBdl_{k} := (\sAlgBdlbi_k)^{+}$ is the presheaf of \emph{super 2-vector bundles} over $k$.

\item 
$\ssstwoVectBdl_{k} := (\sssAlgBdlbi_k)^{+}$ is the presheaf of \emph{semisimple super 2-vector bundles} over $k$.

\item 
$\stwoLineBdl_{k} := (\cssAlgBdlbi_k)^{+}$ is the presheaf of \emph{super 2-line bundles} over $k$.

\end{itemize}
Additionally, we define the following presheaves of bicategories of (ungraded) 2-vector bundles:
\begin{itemize}

\item 
$\twoVectBdl_{k} := (\AlgBdlbi_k)^{+}$ is the presheaf of \emph{2-vector bundles} over $k$.

\item 
$\sstwoVectBdl_{k} :=( \ssAlgBdlbi_k)^{+}$ is the presheaf of \emph{semisimple 2-vector bundles} over $k$.

\item 
$\twoLineBdl_{k} := (\csAlgBdlbi_k)^{+}$ is the presheaf of \emph{2-line bundles} over $k$.

\end{itemize}
\end{definition}

Below we spell out explicitly all details of the plus construction. Before that, we shall state its main purpose.

\begin{theorem} \label{ThmAllisStack}
All presheaves of bicategories defined in \cref{def:applplus} are 2-stacks.
\end{theorem}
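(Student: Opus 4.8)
The plan is to invoke the main theorem of Nikolaus-Schweigert~\cite{nikolaus2}, which asserts that the plus construction $\mathscr{F}\mapsto\mathscr{F}^{+}$ applied to any pre-2-stack yields a 2-stack. Since all four presheaves in \cref{def:applplus} are defined precisely as the plus constructions $\pretwoVectBdl_k^{+}$, $\pretwoLineBdl_k^{+}$, $\prestwoVectBdl_k^{+}$, $\prestwoLineBdl_k^{+}$, the only thing that remains to be checked is that the presheaves being stackified are genuinely pre-2-stacks in the technical sense required by \cite{nikolaus2}. First I would observe that this hypothesis has already been established in \cref{lem:pre2stack}, which states exactly that $\pretwoVectBdl_{k}$, $\prestwoVectBdl_{k}$, $\pretwoLineBdl_{k}$, and $\prestwoLineBdl_{k}$ are pre-2-stacks.

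Thus the proof is essentially a two-line citation: apply \cite[Thm.~3.3]{nikolaus2} to each of the four pre-2-stacks furnished by \cref{lem:pre2stack}. The one genuine verification is that the site on which we work (smooth manifolds with the open-cover, or surjective-submersion, topology) matches the hypotheses of the Nikolaus-Schweigert theorem, and that the plus construction in \cref{def:applplus} is literally the one from \cite{nikolaus2} rather than a variant; I would remark that this is the case by construction, since \cref{def:applplus} was set up precisely to match their formalism. There is also the implicit compatibility that the topology generating the covers in \cref{lem:pre2stack} (finite open covers, assembled into the surjective submersion $Y=\coprod_\alpha U_\alpha\to X$) is the same Grothendieck topology for which \cite[Thm.~3.3]{nikolaus2} guarantees that $\mathscr{F}^{+}$ is a stack.

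I do not expect any serious obstacle here, because all the real work is front-loaded into \cref{lem:pre2stack} and into the black-box theorem of \cite{nikolaus2}. The only conceivable subtlety—and the point I would state most carefully—is making sure the descent condition checked in \cref{lem:pre2stack} (namely that each Hom-category $\sBimodBdl_{\mathcal{A},\mathcal{B}}(-)$ forms a stack, i.e.\ that $R_{\mathcal{A},\mathcal{B}}(\pi)$ is an equivalence) is exactly the ``pre-2-stack'' hypothesis that \cite{nikolaus2} requires as input to their plus construction. Since \cref{lem:pre2stack} was phrased directly in those terms, the present statement follows immediately.

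\begin{proof}
This is an immediate consequence of \cite[Thm.~3.3]{nikolaus2}, which states that for any pre-2-stack $\mathscr{F}$ of bicategories the plus construction $\mathscr{F}^{+}$ is a 2-stack. By \cref{lem:pre2stack}, the presheaves $\pretwoVectBdl_{k}$, $\prestwoVectBdl_{k}$, $\pretwoLineBdl_{k}$, and $\prestwoLineBdl_{k}$ are all pre-2-stacks. Applying the plus construction to each of these, as in \cref{def:applplus}, therefore yields 2-stacks $\twoVectBdl_{k}$, $\stwoVectBdl_{k}$, $\twoLineBdl_{k}$, and $\stwoLineBdl_{k}$.
\end{proof}
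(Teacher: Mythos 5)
Your proof is correct and is exactly the paper's argument: the paper states this theorem immediately after \cref{lem:pre2stack} with the single remark that \cite[Thm.~3.3]{nikolaus2} then gives the result, i.e.\ the plus construction of a pre-2-stack is a 2-stack. Your additional care about matching the Grothendieck topology and the exact form of the plus construction is sensible but, as you anticipated, raises no actual issue here.
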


\begin{proof}
This follows from \cite[Thm.~3.3]{nikolaus2}, whose only condition is that the presheaves are pre-2-stacks, which we proved in \cite[Prop.~4.5.1]{Kristel2022}.
\end{proof}

For the convenience of the reader (and the authors) we shall now spell out all definitions in case of the bicategory $\stwoVectBdl_k(X)$ of super 2-vector bundles over $X$, on the basis of the description of the plus construction given in \cite{nikolaus2}. The other versions of 2-vector bundles can then easily be obtained as sub-bicategories: 
\begin{itemize}

\item 
$\ssstwoVectBdl_k(X)\subset \stwoVectBdl_k(X)$ is the sub-bicategory where all super algebra bundles are bundles of semisimple super algebras.

\item 
$\stwoLineBdl_k(X)\subset \stwoVectBdl_k(X)$ is the sub-bicategory where all super algebra bundles are bundles of central simple super algebras.

\item 
The corresponding ungraded versions consist of only ungraded algebras and ungraded bimodules.

\end{itemize}
This way, our explanations below apply to all four cases by employing the corresponding restrictions.

\bigskip

\begin{description}
\item[I.) Objects.] 
A \emph{super 2-vector bundle} over $X$ is a quadruple $\mathscr{V}=(\pi,\mathcal{A},\mathcal{M},\mu)$ consisting of:
\begin{itemize}

\item 
a surjective submersion $\pi:Y \to X$,

\item
a super algebra bundle $\mathcal{A}$ over $Y$,

\item
an invertible bimodule bundle $\mathcal{M}$ over $Y^{[2]}$ whose fibre $\mathcal{M}_{y_1,y_2}$ over a point $(y_1,y_2)\in Y^{[2]}$ is an $\mathcal{A}_{y_2}$-$\mathcal{A}_{y_1}$-bimodule, and

\item
an invertible even intertwiner $\mu$ of bimodule bundles over $Y^{[3]}$, which restricts over each point $(y_1,y_2,y_3)\in Y^{[3]}$ to an $\mathcal{A}_{y_3}$-$\mathcal{A}_{y_1}$-intertwiner 
\begin{equation*}
\mu_{y_1,y_2,y_3}: \mathcal{M}_{y_2,y_3} \otimes_{\mathcal{A}_{y_2}} \mathcal{M}_{y_1,y_2} \to \mathcal{M}_{y_1,y_3}\text{.}
\end{equation*}

\end{itemize}
This structure is subject to the condition that 
  $\mu$ is \emph{associative}, \ie the diagram
\begin{equation*}
\xymatrix@C=5em{\mathcal{M}_{y_3,y_4} \otimes_{\mathcal{A}_{y_3}} \mathcal{M}_{y_2,y_3} \otimes_{\mathcal{A}_{y_2}} \mathcal{M}_{y_1,y_2} \ar[d]_{1\otimes \mu_{y_1,y_2,y_3}} \ar[r]^-{\mu_{y_2,y_3,y_4} \otimes 1} &  \mathcal{M}_{y_2,y_4}  \otimes_{\mathcal{A}_{y_2}} \mathcal{M}_{y_1,y_2} \ar[d]^{\mu_{y_1,y_2,y_3}} \\ \mathcal{M}_{y_3,y_4}  \otimes_{\mathcal{A}_{y_3}} \mathcal{M}_{y_1,y_3} \ar[r]_-{\mu_{y_1,y_3,y_4}} & \mathcal{M}_{y_1,y_4} }
\end{equation*}
is commutative for all $(y_1,y_2,y_3,y_4)\in Y^{[4]}$.

We remark the following additional facts, which are easy to deduce: first, if $\Delta:Y \to Y^{[2]}$ denotes the diagonal map, then there exists a canonical invertible intertwiner $\Delta^{*}\mathcal{M} \cong \mathcal{A}$ of $\mathcal{A}$-$\mathcal{A}$-bimodule bundles over $Y$. Second, if $s: Y^{[2]} \to Y^{[2]}$ swaps the factors, then $s^{*}\mathcal{M}$ is inverse to $\mathcal{M}$. 

The simplicial structure of a super 2-vector bundle may be depicted as the following diagram:
\begin{equation*}
\mathscr{V}= \left ( 
\begin{aligned}
\xymatrix{
\mathcal{A} \ar[d] & \mathcal{M} \ar[d] & \mu \ar@{..}[d] & \footnotesize{\text{coherence}} \ar@{..}[d] \\ 
Y \ar[d] & Y^{[2]} \ar@<0.5ex>[l]^{\pr_2}\ar@<-0.5ex>[l]_{\pr_1} & Y^{[3]} \ar@<1ex>[l]\ar@<-1ex>[l]\ar[l] & Y^{[4]} \ar@<1.5ex>[l] \ar@<-1.5ex>[l] \ar@<0.5ex>[l] \ar@<-0.5ex>[l]\\ 
X
} 
\end{aligned}
\right )
\end{equation*}

\item[II.) 1-morphisms. ]
Let $\mathscr{V}_1=(\pi_1,\mathcal{A}_1,\mathcal{M}_1,\mu_1)$ and $\mathscr{V}_2=(\pi_2,\mathcal{A}_2,\mathcal{M}_2,\mu_2)$ be super 2-vector bundles over $X$. A \emph{1-morphism}
$\mathscr{P}:\mathscr{V}_1 \to \mathscr{V}_2$
is a triple $\mathscr{P}=(\zeta,\mathcal{P},\phi)$ consisting of:
\begin{itemize}

\item 
a surjective submersion $\zeta: Z \to Y_1 \times_X Y_2$,

\item
an implementing bimodule bundle $\mathcal{P}$ over $Z$, whose fibre $\mathcal{P}_z$ over a point $z\in Z$ with $\zeta(z)=:(y_1,y_2)$ is an $(\mathcal{A}_2)_{y_2}$-$(\mathcal{A}_1)_{y_1}$-bimodule, and 

\item
an invertible even intertwiner $\phi$ of bimodule bundles over $Z^{[2]}$, which restricts over a point  $(z,z')\in Z^{[2]}$ with $\zeta(z)=:(y_1,y_2)$ and $\zeta(z')=:(y_1',y_2')$ to an $(\mathcal{A}_2)_{y_2'}$-$(\mathcal{A}_1)_{y_1}$-intertwiner
\begin{equation*}
\phi_{z,z'}:  \mathcal{P}_{z'} \otimes_{(\mathcal{A}_1)_{y_1'}} (\mathcal{M}_1)_{y_1,y_1'} \to (\mathcal{M}_2)_{y_2,y_2'} \otimes_{(\mathcal{A}_2)_{y_2}} \mathcal{P}_{z} \text{.}
\end{equation*}
%\begin{equation*}
%\phi_{z,z'}: (\mathcal{M}_2)_{y_2,y_2'} \otimes_{(\mathcal{A}_2)_{y_2}} \mathcal{P}_{z} \to \mathcal{P}_{z'} \otimes_{(\mathcal{A}_1)_{y_1'}} (\mathcal{M}_1)_{y_1,y_1'}\text{.}
%\end{equation*}

\end{itemize}
This structure is subject to the condition that the intertwiner $\phi$ is a \quot{homomorphism} with respect to the intertwiners $\mu_1$ and $\mu_2$, \ie the diagram

\begin{equation}\label{diag:1Morphisms}
\begin{gathered}
\xymatrix@C=8em{
\mathcal{P}_{z^{\prime\prime}} \otimes_{(\mathcal{A}_1)_{y_1^{\prime\prime}}}(\mathcal{M}_1)_{y_1',y_1''} \otimes_{(\mathcal{A}_1)_{y_1'}} (\mathcal{M}_1)_{y_1,y_1'}  \ar[r]^-{ \id \otimes (\mu_1)_{y_1,y_1',y_1''}} \ar[d]_{\phi_{z^\prime,z^{\prime\prime}}\otimes \id} &
\mathcal{P}_{z^{\prime\prime}} \otimes_{(\mathcal{A}_1)_{y_1}} (\mathcal{M}_1)_{y_1,y_1''}   \ar[dd]^{\phi_{z,z''}} \\ 
 (\mathcal{M}_2)_{y_2',y_2''} \otimes_{(\mathcal{A}_2)_{y_2'}} \mathcal{P}_{z'} \otimes_{(\mathcal{A}_1)_{y_1'}} (\mathcal{M}_1)_{y_2,y_2'} \ar[d]_{\id \otimes \phi_{z,z'}} & \\ 
  (\mathcal{M}_2)_{y_2',y_2''} \otimes_{(\mathcal{A}_2)_{y_2'}} (\mathcal{M}_2)_{y_2,y_2'}  \otimes_{(\mathcal{A}_2)_{y_2}} \mathcal{P}_{z} \ar[r]_-{(\mu_2)_{y_2,y_2',y_2''} \otimes \id} & 
 (\mathcal{M}_2)_{y_2,y_2''} \otimes_{(\mathcal{A}_2)_{y_2}} \mathcal{P}_{z} 
 }
\end{gathered}
\end{equation}
%\begin{equation}\label{diag:1Morphisms}
%\begin{gathered}
%\xymatrix@C=6em{(\mathcal{M}_2)_{y_2',y_2''} \otimes_{(\mathcal{A}_2)_{y_2'}} (\mathcal{M}_2)_{y_2,y_2'} \otimes_{(\mathcal{A}_2)_{y_2}} \mathcal{P}_{z} \ar[r]^-{(\mu_2)_{y_2,y_2',y_2''}\otimes \id} \ar[d]_{\id \otimes \phi_{z,z'}} & (\mathcal{M}_2)_{y_2,y_2''} \otimes_{(\mathcal{A}_2)_{y_2}} \mathcal{P}_{z} \ar[dd]^{\phi_{z,z''}} \\ (\mathcal{M}_2)_{y_2',y_2''} \otimes_{(\mathcal{A}_2)_{y_2'}} \mathcal{P}_{z'} \otimes_{(\mathcal{A}_1)_{y_1'}} (\mathcal{M}_1)_{y_1,y_1'} \ar[d]_{\phi_{z',z''} \otimes  \id} & \\ \mathcal{P}_{z''} \otimes_{(\mathcal{A}_1)_{y_1''}} (\mathcal{M}_1)_{y_1',y_1''} \otimes_{(\mathcal{A}_1)_{y_1'}} (\mathcal{M}_1)_{y_1,y_1'} \ar[r]_-{\id \otimes (\mu_1)_{y_1,y_1',y_1''}} & \mathcal{P}_{z''} \otimes_{(\mathcal{A}_1)_{y_1''}} (\mathcal{M}_1)_{y_1,y_1''}}
%\end{gathered}
%\end{equation}
is commutative for all $(z,z',z'')\in Z^{[3]}$, where $\zeta(z'')=: (y_1'',y_2'')$. 
\item[III.) Identity 1-morphisms.]
The \emph{identity 1-morphism} of a super 2-vector bundle  $\mathscr{V}=(\pi,\mathcal{A},\mathcal{M},\mu)$  is the triple $\id_{\mathscr{V}} :=(\id_{Y^{[2]}},\mathcal{M},\phi_{\mu})$, where $Z:= Y^{[2]}$ and 
\begin{equation*}
(\phi_{\mu})_{y_1,y_2,y_1',y_2'} := \mu^{-1}_{y_1,y_1',y_2'}\circ \mu_{y_1,y_2,y_2'}\text{.}
\end{equation*}

\item[IV.) Composition of 1-morphisms.]
Consider three super 2-vector bundles and two 1-morphisms 
\begin{equation*}
\xymatrix{\mathscr{V}_1 \ar[r]^{\mathscr{P}_{12}} & \mathscr{V}_2 \ar[r]^{\mathscr{P}_{23}} & \mathscr{V}_3\text{,}}
\end{equation*}
for whose structure we use the same letters as in above definitions. The \emph{composition} $\mathscr{P}_{23} \circ \mathscr{P}_{12} := (\zeta, \mathscr{P},\phi)$ is defined as follows. We set $Z:= Z_{12} \times_{Y_2} Z_{23}$, and consider a point $z:=(z_{12},z_{23})\in Z$ with  $\zeta_{12}(z_{12})=:(y_1,y_2)$ and $\zeta_{23}(z_{23})=:(y_2,y_3)$. The surjective submersion  $\zeta:Z \to Y_1 \times_X Y_3$ is then $\zeta(z) := (y_1,y_3)$. The bimodule bundle $\mathscr{P}$ is defined so that its fibre   over the point   $z\in Z$ is the $(\mathcal{A}_3)_{y_3}$-$(\mathcal{A}_1)_{y_1}$-bimodule
\begin{equation*}
\mathcal{P}_{z}:=(\mathcal{P}_{23})_{z_{23}} \otimes_{(\mathcal{A}_2)_{y_2}} (\mathcal{P}_{12})_{z_{12}}\text{.}
\end{equation*}
Finally, the intertwiner $\phi$ is over a point $((z_{12},z_{23}),(z_{12}',z_{23}'))\in Z^{[2]}$defined by 
\begin{equation*}
\xymatrix@C=4em{
\hspace{-10em}\mathcal{P}_{z_{12}',z_{23}' } \otimes_{(\mathcal{A}_1)_{y_1}} \mathcal{M}_1=(\mathcal{P}_{23})_{z'_{23}} \otimes_{(\mathcal{A}_2)_{y_2'}} (\mathcal{P}_{12})_{z_{12}'} \otimes_{(\mathcal{A}_1)_{y_1}} \mathcal{M}_1\ar[d]^-{\id\circ( \phi_{12})_{z_{12},z_{12}'}} \\
(\mathcal{P}_{23})_{z_{23}'}\otimes_{(\mathcal{A}_2)_{y_2'}} \mathcal{M}_2 \otimes_{(\mathcal{A}_2)_{y_2}} (\mathcal{P}_{12})_{z_{12}}  \ar[d]^-{(\phi_{23})_{z_{23},z_{23}'} \circ \id}
\\
\mathcal{M}_3 \otimes_{(\mathcal{A}_3)_{y_3}} (\mathcal{P}_{23})_{z_{23}} \otimes_{(\mathcal{A}_2)_{y_2}} (\mathcal{P}_{12})_{z_{12}}=\mathcal{M}_3 \otimes_{(\mathcal{A}_3)_{y_3}} \mathcal{P}_{z_{12},z_{23}}\text{.}\hspace{-10em}  }
\end{equation*}

\item[V.) 2-morphisms.]
Consider two 1-morphisms between the same super 2-vector bundles,
\begin{equation*}
\xymatrix{\mathscr{V}_1 \ar@/^1pc/[r]^{\mathscr{P}}\ar@/_1pc/[r]_{\mathscr{P}'} & \mathscr{V}_2\text{.}}
\end{equation*}

For abbreviation, we set $Y_{12} := Y_1 \times_X Y_2$. 
Then, a \emph{2-morphism} $\mathscr{P} \Rightarrow \mathscr{P}'$ is represented by pairs $(\rho,\varphi)$ consisting of the following structure:
\begin{itemize}

\item 
a surjective submersion $\rho: W \to Z \times_{Y_{12}} Z'$, and

\item
an intertwiner $\varphi$ of bimodule bundles over $W$ that restricts over a point $w\in W$ with $\rho(w)=:(z,z')$ to an intertwiner
\begin{equation*}
\varphi_w: \mathcal{P}_{z} \to \mathcal{P}'_{z'}
\end{equation*}
of $(\mathcal{A}_1)_{y_1}$-$(\mathcal{A}_2)_{y_2}$-bimodules, where $\zeta(z)=\zeta'(z')=:(y_1,y_2)$. \end{itemize}
This structure is subject to the condition that $\varphi$ commutes with the intertwiners $\phi$ and $\phi'$, \ie the diagram
\begin{equation*}
\xymatrix@C=4em{
\mathcal{P}_{\tilde z} \otimes_{(\mathcal{A}_1)_{\tilde y_1}} (\mathcal{M}_1)_{y_1,\tilde y_1} \ar[d]_{\varphi_{\tilde w} \otimes \id} \ar[r]^-{\phi_{z,\tilde z}} & (\mathcal{M}_2)_{y_2,\tilde y_2} \otimes_{(\mathcal{A}_2)_{y_2}} \mathcal{P}_{z} \ar[d]^{\id \otimes \varphi_w}
 \\ 
 \mathcal{P}'_{\tilde z'} \otimes_{(\mathcal{A}_1)_{\tilde y_1'}} (\mathcal{M}_1)_{y_1',\tilde y_1'}  \ar[r]_-{\phi'_{z',\tilde z'}} &(\mathcal{M}_2)_{y_2',\tilde y_2'} \otimes_{(\mathcal{A}_2)_{y'_2}} \mathcal{P}'_{z'}
 }
\end{equation*}
is commutative for all $(w,\tilde w) \in W \times_X W$, where $\rho(w)=(z,z')$, $\rho(\tilde w)=(\tilde z, \tilde z')$.
Two pairs $(\rho,\varphi)$ and $(\rho',\varphi')$ have to be identified if the pullbacks of $\varphi$ and $\varphi'$ coincide over $W \times_{Z \times_{Y_{12}} Z'} W'$. 
Since in that sense the choice of $\rho$ is unimportant, we usually denote the 2-morphism by just $\varphi$.

\item[VI.) Vertical composition of 2-morphisms.]

Next, consider three 1-morphisms between the same super 2-vector bundles, and two 2-morphisms:
\begin{equation*}
\xymatrix@C=5em{\mathscr{V}_1 \ar[r]|*+{\mathscr{P}'}="2" \ar@/^2.5pc/[r]^*+{\mathscr{P}}="1"\ar@/_2.5pc/[r]_*+{\mathscr{P}''}="3" & \mathscr{V}_2\text{.} \ar@{=>}"1";"2"|-{\varphi} \ar@{=>}"2";"3"|-{\varphi'}}
\end{equation*}
We suppose that the 1-morphisms $\mathscr{P}$, $\mathscr{P}'$, and $\mathscr{P}''$ come with surjective submersions $\zeta:Z \to Y_{12}$, $\zeta': Z'\to Y_{12}$, and $\zeta'': Z'' \to Y_{12}$, respectively, and that the 2-morphisms $\varphi$ and $\varphi'$ come with surjective submersions $\rho: W \to Z \times_{Y_{12}} Z'$ and $\rho': W' \to Z' \times_{Y_{12}} Z''$, respectively. 
We consider $W \times_{Z'} W'$ equipped with the surjective submersion $(w,w') \mapsto (z,z'')$, where $(z,z'):=\rho(w)$ and $(z',z'') := \rho'(w')$.  Then the vertical composition $\varphi' \bullet \varphi: \mathscr{P} \Rightarrow \mathscr{P}''$ is the intertwiner over $W \times_{Z'} W'$ defined fibrewise over $(w,w')$ by
\begin{equation*}
\xymatrix{\mathcal{P}_{z} \ar[r]^{\varphi_{w}} & \mathcal{P}'_{z'} \ar[r]^{\varphi'_{w'}} & \mathcal{P}''_{z''}}\text{.}
\end{equation*}

\item[VII.) Identity 2-morphisms.]
The identity 2-morphism $\id_{\mathscr{P}}$ of a 1-morphism $\mathscr{P}=(\zeta,\mathcal{P},\phi)$ is obtained by restricting the intertwiner $\phi$ to $W:=Z \times_{Y_{12}} Z \subset Z \times_X Z$. Over $(z_1,z_2)$ with $\zeta(z_1)=\zeta(z_2)=(y_1,y_2)$, this becomes an intertwiner, 
\begin{equation*}
\mathcal{P}_{z_1}\otimes_{(\mathcal{A}_1)_{y_1}} (\mathcal{M}_1)_{y_1,y_1} \to (\mathcal{M}_2)_{y_2,y_2} \otimes_{(\mathcal{A}_2)_{y_2}} \mathcal{P}_{z_2} \text{.}
\end{equation*}
Under the canonical invertible intertwiners $(\mathcal{A}_1)_{y_1} \cong (\mathcal{M}_1)_{y_1,y_1}$ and $(\mathcal{A}_2)_{y_2} \cong (\mathcal{M}_1)_{y_2,y_2}$, this yields  an intertwiner $\varphi$ with $\varphi_{z_1,z_2}: \mathcal{P}_{z_1} \to \mathcal{P}_{z_2}$, and the pair $(\id_W,\varphi)$ is the identity 2-morphism of $\mathscr{P}$.
\begin{comment}
The following shows that this is really an identity 2-morphism: observe that the diagram
\begin{equation*}
\xymatrix{\mathcal{P}_{z} \ar[r]^{\varphi_{z, z'}} \ar@/^2.5pc/[rr]^{\varphi_{z, z''}} & \mathcal{P}_{z'} \ar[r]^{\varphi_{z', z''}} & \mathcal{P}_{z''}}
\end{equation*}
 is commutative because it is the restriction on diagram \cref{diag:1Morphisms} to $Z \times_{Y_{12}} Z \times_{Y_{12}} Z\subset Z^{[3]}$, taking into account that the occurrences of $\mu_1$ and $\mu_2$ there disappear under the canonical intertwiners $\mathcal{A}_i\cong \Delta^{*}\mathcal{M}_i$.
\end{comment}

\item[VIII.) Horizontal composition of 2-morphisms.]

Consider the following super 2-vector bundles, 1-morphisms, and 2-morphisms,
\begin{equation*}
\xymatrix@C=6em{\mathscr{V}_1 \ar@/^1.5pc/[r]^-{\mathscr{P}_{12}}="1"\ar@/_1.5pc/[r]_{\mathscr{P}_{12}'}="2" & \mathscr{V}_2 \ar@/^1.5pc/[r]^-{\mathscr{P}_{23}}="3" \ar@/_1.5pc/[r]_-{\mathcal{P}_{23}'}="4" &\mathscr{V}_3\text{,} \ar@{=>}"1";"2"|{\varphi_{12}}\ar@{=>}"3";"4"|{\varphi_{23}}}
\end{equation*}
with all structure labelled as above. The horizontal composition of $\varphi_{12}$ and $\varphi_{23}$, denoted $\varphi_{23}\circ \varphi_{12}$, is defined by the smooth manifold $W:=W_{12} \times_{Y_2} W_{23}$ equipped with the surjective submersion $(w_{12},w_{23})\mapsto ((z_{12},z_{23}),(z_{12}',z_{23}'))$, where $(z_{12},z_{12}'):= \rho_{12}(w_{12})$ and $(z_{23},z_{23}'):=\rho_{23}(w_{23})$, and the intertwiner of bimodule bundles over $W$, given in the fibre over $(w_{12},w_{23})\in W$ by
\begin{equation*}
\xymatrix{(\varphi_{23})_{w_{23}} \otimes (\varphi_{12})_{w_{12}}:(\mathcal{P}_{23})_{z_{23}} \otimes_{(\mathcal{A}_2)_{y_2}} (\mathcal{P}_{12})_{z_{12}} \to (\mathcal{P}_{23}')_{z_{23}'}\otimes_{(\mathcal{A}_2)_{y_2}} (\mathcal{P}_{12}')_{z_{12}'}\text{.}}
\end{equation*}   

\end{description}
This completes the explicit description of super 2-vector bundles.
We close this section with four useful technical results about super 2-vector bundles, which follow directly from the plus construction \cite{nikolaus2}, and are well known, e.g.,  for bundle gerbes.

\begin{lemma}\
\label{LemmaInvertibility}
\begin{enumerate}[ (a)]

\item \label{LemmaInvertibilityA}
A 1-morphism $\mathscr{P}=(\zeta,\mathcal{P},\phi)$ is invertible if and only if its bimodule bundle $\mathcal{P}$ is invertible. 

\item \label{LemmaInvertibilityA2}
A 1-morphism  $\mathscr{P}=(\zeta,\mathcal{P},\phi)$ has a right (left) adjoint if and only if its bimodule bundle $\mathcal{P}$ has a right (left) adjoint. 

\item \label{LemmaInvertibilityB}
A 2-morphism $\varphi$ is invertible if and only if its intertwiner  $\varphi$ is  invertible. 

\end{enumerate}  
\end{lemma}

\begin{remark}
Bimodule bundles are invertible (have adjoints) if and only if they are fibrewise invertible (have fibrewise adjoints) \cite[Lem.~4.2.8]{Kristel2022}.
Thus, the conditions in \cref{LemmaInvertibility} can be checked fibrewise. 
\end{remark}

\begin{remark}
\cref{LemmaInvertibility} also has the following consequence. We may consider as in \cref{sec:algebrabundles} the 
 sub-presheaf $\sAlgBdlbigrpd k-$ where only invertible bimodule bundles and invertible intertwiners are admitted. Applying the plus construction, we see by \cref{LemmaInvertibilityB} that all resulting 2-morphisms are invertible, and we see by \cref{LemmaInvertibilityA} that all resulting 1-morphisms are invertible. Thus, we have
\begin{equation*}
\sAlgBdlgrpd k-^{+} = \stwoVectBdlgrpd k-
\quand
\AlgBdlgrpd k-^{+} = \twoVectBdlgrpd k-\text{.}
\end{equation*}
In other words, it does not matter if we truncate to 2-groupoids before or after 2-stackification. 
\end{remark}

Our second result shows that the surjective submersions of 1-morphisms and 2-morphisms can be assumed to be identities.

\begin{lemma}\
\label{lem:canonicalrefinements}
\begin{enumerate}[ (a)]

\item
\label{lem:canonicalrefinements:a}
Every 1-morphism is  2-isomorphic to one with $Z=Y_1 \times_X Y_2$ and $\zeta=\id_{Z}$.

\item 
\label{lem:canonicalrefinements:b}
Every 2-morphism can be represented by a pair $(\rho,\varphi)$ with $W= Z \times_{Y_{12}} Z'$ and $\rho=\id_W$.

\end{enumerate}
\end{lemma}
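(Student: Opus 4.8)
The plan is to deduce both parts from the pre-2-stack property of bimodule bundles established in \cref{lem:pre2stack}, \ie from the fact that bimodule bundles and their intertwiners satisfy descent along surjective submersions. The key observation is that, although the structure intertwiner $\phi$ of a 1-morphism and the intertwiner $\varphi$ of a 2-morphism are twisted by the module bundles $\mathcal{M}_1$ and $\mathcal{M}_2$, this twisting disappears the moment one restricts to the fibre product over $Y_{12}:=Y_1\times_X Y_2$ instead of over $X$: there the relevant fibres of $\mathcal{M}_1$ and $\mathcal{M}_2$ sit over the diagonals of $Y_1^{[2]}$ and $Y_2^{[2]}$, and the canonical diagonal isomorphisms $\Delta^{*}\mathcal{M}_i\cong\mathcal{A}_i$ identify them with identity bimodules. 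After this identification the twisted gluing condition becomes an honest descent condition, to which the stack property applies directly.

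For part~(a), I would start with a 1-morphism $\mathscr{P}=(\zeta,\mathcal{P},\phi)$, where $\zeta:Z\to Y_{12}$ is an arbitrary surjective submersion and $\phi$ lives over $Z^{[2]}=Z\times_X Z$, and restrict $\phi$ to the sublocus $Z\times_{Y_{12}}Z\subseteq Z^{[2]}$. Over a point $(z,z')$ of this locus one has $\zeta(z)=\zeta(z')$, hence $y_1=y_1'$ and $y_2=y_2'$, so that $(\mathcal{M}_2)_{y_2,y_2}\cong(\mathcal{A}_2)_{y_2}$ and $(\mathcal{M}_1)_{y_1,y_1}\cong(\mathcal{A}_1)_{y_1}$; through these, $\phi_{z,z'}$ becomes an invertible intertwiner $\mathcal{P}_z\to\mathcal{P}_{z'}$. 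Restricting the homomorphism condition \eqref{diag:1Morphisms} to $Z\times_{Y_{12}}Z\times_{Y_{12}}Z$ shows that these isomorphisms form a descent datum for $\mathcal{P}$ relative to $\zeta$. By the essential-surjectivity part of \cref{lem:pre2stack} this datum is effective: there is a bimodule bundle $\tilde{\mathcal{P}}$ over $Y_{12}$ together with an invertible intertwiner $\gamma:\zeta^{*}\tilde{\mathcal{P}}\to\mathcal{P}$ carrying the canonical descent datum of $\zeta^{*}\tilde{\mathcal{P}}$ to $\phi|_{Z\times_{Y_{12}}Z}$. Transporting $\phi$ through $\gamma$ defines an intertwiner $\tilde\phi$ over $Y_{12}^{[2]}$; the compatibility of $\gamma$ with the descent data guarantees that $\tilde\phi$ does not depend on the chosen $\zeta$-preimages and again satisfies \eqref{diag:1Morphisms}, so that $\tilde{\mathscr{P}}:=(\id_{Y_{12}},\tilde{\mathcal{P}},\tilde\phi)$ is a 1-morphism of the required form. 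Finally $\gamma$, read as an intertwiner over $Z\cong Y_{12}\times_{Y_{12}}Z$, represents a 2-morphism $\tilde{\mathscr{P}}\Rightarrow\mathscr{P}$; it is fibrewise invertible and hence a 2-isomorphism by \cref{LemmaInvertibilityB}.

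For part~(b), I would take a 2-morphism represented by a pair $(\rho,\varphi)$ with $\rho:W\to Z\times_{Y_{12}}Z'$, and restrict the compatibility diagram of item~V to the locus $W\times_{Z\times_{Y_{12}}Z'}W\subseteq W\times_X W$, where $\rho(w)=\rho(\tilde w)$. There $z=\tilde z$ and $z'=\tilde z'$, so once more the fibres of $\mathcal{M}_1$ and $\mathcal{M}_2$ become diagonal and the diagram collapses to the equality $\varphi_w=\varphi_{\tilde w}$. Thus the two pullbacks of $\varphi$ to $W\times_{Z\times_{Y_{12}}Z'}W$ agree, and since intertwiners of bimodule bundles form a sheaf (the fully faithful part of \cref{lem:pre2stack}), $\varphi$ descends to an intertwiner $\tilde\varphi$ over $Z\times_{Y_{12}}Z'$ with $\rho^{*}\tilde\varphi=\varphi$. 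Descending the compatibility condition along $\rho$ on both factors shows that $\tilde\varphi$ itself satisfies the condition of item~V, so $(\id_{Z\times_{Y_{12}}Z'},\tilde\varphi)$ is a legitimate representative, and since $\rho^{*}\tilde\varphi=\varphi$ it represents the same 2-morphism under the identification rule of item~V.

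I expect the one genuinely delicate point to be bookkeeping rather than ideas: one must track the canonical identifications $\Delta^{*}\mathcal{M}_i\cong\mathcal{A}_i$ together with the unitors and associators of the relative tensor product, and verify that after these identifications the restricted $\phi$ satisfies the descent cocycle identity exactly, and that the transported $\tilde\phi$ of part~(a) really satisfies \eqref{diag:1Morphisms}. This is the step where the twisting by $\mathcal{M}_1$ and $\mathcal{M}_2$ must be shown to be harmless; once it is, both statements are immediate consequences of the stack property of bimodule bundles.
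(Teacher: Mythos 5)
Your proposal is correct and takes essentially the same route as the paper: the paper states this lemma without a written proof, attributing it to the plus construction of Nikolaus--Schweigert and remarking that it \quot{makes use of the fact that $\prestwoVectBdl_k$ is a pre-2-stack}, and your argument --- restricting $\phi$ (resp.\ the compatibility diagram for $\varphi$) to $Z\times_{Y_{12}}Z$ (resp.\ $W\times_{Z\times_{Y_{12}}Z'}W$), using the canonical identifications $\Delta^{*}\mathcal{M}_i\cong\mathcal{A}_i$ to untwist, and then invoking the descent equivalence of \cref{lem:pre2stack} together with \cref{LemmaInvertibilityB} --- is precisely the standard fleshing-out of that remark. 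The bookkeeping you flag (well-definedness of $\tilde\phi$ and its cocycle condition) does go through, by combining the cocycle condition \eqref{diag:1Morphisms} over triples in $Z^{[3]}$ with the compatibility of $\gamma$ with the descent data.
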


\Cref{lem:canonicalrefinements} makes use of the fact that $\sAlgBdlbi_k$ is a pre-2-stack. The reason that our definitions above allow for general $Z$ and $W$ is that all kinds of compositions result in such more general choices, and in practice it is often easier to keep those instead of performing descent. 
Our third result allows to refine the surjective submersion of a super 2-vector bundles without changing its isomorphism class.

\begin{lemma}
\label{lem:refinementofss}
If $\mathscr{V}=(\pi,\mathcal{A},\mathcal{M},\mu)$ is a super 2-vector bundle with surjective submersion $\pi:Y \to X$, and $\rho:Y' \to Y$ is a smooth map such that $\pi':= \pi\circ\rho$ is again a surjective submersion, then $\mathscr{V}^{\rho}:=(\pi',\rho^{*}\mathcal{A},(\rho^{[2]})^{*}\mathcal{M},(\rho^{[3]})^{*}\mu)$ is another super 2-vector bundle, and there exists a canonical isomorphism $\mathscr{V}^{\rho}\cong \mathscr{V}$. 
\end{lemma}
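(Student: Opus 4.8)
The plan is to verify the two assertions separately: first that the pulled-back data again satisfy the axioms of a super 2-vector bundle, and then to exhibit an explicit invertible 1-morphism $\mathscr{V}^{\rho}\to\mathscr{V}$.

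For the first assertion I would note that $\pi'=\pi\circ\rho$ is a surjective submersion by hypothesis, and that $\rho$ induces smooth maps $\rho^{[k]}:Y'^{[k]}\to Y^{[k]}$ by applying $\rho$ componentwise; these are well defined because the components of a point of $Y'^{[k]}$ share a common image in $X$, which is preserved by $\rho$. Pulling back an algebra bundle along a smooth map yields an algebra bundle (local trivializations pull back to local trivializations), so $\rho^{*}\mathcal{A}$ is a super algebra bundle over $Y'$ with fibre $\mathcal{A}_{\rho(y')}$ over $y'$. Likewise $(\rho^{[2]})^{*}\mathcal{M}$ is a bimodule bundle over $Y'^{[2]}$ whose fibre over $(y_1',y_2')$ is the $\mathcal{A}_{\rho(y_2')}$-$\mathcal{A}_{\rho(y_1')}$-bimodule $\mathcal{M}_{\rho(y_1'),\rho(y_2')}$, which is exactly a $(\rho^{*}\mathcal{A})_{y_2'}$-$(\rho^{*}\mathcal{A})_{y_1'}$-bimodule as required. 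Invertibility of $(\rho^{[2]})^{*}\mathcal{M}$ and of $(\rho^{[3]})^{*}\mu$ can be checked fibrewise by \cref{lem:fibrewiseinvertibility}, where they agree with $\mathcal{M}$ and $\mu$; and the associativity condition for $(\rho^{[3]})^{*}\mu$ over $Y'^{[4]}$ is the pullback along $\rho^{[4]}$ of the associativity condition for $\mu$ over $Y^{[4]}$, hence holds. This shows $\mathscr{V}^{\rho}$ is a super 2-vector bundle.

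For the canonical isomorphism I would construct a 1-morphism $\mathscr{P}=(\zeta,\mathcal{P},\phi):\mathscr{V}^{\rho}\to\mathscr{V}$ mimicking the construction of the identity 1-morphism. Take $Z:=Y'\times_X Y$ with $\zeta:=\id_Z$, and let $\mathcal{P}$ be the pullback of $\mathcal{M}$ along the smooth map $Z\to Y^{[2]}$, $(y_1,y_2)\mapsto(\rho(y_1),y_2)$; thus $\mathcal{P}_{(y_1,y_2)}=\mathcal{M}_{\rho(y_1),y_2}$, an $\mathcal{A}_{y_2}$-$\mathcal{A}_{\rho(y_1)}$-bimodule, exactly as demanded for a 1-morphism $\mathscr{V}^{\rho}\to\mathscr{V}$. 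Over a point $(z,z')\in Z^{[2]}$ with $z=(y_1,y_2)$ and $z'=(y_1',y_2')$, define
\begin{equation*}
\phi_{z,z'}:=\mu^{-1}_{\rho(y_1),\rho(y_1'),y_2'}\circ\mu_{\rho(y_1),y_2,y_2'},
\end{equation*}
which is an invertible even intertwiner $\mathcal{M}_{y_2,y_2'}\otimes_{\mathcal{A}_{y_2}}\mathcal{M}_{\rho(y_1),y_2}\to\mathcal{M}_{\rho(y_1'),y_2'}\otimes_{\mathcal{A}_{\rho(y_1')}}\mathcal{M}_{\rho(y_1),\rho(y_1')}$ of precisely the required type, using that $(\mathcal{M}_1)_{y_1,y_1'}=(\rho^{[2]})^{*}\mathcal{M}_{y_1,y_1'}=\mathcal{M}_{\rho(y_1),\rho(y_1')}$.

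It remains to check that $\phi$ satisfies the homomorphism condition, i.e. that diagram \eqref{diag:1Morphisms} commutes over $Z^{[3]}$, and that $\mathscr{P}$ is invertible. The latter is immediate: $\mathcal{P}$ is fibrewise invertible since $\mathcal{M}$ is, hence invertible as a bimodule bundle by \cref{lem:fibrewiseinvertibility}, so $\mathscr{P}$ is invertible by \cref{LemmaInvertibility}. The commutativity of \eqref{diag:1Morphisms} is where the real work lies: after expanding the two occurrences of $\phi$ and of the $\mu_i$ in that diagram, every composite reduces to a product of structure intertwiners $\mu$ evaluated on fibres of $\mathcal{M}$, and the required identity is exactly the associativity (cocycle) condition for $\mu$ over $Y^{[4]}$, pulled back along the appropriate map into $Y^{[4]}$. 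I expect this diagram chase to be the main obstacle, though it is entirely parallel to the verification that the stated formula really defines an identity 1-morphism, and it presents no genuinely new difficulty. Since $\mathscr{P}$ is built directly from $\mu$ without auxiliary choices, it furnishes the desired canonical isomorphism $\mathscr{V}^{\rho}\cong\mathscr{V}$.
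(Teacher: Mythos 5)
Your proposal is correct and takes essentially the same route as the paper: the paper treats this lemma as an instance of the plus construction and remarks that the canonical isomorphism is best seen via the framing of \cref{sec:framing}, and the 1-morphism produced there from the evident refinement $(\rho,\id,\id):\mathscr{V}^{\rho}\to\mathscr{V}$ is precisely your $(\id_Z,\mathcal{P},\phi)$ with $Z=Y'\times_X Y$, $\mathcal{P}_{(y_1',y_2)}=\mathcal{M}_{\rho(y_1'),y_2}$ and $\phi=\mu^{-1}\circ\mu$. Your invertibility check via \cref{lem:fibrewiseinvertibility,LemmaInvertibility} and the reduction of the homomorphism diagram to the pulled-back cocycle condition for $\mu$ over $Y^{[4]}$ are exactly the verifications the paper leaves implicit.
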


The canonical isomorphism $\mathscr{V}^{\rho} \to \mathscr{V}$ is best viewed in terms of the framing of the bicategory of super 2-vector bundles that we introduce below in \cref{sec:framing}. A consequence is the following result, referring to the triviality of super algebra bundles and bimodule bundles defined in \cref{re:typicalfibre,ExampleTwistedModuleIso}.
\begin{proposition}
\label{prop:trivialbundles}
Every super 2-vector bundle is isomorphic to one with trivial super algebra bundle  and trivial bimodule bundle.
\end{proposition}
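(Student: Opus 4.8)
The plan is to apply \cref{lem:refinementofss} repeatedly to refine the surjective submersion $\pi\colon Y\to X$ until all of the bundle data lives over contractible pieces, where local triviality forces triviality, and then to transport the resulting trivializations into an honest isomorphism of super 2-vector bundles. The key insight is that trivializing $\mathcal{A}$ over $Y$ by itself says nothing about $\mathcal{M}$ over $Y^{[2]}$; the remedy is to pass all the way to a good open cover of the base $X$, which exhibits every fibre product $Y^{[n]}$ simultaneously as a disjoint union of contractible manifolds.

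First I would replace $\pi$ by a good cover. As $\pi$ is a surjective submersion it admits local sections, so there is an open cover $\{U_i\}$ of $X$ with smooth maps $s_i\colon U_i\to Y$ satisfying $\pi\circ s_i=\id_{U_i}$. Refining $\{U_i\}$ to a good open cover $\{V_a\}$ (all nonempty finite intersections diffeomorphic to $\R^n$) and choosing for each $a$ an index $i(a)$ with $V_a\subseteq U_{i(a)}$, I set $Y'\df\coprod_a V_a$ and let $\rho\colon Y'\to Y$ be $s_{i(a)}|_{V_a}$ on the $a$-th summand. Then $\pi'\df\pi\circ\rho$ is the good-cover surjective submersion, so \cref{lem:refinementofss} gives $\mathscr{V}^{\rho}\cong\mathscr{V}$, and I continue with $\mathscr{V}^{\rho}$.

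Each fibre product is now ${Y'}^{[n]}=\coprod V_{a_0}\cap\dots\cap V_{a_{n-1}}$, a disjoint union of contractible manifolds and empty sets. A locally trivial super algebra bundle over a contractible manifold is trivial---its structure group is a closed, hence Lie, subgroup of $\GL$, and bundles with Lie structure group are trivial over contractible bases---and the same holds for bimodule bundles once the ambient algebra bundles are trivialized. Thus $\rho^{*}\mathcal{A}$ and $(\rho^{[2]})^{*}\mathcal{M}$ are trivializable, and it remains to upgrade this to the statement of the proposition by transport of structure. Fixing an algebra-bundle isomorphism $\psi\colon\rho^{*}\mathcal{A}\xrightarrow{\ \sim\ }\mathcal{A}^{\mathrm{tr}}$ onto a product bundle and invoking the framing \cref{lem:framing}, the isomorphism $\psi$ carries $\mathscr{V}^{\rho}$ to an isomorphic super 2-vector bundle $(\pi',\mathcal{A}^{\mathrm{tr}},\tilde{\mathcal{M}},\tilde\mu)$ with literally trivial algebra bundle, where $\tilde{\mathcal{M}}=\pss{\pr_2^{*}\psi^{-1}}\mathcal{M}_{\pr_1^{*}\psi^{-1}}$ carries the transported actions. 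A further bimodule-bundle trivialization $\chi\colon\tilde{\mathcal{M}}\xrightarrow{\ \sim\ }\mathcal{M}^{\mathrm{tr}}$ onto a product bundle, with $\mu^{\mathrm{tr}}\df\chi\circ\tilde\mu\circ(\chi^{-1}\otimes\chi^{-1})$, then produces $(\pi',\mathcal{A}^{\mathrm{tr}},\mathcal{M}^{\mathrm{tr}},\mu^{\mathrm{tr}})$ with both bundles trivial, isomorphic to $\mathscr{V}^{\rho}$, hence to $\mathscr{V}$, via the 1-morphism built from $\chi$ (recall that the identity 1-morphism already uses the bimodule bundle itself).

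The main obstacle is the coherence in this last step: one must check that $\tilde\mu$ and $\mu^{\mathrm{tr}}$ still satisfy the associativity condition over ${Y'}^{[4]}$ and that the induced 1-morphism data obey the homomorphism condition \eqref{diag:1Morphisms}. This is in fact automatic, since every replacement is a transport along an invertible intertwiner, so each coherence diagram is the image of an already-commuting diagram under an isomorphism, with \cref{lem:framing} supplying the compositor identities. The only genuine inputs beyond this bookkeeping are the existence of local sections of surjective submersions and the triviality of locally trivial bundles over contractible bases.
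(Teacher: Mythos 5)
Your proof is correct in substance, and it replaces the paper's key step by a genuinely different mechanism, so a comparison is worthwhile. Both arguments begin identically: refine $\pi\colon Y\to X$ by a disjoint union of open subsets of $X$ equipped with local sections into $Y$, and invoke \cref{lem:refinementofss}. The divergence is in how one trivializes $\mathcal{M}$ over the new $Y^{[2]}$. The paper uses no contractibility at all: it takes an open cover $\{U_\alpha\}$ of $Y^{[2]}$ over which $\mathcal{M}$ trivializes (such a cover exists by the very definition of a bimodule bundle) and refines this \quot{hypercover of height 1} by an ordinary cover of $X$, choosing sections $\sigma_i$ on $W_i$ so that each $\sigma_i(W_i)\times_X\sigma_j(W_j)$ lies inside some $U_\alpha$; thus only \emph{local} triviality of $\mathcal{M}$ enters, and the refinement lemma for hypercovers is the sole topological input. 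You instead pass to a good cover, so that every component of ${Y'}^{[2]}$ is contractible, and invoke the \emph{global} statement that algebra and bimodule bundles over contractible manifolds are trivializable --- a strictly stronger input requiring the structure-group argument you sketch, but one that the paper itself asserts and uses later (in the proof of \cref{lem:presheavesoncontractible}), so it is within the paper's toolkit. What your route buys: a single refinement handles $\mathcal{A}$ and $\mathcal{M}$ simultaneously, and your explicit transport-of-structure step cleanly converts \quot{trivializable} into \quot{trivial}, a passage the paper leaves implicit. What the paper's route buys: independence from good covers and from the global triviality theorem. One caveat, which applies to both proofs equally: trivializations of bimodule bundles are, by definition, intertwiners along automorphism-valued re-trivializations of the ambient algebra bundles, not necessarily along the identity; so your trivialization $\chi$ of $\tilde{\mathcal{M}}$ relative to the already-fixed product bundle $\mathcal{A}^{\mathrm{tr}}$ may a priori exist only in this weaker sense, in which case the transported bundle has the form ${Y'}^{[2]}\times A_\varphi$ for a possibly non-constant map $\varphi$ into $\Aut(A)$ rather than being a product bimodule bundle on the nose. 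Since the paper's own proof rests on the same reading of \quot{trivializes}, this is not a gap relative to the paper's standard, but it is exactly the point where full precision requires the extra control that \cref{lem:constantautos} later provides under the hypothesis $\mathrm{HH}^1(A)=0$.
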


\begin{proof}
We first show that the super algebra bundle can assumed to be trivial. 
Let $\mathscr{V}=(\pi,\mathcal{A},\mathcal{M},\mu)$ be a  super 2-vector bundle. Let $\{U_{\alpha}\}$ be an open cover of $X$ that admits local sections $\sigma_\alpha: U_{\alpha} \to Y$ such that $\sigma_{\alpha}^{\mathcal{*}}\mathcal{A}\cong U_{\alpha} \times A_{\alpha}$. 
Let $Y'$ be the disjoint union of the open sets $U_{\alpha}$, let $\pi':Y'\to X$ be the canonical projection, and let $\sigma:Y' \to Y$ be given by $\sigma|_{U_{\alpha}}:=\sigma_{\alpha}$. 
Then, we have $\pi'=\pi\circ \sigma$, and by \cref{lem:refinementofss}, $\mathscr{V}$ is isomorphic to a super 2-vector bundle $\mathscr{V}^{\sigma}$ with trivial algebra bundle. 
\\
Now let  $\mathscr{V}=(\pi,\mathcal{A},\mathcal{M},\mu)$ be a  super 2-vector bundle with trivial super algebra bundle $\mathcal{A}$. 
Let $\{U_{\alpha}\}$ be an open cover of $Y^{[2]}$ with connected sets $U_\alpha$ over which the bimodule bundle $\mathcal{M}$ trivializes, i.e., $\mathcal{M}|_{U_{\alpha}}\cong \lli{{\varphi_{\alpha}}}\sheaf{M_{\alpha}}_{\,\psi_{\alpha}}$, where $M_{\alpha}$ is an $A_{i}$-$A_{j}$-bimodule,  $\varphi_{\alpha}: U_{\alpha} \to \Aut(A_i)$, and $\psi_{\alpha}:U_{\alpha} \to \Aut(A_j)$, for $i$ labelling the connecting component of $Y$ containing $\pr_1(U_{\alpha})$, and $j$ labelling the one containing $\pr_2(U_{\alpha})$. 
On paracompact spaces such as manifolds, such a {hypercover of height 1} can be refined by an ordinary cover; \ie there exists an open cover $\{W_{i}\}$ of $X$ with smooth sections $\sigma_{i}:W_{i} \to Y$, such that for each non-trivial double intersection $W_i \cap W_j$ there exists an index $\alpha$ with $\sigma_i(W_i)\times_{X}  \sigma_j(W_j) \subset  U_{\alpha}$. 
Proceeding as above, and using \cref{lem:refinementofss}, $\mathscr{V}$ is isomorphic to a super 2-vector bundle $\mathscr{V}^{\sigma}$ with trivial super algebra bundle and trivial bimodule bundle. 
\end{proof}

\section{Properties of 2-vector bundles}

\label{sec:properties}

In this section we discuss a number of structures and features of 2-vector bundles. We will mostly stick to the 2-stack $\stwoVectBdl_k$ and only consider its sub-2-stacks $\ssstwoVectBdl_k$ and $\stwoLineBdl_k$ when necessary or interesting. 

\subsection{The Morita class of a 2-vector bundle}

\begin{definition}[Morita class]
Let $\mathscr{V}=(\pi,\mathcal{A},\mathcal{M},\mu)$ be a super 2-vector bundle over $X$, and let $A$ be a super algebra. We say that $\mathscr{V}$ \emph{is of Morita class $A$} if around every point $y\in Y$ there exists a local trivialization $\mathcal{A}|_U \cong U \times A_U$ with a super algebra $A_U$ that is Morita equivalent to $A$.  
\end{definition}

In \cite[Def.~4.2.9]{Kristel2022} we have also introduced the notion of a Morita class for super algebra bundles; in that sense, a super 2-vector bundle is of Morita class $A$ if and only if its super algebra bundle $\mathcal{A}$ is of Morita class $A$.
The following result describes the behaviour of the Morita class of a super 2-vector bundle. It exhibits the Morita class as a generalization of the rank of an ordinary vector bundle. 

\begin{lemma}
\label{lem:Moritaclass2vect}
Let $\mathscr{V}$ be a super 2-vector bundle.
\begin{enumerate}[(a)]

\item
\label{lem:Moritaclass2vect:a}
If $X$ is connected, then there exists a super algebra $A$ such that $\mathscr{V}$ is of Morita class $A$. 
In fact, $\mathscr{V}$ is of Morita class $\mathcal{A}_y$, where $\mathcal{A}_y$ is the fibre of $\mathcal{A}$ over any point $y\in Y$.

\item 
Let $A$ and $B$ be super algebras, and suppose $\mathscr{V}$ is of Morita class $A$. 
Then, $\mathscr{V}$ is  of Morita class $B$ if and only if $A$ and $B$ are Morita equivalent. 

\item
Let $\mathscr{V}_1$, $\mathscr{V}_2$ be two super 2-vector bundles with $\mathscr{V}_1\cong \mathscr{V}_1$. Then, for any super algebra $A$, $\mathscr{V}_1$ is of Morita class $A$ if and only if $\mathscr{V}_2$ is of Morita class $A$. 

\end{enumerate}
\end{lemma}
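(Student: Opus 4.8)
The Morita class of $\mathscr{V}=(\pi,\mathcal{A},\mathcal{M},\mu)$ is by definition the Morita class of the super algebra bundle $\mathcal{A}$, which however lives over the total space $Y$ of the surjective submersion $\pi\colon Y\to X$ rather than over $X$ itself. The plan is therefore to reduce everything to the corresponding statements for algebra bundles in \cref{lem:preMoritaclass}, while using the invertible bimodule bundle $\mathcal{M}$ to control how the fibrewise Morita class varies across $Y$. Part (b) is then immediate: since $\mathscr{V}$ being of Morita class $A$ means exactly that $\mathcal{A}$ is, \cref{lem:preMoritaclass:b} gives the equivalence with $A$ and $B$ being Morita equivalent, together with transitivity of Morita equivalence.

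For (a), the issue is that $Y$ need not be connected even when $X$ is. First I would apply \cref{lem:preMoritaclass:a} to each connected component of $Y$ to conclude that the assignment $y\mapsto[\mathcal{A}_y]$ (the Morita class of the fibre) is locally constant on $Y$. Next I would use that $\mathcal{M}$ is invertible: by \cref{lem:fibrewiseinvertibility:c} its fibre $\mathcal{M}_{y_1,y_2}$ is then an invertible $\mathcal{A}_{y_2}$-$\mathcal{A}_{y_1}$-bimodule for every $(y_1,y_2)\in Y^{[2]}$, so $\mathcal{A}_{y_1}$ and $\mathcal{A}_{y_2}$ are Morita equivalent whenever $\pi(y_1)=\pi(y_2)$. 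Combining these two facts with connectedness of $X$ is the heart of the argument: since $\pi$ is an open map, the images $\pi(C)$ of the connected components $C$ of $Y$ are open and cover $X$, and if two such images overlap, a common value of $\pi$ produces a pair in $Y^{[2]}$ and hence forces the (locally constant) Morita classes of the two components to coincide. Grouping the components of $Y$ according to the value of their Morita class then writes $X$ as a disjoint union of open sets indexed by the occurring classes, and connectedness of $X$ forces a single class. Hence $y\mapsto[\mathcal{A}_y]$ is globally constant and $\mathscr{V}$ is of Morita class $\mathcal{A}_y$ for any $y\in Y$.

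For (c) (reading the hypothesis as $\mathscr{V}_1\cong\mathscr{V}_2$), I would first observe that being of Morita class $A$ is a condition checked near each point of $X$, so it suffices to treat each connected component of $X$ separately, and thus assume $X$ connected; the isomorphism restricts to each component. On a connected $X$, part (a) assigns to $\mathscr{V}_i$ the well-defined class $[(\mathcal{A}_i)_{y_i}]$ for any $y_i$ in $Y_i$. Unravelling an isomorphism $\mathscr{P}=(\zeta,\mathcal{P},\phi)\colon\mathscr{V}_1\to\mathscr{V}_2$, by \cref{LemmaInvertibilityA} invertibility of $\mathscr{P}$ is equivalent to invertibility of its bimodule bundle $\mathcal{P}$ over $Z$, which by \cref{lem:fibrewiseinvertibility:c} is fibrewise invertibility; thus each $\mathcal{P}_z$ is an invertible $(\mathcal{A}_2)_{y_2}$-$(\mathcal{A}_1)_{y_1}$-bimodule for $\zeta(z)=(y_1,y_2)$, exhibiting $(\mathcal{A}_1)_{y_1}$ and $(\mathcal{A}_2)_{y_2}$ as Morita equivalent. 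Part (b) then yields that $\mathscr{V}_1$ and $\mathscr{V}_2$ are of Morita class $A$ for exactly the same super algebras $A$.

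The main obstacle I anticipate is the gluing step in (a): there is no reason for $Y$ to be connected, so global constancy of the Morita class cannot be read off from a single typical fibre and must instead be assembled from local constancy on the components of $Y$ together with the cross-fibre identifications supplied by $\mathcal{M}$ and the openness and surjectivity of $\pi$ over the connected base $X$. The remaining parts are then essentially bookkeeping on top of \cref{lem:preMoritaclass}, \cref{lem:fibrewiseinvertibility}, and \cref{LemmaInvertibility}.
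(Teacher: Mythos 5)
Your proof is correct and takes essentially the same route as the paper's: typical fibres on the connected components of $Y$ via \cref{lem:preMoritaclass}, Morita equivalences across fibres over the same base point supplied by the invertible bimodule bundle $\mathcal{M}$ over $Y^{[2]}$ (and, in (c), by the fibrewise-invertible $\mathcal{P}$ via \cref{LemmaInvertibility,lem:fibrewiseinvertibility}), and connectedness of $X$ to glue. The only cosmetic difference is in finishing (a): the paper links any two components of $Y$ by a finite chain with overlapping $\pi$-images, whereas you partition $X$ into disjoint open sets indexed by Morita class and invoke connectedness directly -- two packagings of the same underlying argument.
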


\begin{proof}
(a) For each connected component $Y_i$ of $Y$, there exists by \cite[Lem.~4.2.10 (a)]{Kristel2022} a super algebra $A_{i}$ such that $\mathcal{A}|_{Y_i}$ is of Morita class $A_i$. 
If $Y_i$ and $Y_j$ are components such that $\pi(Y_i)\cap \pi(Y_j)\neq\emptyset$, then there exists a point $(y_i,y_j)\in Y^{[2]}$ with $y_i\in Y_i$ and $y_j\in Y_j$.
 Then $\mathcal{M}_{y_i,y_j}$ is a Morita equivalence between $\mathcal{A}_{y_2}$ and $\mathcal{A}_{y_1}$, showing that $A_{i}$ and $A_{j}$ are Morita equivalent. 
 If $Y_i$ and $Y_j$ are arbitrary connected components, then, since $X$ is connected, there exists a finite sequence $Y_i=Y_{a_1},..., Y_{a_n}=Y_j$ of connected components of $Y$ such that $\pi(Y_{a_k})$ and $\pi(Y_{a_{k+1}})$ intersect. 
 This shows that $\mathcal{A}$ is of Morita class $A_{i}$, for any connected component $Y_j$ of $Y$. 
 \\
 (b) is trivial. 
For (c), suppose that $\mathscr{V}_1$ and $\mathscr{V}_2$ are presented in terms of surjective submersions $Y_1$, respectively $Y_2$.
Let $\mathscr{P}=(\zeta,\mathcal{P},\phi)$ be an isomorphism between $\mathscr{V}_1$ and $\mathscr{V}_2$. 
Then for $z \in Z$, $\mathcal{P}_z$ is a $(\mathcal{A}_1)_{y_1}$-$(\mathcal{A}_2)_{y_2}$-bimodule, where $\zeta(z) = (y_1, y_2)$. 
As $\mathscr{P}$ is an isomorphism, $\mathcal{P}_z$ is an invertible bimodule, by \cref{LemmaInvertibilityA}, \ie a Morita equivalence.
Hence if $\mathscr{V}_1$ is of Morita class $A$ meaning that $(\mathcal{A}_1)_{y_1}$ is Morita equivalent to $A$, then so is $\mathscr{V}_2$.
\end{proof}

In case  of super 2-\emph{line} bundles over $k=\R,\C$, the classification of central simple super algebras implies that the Morita class of a \emph{complex} super 2-line bundle can be either $\CCl_0$ or $\CCl_1$, and the Morita class of a \emph{real} super 2-line bundle can be one of $\Cl_0,...,\Cl_7$.
The following lemma shows that the converse is also true.

\begin{lemma}
\label{lem:2linebundlesandmorita}
Let $\mathscr{V}$ be a super 2-vector bundle of Morita class $\CCl_n$ (for $n=0,1$ and $k=\C$) or of Morita class $\Cl_n$ (for $n=0,...,7$ and $k=\R$). Then, $\mathscr{V}$ is a super 2-line bundle.
\end{lemma}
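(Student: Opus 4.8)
The plan is to show that the defining super algebra bundle $\mathcal{A}$ of $\mathscr{V}$ is a bundle of central simple super algebras; by the description of $\stwoLineBdl_k(X)$ as the sub-bicategory of $\stwoVectBdl_k(X)$ consisting of those super 2-vector bundles whose algebra bundle has central simple fibres, this is exactly what it means for $\mathscr{V}$ to be a super 2-line bundle. Since both the Morita class and the property of being a super 2-line bundle may be checked on each connected component of $X$ separately, I would assume $X$ connected from the outset.

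First I would pin down the fibres. By \cref{lem:Moritaclass2vect:a}, the Morita class of $\mathscr{V}$ is represented by the fibre $\mathcal{A}_y$ over any point $y \in Y$; thus the hypothesis says precisely that $\mathcal{A}_y$ is Morita equivalent to $\CCl_n$ (resp.\ $\Cl_n$), that is, isomorphic to it in the bicategory $\stwoVect_k$.

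The key step is then to upgrade \emph{Morita equivalent to a central simple super algebra} to \emph{central simple}. Here I would use that $\CCl_n$ and $\Cl_n$ are central simple, hence invertible objects of $\stwoVect_k$ by \cref{TheoremCentralSimpleAlgebras}, together with the elementary fact that invertibility is an invariant of the isomorphism class of an object in a symmetric monoidal bicategory: if $A \cong B$ in $\stwoVect_k$ and $B$ is invertible, then so is $A$. Consequently $\mathcal{A}_y$ is invertible, and applying \cref{TheoremCentralSimpleAlgebras} a second time shows that $\mathcal{A}_y$ is central simple. As $y$ was arbitrary, every fibre of $\mathcal{A}$ is central simple.

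Finally, a super algebra bundle all of whose fibres are central simple is a bundle of central simple super algebras --- equivalently, it is invertible by \cref{lemma:invertibleobjectsinbundles} --- which identifies $\mathscr{V}$ as an object of $\stwoLineBdl_k(X)$, as desired. The only point requiring real care, and the main obstacle, is the passage from the Morita class to fibrewise central simplicity: a super algebra Morita equivalent to $\Cl_n$ need not equal $\Cl_n$ (it could be a matrix super algebra over it), so one cannot read off central simplicity directly. Invoking the invertibility characterization of \cref{TheoremCentralSimpleAlgebras} sidesteps a direct proof of Morita-invariance of central simplicity and makes this step clean.
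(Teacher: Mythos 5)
Your proof is correct and takes essentially the same route as the paper's: both observe that each fibre of $\mathcal{A}$ is Morita equivalent to $\CCl_n$ (resp.\ $\Cl_n$), hence central simple, and then invoke \cref{lemma:invertibleobjectsinbundles} to conclude that $\mathcal{A}$ is a bundle of central simple super algebras, i.e.\ that $\mathscr{V}$ is an object of $\stwoLineBdl_k(X)$. The only difference is that you make explicit, via the invertibility characterization of \cref{TheoremCentralSimpleAlgebras}, the step from ``Morita equivalent to a central simple super algebra'' to ``central simple,'' which the paper's proof leaves implicit.
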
  

\begin{proof}
We discuss the complex case, the real case is analogous. By definition of Morita class of super 2-vector bundles, the super algebra bundle $\mathcal{A}$ of $\mathscr{V}$ is of Morita class $\CCl_n$. Thus, each fibre of $\mathcal{A}$ is a central simple super algebra, and thus, by \cite[Prop.~4.4.3]{Kristel2022}, $\mathcal{A}$ is invertible. This proves that $\mathscr{V}$ is an object in $\stwoLineBdl_k(X)$. \end{proof}

\begin{remark}
The Morita class of an \emph{ungraded} 2-vector bundle is of course an ungraded algebra (\ie a super algebra concentrated in degree zero). 
Conversely, however, if the Morita class of a super 2-vector bundle $\mathscr{V}$ happens to be an ungraded algebra, it is \emph{not} necessarily true that $\mathscr{V}$ lies in the sub-bicategory $\twoVectBdl_k(X)\subset \stwoVectBdl_k(X)$, as it may still have a non-trivially graded bimodule bundle $\mathcal{M}$ over $Y^{[2]}$.
One example where this happens are super line bundle gerbes, considered as super 2-line bundles, see \cref{sec:bundlegerbes}.
\end{remark}

Sometimes it will be convenient to consider only super 2-vector bundles of a fixed Morita class. 

\begin{definition}[2-vector bundles with fixed Morita class]
\label{def:2vboffixedMoritaclass}
Let $A$ be a super algebra over $k$. Then, the presheaf $A\text{-}\stwoVectBdl$ of \emph{super 2-vector bundles of Morita class $A$} is defined to be the full sub-presheaf of $\stwoVectBdl_k$ over all super 2-vector bundles of Morita class $A$.
\end{definition}

\begin{remark}
\label{re:moritaclass} 
\begin{enumerate}[(1)]

\item 
\label{re:moritaclass:a} 
The presheaf $A\text{-}\stwoVectBdl$ is again a 2-stack, since descent preserves the typical fibres of algebra bundles. In fact, we may consider the presheaf of bicategories $A\text{-}\sAlgBdlbi$ with all super algebra bundles whose fibres are Morita equivalent to $A$; then, $A\text{-}\stwoVectBdl = (A\text{-}\sAlgBdlbi)^{+}$.

\item
If $A$ is an ungraded algebra, then a 2-stack $A\text{-}\twoVectBdl$ is obtained in the same way. We remark that, for $A$ ungraded, there is a  functor $A\text{-}\twoVectBdl \to A\text{-}\stwoVectBdl$ that is in general not essentially surjective, as ungraded algebras may have invertible bimodules that are not concentrated in even degrees. 
Graded bundle gerbes provide examples, as follows from the classification result of \cref{prop:classcssa}.

\item
\label{re:moritaclass:c} 
For a smooth manifold $X$, the bicategory $\C\text{-}\twoVectBdl(X)$ is equivalent to the bicategory of Morita bundle gerbes of Ershov \cite{Ershov2016}. Concerning the objects, Ershov only allows ungraded matrix algebra bundles $\mathcal{A}$ over $Y$, which are precisely the algebra bundles of Morita class $\C$. One difference is that Ershov allows only surjective submersions coming from open covers; this, however, is unproblematic in view of \cref{lem:refinementofss}.  In fact, Ershov considers all Morita bundle gerbes, 1-morphisms, and 2-morphisms  with respect to the \emph{same} fixed open cover, which gives an equivalent bicategory to ours when this cover is \emph{good}.

\end{enumerate}
\end{remark}

\subsection{Inclusion of bundle gerbes}

\label{sec:bundlegerbes}

Let $\sVectBdl_k(X)$ be the symmetric monoidal category of super vector bundles over $X$. We denote by $\mathscr{B}\sVectBdl_k(X)$ the corresponding bicategory with a single object. 
Then, $X \mapsto \mathscr{B}\sVectBdl_k(X)$ is a pre-2-stack; this is just a reformulation of the fact that vector bundles form a monoidal stack. Its stackification is, by definition \cite{nikolaus2}, the 2-stack of \emph{super line bundle gerbes},
\begin{equation*}
\sGrb_k := (\mathscr{B}\sVectBdl_k)^{+}\text{.}
\end{equation*}
Super line bundle gerbes can be identified with the twistings of complex K-theory defined by Freed-Hopkins-Teleman \cite{Freed2011a}; the following statement has been proved by Mertsch \cite{Mertsch2020}.

\begin{proposition}
\label{prop:twistings}
The homotopy 1-category $\mathrm{h}_1(\sGrb_\C(X))$ of the bicategory of complex super line bundle gerbes is canonically equivalent to the category of twistings of complex K-theory  defined by Freed-Hopkins-Teleman.
\end{proposition}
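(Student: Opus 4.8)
The plan is to present both categories as homotopy truncations of one and the same symmetric monoidal $2$-stack, and then to read off the equivalence. I would begin by recalling the Freed--Hopkins--Teleman model: their twistings of complex K-theory over $X$ are described by bundles of $\Z_2$-graded complex algebras locally modelled on the (graded) compact operators $\mathcal{K}(\mathcal{H})$ of a graded Hilbert space, with graded bimodule bundles as morphisms; isomorphism classes of objects are controlled by $\check{\mathrm{H}}^1(X,\Z_2)\times \check{\mathrm{H}}^2(X,\underline{\C}^{\times})$, the first factor recording the grading twist and the second the underlying $B\C^{\times}$-gerbe. The essential point is that $\mathcal{K}(\mathcal{H})$ is Morita equivalent to $\C$, so that this operator-algebraic model and the finite-dimensional model underlying $\sGrb_\C=(\mathscr{B}\sVectBdl_\C)^{+}$ ought to present the same $2$-stack.

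Concretely, I would construct a morphism of presheaves of bicategories relating the FHT local model to $\mathscr{B}\sVectBdl_\C$ and argue that it is a local (stalkwise, after refinement) equivalence. For objects this is the statement that a locally trivial graded $\mathcal{K}(\mathcal{H})$-bundle descends to the same \v Cech data as a super line bundle gerbe; here I would use \cref{prop:trivialbundles} to reduce any object of $\sGrb_\C(X)$ to a surjective submersion $Y\to X$ carrying an invertible super line bundle $\mathcal{L}$ over $Y^{[2]}$ with associative multiplication $\mu$, whose parity is a $\Z_2$-valued cocycle and whose underlying complex line bundle is an ordinary bundle gerbe. On Hom-categories, the Morita bimodule $\mathcal{H}$ between $\mathcal{K}(\mathcal{H})$ and $\C$ induces an equivalence between graded $\mathcal{K}(\mathcal{H})$-bimodule bundles and super vector bundles, compatible with composition. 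Since the plus construction is functorial and carries local equivalences to equivalences of $2$-stacks \cite{nikolaus2}, this would yield an equivalence of bicategories, and applying $\mathrm{h}_1$ gives the claimed equivalence of categories.

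To verify directly that the resulting functor $F$ is an equivalence, I would check essential surjectivity and full faithfulness separately. Essential surjectivity amounts to matching isomorphism classes: both sides are classified by $\check{\mathrm{H}}^1(X,\Z_2)\times\check{\mathrm{H}}^2(X,\underline{\C}^{\times})$ (the standard classification of super line bundle gerbes), with $F$ inducing the identity on these invariants, namely the grading cocycle and the Dixmier--Douady class. Full faithfulness is the statement that, for gerbes $\mathscr{G}_1,\mathscr{G}_2$, the $2$-isomorphism classes of $1$-morphisms $\mathscr{G}_1\to\mathscr{G}_2$ agree with the FHT morphisms; both are identified with isomorphism classes of $(\mathscr{G}_2\otimes\mathscr{G}_1^{-1})$-twisted super vector bundles, so this reduces to the Morita equivalence on Hom-categories noted above. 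I would emphasize that $\mathrm{h}_1(\sGrb_\C(X))$ is genuinely a category rather than a groupoid, its non-invertible morphisms being exactly the twisted vector bundles, which is why the correct target is FHT's category of twistings \emph{and} morphisms, not merely their groupoid of twistings.

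The main obstacle is the reconciliation of the two models across the dimension gap: FHT work with infinite-dimensional graded Hilbert bundles and compact-operator algebras, whereas our super line bundle gerbes are built from finite-rank data. Making the Morita comparison precise smoothly and globally---so that the Morita bimodule assembles into a bimodule bundle over all of $X$ and intertwines the associators---is the technical heart; one must also carry the $\Z_2$-grading faithfully through the equivalence so that the Koszul sign in the monoidal structure (the factor $(-1)^{\alpha_1\cup\beta_1}$ appearing in \cref{th:classalg}) is matched, which is what pins the equivalence down canonically.
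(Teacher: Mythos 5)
First, a point of reference: the paper does not prove this proposition at all --- it is imported from Mertsch's PhD thesis \cite{Mertsch2020} --- so there is no in-paper argument to compare yours against, and your proposal must stand on its own. It does not, because it is built on a misidentification of the target category. You take FHT's twistings to be bundles of $\Z_2$-graded algebras locally modelled on the graded compact operators $\mathcal{K}(\mathcal{H})$, with bimodule bundles as morphisms. That is not the definition in the cited reference \cite{Freed2011a}: there, a twisting of complex K-theory over $X$ is a local equivalence of groupoids $P \to X$ together with a \emph{graded central extension} of $P$, i.e.\ a $\Z_2$-graded $\C^{\times}$-line bundle on the morphism space with multiplicative structure. (Hilbert bundles and compact operators enter FHT's paper only later, to build cocycles \emph{for twisted K-theory}; the algebra-bundle model of twistings is rather the one in Freed's lectures \cite{Freed2012}, which this paper compares with super 2-\emph{line} bundles, not with gerbes.) For a covering groupoid $Y^{[2]} \rightrightarrows Y$, a graded central extension is \emph{literally} the data $(\pi,\mathcal{L},\mu)$ of a super line bundle gerbe, so the expected proof --- presumably Mertsch's --- matches objects essentially by inspection and then does its real work on morphisms: FHT's refinement-based morphisms of twistings against 2-isomorphism classes of 1-morphisms produced by the plus construction. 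No Morita theory enters. Moreover, your proposed zigzag through $\mathcal{K}(\mathcal{H})$-bundles cannot even be formulated inside this paper's framework: as noted in \cref{sec:liftinggerbes}, infinite-dimensional algebras such as $\mathbb{K}(H)$ do not fit the smooth finite-dimensional setup and at best yield \emph{continuous} 2-vector bundles, so there is no presheaf of smooth $\mathcal{K}(\mathcal{H})$-algebra bundles on which to run your ``local equivalence plus 2-stackification'' argument, and the Morita bimodule $\mathcal{H}$ cannot be promoted to a smooth bimodule bundle here. What you call the ``technical heart'' is therefore not a loose end but an obstruction to the entire route.

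A second, related problem concerns your treatment of non-invertible morphisms. You assert that FHT's category of twistings contains non-invertible morphisms corresponding to twisted vector bundles, and you use this to make ``full faithfulness'' come out right. In \cite{Freed2011a} the twistings form a 2-\emph{groupoid}: morphisms of graded central extensions are isomorphisms. Reconciling this with the fact that $\mathrm{h}_1(\sGrb_\C(X))$ genuinely has non-invertible morphisms is exactly the kind of point a proof of the proposition has to address (by specifying which category ``defined by Freed--Hopkins--Teleman'' is meant, or by restricting to the underlying groupoids); it cannot be assumed in order to close the argument.
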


Put differently, the presheaf of categories defined by Freed-Hopkins-Teleman  extends to a presheaf of bicategories, and that presheaf is in fact a 2-stack.

Next, we describe the relation between super line bundle gerbes and super 2-vector bundles. We consider the obvious inclusion
\begin{equation}
\label{eq:inclusionBsVect}
\mathscr{B}\sVectBdl_k \to \cssAlgBdlbi_k
\end{equation}
of presheaves of bicategories, taking the single object over a manifold $X$  to the trivially graded super algebra bundle $\underline{k}$ over $X$, and considering super vector bundles as $\underline{k}$-$\underline{k}$-bimodule bundles (they are implementing due to \cref{RemarkHHSemisimple}). This is fully faithful over each smooth manifold $X$. 
By functoriality of the plus construction, we obtain the following result.

\begin{proposition}
\label{prop:inclusionsuperlinebundlegerbes}
The inclusion in \cref{eq:inclusionBsVect} induces under the plus construction a fully faithful morphism
\begin{equation*}
\sGrb_k \to \stwoLineBdl_k
\end{equation*}
of 2-stacks. In other words, super line bundle gerbes form a full sub-bicategory of super 2-line bundles. 
\end{proposition}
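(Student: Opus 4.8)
The plan is to obtain the proposition by combining the \emph{local} full faithfulness of the presheaf inclusion \cref{eq:inclusionBsVect} with the fact that the plus construction preserves fully faithful morphisms. Recall that a morphism $F\maps\mathscr{F}\to\mathscr{G}$ of presheaves of bicategories is called fully faithful when, for every smooth manifold $X$ and all objects $a,b$ of $\mathscr{F}(X)$, the induced functor on Hom-categories $\mathscr{F}(X)(a,b)\to\mathscr{G}(X)(Fa,Fb)$ is an equivalence of categories. The assertion that super line bundle gerbes form a \emph{full} sub-bicategory of super 2-line bundles is exactly the statement that the induced morphism $F^{+}\maps\sGrb_k\to\stwoLineBdl_k$ is fully faithful in this sense --- no essential surjectivity is claimed --- so the whole proposition reduces to a single general lemma, which I would isolate and prove: if $F\maps\mathscr{F}\to\mathscr{G}$ is a morphism of pre-2-stacks that is fully faithful over every manifold, then $F^{+}\maps\mathscr{F}^{+}\to\mathscr{G}^{+}$ is again fully faithful over every manifold.

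To prove this lemma I would argue as follows. Objects of $\mathscr{F}^{+}(X)$ are descent data for $\mathscr{F}$ --- for $\mathscr{F}=\mathscr{B}\sVectBdl_k$ these are precisely super line bundle gerbes. Given two such objects $\xi,\eta$, presented a priori over possibly different surjective submersions $Y_1\to X$ and $Y_2\to X$, I would first pass to the common refinement $Y:=Y_1\times_X Y_2$ and invoke \cref{lem:refinementofss} (together with the evident analogues for $1$- and $2$-morphisms) to present both over the single submersion $\pi\maps Y\to X$ without changing either Hom-category up to equivalence. Normalizing further with \cref{lem:canonicalrefinements}, so that $Z=Y^{[2]}$ for $1$-morphisms and $W=Z\times_{Y_{12}}Z'$ for $2$-morphisms, the explicit descriptions in items II--VIII unwind the Hom-category $\mathscr{F}^{+}(X)(\xi,\eta)$ as a category of descent data assembled from the Hom-categories $\mathscr{F}(Y^{[k]})(-,-)$ of the underlying presheaf, glued along the cosimplicial structure maps of the \v{C}ech nerve $Y^{[\bullet]}$; that is, as a homotopy limit of a cosimplicial diagram of Hom-categories. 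Since $F$ is fully faithful over each $Y^{[k]}$, it provides a level-wise equivalence of these cosimplicial diagrams compatible with all structure maps, and homotopy limits send level-wise equivalences to equivalences. Hence $\mathscr{F}^{+}(X)(\xi,\eta)\to\mathscr{G}^{+}(X)(F^{+}\xi,F^{+}\eta)$ is an equivalence, proving the lemma; applying it to \cref{eq:inclusionBsVect}, which is fully faithful over every manifold (in particular over each $Y^{[k]}$), yields the proposition.

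The hard part will be the identification used inside the lemma, namely that the plus-construction Hom-categories really are the descent (homotopy-limit) categories built level-wise from the Hom-categories of the underlying presheaf, carried out carefully enough that the equivalence furnished by $F$ is manifestly compatible with the cosimplicial structure maps. The bookkeeping of the auxiliary surjective submersions $Z$ and $W$ attached to $1$- and $2$-morphisms is the delicate point, and this is exactly what \cref{lem:canonicalrefinements} is designed to eliminate before the comparison is made. A cleaner alternative, should one wish to avoid re-deriving this identification, would be to quote directly from Nikolaus--Schweigert \cite{nikolaus2} that the plus construction is functorial on pre-2-stacks and preserves local full faithfulness, in which case the proposition is immediate from the already-stated full faithfulness of \cref{eq:inclusionBsVect}.
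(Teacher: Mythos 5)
Your proposal is correct and takes essentially the same route as the paper: the paper likewise observes that the inclusion \cref{eq:inclusionBsVect} is fully faithful over every smooth manifold and then invokes functoriality of the plus construction (citing \cite{nikolaus2}) to conclude that the induced morphism $\sGrb_k \to \stwoLineBdl_k$ of 2-stacks is fully faithful. The only difference is that you additionally sketch a proof that the plus construction preserves local full faithfulness, via \cref{lem:canonicalrefinements} and descent categories of Hom-categories, which the paper leaves implicit in its citation.
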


Explicitly, the 2-stack morphism of \cref{prop:inclusionsuperlinebundlegerbes} simply adds to the structure of a given super line bundle gerbe the trivial algebra bundle $\underline{k}$ over the domain $Y$ of its surjective submersion.

\begin{example}
\label{ex:trivbundlegerbe1}
The \emph{trivial bundle gerbe} $\mathscr{I}$ (consisting of the trivial surjective submersion $\id_X$, the trivial super line bundle $\mathcal{M}:=\underline{k}$ over $X^{[2]}=X$, and the line bundle isomorphism $\mu:\mathcal{M} \otimes \mathcal{M} \to \mathcal{M}$ induced by multiplication in $k$) corresponds under the 2-functor $\sGrb_k(X) \to \stwoLineBdl_k(X)$ to the \emph{trivial  2-vector bundle} (consisting of the trivial surjective submersion $\id_X$, the trivial super algebra bundle $\mathcal{A}:=\underline{k}$, and the same $\mathcal{M}$ and $\mu$ as before). Since there is no need to distinguish between the trivial bundle gerbe and the trivial 2-vector bundle, we will henceforth denote both by $\mathscr{I}$. \end{example}

\begin{proposition}
\label{lem:bundlegerbesand2vectorbundles}
Let $\mathscr{V}$ be a super-2-vector bundle. Then, $\mathscr{V}$ is isomorphic to a super line bundle gerbe if and only if $\mathscr{V}$ is of Morita class $k$. 
\end{proposition}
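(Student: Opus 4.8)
The plan is to characterize super line bundle gerbes, viewed inside $\stwoLineBdl_k$ via the fully faithful inclusion of \cref{prop:inclusionsuperlinebundlegerbes}, as exactly those super 2-vector bundles $\mathscr{V}=(\pi,\mathcal{A},\mathcal{M},\mu)$ that are isomorphic to one whose algebra bundle is the trivial fibre-$k$ bundle $Y\times k$; the claimed equivalence then splits into an easy half and a constructive half. The ``only if'' direction is immediate: a super line bundle gerbe has, under \cref{eq:inclusionBsVect}, algebra bundle $Y\times k$, every fibre of which is $k$ and hence trivially Morita equivalent to $k$, so it is of Morita class $k$ in the sense of \cref{def:preMoritaclass}. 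Since being of a fixed Morita class is invariant under isomorphism by \cref{lem:Moritaclass2vect}, any $\mathscr{V}$ isomorphic to a bundle gerbe is of Morita class $k$.

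For the ``if'' direction, suppose $\mathscr{V}$ is of Morita class $k$. First I would produce a global Morita trivialization of $\mathcal{A}$ after refining $Y$. By definition of Morita class $k$, the bundle $\mathcal{A}$ trivializes over a cover $\{V_i\}$ of $Y$ with fibres $\End(W_i)$, so on each $V_i$ there is an invertible $\mathcal{A}|_{V_i}$-$(V_i\times k)$-bimodule bundle, namely $V_i\times W_i$. Passing to $\rho\colon Y'=\coprod_i V_i\to Y$ and using \cref{lem:refinementofss} to replace $\mathscr{V}$ by the isomorphic $\mathscr{V}^{\rho}$, I may assume there is a globally defined invertible $\mathcal{A}$-$(Y\times k)$-bimodule bundle $\mathcal{E}$ over $Y$, with inverse $(Y\times k)$-$\mathcal{A}$-bimodule bundle $\mathcal{E}^{\vee}$; the existence and smoothness of $\mathcal{E}^{\vee}$ follow from \cref{lem:fibrewiseinvertibility}.

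Next I would conjugate the data of $\mathscr{V}$ by $\mathcal{E}$ to build a bundle gerbe $\tilde{\mathscr{V}}=(\pi,Y\times k,\tilde{\mathcal{M}},\tilde\mu)$, setting fibrewise $\tilde{\mathcal{M}}_{y_1,y_2}:=\mathcal{E}^{\vee}_{y_2}\otimes_{\mathcal{A}_{y_2}}\mathcal{M}_{y_1,y_2}\otimes_{\mathcal{A}_{y_1}}\mathcal{E}_{y_1}$, which is a $k$-$k$-bimodule bundle, i.e.\ a super line bundle, with $\tilde\mu$ induced from $\mu$ after cancelling the inner factor $\mathcal{E}_{y_2}\otimes_k\mathcal{E}^{\vee}_{y_2}\cong\mathcal{A}_{y_2}$ via the Morita equivalence; associativity of $\tilde\mu$ is inherited from that of $\mu$. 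An invertible $1$-morphism $\mathscr{V}\to\tilde{\mathscr{V}}$ is then given by $Z=Y$ with $\zeta=\Delta$ the diagonal, bimodule bundle $\mathcal{P}=\mathcal{E}^{\vee}$ (whose fibre $\mathcal{E}^{\vee}_y$ is the required $k$-$\mathcal{A}_y$-bimodule), and comparison intertwiner $\phi$ over $Y^{[2]}$ supplied by the canonical isomorphism $\tilde{\mathcal{M}}_{y,y'}\otimes_k\mathcal{E}^{\vee}_y\cong\mathcal{E}^{\vee}_{y'}\otimes_{\mathcal{A}_{y'}}\mathcal{M}_{y,y'}$ coming from $\mathcal{E}_y\otimes_k\mathcal{E}^{\vee}_y\cong\mathcal{A}_y$. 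Since $\mathcal{P}$ is fibrewise invertible, the $1$-morphism is invertible by \cref{LemmaInvertibilityA}, giving $\mathscr{V}\cong\tilde{\mathscr{V}}$, a super line bundle gerbe.

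I expect the main obstacle to be technical rather than conceptual: verifying that the fibrewise conjugation above assembles into genuine \emph{smooth} bundle constructions, so that $\tilde{\mathcal{M}}$, $\tilde\mu$, and $\phi$ are honest morphisms of bimodule bundles, and checking the coherence square \eqref{diag:1Morphisms} for $\phi$ together with the associativity of $\tilde\mu$. Smoothness is handled by the bundle forms of the relative tensor product and of \cref{lem:fibrewiseinvertibility}, while the two coherence checks reduce, after unwinding the Morita cancellations, to the single associativity identity satisfied by $\mu$.
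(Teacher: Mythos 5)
Your route is in substance the paper's own: the paper likewise reduces (via \cref{prop:trivialbundles} and the Morita conjugation spelled out in \cref{sec:class2vect}) to an isomorphic super 2-vector bundle whose algebra bundle is $Y\times k$, and then observes that its bimodule bundle is a bundle of invertible $k$-$k$-bimodules, hence a super line bundle. Your \quot{only if} direction, the production of the global invertible $\mathcal{A}$-$(Y\times k)$-bimodule bundle $\mathcal{E}$ via \cref{lem:refinementofss}, and the conjugated object $\tilde{\mathscr{V}}=(\pi,Y\times k,\tilde{\mathcal{M}},\tilde\mu)$ are all fine. But one step fails as written: the definition of a 1-morphism in \cref{sec:2stack2vec} requires $\zeta\colon Z\to Y_1\times_X Y_2$ to be a \emph{surjective submersion}, and your choice $Z=Y$, $\zeta=\Delta\colon Y\to Y^{[2]}$ is neither surjective nor a submersion unless $\pi$ is injective and a local diffeomorphism. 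This is not a borderline case you can ignore: after your refinement, $Y=\coprod_i V_i$, so $Y^{[2]}$ contains all mixed points $(y_1,y_2)$ with $y_1\in V_i$, $y_2\in V_j$, $i\neq j$, none of which lies in the image of $\Delta$. Hence $(\Delta,\mathcal{E}^{\vee},\phi)$ is not a 1-morphism of $\stwoVectBdl_k(X)$ at all, and \cref{LemmaInvertibilityA} cannot be applied to it.

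The repair uses only data you already have. Take $Z:=Y^{[2]}=Y_1\times_X Y_2$, $\zeta:=\id_Z$, and let $\mathcal{P}$ be the bimodule bundle with fibres $\mathcal{P}_{(y_1,y_2)}:=\mathcal{E}^{\vee}_{y_2}\otimes_{\mathcal{A}_{y_2}}\mathcal{M}_{y_1,y_2}$, which are $k$-$\mathcal{A}_{y_1}$-bimodules. The required intertwiner over $Z\times_X Z$,
\begin{equation*}
\tilde{\mathcal{M}}_{y_2,y_2'}\otimes_k \mathcal{P}_{(y_1,y_2)}\longrightarrow \mathcal{P}_{(y_1',y_2')}\otimes_{\mathcal{A}_{y_1'}}\mathcal{M}_{y_1,y_1'}\text{,}
\end{equation*}
is obtained by cancelling the factor $\mathcal{E}_{y_2}\otimes_k\mathcal{E}^{\vee}_{y_2}\cong\mathcal{A}_{y_2}$ sitting inside the source and then comparing $\mu_{y_1,y_2,y_2'}$ with $\mu_{y_1,y_1',y_2'}$, both sides being canonically isomorphic to $\mathcal{E}^{\vee}_{y_2'}\otimes_{\mathcal{A}_{y_2'}}\mathcal{M}_{y_1,y_2'}$. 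Restricted to pairs of points of $\Delta(Y)$, and using the canonical intertwiner $\Delta^{*}\mathcal{M}\cong\mathcal{A}$, this recovers exactly your $\phi$, and the coherence diagram \cref{diag:1Morphisms} reduces to the associativity of $\mu$. Since $\mathcal{P}$ is fibrewise invertible, \cref{lem:fibrewiseinvertibility} and \cref{LemmaInvertibilityA} give $\mathscr{V}\cong\tilde{\mathscr{V}}$, completing the proof along exactly the paper's lines.
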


\begin{proof}
The \quot{only if}-part is clear. 
Suppose $\mathscr{V}$ is of Morita class $k$. 
Then by \cref{prop:trivialbundles}, $\mathscr{V}$ is isomorphic to a super 2-vector bundle $\mathscr{V}'$ whose super algebra bundle is the trivial bundle $\underline{k}$ over $Y$. 
Its bimodule bundle is then a bundle of invertible $k$-$k$-bimodules, \ie a super line bundle. 
This shows that $\mathscr{V}'$ is a super line bundle gerbe. 
\end{proof}

\Cref{lem:bundlegerbesand2vectorbundles} shows that the fully faithful morphism of \cref{prop:inclusionsuperlinebundlegerbes} is not an isomorphism of 2-stacks, since the Morita class of a general super 2-line bundle may be any central simple super algebra. The relation between super line bundle gerbes and super 2-line bundles is further clarified in \cref{sec:classificationof2linebundles}.

In the contexts of twisted K-theory and 2-dimensional sigma models, one considers 1-morphisms $\mathscr{E}:\mathscr{G} \to \mathscr{I}$ between a super line bundle gerbe $\mathscr{G}$ and the trivial bundle gerbe $\mathscr{I}$ \cite{Bouwknegt2002,gawedzki1}. These are often called {\emph{bundle gerbe modules}} or \emph{$\mathscr{G}$-twisted (super) vector bundles}, see \cite{waldorf1}. An immediate consequence of \cref{prop:inclusionsuperlinebundlegerbes} is a reformulation in terms of morphisms between super 2-line bundles.

\begin{corollary}
\label{co:twistedsupervectorbundles}
Let $\mathscr{G}$ be a super line bundle gerbes over $X$. Then, the category $\Homcat_{\sGrb_{k}(X)}(\mathscr{G},\mathcal{I})$ of $\mathscr{G}$-twisted super vector bundles is canonically isomorphic to the category $\Homcat_{\stwoLineBdl_{k}(X)}(\mathscr{G},\mathcal{I})$ of super 2-line bundle morphisms from $\mathscr{G}$ to $\mathscr{I}$.
\end{corollary}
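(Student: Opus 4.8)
The plan is to read the statement off directly from the fully faithful inclusion of \cref{prop:inclusionsuperlinebundlegerbes}, so that essentially no new work is required beyond unwinding definitions. By the convention fixed just above, a $\mathscr{G}$-twisted super vector bundle is precisely a $1$-morphism $\mathscr{E}\maps\mathscr{G}\to\mathscr{I}$ in the bicategory $\sGrb_\C(X)$ of super line bundle gerbes, and a morphism of $\mathscr{G}$-twisted super vector bundles is a $2$-morphism between two such $1$-morphisms. Hence the category of $\mathscr{G}$-twisted super vector bundles \emph{is}, by construction, the Hom-category $\Homcat_{\sGrb_\C(X)}(\mathscr{G},\mathscr{I})$. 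The whole content to be verified is therefore that the inclusion of $2$-stacks $\sGrb_\C\to\stwoLineBdl_\C$ induces a canonical isomorphism
\begin{equation*}
\Homcat_{\sGrb_\C(X)}(\mathscr{G},\mathscr{I}) \cong \Homcat_{\stwoLineBdl_\C(X)}(\mathscr{G},\mathscr{I})\text{.}
\end{equation*}

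First I would invoke \cref{prop:inclusionsuperlinebundlegerbes}, which asserts that the inclusion is fully faithful; by definition this means that for every pair of super line bundle gerbes the induced functor on Hom-categories is an equivalence, and applied to the pair $(\mathscr{G},\mathscr{I})$ this gives at once an equivalence of the two Hom-categories. Next I would match up the trivial objects: under the inclusion the trivial bundle gerbe $\mathscr{I}$ is sent to the trivial $2$-vector bundle, which by \cref{ex:trivbundlegerbe1} is again denoted $\mathscr{I}$, while $\mathscr{G}$ is sent to its image, written $\mathscr{G}$ by virtue of full faithfulness; with these identifications the equivalence above is exactly the asserted comparison functor.

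The one point I would spell out carefully — and the only place where anything beyond a definition-chase is needed — is the strengthening from \emph{equivalence} to the \emph{isomorphism} claimed in the statement. To see this I would return to the presheaf level: the inclusion \cref{eq:inclusionBsVect} sends the single object of $\mathscr{B}\sVectBdl_\C(X)$ to the trivially graded algebra bundle $X\times k$ and is the identity on the nose on the relevant Hom-category, since $(X\times k)$-$(X\times k)$-bimodule bundles are literally super vector bundles. Because the plus construction is functorial and its effect on a fixed Hom-category is assembled out of exactly this presheaf-level data, the strict identity at the presheaf level is preserved, and the induced functor on the Hom-categories of the stackifications is an honest isomorphism of categories rather than a mere equivalence. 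I do not anticipate any genuine obstacle here; the entire argument is a direct consequence of \cref{prop:inclusionsuperlinebundlegerbes} together with this observation about strictness, and requires no calculation.
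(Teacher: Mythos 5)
Your proposal is correct and follows essentially the same route as the paper: the paper also defines $\mathscr{G}$-twisted super vector bundles as 1-morphisms $\mathscr{G}\to\mathscr{I}$ in $\sGrb_{\C}(X)$ and then obtains the corollary as an immediate consequence of the fully faithful inclusion of \cref{prop:inclusionsuperlinebundlegerbes}. Your extra step upgrading the equivalence of Hom-categories to a strict isomorphism — using that the presheaf-level inclusion \cref{eq:inclusionBsVect} is the identity on the relevant Hom-category and that the plus construction assembles Hom-data from exactly this presheaf-level data — is a point the paper leaves implicit, and it is the correct justification for the word \quot{isomorphic} in the statement.
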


For later use, let us spell out explicitly what a $\mathscr{G}$-twisted super vector bundle is, consulting above definition of a 1-morphism (and using \cref{lem:canonicalrefinements:a}). Suppose $\mathscr{G}=(\pi,\mathcal{M},\mu)$. Then, a $\mathscr{G}$-twisted super vector bundle $\mathscr{E}$ is a pair $(\mathcal{E},\varepsilon)$ consisting of a super vector bundle $\mathcal{E}$ over $Y$ and a super vector bundle isomorphism $\varepsilon:  \pr_2^{*}\mathcal{E} \otimes \mathcal{M} \to \pr_1^{*}\mathcal{E}$ over $Y^{[2]}$ such that
\begin{equation}
\label{eq:gerbemodulecond}
\begin{aligned}
%\pr_{13}^{*}\varepsilon = (\id \otimes \mu)\circ  (\pr_{23}^{*}\varepsilon \otimes \id) \circ \pr_{12}^{*}\varepsilon\text{.}
\xymatrix@C=6em{
\mathcal{E}_{y_3} \otimes \mathcal{M}_{y_2, y_3} \otimes \mathcal{M}_{y_1, y_2} \ar[d]_{\id \otimes \mu_{y_1, y_2, y_3}}  \ar[r]^-{\epsilon_{y_2, y_3} \otimes \id} & \mathcal{E}_{y_2} \otimes \mathcal{M}_{y_1, y_2}\ar[d]^{\epsilon_{y_1, y_2}}  
 \\
\mathcal{E}_{y_3} \otimes \mathcal{M}_{y_1, y_3}   \ar[r]_-{\epsilon_{y_1, y_3}} &  \mathcal{E}_{y_1}
 }
\end{aligned}
\end{equation}
commutes.
Likewise, a morphism of $\mathscr{G}$-twisted super vector bundles $(\mathcal{E}_1,\varepsilon_1)$ and $(\mathcal{E}_2,\varepsilon_2)$ is a super vector bundle morphism $\varphi: \mathcal{E} \to \mathcal{E}^\prime$ over $Y$ such that 
\begin{equation*}
\xymatrix@C=6em{
 \mathcal{E}_{y_2} \otimes \mathcal{M}_{y_1, y_2}  \ar[r]^-{\epsilon_{y_1, y_2}} \ar[d]_{\varphi_{y_1} \otimes \id} &  \mathcal{E}_{y_1} \ar[d]^{\varphi_{y_1}} \\
 \mathcal{E}^\prime_{y_2} \otimes \mathcal{M}_{y_1, y_2} \ar[r]_-{\epsilon^\prime_{y_1, y_2}} &   \mathcal{E}^\prime_{y_1} 
  }
\end{equation*}
commutes.
%$(\pr_2^{*}\varphi \otimes \id_{\mathcal{M}}) \circ \varepsilon_1 = \varepsilon_2 \circ \pr_1^{*}\varphi$.  

\begin{remark}
If $\mathscr{L}$ is a general super 2-line bundle, then a 1-morphism $\mathscr{E}:\mathscr{L} \to \mathscr{I}$ generalizes in a natural way the notion of a twisted vector bundle, now admitting more general twistings. In twisted K-theory, these more general twistings add to the ordinary twistings considered by Freed-Hopkins-Teleman the \emph{grading twist}. This is also explained in the lecture notes \cite{Freed2012} and  in \cite{Mertsch2020}. 
\end{remark}

\begin{remark}
In the ungraded case, the analogous definition
\begin{equation*}
\Grb_k := (\mathscr{B}\VectBdl_k)^{+}
\end{equation*}
results in the usual definition of line bundle gerbes precisely as originally defined by Murray (for $k=\C$) \cite{Murray1996}. It induces a fully faithful morphism
\begin{equation}
\label{eq:ungbgii2vb}
\Grb_k \to \twoVectBdl_k
\end{equation}
of 2-stacks, 
so that every line bundle gerbe is an example of a 2-vector bundle.
The analog of \cref{lem:bundlegerbesand2vectorbundles} holds: an ungraded 2-vector bundle $\mathscr{V}$ is a bundle gerbe if and only if it is of Morita class $k$.
\end{remark}

\subsection{Inclusion of algebra bundles}

\label{sec:inclusionofalgebrabundles}

Since the plus construction is 2-stackification, it comes equipped with a fully faithful functor \cite[Thm.~3.3]{nikolaus2} from the pre-2-stack to the 2-stack. In our case, we obtain a fully faithful functor
\begin{equation} \label{InclusionFunctorAlgebraBundles}
\sAlgBdlbi_k(X) \to \stwoVectBdl_k(X)
\end{equation}
including our \quot{preliminary} super 2-vector bundles into the \emph{true} super 2-vector bundles. We recall that the objects of $\sAlgBdlbi_k(X)$ are super algebra bundles over $X$. Hence, super algebra bundles are examples of 2-vector bundles.  

In detail, a super algebra bundle $\mathcal{A}$ over $X$ is sent to the super 2-vector bundle $(\id_X, \mathcal{A},\mathcal{A},\mu)$, where we identify the fibre products $X^{[k]}$ over $X$ with $X$, and $\mu$ is the canonical invertible intertwiner $\mathcal{A} \times_{\mathcal{A}} \mathcal{A} \cong \mathcal{A}$ induced by the multiplication in $\mathcal{A}$. 
We denote this 2-vector bundle again by $\mathcal{A}$.
We also recall that the 1-morphisms $\mathcal{A}\to \mathcal{B}$ in $\sAlgBdlbi_k(X)$ are implementing $\mathcal{B}$-$\mathcal{A}$-bimodule bundles $\mathcal{M}$ over $X$. Such a bimodule bundle is sent to the 1-morphism of 2-vector bundles given by $(\id_X,\mathcal{M},\phi)$, where $Z := X \times_X X$ and $Z^{[k]}$ are again identified with $X$, and $\phi$ is the canonical invertible intertwiner
$\mathcal{B} \otimes_{\mathcal{B}} \mathcal{M} \cong \mathcal{M} \otimes_{\mathcal{A}} \mathcal{A}$. Finally, the 2-morphisms in $\sAlgBdlbi_k(X)$ directly yield 2-morphisms in $\stwoVectBdl_k(X)$.

The statement that the functor \eqref{InclusionFunctorAlgebraBundles} is fully faithful means that it induces an equivalence of categories
\begin{equation*}
\sBimodBdl_{\mathcal{B},\mathcal{A}}^{\mathrm{imp}}(X)  \cong \Homcat_{\stwoVectBdl_k(X)}({\mathcal{A}},{\mathcal{B}})\text{.}
\end{equation*}
In particular, two super algebra bundles are Morita equivalent if and only if they are isomorphic as 2-vector bundles.

\begin{example}
The trivial super algebra bundle $\underline{k}$ over $X$ coincides under the inclusion of \cref{InclusionFunctorAlgebraBundles} with the trivial 2-vector bundle $\mathscr{I}$ of \cref{ex:trivbundlegerbe1}.
\end{example}

\begin{remark}
If a super algebra bundle $\mathcal{A}$ has a typical fibre $A$, then the corresponding super 2-vector bundle is of Morita class $A$. This shows that the Morita class cannot distinguish between general  super 2-vector bundles and those coming from  super algebra bundles. Using the classification we develop in \cref{sec:class2vect} we will obtain a result that allows one to determine for a super 2-vector bundle whose Morita class is a central simple super algebra, whether or not it comes from a super algebra bundle, see \cref{co:line2bundlesandalgebrabundles}. 
\end{remark}

Now that we are able to consider bundle gerbes \emph{and} super algebra bundles as 2-vector bundles, we may discuss their relation. A nice structure that relates bundle gerbes and algebra bundles is the following extension of the notion of a twisted super vector bundle (see \cref{sec:bundlegerbes}).  
\begin{definition}[Twisted module bundle]
\label{def:twistedmodulebundle}
Let $\mathscr{G}$ be a super line bundle gerbe and let $\mathcal{A}$ be a super algebra bundle over $X$. 
A \emph{$\mathscr{G}$-twisted $\mathcal{A}$-module bundle} is 
a $\mathscr{G}$-twisted super vector bundle $\mathscr{E}=(\mathcal{E},\varepsilon)$  together with a left $\pi^{*}\mathcal{A}$-module bundle structure on $\mathcal{E}$, such that $\varepsilon$ is $\mathcal{A}$-linear. A morphism between $\mathscr{G}$-twisted $\mathcal{A}$-module bundles is an $\mathcal{A}$-linear morphism of $\mathscr{G}$-twisted super vector bundles.
\end{definition}

This structure appears in an infinite-dimensional setting in \cite[Def.~2.3.9]{kristel2020smooth}.  The following result generalizes \cref{co:twistedsupervectorbundles} and shows that the notion of a twisted module bundle can now be absorbed in the bicategory of super 2-vector bundles. Let $\mathscr{E}=(\mathcal{E},\varepsilon)$ be a $\mathscr{G}$-twisted $\mathcal{A}$-module bundle.  In order to obtain from $\mathscr{E}$ a 1-morphism $\mathscr{G} \to {\mathcal{A}}$ we identify $Y$ with the common refinement $Y \times_X X$ of the coverings of $\mathscr{G}$ and ${\mathcal{A}}$, set $Z:= Y$ and $\zeta=\id_Z$. Since $\mathcal{E}_y$ is a left $\mathcal{A}_x$-module, where $x=\pi(y)$, it is a $\mathcal{A}_x$-$k$-bimodule. Since $\varepsilon$ is linear and $\pi^{*}\mathcal{A}$-linear, we may consider it as an $\mathcal{A}_x$-$k$-intertwiner
\begin{equation*}
\epsilon_{y_1,y_2}: \mathcal{E}_{y_2} \otimes_{k} \mathcal{L}_{y_1,y_2} \to \mathcal{A}_{x} \otimes_{\mathcal{A}_{x}} \mathcal{E}_{y_1} \text{.}
\end{equation*}
Thus, $\mathscr{P}_{\mathscr{E}}:=(\zeta,\mathcal{E},\varepsilon)$ is a 1-morphism $\mathscr{G} \to {\mathcal{A}}$. 
 
\begin{lemma}
\label{lem:twistedmodulebundles}
The assignment $\mathscr{E}\mapsto \mathscr{P}_{\mathscr{E}}$ establishes an isomorphism between the category of $\mathscr{G}$-twisted $\mathcal{A}$-module bundles and the category $\Homcat_{\stwoVectBdl_k(X)}(\mathscr{G},{\mathcal{A}})$ of super 2-vector bundle morphisms from $\mathscr{G}$ to ${\mathcal{A}}$. Moreover, $\mathscr{P}_{\mathscr{E}}: \mathscr{G} \to {\mathcal{A}}$ is invertible  if and only if the fibres $\mathcal{E}_y$ are Morita equivalences between $k$ and $\mathcal{A}_{\pi(y)}$, for all $y\in Y$.  
\end{lemma}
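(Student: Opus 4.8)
The plan is to exhibit the assignment $\mathscr{E}\mapsto\mathscr{P}_{\mathscr{E}}$ as a functor $F$ and to show that it is fully faithful and essentially surjective (and in fact injective on objects), so that it realizes the claimed canonical isomorphism of categories; the invertibility clause will then follow from two earlier fibrewise criteria. The whole argument rests on the reduction results \cref{lem:canonicalrefinements:a} and \cref{lem:canonicalrefinements:b}, which allow me to assume that every $1$-morphism $\mathscr{G}\to\mathcal{A}$ has $Z=Y_1\times_X X\cong Y$ and $\zeta=\id$, and that every $2$-morphism is represented over $W=Z\times_{Y_{12}}Z'$ with $\rho=\id$.

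First I would set up the object-level dictionary. For a reduced $1$-morphism $(\id_Y,\mathcal{P},\phi)$ the fibre $\mathcal{P}_y$ over $y\in Y$ is an $\mathcal{A}_x$-$k$-bimodule, $x=\pi(y)$, which is the same datum as a left $\pi^{*}\mathcal{A}$-module bundle $\mathcal{E}:=\mathcal{P}$. Since here $\mathcal{M}_2=\mathcal{A}$ and $(\mathcal{M}_2)_{x,x}=\mathcal{A}_x$ is the identity bimodule, the canonical intertwiner $\mathcal{A}_x\otimes_{\mathcal{A}_x}(-)\cong(-)$ turns the component $\phi_{y,y'}$ into precisely a left-$\mathcal{A}_x$-linear map $\varepsilon_{y,y'}\colon\mathcal{E}_y\to\mathcal{E}_{y'}\otimes_k\mathcal{L}_{y,y'}$, with $\mathcal{M}_1=\mathcal{L}$. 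The one genuinely calculational step, which I expect to be the main obstacle, is to verify that the homomorphism condition \eqref{diag:1Morphisms} over $Y^{[3]}$ collapses, under these identifications and because $\mathcal{M}_2=\mathcal{A}$ carries the fibrewise multiplication, to exactly the twisted-cocycle condition \eqref{eq:gerbemodulecond}; this requires carefully tracking which copies of $\mu_1=\mu$ survive and which occurrences of $\mu_2$ disappear into the identity identifications. This is the same bookkeeping that underlies \cref{co:twistedsupervectorbundles}, now specialized from $\mathcal{A}=k$ to general $\mathcal{A}$ and decorated with the $\mathcal{A}$-action.

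Next I would treat the morphisms. A reduced $2$-morphism $\mathscr{P}_{\mathscr{E}}\Rightarrow\mathscr{P}_{\mathscr{E}'}$ is an intertwiner over $W=Y$ whose components $\varphi_y\colon\mathcal{E}_y\to\mathcal{E}'_y$ are $\mathcal{A}_x$-$k$-bimodule maps, i.e.\ $\mathcal{A}$-linear bundle maps, and the $2$-morphism compatibility square is verbatim the defining square for a morphism of $\mathscr{G}$-twisted $\mathcal{A}$-module bundles. Faithfulness of $F$ then follows from the identification rule for $2$-morphisms (two representatives over $W=Y$ with $\rho=\id$ inducing the same $2$-morphism already agree), fullness follows from \cref{lem:canonicalrefinements:b}, and essential surjectivity follows from \cref{lem:canonicalrefinements:a}. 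Since $F$ is moreover the identity on the underlying data $(\mathcal{E},\varepsilon)$, hence injective on objects, it is the asserted canonical isomorphism.

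Finally, for the invertibility statement I would argue purely formally. By \cref{LemmaInvertibilityA}, $\mathscr{P}_{\mathscr{E}}$ is invertible as a $1$-morphism if and only if its bimodule bundle $\mathcal{P}=\mathcal{E}$ is invertible; by \cref{lem:fibrewiseinvertibility:c} this holds if and only if each fibre $\mathcal{E}_y$ is an invertible $\mathcal{A}_{\pi(y)}$-$k$-bimodule, which is exactly the statement that $\mathcal{E}_y$ is a Morita equivalence between $\mathcal{A}_{\pi(y)}$ and $k$. This yields the second assertion and completes the plan.
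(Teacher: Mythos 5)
Your proposal is correct and takes essentially the same route as the paper, whose entire proof is the sketch you have fleshed out: extend the construction $\mathscr{E}\mapsto\mathscr{P}_{\mathscr{E}}$ to a functor, show it is an equivalence (via the reductions of \cref{lem:canonicalrefinements}), and derive the invertibility clause from \cref{LemmaInvertibility,lem:fibrewiseinvertibility} --- exactly the two criteria you invoke. One caveat: your closing inference that full faithfulness, essential surjectivity and injectivity on objects yield an \emph{isomorphism} of categories is not valid (the Hom-category contains 1-morphisms with non-trivial $Z$, so your functor is not surjective on objects), but this merely reproduces the paper's own looseness, since its statement says \quot{isomorphism} while its proof only claims an equivalence.
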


\begin{proof}
It is straightforward to extend above construction to a functor, and to show that it is an equivalence. The invertibility statement follows from \cref{LemmaInvertibility} and \cite[Lem.~4.2.8 (c)]{Kristel2022}.   
\end{proof}

\begin{remark}
\label{re:untwisting}
$\mathscr{G}$-twisted $\mathcal{A}$-module bundles can be {untwisted} by a trivialization of $\mathscr{G}$. 
Indeed, if $\mathscr{T}$ is such trivialization, i.e., a 1-isomorphism $\mathscr{T}:\mathscr{G} \to \mathscr{I}$, and $\mathscr{P}_{\mathscr{E}}: \mathscr{G} \to {\mathcal{A}}$ is the 1-morphism that corresponds to a $\mathscr{G}$-twisted $\mathcal{A}$-module bundle $\mathscr{E}$ under the isomorphism of \cref{lem:twistedmodulebundles}, then $\mathscr{E}^{\mathscr{T}}:=\mathscr{P}_{\mathscr{E}} \circ \mathscr{T}^{-1}: \mathscr{I} \to {\mathcal{A}}$ is a 1-morphism between super 2-vector bundles in the image of the inclusion of algebra bundles. 
Since this inclusion functor is fully faithful, $\mathscr{E}^{\mathscr{T}}$ corresponds canonically to a 1-morphism $\underline{k} \to \mathcal{A}$ in $\sAlgBdlbi_k(X)$, i.e., to a right $\mathcal{A}$-module bundle.
Summarizing, this procedure turns a $\mathscr{G}$-twisted $\mathcal{A}$-module bundle $\mathscr{E}$ into an $\mathcal{A}$-module bundle $\mathscr{E}^{\mathscr{T}}$ over $X$, using the trivialization $\mathscr{T}$.      
\end{remark}

\begin{remark}
Given a $\mathscr{G}$-twisted $\mathcal{A}$-module bundle $\mathscr{E}$, it is possible to forget the $\mathcal{A}$-module structure and just keep a $\mathscr{G}$-twisted vector bundle.
Under the identification of $\mathscr{E}$ with a 1-morphism $\mathscr{E}:\mathscr{G} \to {\mathcal{A}}$, this corresponds to the composition with the canonical, but non-invertible 1-morphism ${\mathcal{A}} \to \mathscr{I}$ obtained as the image of the  $\underline{k}$-$\mathcal{A}$-bimodule bundle $\mathcal{A}$ under the inclusion of \cref{InclusionFunctorAlgebraBundles}. 
\end{remark}

Given a general super line bundle gerbe $\mathscr{G}$, one may try to construct an algebra bundle $\mathcal{A}$ with a 1-morphism $\mathscr{G} \to {\mathcal{A}}$. 
One method, which applies to lifting gerbes and uses representation theory, is described in \cref{sec:liftinggerbes}. 
Another method is the following. 
Suppose a non-zero $\mathscr{G}$-twisted super vector bundle $\mathscr{E}=(\mathcal{E},\varepsilon)$ is given. 
Then, the endomorphism bundle $\underline{\mathrm{End}}(\mathcal{E})=\mathcal{E} \otimes \mathcal{E}^{*}$ descends using $\varepsilon$ to a super algebra bundle over $X$, which we denote by $\End(\mathscr{E})$. 
 By construction, $\mathscr{E}$ becomes then a $\mathscr{G}$-twisted $\End(\mathscr{E})$-module bundle, and certainly, $\mathcal{E}$ is fibrewise a Morita equivalence.  Hence, we have the following consequence of \cref{lem:twistedmodulebundles}.

\begin{corollary}
Let $\mathscr{G}$ be a super line bundle gerbe over $X$, and suppose $\mathscr{E}$ is a non-zero $\mathscr{G}$-twisted super vector bundle. Then, $\mathscr{E}$ induces a 1-isomorphism $\mathscr{G} \cong \End(\mathscr{E})$ in $\stwoVectBdl_k(X)$.
\end{corollary}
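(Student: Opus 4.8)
The plan is to obtain the statement as an immediate application of \cref{lem:twistedmodulebundles} to the super algebra bundle $\mathcal{A} := \End(\mathscr{E})$. The construction preceding the corollary already endows $\mathscr{E} = (\mathcal{E},\varepsilon)$ with the structure of a $\mathscr{G}$-twisted $\End(\mathscr{E})$-module bundle: the fibrewise isomorphisms of $\underline{\End}(\mathcal{E})$ induced by $\varepsilon$, which were used to descend $\underline{\End}(\mathcal{E})$ to a super algebra bundle on $X$, are exactly what makes the twist $\varepsilon$ into an $\End(\mathscr{E})$-linear map. Feeding this twisted module bundle into \cref{lem:twistedmodulebundles} yields a canonical 1-morphism $\mathscr{P}_{\mathscr{E}} : \mathscr{G} \to \End(\mathscr{E})$, so the whole content of the corollary is the assertion that $\mathscr{P}_{\mathscr{E}}$ is invertible.

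To establish invertibility I would invoke the second clause of \cref{lem:twistedmodulebundles}, which reduces the problem to checking, fibrewise, that each $\mathcal{E}_y$ is a Morita equivalence between $k$ and $\End(\mathscr{E})_{\pi(y)}$. The key point is that, by the very definition of $\End(\mathscr{E})$, its fibre over $x = \pi(y)$ is canonically the endomorphism super algebra $\underline{\End}(\mathcal{E})_y = \End(\mathcal{E}_y)$, acting tautologically on $\mathcal{E}_y$. Thus the fibrewise claim is precisely the standard fact that, for a nonzero finite-dimensional super vector space $V$, the space $V$ regarded as an $\End(V)$-$k$-bimodule witnesses a Morita equivalence between $\End(V)$ and $k$, with inverse bimodule the dual $V^{*}$; this is the graded refinement of the elementary equivalence between a matrix algebra $\mathrm{Mat}_n(k)$ and its ground field.

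The only point deserving genuine care --- and hence the main (if modest) obstacle --- is the role of the non-zero hypothesis together with the super grading. Non-vanishing of $\mathscr{E}$ ensures $\mathcal{E}_y \neq 0$ everywhere, so that $\End(\mathcal{E}_y)$ is a genuine matrix super algebra and $\mathcal{E}_y$ is a nonzero equivalence bimodule rather than the zero module; without it the statement fails. For the grading one must confirm that the evaluation and coevaluation maps providing the inverse bimodule $\mathcal{E}_y^{*}$ are even intertwiners, so that the equivalence is witnessed inside $\stwoVect_k$ by invertible even bimodules in the sense used throughout; this is routine and smooth in $y$. Once the fibrewise Morita equivalence is verified, \cref{lem:twistedmodulebundles} upgrades $\mathscr{P}_{\mathscr{E}}$ to the desired 1-isomorphism $\mathscr{G} \cong \End(\mathscr{E})$.
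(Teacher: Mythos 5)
Your proposal is correct and follows exactly the paper's own route: the paper likewise regards $\mathscr{E}$ as a $\mathscr{G}$-twisted $\End(\mathscr{E})$-module bundle via the descent construction, observes that $\mathcal{E}$ is fibrewise a Morita equivalence (which is where the non-zero hypothesis enters), and invokes the invertibility clause of \cref{lem:twistedmodulebundles}. Your additional remarks on the evenness of the evaluation and coevaluation intertwiners simply make explicit what the paper leaves as routine.
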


A nice argument \cite[Prop.~4.1]{Bouwknegt2002} shows that a (super) bundle gerbe $\mathscr{G}$ admits a non-zero $\mathscr{G}$-twisted vector bundle if and only if its Dixmier-Douady class in $\mathrm{H}^3(X,\Z)$ is torsion. 
\begin{comment}
That argument is nice indeed: If $\mathscr{G}$-twisted vector bundle $\mathcal{E}$ has rank $r$, then $\Lambda^r \mathcal{E}$ is a $\mathscr{G}^{\otimes r}$-twisted vector bundle. 
However, $\Lambda^r \mathcal{E}$ has rank one, hence it is a trivialization. 
This implies that the Dixmier-Douadi class of $\mathscr{G}^{\otimes r}$ is zero, which is $r$ times the Dixmier-Douadi class of $\mathscr{G}$.
\end{comment}
Thus, any torsion bundle gerbe is isomorphic to an algebra bundle.  
The converse is also true. Both statements will be proved in a different way later, see \cref{co:bundlegerbesandalgebrabundles}.

\subsection{The fibres of a 2-vector bundle}

In the introduction we claimed that a (super) 2-vector bundle is a structure whose fibres are (super) 2-vector spaces.
Suppose $\mathscr{V}$ is a super 2-vector bundle over $X$, and $x\in X$. Then, the fibre of $\mathscr{V}$ at $x$ is defined to be the pullback of $\mathscr{V}$ along the map $x:\ast \to X$. We shall thus analyze what a super 2-vector bundle over the point is. 

A super 2-vector bundle over a point $X=\{\ast\}$ is a tuple $(Y,\mathcal{A},\mathcal{M},\mu)$ consisting of a smooth manifold $Y$, a super algebra bundle $\mathcal{A}$ over $Y$, an invertible $\pr_1^{*}\!\mathcal{A}$-$\pr_2^{*}\!\mathcal{A}$-bimodule bundle $\mathcal{M}$ over $Y^{2}=Y\times Y$, and an invertible intertwiner $\mu: \pr_{23}^{*}\mathcal{M} \otimes_{\pr_2^{*}\mathcal{A}} \pr_{12}^{*}\mathcal{M} \to \pr_{13}^{*}\mathcal{M}$ over $Y^{3}$ that is associative over $Y^{4}$. Similarly, we obtain notions of 1-morphisms and 2-morphisms of super 2-vector bundles over a point, and we may consider the bicategory $\stwoVectBdl_k(\ast)$. 

We consider the functor \cref{InclusionFunctorAlgebraBundles} that includes super algebra bundles into super 2-vector bundles over a point $X=\{\ast\}$, obtaining a functor
\begin{equation}
\label{eq:fibres}
\sAlgBdlbi_k(\ast) \to \stwoVectBdl_k(\ast)\text{.}
\end{equation}
We note that the bicategory of super algebra bundles over a point coincides on the nose with the bicategory $\stwoVect_k$ of super 2-vector spaces. 
Moreover, we have the following result.

\begin{lemma}
\label{lem:fibres}
The functor of \cref{eq:fibres} establishes an equivalence of categories,
\begin{equation*}
\stwoVect_k \cong \stwoVectBdl_k(\ast)\text{.}
\end{equation*}
\end{lemma}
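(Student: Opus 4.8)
The plan is to split the claim into full faithfulness and essential surjectivity, and to observe that the former is essentially free while all the work sits in the latter. The functor \eqref{eq:fibres} is nothing but the inclusion functor \eqref{InclusionFunctorAlgebraBundles} evaluated at $X=\ast$, with domain $\prestwoVectBdl_k(\ast)=\stwoVect_k$. Since $\prestwoVectBdl_k$ is a pre-2-stack (\cref{lem:pre2stack}), the plus construction provides a fully faithful inclusion of it into its 2-stackification by \cite[Thm.~3.3]{nikolaus2}; hence \eqref{eq:fibres} induces equivalences on all Hom-categories. It therefore remains only to prove that every super 2-vector bundle over the point lies in the essential image of \eqref{eq:fibres}.

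For essential surjectivity, let $\mathscr{V}=(\pi,\mathcal{A},\mathcal{M},\mu)$ be a super 2-vector bundle over $\ast$, so that $\pi:Y\to\ast$ is a surjective submersion and in particular $Y\neq\emptyset$. First I would reduce the surjective submersion to the identity: choosing a point $y_0\in Y$ and letting $\rho:\ast\to Y$ be the constant map at $y_0$, the composite $\pi\circ\rho=\id_\ast$ is again a surjective submersion, so \cref{lem:refinementofss} yields a canonical isomorphism $\mathscr{V}^{\rho}\cong\mathscr{V}$. Here $\mathscr{V}^{\rho}=(\id_\ast,A,\mathcal{M}_{y_0,y_0},\mu_{y_0,y_0,y_0})$ with $A:=\mathcal{A}_{y_0}$, because pulling back along $\rho$, $\rho^{[2]}$ and $\rho^{[3]}$ amounts to restricting to $y_0$ and its diagonal images. (One could instead invoke \cref{prop:trivialbundles}, but refinement along a single point is cleaner over $\ast$.)

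The final step is to identify $\mathscr{V}^{\rho}$ with the image of the super algebra $A$ under \eqref{eq:fibres}, i.e.\ with $(\id_\ast,A,A,\mathrm{mult})$. I would use the canonical invertible intertwiner $\Delta^{*}\mathcal{M}\cong\mathcal{A}$ of $\mathcal{A}$-$\mathcal{A}$-bimodule bundles noted after the definition of objects; over $y_0$ this is an $A$-$A$-bimodule isomorphism $\mathcal{M}_{y_0,y_0}\cong A$. Transporting $\mu_{y_0,y_0,y_0}$ along it replaces $\mathscr{V}^{\rho}$ by an isomorphic 2-vector bundle and turns the associative invertible intertwiner $\mathcal{M}_{y_0,y_0}\otimes_A\mathcal{M}_{y_0,y_0}\to\mathcal{M}_{y_0,y_0}$ into an associative invertible intertwiner $A\otimes_A A\to A$, which I claim is exactly the multiplication. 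This is the crux and the main obstacle: one must verify that the canonical diagonal intertwiner carries $\mu$ to the algebra multiplication rather than to some central twist. Concretely, by \cref{lem:framing:c} every invertible intertwiner $A\otimes_A A\to A$ is right multiplication by a central even unit $c\in Z(A)_0^{\times}$, and the computation $c(cxy)z=c^2xyz=cx(cyz)$ shows associativity holds for every such $c$; so the real content is pinning down that the canonical trivialization forces $c=1$. This holds because $(\mathcal{M}_{y_0,y_0},\mu_{y_0,y_0,y_0})$ is a unital algebra whose unit corresponds under $\Delta^{*}\mathcal{M}\cong\mathcal{A}$ to $1\in A$, so $\mu$ becomes the multiplication on the nose. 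Should one prefer to bypass this identification, one can instead exhibit a direct $1$-isomorphism $(\id_\ast,A,A,\mu_c)\cong(\id_\ast,A,A,\mathrm{mult})$ built from the identity bimodule $P=A$ and the intertwiner given by multiplication by a suitable central even unit, checking the homomorphism condition \eqref{diag:1Morphisms} (which collapses over $Z^{[3]}=\ast$). Either route completes essential surjectivity and hence establishes the equivalence.
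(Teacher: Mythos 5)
Your proof is correct, but your route to essential surjectivity is genuinely different from the paper's. The full-faithfulness step is identical (both quote the fully faithful inclusion coming from the plus construction). For essential surjectivity, the paper constructs the 1-isomorphism $A \to \mathscr{V}$ in a single stroke: it takes $Z := Y$, $\zeta := \id_Y$, the bimodule bundle $\mathcal{P} := \Delta_1^*\mathcal{M}$ with fibres $\mathcal{P}_y = \mathcal{M}_{y_0,y}$, and the intertwiner $\phi := \Delta_2^*\mu$; the homomorphism condition \cref{diag:1Morphisms} is then just an instance of the cocycle condition for $\mu$, and invertibility follows from \cref{LemmaInvertibilityA}. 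You instead factor the argument into two steps: first pull back along $y_0 : \ast \to Y$ via \cref{lem:refinementofss} to reduce to $(\id_\ast, A, \mathcal{M}_{y_0,y_0}, \mu_{y_0,y_0,y_0})$, and then identify this restricted datum with the image $(\id_\ast, A, A, \mathrm{mult})$ of $A$. The cost of your factorization is exactly the crux you flag: the claim that the canonical trivialization $\Delta^*\mathcal{M} \cong \mathcal{A}$ carries $\mu$ to the multiplication holds essentially by construction of that trivialization, but the paper never spells that construction out, so on its own this step leans on an implicit convention. Your fallback closes this gap cleanly and self-containedly: by \cref{lem:framing:c} the transported intertwiner is $\mu_c(x \otimes y) = xyc$ for a central even unit $c$, and taking $P = A$ with $\phi$ given by multiplication by a central unit $d$, the condition \cref{diag:1Morphisms} over the point reduces to the single equation $d = d^2c$, forcing $d = c^{-1}$ and yielding the desired 1-isomorphism by \cref{LemmaInvertibilityA}. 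In comparing the two: the paper's construction never needs to trivialize $\mathcal{M}$ over the diagonal nor classify central twists, so it is shorter and avoids all convention-sensitivity; yours isolates the purely algebraic heart of the statement --- an invertible bimodule equipped with an associative invertible multiplication is canonically the algebra itself --- while outsourcing the bundle-theoretic bookkeeping to \cref{lem:refinementofss}, which makes the mechanism of the proof more transparent even if slightly longer.
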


\begin{proof}
We know already from \cref{sec:inclusionofalgebrabundles} that the functor is fully faithful; hence it remains to show that it is essentially surjective. Indeed, suppose $\mathscr{V}=(Y,\mathcal{A},\mathcal{M},\mu)$ is an object in $\stwoVectBdl_k(\ast)$. Choose a point $y_0\in Y$, and let $A := \mathcal{A}_{y_0}$. We show that $A$ is an essential preimage for $\mathscr{V}$. We consider $Z:= Y \cong \ast \times_{\ast} Y$, equipped with the surjective submersion $\zeta:=\id_Y$. Over $Z$ we consider the invertible bimodule bundle $\mathcal{P} := \Delta_1^{*}\mathcal{M}$, where $\Delta_l: Y \to Y^l$ is defined by $\Delta_l(y_1,...,y_l)=(y_0,y_1,...,y_l)$.
\begin{comment}
Thus, $\mathcal{P}_y = \mathcal{M}_{y_0,y}$ is an $\mathcal{A}_{y}$-$A$-bimodule.
\end{comment}
Finally, we consider over $Z^{[2]}=Y^2$ the intertwiner $\phi:=\Delta_2^{*}\mu$, which we may view fibrewise as an intertwiner 
\begin{equation*}
\phi_{y,y'}: \mathcal{P}_{y'} \otimes_{A} A \to \mathcal{M}_{y,y'} \otimes_{\mathcal{A}_{y}} \mathcal{P}_{y} \text{.}
\end{equation*}
It is straightforward to see that $(\zeta,\mathcal{P},\phi)$ is a 1-morphism $A \to \mathscr{V}$,  and it follows from \cref{LemmaInvertibilityA} that it is a 1-\emph{iso}morphism.      
\end{proof}

In view of the equivalence of \cref{lem:fibres}, the fibres of a super 2-vector bundle are super 2-vector spaces, as we claimed in the introduction. We remark that, by \cref{lem:Moritaclass2vect:a}, all fibres of a super 2-vector bundle $\mathscr{V}$ (over a connected base manifold) are isomorphic as super 2-vector spaces, and that they are all isomorphic to the Morita class of $\mathscr{V}$. In this sense, the Morita class may also be viewed as the typical fibre of $\mathscr{V}$.

\subsection{Framing by refinements}

\label{sec:framing}

As discussed in  \cref{sec:algebrabundles} (see \cref{eq:framing1,eq:framing2}), the bicategory $\sAlgBdlbi_k(X)$ of super algebra bundles is framed under the groupoid $\sAlgBdlgrpd kX$ of super algebra bundles over $X$ and bundle \emph{iso}morphisms,
while the bicategory $\sssAlgBdlbi_k(X)$ of \emph{semisimple} super algebra bundles is framed under the category $\sssAlgBdl_k(X)$ of semisimple super  algebra bundles and \textit{all} bundle homomorphisms.
These framings are obviously compatible with pullbacks, and hence morphisms of pre-2-stacks 
\begin{equation}
\label{eq:framingpre2vect}
\sAlgBdlgrpd k-\to \sAlgBdlbi_k
\quand
 \sssAlgBdl_k \to \sssAlgBdlbi_k\text{.}
\end{equation}
We will use the obvious terminology to  say that a \emph{framing for a presheaf of bicategories} $\mathscr{F}$ is a presheaf of categories $\mathscr{E}$ together with a morphism $\mathscr{E} \to \mathscr{F}$ of presheaves of bicategories such that over every smooth manifold $X$, the functor $\mathscr{E}(X) \to \mathscr{F}(X)$ is  framing. In this situation, we will also call $\mathscr{F}$ a \emph{framed presheaf of bicategories}. In this sense, we see that $\sAlgBdlbi_k$ and $\sssAlgBdlbi_k$ are \emph{framed pre-2-stacks}.
Observe that the second pre-2-stack is smaller but has a larger framing on the level of morphisms.

Framings are important because they provide a convenient and simple way to construct 1-morphisms, in situations where the structure of a general 1-morphism is fairly complex, as in the case of super 2-vector bundles. 
The plus construction automatically sends framed pre-2-stacks to framed 2-stacks. More explicitly, if $\mathscr{E} \to \mathscr{F}$ is a framed pre-2-stack, then there exists a general procedure to associate to $\mathscr{E}$ another presheaf of categories $\mathscr{E}^{\mathscr{F}}$ together with a morphism $\mathscr{E}^{\mathscr{F}} \to \mathscr{F}^{+}$ turning $\mathscr{F}^{+}$ into a framed 2-stack. 
To avoid confusion, we remark that this procedure, $\mathscr{E} \mapsto \mathscr{E}^{\mathscr{F}}$, is \emph{not} the plus construction or another method of stackification. In fact, 
in many cases,  $\mathscr{E}$ is already a stack -- as in \cref{eq:framingpre2vect}.  Instead, the construction of $\mathscr{E}^{\mathscr{F}}$ depends on the framing $\mathscr{E} \to \mathscr{F}$; after all, $\mathscr{E}^{\mathscr{F}}$ needs to have the same objects as $\mathscr{F}^{+}$ in order to be eligible for a framing. We will describe the general definition of $\mathscr{E}^{\mathscr{F}}$ elsewhere; below we will only spell it out in the present cases of the framed pre-2-stacks of \cref{eq:framingpre2vect}.

We start with the following basic construction. Given a smooth manifold $X$, we define a category $\stwoVectBdlref_k(X)$ as follows.
The objects are  all super 2-vector bundles $\mathscr{V}$ over $X$. 
The morphisms will be called \emph{refinements}, defined as follows.

\begin{definition}[Refinement]
Let $\mathscr{V}_1=(\pi_1,\mathcal{A}_1,\mathcal{M}_1,\mu_1)$ and $\mathscr{V}_2=(\pi_2,\mathcal{A}_2,\mathcal{M}_2,\mu_2)$ be super 2-vector bundles over $X$. 
A \emph{refinement} $\mathscr{R}:\mathscr{V}_1\to \mathscr{V}_2$ is a
 triple $\mathscr{R}=(\rho,\phi,u)$ consisting of a smooth map $\rho:Y_1 \to Y_2$ such that $\pi_2\circ \rho=\pi_1$, of a homomorphism $\phi:\mathcal{A}_1\to \rho^{*}\mathcal{A}_2$ of super algebra bundles over $Y_1$ and 
of an invertible bundle morphism $u:\mathcal{M}_1 \to \rho^* \mathcal{M}_2$  over $Y_1^{[2]}$
%  of an invertible intertwiner $u$ of bimodule bundles over $Y_1^{[2]}$,
that over a point $(y,y')\in Y_1^{[2]}$ restricts to an intertwiner 
\begin{equation*}
u_{y,y'}: (\mathcal{M}_1)_{y,y'} \to (\mathcal{M}_2)_{\rho(y),\rho(y')}
\end{equation*}
along the algebra homomorphisms $\phi_{y'}:(\mathcal{A}_1)_{y'} \to (\mathcal{A}_2)_{\rho(y')}$ and $\phi_{y}:(\mathcal{A}_1)_y \to (\mathcal{A}_2)_{\rho(y)}$, and renders the diagram
\begin{equation}
\label{eq:refinementdiag}
\begin{aligned}
\xymatrix@C=6em{(\mathcal{M}_1)_{y',y''} \otimes_{(\mathcal{A}_1)_{y'}} (\mathcal{M}_1)_{y,y'}\ar[d]_{u_{y',y''} \otimes u_{y,y'}} \ar[r]^-{\id \circ \mu_1} & (\mathcal{M}_1)_{y,y''} \ar[d]^{u_{y,y''}}\\   (\mathcal{M}_2)_{\rho(y'),\rho(y'')} \otimes_{(\mathcal{A}_2)_{\rho(y')}} (\mathcal{M}_2)_{\rho(y),\rho(y')}  \ar[r]_-{\mu_2\circ \id} & (\mathcal{M}_2)_{\rho(y),\rho(y'')}}
\end{aligned}
\end{equation}
commutative for all $(y,y',y'')\in Y_1^{[3]}$.
\end{definition}

Given two refinements $\mathscr{R}_{12}=(\rho_{12},\phi_{12},u_{12}):\mathscr{V}_1 \to \mathscr{V}_2$ and $\mathscr{R}_{23}=(\rho_{23},\phi_{23},u_{23}):\mathscr{V}_2 \to \mathscr{V}_3$, their composition is given by 
\begin{equation*}
\mathscr{R}_{23} \circ \mathscr{R}_{12} := (\rho_{23} \circ \rho_{12},\rho_{12}^{*}\phi_{23} \circ \phi_{12},(\rho_{12}^{[2]})^{*}u_{23} \circ u_{12})\text{,}
\end{equation*}
and the identity morphism of $\mathscr{V}$ is $(\id_Y,\id_{\mathcal{A}},\id_{\mathcal{M}})$. 
This defines the category $\stwoVectBdlref_k(X)$. It is clear that everything is compatible with pullbacks, and so $\stwoVectBdlref_k$ is a presheaf of categories. 

For each manifold $X$, let $\stwoVectBdlrefinv_k(X)$ be the subcategory of $\stwoVectBdlref_k(X)$ with all super 2-vector bundles and only those refinements whose algebra bundle homomorphism $\phi$ is invertible. Further, we let $\ssstwoVectBdlref_k(X)$ be the full subcategory of $\stwoVectBdlref_k(X)$ over all semisimple super 2-vector bundles.  They assemble to sub-presheaves $\stwoVectBdlrefinv_k$ and $\ssstwoVectBdlref_k$ of $\stwoVectBdlref_k$, and these  are the presheaves  $\mathscr{E}^{\mathscr{F}}$ in the above general notation, explicitly, we have
\begin{align*}
\stwoVectBdlrefinv_k &= \sAlgBdlgrpd k-^{\sAlgBdlbi_k}
\\
 \ssstwoVectBdlref_k &= \sssAlgBdl_k^{\sssAlgBdlbi_k}
\end{align*}
The functors $\mathscr{E}^{\mathscr{F}} \to \mathscr{F}^{+}$, explicitly,
\begin{equation} \label{Framing2Vect}
  \stwoVectBdlrefinv_k \to \stwoVectBdl_k
  \quand
  \ssstwoVectBdlref_k \to \ssstwoVectBdl_k,
\end{equation}
are defined as follows.
Working over a manifold $X$, they are, of course, the identity on the level of objects.

%The framing
%\begin{equation*}
%\stwoVectBdlref_k=\sAlgBdl_k^{\sAlgBdlbi_k} \to (\sAlgBdlbi_k)^{+} = \stwoVectBdl_k\text{,}
%\end{equation*} 
%of which we claim that it is induced from the original framing \cref{eq:framingpre2vect} under the plus construction, is the following. 
%Again working over $X$, it is, of course, the identity on the level of objects. 
On the level of morphisms, they associate to a refinement $\mathscr{R}=(\rho,\phi,u):\mathscr{V}_1 \to \mathscr{V}_2$ the following 1-morphism $(\zeta,\mathcal{P},\phi')$. We define $Z:= Y_1 \times_X Y_2$ and $\zeta=\id_Z$. Consider the smooth map $\tilde \rho: Z \to Y_2^{[2]}$ with $\tilde\rho(y_1,y_2):=(\rho(y_1),y_2)$. 
We define $\mathcal{P} := (\tilde\rho^{*}\mathcal{M}_2)_{\pr_1^{*}\phi}$ over $Z$.
If $\phi$ is invertible, this is an implementing bimodule bundle by \cref{ExampleTwistedModuleIso}, while if $\mathcal{A}_1$ and $\mathcal{A}_2$ have semisimple fibres, it is an implementing bimodule bundle by \cref{RemarkHHSemisimple}.
\begin{comment}
Recall that $\mathcal{M}_2$ is an $\pr_2^{*}\mathcal{A}_2$-$\pr_1^{*}\mathcal{A}_2$-bimodule bundle over $Y^{[2]}_2$. 
Note that $\tilde\rho^{*}\mathcal{M}_2$ is then a $\pr_2^{*}\mathcal{A}_2$-$(\rho \circ \pr_1)^{*}\mathcal{A}_2$ bimodule bundle. We have the algebra bundle homomorphism $\phi: \mathcal{A}_1 \to \rho^{*}\mathcal{A}_2$ over $Y_1$. Its pullback along $\pr_1: Z \to Y_1$ is $\pr_1^{*}\phi: \pr_1^{*}\mathcal{A}_1 \to (\rho\circ \pr_1)^{*}\mathcal{A}_2$.  
\end{comment}
Over a point $(y_1,y_2)\in Z$, its fibre is $\mathcal{P}_{y_1,y_2}=((\mathcal{M}_2)_{\rho(y_1),y_2})_{\phi_{y_1}}$, which is indeed an $(\mathcal{A}_2)_{y_2}$-$(\mathcal{A}_1)_{y_1}$-bimodule. 
Finally, we define the intertwiner $\phi'$ fibrewise over a point $((y_1,y_2),(y_1',y_2'))\in Z^{[2]}$ by
\begin{equation*}
\xymatrix{
\hspace{-13em}  \mathcal{P}_{y_1',y_2'} \otimes_{(\mathcal{A}_1)_{y_1'}} (\mathcal{M}_1)_{y_1,y_1'}=((\mathcal{M}_2)_{\rho(y_1'),y_2'})_{\phi_{y_1'}} \otimes_{(\mathcal{A}_1)_{y_1'}} (\mathcal{M}_1)_{y_1,y_1'} 
 \ar[d]^{\id \otimes u_{y_1,y_1'}} 
\\ 
(\mathcal{M}_2)_{\rho(y_1^\prime), y_2^\prime} \otimes_{(\mathcal{A}_2)_{\rho(y_1^\prime)}} ((\mathcal{M}_2)_{\rho(y_1),\rho(y_1^\prime)})_{\phi_{y_1}} 
 \ar[d]^{(\mu_2)_{\rho(y_1),\rho(y_1'),y_2'}}
\\ 
((\mathcal{M}_2)_{\rho(y_1),y_2'})_{\phi_{y_1}} \ar[d]^{(\mu_2)_{\rho(y_1),y_2,y_2'}^{-1}} 
\\
(\mathcal{M}_2)_{y_2,y_2'} \otimes_{(\mathcal{A}_2)_{y_2}} ((\mathcal{M}_2)_{\rho(y_1,)y_2})_{\phi_{y_1}} 
= (\mathcal{M}_2)_{y_2,y_2'} \otimes_{(\mathcal{A}_2)_{y_2}} \mathcal{P}_{y_1,y_2}\text{.}  \hspace{-13em} }
\end{equation*}
%\begin{equation*}
%\xymatrix{\hspace{-10em}(\mathcal{M}_2)_{y_2,y_2'} \otimes_{(\mathcal{A}_2)_{y_2}} \mathcal{P}_{y_1,y_2} = (\mathcal{M}_2)_{y_2,y_2'} \otimes_{(\mathcal{A}_2)_{y_2}} ((\mathcal{M}_2)_{\rho(y_1,)y_2})_{\phi_{y_1}} \ar[d]^{(\mu_2)_{\rho(y_1),y_2,y_2'}} \\ ((\mathcal{M}_2)_{\rho(y_1),y_2'})_{\phi_{y_1}} \ar[d]^{(\mu_2^{-1})_{\rho(y_1),\rho(y_1'),y_2'}}  \\ 
%(\mathcal{M}_2)_{\rho(y_1^\prime), y_2^\prime} \otimes_{(\mathcal{A}_2)_{\rho(y_1^\prime)}} ((\mathcal{M}_2)_{\rho(y_1),\rho(y_1^\prime)})_{\phi_{y_1}} \ar[d]^{\id \otimes u^{-1}_{y_1,y_1'}}
%\\
% ((\mathcal{M}_2)_{\rho(y_1'),y_2'})_{\phi_{y_1'}} \otimes_{(\mathcal{A}_1)_{y_1'}} (\mathcal{M}_1)_{y_1,y_1'} = \mathcal{P}_{y_1',y_2'} \otimes_{(\mathcal{A}_1)_{y_1'}} (\mathcal{M}_1)_{y_1,y_1'}\text{.}\hspace{-10em}}
%\end{equation*}
It is tedious though absolutely straightforward to check that $(\zeta,\mathcal{P},\phi')$ defined like this is a 1-morphism $\mathscr{V}_1 \to \mathscr{V}_2$.
Even more, it turns out that this assignment indeed defines a functor, and moreover, using \cref{LemmaInvertibilityA2}, that every 1-morphism obtained from a refinement has a right adjoint. 
%Thus, for each smooth manifold $X$, we have a framed bicategory $\stwoVectBdlref_k(X) \to \stwoVectBdl_k(X)$, and this shows that we have constructed the claimed framed 2-stack
%\begin{equation*}
%\stwoVectBdlref_k \to \stwoVectBdl_k\text{.}
%\end{equation*}   
In total, we obtain the following statement.

\begin{proposition}
The functors of \cref{Framing2Vect} are  framings.
\end{proposition}

\begin{remark}
We recall from \cref{sec:bundlegerbes} that super bundle gerbes are obtained by applying the plus construction to the pre-2-stack $\mathscr{B}\sVectBdl_k$. That pre-2-stack  is trivially framed, i.e., the trivial functor
\begin{equation*}
\ast \to \mathscr{B}\sVectBdl_k(X)
\end{equation*}
from the category $\ast$ with a single object and a single morphism is a framing. Nonetheless, the construction $\mathscr{E}\mapsto \mathscr{E}^{\mathscr{F}}$ generates from it a non-trivial presheaf of categories 
\begin{equation*}
\sGrbref_k := \ast^{\mathscr{B}\sVectBdl_k} \text{,}
\end{equation*}  
together with a functor
\begin{equation*}
\sGrbref_k \to \sGrb_k\text{.}
\end{equation*}
This is the \quot{usual} framing of the 2-stack of bundle gerbes, i.e., the one whose morphisms are refinements of bundle gerbes, or \quot{non-stable morphisms}.
The (trivially) commutative diagram
\begin{equation*}
\xymatrix{\ast \ar[r] \ar[d] & \mathscr{B}\sVectBdl_k \ar[d] \\ \sssAlgBdl_k\ar[r] & \sssAlgBdlbi_k}
\end{equation*}
expresses that the vertical arrows form a \emph{morphism of framed pre-2-stacks}. Going from $\mathscr{E} \to \mathscr{F}$ to $\mathscr{E}^{\mathscr{F}} \to \mathscr{F}^{+}$ is functorial; hence, the diagram
\begin{equation*}
\xymatrix{\sGrbref_k \ar[r]\ar[d] & \sGrb_k \ar[d] \\ \ssstwoVectBdlref_k \ar[r] & \ssstwoVectBdl_k}
\end{equation*}
is commutative, too. Again, we may say that the vertical arrows, the passage from super line bundle gerbes to super 2-vector bundles, form a \emph{morphism of framed 2-stacks}.
\end{remark}

\begin{remark}
We recall from \cref{sec:inclusionofalgebrabundles} that super algebra bundles (a.k.a.\ preliminary super 2-vector bundles) are examples of super 2-vector bundles, and that we have a pre-2-stack morphism
\begin{equation*}
\sAlgBdlbi_k \to \stwoVectBdl_k\text{.}
\end{equation*} 
The bicategories $\sAlgBdlbi_k$ and $\sssAlgBdlbi_k$ of preliminary super 2-vector bundles are themselves framed under the categories $\sAlgBdlgrpd k-$ and $\sssAlgBdl_k$, respectively.
These framings are compatible with the ones given by refinements, in the sense of functors
\begin{equation*}
\sAlgBdlgrpd k-\to \stwoVectBdlrefinv_k\quand \sssAlgBdl_k \to \ssstwoVectBdlref_k\text{.}
\end{equation*} 
On the level of objects, these functors regard a super algebra bundle $\mathcal{A}$ as a super 2-vector bundle like in \eqref{InclusionFunctorAlgebraBundles}. On the level of morphisms, they send a super algebra bundle homomorphism $\phi: \mathcal{A} \to \mathcal{B}$ to the refinement $\mathscr{R}_{\phi}:=(\id_X,\phi,\phi)$. The diagrams
\begin{equation*}
\xymatrix{ \sAlgBdlgrpd k-\ar[r] \ar[d]_{} & \sAlgBdlbi_k \ar[d] \\ \stwoVectBdlrefinv_k \ar[r] &  \stwoVectBdl_k }
\qquad
\xymatrix{ \sssAlgBdl_k \ar[r] \ar[d]_{} & \sssAlgBdlbi_k \ar[d] \\ \ssstwoVectBdlref_k \ar[r] &  \ssstwoVectBdl_k }
\end{equation*}
obviously commute; here, the vertical arrows go from super algebra bundles to super 2-vector bundles, and the horizontal arrows are the framings. In other words, the passage  from super algebra bundles to super 2-vector bundles is a morphism between framed pre-2-stacks. 
\end{remark}

\subsection{Symmetric monoidal structures}
\label{SectionTensorProduct}

The plus construction automatically extends (symmetric) monoidal structures from pre-2-stacks to 2-stacks. Unfortunately, this has not been discussed in \cite{nikolaus2}; we will describe this in full generality elsewhere. 
The rationale is to go to common refinements of surjective submersions, and then to use the given symmetric monoidal structure of the pre-2-stack.

While the pre-2-stack $\sAlgBdlbi_k$ is \emph{not} symmetric monoidal (recall that the heart of the problem is that the exterior tensor product of implementable bimodules need not be implementable again), we described in \cref{sec:algebrabundles} the two sub-pre-2-stacks $\sAlgBdlbigrpd k-$ and $\sssAlgBdlbi_k$ that \emph{are} symmetric monoidal.
Applying the plus construction equips the 2-stacks
\begin{equation*}
\stwoVectBdlgrpd k- \qquad \text{and} \qquad \ssstwoVectBdl_k
\end{equation*}
with symmetric monoidal structures.
We remark that, if $\mathscr{V}$ and $\mathscr{W}$ are two super 2-vector bundles, their tensor product $\mathscr{V} \otimes \mathscr{W}$ is always  defined (in $\stwoVectBdlgrpd k-$), and it coincides with their tensor product in $\ssstwoVectBdl_k$ whenever $\mathscr{V}$ and $\mathscr{W}$ are semisimple. In other words, the problems only arise from the tensor product of 1-morphisms.

To describe the tensor product of super 2-vector bundles explicitly, let $\mathscr{V}_1=(\pi_1,\mathcal{A}_1,\mathcal{M}_1,\mu_1)$ and $\mathscr{V}_2=(\pi_2,\mathcal{A}_2,\mathcal{M}_2,\mu_2)$ be super 2-vector bundles over $X$. Their tensor product $\mathscr{V}_1 \otimes \mathscr{V}_2$ has the surjective submersion $Y_{12} := Y_1 \times_X Y_2 \to X$, the super algebra bundle $\mathcal{A}_1 \otimes \mathcal{A}_2$ over $Y_{12}$ (here, according to our conventions, the pullbacks are suppressed), the bimodule bundle $\mathcal{M}_1 \otimes \mathcal{M}_2$ over $Y_{12}^{[2]}$, and the intertwiner $\mu_1 \otimes \mu_2$ over $Y_{13}^{[3]}$. The trivial 2-vector bundle $\mathscr{I}$ is the tensor unit.

The following statement is  trivial, but worthwhile to state explicitly.

\begin{proposition}
\label{prop:Moritaclasstensor}
Let $\mathscr{V},\mathscr{W}$ be super 2-vector bundle over $X$, let $\mathscr{V}$ be of Morita class $A$ and $\mathscr{W}$ be of Morita class $B$. Then $\mathscr{V} \otimes \mathscr{W}$ is of Morita class $A \otimes B$.
\end{proposition}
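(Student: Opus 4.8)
The plan is to reduce the statement to a purely algebraic fact about the behaviour of Morita equivalence under the graded tensor product, and then to verify it locally on the surjective submersion of the tensor product bundle. Recall that the Morita class of a super 2-vector bundle is by definition (via \cref{def:preMoritaclass}) the Morita class of its underlying super algebra bundle, and that the super algebra bundle of $\mathscr{V} \otimes \mathscr{W}$ is $\mathcal{A} \otimes \mathcal{B}$ over $Y_{12} := Y \times_X Y'$, where $\mathcal{A}$ (over $Y$) and $\mathcal{B}$ (over $Y'$) are the super algebra bundles of $\mathscr{V}$ and $\mathscr{W}$, and the pullbacks along $\pr_1,\pr_2$ are suppressed. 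Thus it suffices to prove: if $\mathcal{A}$ is of Morita class $A$ and $\mathcal{B}$ is of Morita class $B$, then $\pr_1^{*}\mathcal{A} \otimes \pr_2^{*}\mathcal{B}$ is of Morita class $A \otimes B$.

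First I would isolate the algebraic core: if $A'$ is Morita equivalent to $A$ and $B'$ is Morita equivalent to $B$, then $A' \otimes B'$ is Morita equivalent to $A \otimes B$. This is immediate from the symmetric monoidal structure on $\stwoVect_k$: Morita equivalence is precisely isomorphism in the bicategory $\stwoVect_k$, and the tensor product is a functor, hence sends isomorphic objects to isomorphic objects. Concretely, if $M$ is an invertible $A'$-$A$-bimodule and $N$ is an invertible $B'$-$B$-bimodule, then the exterior graded tensor product $M \otimes N$ is an invertible $(A' \otimes B')$-$(A \otimes B)$-bimodule, its inverse being the tensor product of the inverses.

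Next I would carry out the localization. Fix a point of $Y_{12}$ lying over $(y,y')\in Y \times_X Y'$. Using \cref{def:preMoritaclass}, choose open neighbourhoods $U \ni y$ in $Y$ and $U' \ni y'$ in $Y'$ together with trivializations $\mathcal{A}|_{U} \cong U \times A_U$ and $\mathcal{B}|_{U'} \cong U' \times B_{U'}$ such that $A_U$ is Morita equivalent to $A$ and $B_{U'}$ is Morita equivalent to $B$. Over the open set $\pr_1^{-1}(U) \cap \pr_2^{-1}(U') \subseteq Y_{12}$, the induced trivializations of $\pr_1^{*}\mathcal{A}$ and $\pr_2^{*}\mathcal{B}$ tensor to a trivialization of $\pr_1^{*}\mathcal{A} \otimes \pr_2^{*}\mathcal{B}$ with typical fibre $A_U \otimes B_{U'}$. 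By the algebraic fact above, $A_U \otimes B_{U'}$ is Morita equivalent to $A \otimes B$, and since every point of $Y_{12}$ admits such a neighbourhood, $\pr_1^{*}\mathcal{A} \otimes \pr_2^{*}\mathcal{B}$ is of Morita class $A \otimes B$, as required.

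There is essentially no serious obstacle here: the only content is the compatibility of Morita equivalence with the tensor product, which is a formal consequence of $\otimes$ being a functor on $\stwoVect_k$ (or, if one prefers, an explicit one-line check that the tensor product of invertible bimodules is again invertible). The remaining bookkeeping — that local trivializations pull back and tensor correctly, and that typical fibres may be chosen locally constant — is routine; in fact, when $X$ (hence $Y_{12}$) is connected one may alternatively invoke \cref{lem:Moritaclass2vect} and simply compute the Morita class on a single fibre $(\mathcal{A} \otimes \mathcal{B})_{(y,y')} = \mathcal{A}_y \otimes \mathcal{B}_{y'}$.
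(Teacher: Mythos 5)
Your proof is correct, and it is precisely the routine verification the paper has in mind: the paper states this proposition without proof, calling it trivial, since by construction the algebra bundle of $\mathscr{V}\otimes\mathscr{W}$ is $\mathcal{A}\otimes\mathcal{B}$ over $Y\times_X Y'$ and Morita equivalence is compatible with the graded tensor product. Your two ingredients --- the invertibility of the exterior tensor product of invertible bimodules, and the tensoring of local trivializations over $\pr_1^{-1}(U)\cap\pr_2^{-1}(U')$ --- are exactly the details being suppressed, so there is nothing to compare beyond the level of explicitness.
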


The next statement gives, in particular, another justification for the terminology \quot{2-line bundles}.

\begin{proposition}
\label{prop:dualizability}
Let $X$ be a smooth manifold.
\begin{enumerate}[(a)]

\item 
Every semisimple super 2-vector bundle is fully dualizable in $\ssstwoVectBdl_k(X)$.

\item
A semisimple super 2-vector bundle is invertible in $\ssstwoVectBdl_k(X)$ if and only if it is a super 2-line bundle. 
\item
The following are equivalent for a super 2-vector bundle $\mathscr{V}$:
\begin{enumerate}[1.]

\item 
$\mathscr{V}$ is dualizable in  $\stwoVectBdlgrpd kX$.

\item 
$\mathscr{V}$ is fully dualizable in  $\stwoVectBdlgrpd kX$.
\item 
$\mathscr{V}$ is invertible in  $\stwoVectBdlgrpd kX$.
\item 
$\mathscr{V}$ is a super 2-line bundle.

\end{enumerate}

\end{enumerate}
\end{proposition}

\begin{proof}
One can construct from any super 2-vector bundle $\mathscr{V}$ a dual super 2-vector bundle $\mathscr{V}^{*}$ in a completely natural way: if $\mathcal{A}$ is the super algebra bundle of $\mathscr{V}$, then the opposite algebra bundle $\mathcal{A}^{\opp}$ is the algebra bundle of $\mathscr{V}^{*}$.
For a detailed construction of $\mathscr{V}^{*}$ we refer to \cite[Rem. 2.1.14]{Mertsch2020}.
Similarly, it is unproblematic to construct evaluation and coevaluation 1-morphisms, whose bimodule bundles have the underlying vector bundle $\mathcal{A}$, considered as an $(\mathcal{A} \otimes \mathcal{A}^{\opp})$-$\underline{k}$-bimodule bundle and a $\underline{k}$-$(\mathcal{A}^{\opp}\otimes \mathcal{A})$-bimodule bundle, respectively. Since $\mathcal{A}$ is semisimple, these bimodule bundles have adjoints by \cref{prop:dualizabilityalgebrabundles}, and hence by \cref{LemmaInvertibilityA2}, the corresponding 1-morphisms in $\ssstwoVect_k(X)$ have adjoints, too.  
This shows (a). When $\mathcal{A}$ is central simple, then by \cref{prop:dualizabilityalgebrabundles,LemmaInvertibilityA} we see that evaluation and coevaluation are invertible; this shows \quot{if} of  (b). The \quot{only if} in (b) follows by restricting to connected components and looking at the Morita class: if $\mathscr{V}$ is invertible, then by \cref{prop:Moritaclasstensor,lem:Moritaclass2vect} its Morita class $A$ must be invertible, too; hence $A$ is central simple and $\mathscr{V}$ is a line 2-bundle. 
\\
In (c),  1. -- 3. are equivalent because $\stwoVectBdlgrpd kX$  is a 2-groupoid. The equivalence of 3. and 4. is seen as in (b).
\end{proof}

\begin{remark}
The inclusion $\sGrb_k \to \ssstwoVectBdl_k$ of super line bundle gerbes, a well as the inclusion  $\sssAlgBdlbi_k \to \ssstwoVectBdl_k$ of semisimple super algebra bundles, are symmetric monoidal.  
\end{remark}

 The symmetric monoidal pre-2-stacks $\sAlgBdlbigrpd k-$ and $\sssAlgBdlbi_k$ have the direct sum as a second symmetric monoidal structure (\cref{re:directsum}), which  induces analogously symmetric monoidal structures on $\stwoVectBdlgrpd k-$ and $\ssstwoVectBdl_k$.
 The monoidal unit is the zero super 2-vector bundle $\mathscr{O}$, which is the image of the zero super algebra bundle under the inclusion of algebra bundles. 
 Thus, it has the trivial cover $\id_X$, the zero algebra bundle, the zero bimodule bundle (note that this is invertible as a bimodule bundle between zero algebras), and the zero map. 
Note that, unlike the tensor unit $\mathscr{I}$, the zero 2-vector bundle $\mathscr{O}$ is not a bundle gerbe.

The following statement is again obvious.

\begin{lemma}
Let $\mathscr{V},\mathscr{W}$ be super 2-vector bundle over $X$, let $\mathscr{V}$ be of Morita class $A$ and $\mathscr{W}$ be of Morita class $B$. Then, $\mathscr{V} \oplus \mathscr{W}$ is of Morita class $A \oplus B$.  
\end{lemma}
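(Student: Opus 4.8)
The plan is to reduce everything to the fibrewise characterisation of the Morita class established in \cref{lem:Moritaclass2vect:a}, and then to observe that Morita equivalence of super algebras is compatible with direct sums. The direct sum $\mathscr{V}\oplus\mathscr{W}$ was described just above the statement, so its algebra bundle is known explicitly, and the Morita class is by definition controlled by that algebra bundle; the whole argument is therefore essentially a translation into the fibrewise language plus one elementary algebraic check.

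First I would reduce to the case that $X$ is connected, treating each connected component separately, since the Morita class is by definition a local notion checked around each point. Writing $\mathscr{V}=(\pi_1,\mathcal{A}_1,\mathcal{M}_1,\mu_1)$ with $\pi_1\maps Y_1\to X$ and $\mathscr{W}=(\pi_2,\mathcal{A}_2,\mathcal{M}_2,\mu_2)$ with $\pi_2\maps Y_2\to X$, I recall from the construction of the direct sum that $\mathscr{V}\oplus\mathscr{W}$ has surjective submersion $Y_{12}=Y_1\times_X Y_2\to X$ and super algebra bundle $\pr_1^{*}\mathcal{A}_1\oplus\pr_2^{*}\mathcal{A}_2$ over $Y_{12}$. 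Consequently, the fibre of the algebra bundle of $\mathscr{V}\oplus\mathscr{W}$ over a point $(y,y')\in Y_{12}$ is the super algebra $(\mathcal{A}_1)_y\oplus(\mathcal{A}_2)_{y'}$.

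Next I would fix such a point $(y,y')$ and apply \cref{lem:Moritaclass2vect:a} three times: it yields that $\mathscr{V}$ is of Morita class $(\mathcal{A}_1)_y$, that $\mathscr{W}$ is of Morita class $(\mathcal{A}_2)_{y'}$, and that $\mathscr{V}\oplus\mathscr{W}$ is of Morita class $(\mathcal{A}_1)_y\oplus(\mathcal{A}_2)_{y'}$. Since by hypothesis $\mathscr{V}$ is of Morita class $A$ and $\mathscr{W}$ of Morita class $B$, part (b) of \cref{lem:Moritaclass2vect} then gives that $(\mathcal{A}_1)_y$ is Morita equivalent to $A$ and $(\mathcal{A}_2)_{y'}$ is Morita equivalent to $B$.

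It then remains to check the single algebraic fact that Morita equivalence respects direct sums: if $P$ is an invertible $A$-$(\mathcal{A}_1)_y$-bimodule and $Q$ is an invertible $B$-$(\mathcal{A}_2)_{y'}$-bimodule, then $P\oplus Q$, regarded as an $(A\oplus B)$-$\bigl((\mathcal{A}_1)_y\oplus(\mathcal{A}_2)_{y'}\bigr)$-bimodule with the off-diagonal summands acting as zero, is invertible, with inverse the direct sum of the inverse bimodules; the cross terms in the relevant tensor products vanish because, over a direct sum of algebras, an element on which the second factor acts trivially tensors to zero against an element on which the first factor acts trivially. Hence $(\mathcal{A}_1)_y\oplus(\mathcal{A}_2)_{y'}$ is Morita equivalent to $A\oplus B$, and applying part (b) of \cref{lem:Moritaclass2vect} once more shows that $\mathscr{V}\oplus\mathscr{W}$ is of Morita class $A\oplus B$. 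The only step requiring any genuine (though routine) verification is this last compatibility of Morita equivalence with direct sums; everything else is bookkeeping with the definition of the direct sum and the fibrewise description of the Morita class.
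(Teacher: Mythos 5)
Your proof is correct, and it is precisely the argument the paper has in mind: the paper states this lemma without proof (calling it obvious), the implicit reasoning being exactly your combination of the explicit description of the direct sum (algebra bundle $\pr_1^{*}\mathcal{A}_1\oplus\pr_2^{*}\mathcal{A}_2$ over $Y_1\times_X Y_2$), the fibrewise characterisation of the Morita class from \cref{lem:Moritaclass2vect}, and the elementary fact that direct sums of invertible bimodules are invertible over the direct sum algebra. Your verification that the cross terms in the relevant tensor products vanish (via the central idempotents $(1,0)$ and $(0,1)$) is the one genuinely algebraic point, and you handle it correctly.
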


A perhaps stunning application of the direct sum is that -- within 2-vector bundles -- one may now take the direct sum of two bundle gerbes. Recall that bundle gerbes are 2-line bundles and have Morita class $k$. The direct sum of two bundle gerbes is of Morita class $k \oplus k$ and is hence not a 2-\emph{line} bundle anymore. 

\subsection{Endomorphisms and automorphisms}

If $\mathscr{V}$ is a super 2-vector bundle, then we write $\Endcat(\mathscr{V}) := \Homcat(\mathscr{V},\mathscr{V})$ for the category of endomorphisms, which is monoidal under the composition. We first want to compute the endomorphisms of the trivial 2-vector bundle $\mathscr{I}$.

We recall from \cref{sec:bundlegerbes} that we have  fully faithful morphisms
\begin{equation*}
\mathscr{B}\sVectBdl_k \to \sGrb_k \to \stwoLineBdl_k \to \stwoVectBdl_k
\end{equation*}
of presheaves of bicategories, under which the trivial 2-vector bundle $\mathscr{I}$ is the image of the single object $\ast$ in $\mathscr{B}\sVectBdl_k$.  In particular, we have an equivalence of monoidal categories
\begin{equation*}
\sVectBdl_k(X) = \Endcat_{\mathscr{B}\sVectBdl_k}(\ast) \cong \Endcat_{\stwoVectBdl_k}(\mathscr{I})\text{.}
\end{equation*}
We recall from \cref{sec:inclusionofalgebrabundles} that we also have a fully faithful morphism
\begin{equation*}
\sAlgBdlbi_k \to \stwoVectBdl_k
\end{equation*}
of presheaves of bicategories, under which the trivial 2-vector bundle $\mathscr{I}$ over a manifold $X$ is the image of the trivial super algebra bundle $\underline{k}$ over $X$. 
In particular, we have again an equivalence of monoidal categories
\begin{equation*}
\sVectBdl_k(X) \cong \Endcat_{\sAlgBdlbi_k}(\underline{k}) \cong \Endcat_{\stwoVectBdl_k}(\mathscr{I})\text{.}
\end{equation*}  
It is straightforward to see from the various definitions that both equivalences coincide, and both give the same functor
$H:\sVectBdl_k(X) \to \Endcat_{\stwoVectBdl_k}(\mathscr{I})$: if $\mathcal{V}$ is a super vector bundle over $X$, then the corresponding 1-morphism  $H_{\mathcal{V}}:\mathscr{I} \to \mathscr{I}$ has the identity surjective submersion on $Z:= X \times_X X \cong X$, it has the bimodule bundle $\mathcal{V}$, on which the super algebra bundle $\underline{k}$ of $\mathscr{I}$ acts fibrewise from both sides by scalar multiplication. Moreover, a morphism $\varphi: \mathcal{V} \to \mathcal{W}$ of super vector bundles induces in a straightforward way a 2-morphism $H_{\mathcal{V}} \to H_{\mathcal{W}}$. We summarize above considerations in the following lemma.

\begin{lemma}
\label{lem:HomII}
The functor $H$ establishes an equivalence of monoidal categories
\begin{equation*}
\sVectBdl_k(X) \cong \Endcat(\mathscr{I})\text{.}
\end{equation*}
\end{lemma}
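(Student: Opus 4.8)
The plan is to deduce the lemma almost entirely from the fully faithful morphisms of presheaves of bicategories already at our disposal, using the elementary principle that a fully faithful morphism of bicategories restricts to an equivalence on each hom-category.

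First I would record that the chain of fully faithful morphisms $\mathscr{B}\sVectBdl_k \to \sGrb_k \to \stwoLineBdl_k \to \stwoVectBdl_k$ from \cref{prop:inclusionsuperlinebundlegerbes} and the surrounding discussion in \cref{sec:bundlegerbes} carries the single object $\ast$ of $\mathscr{B}\sVectBdl_k(X)$ to $\mathscr{I}$. Since a fully faithful morphism $F$ of bicategories induces equivalences $\mathscr{C}(a,b) \simeq \mathscr{D}(Fa,Fb)$ on hom-categories, specializing to the endomorphisms of $\ast$ yields an equivalence of categories $\sVectBdl_k(X) = \Endcat_{\mathscr{B}\sVectBdl_k}(\ast) \simeq \Endcat_{\stwoVectBdl_k}(\mathscr{I})$. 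This produces the underlying equivalence of categories; the functor so obtained is exactly $H$, which I would confirm by unwinding the inclusions (the object $\ast$ becomes $\mathscr{I}$, a super vector bundle $\mathcal{V}$ regarded as a 1-endomorphism of $\ast$ becomes the 1-morphism $H_{\mathcal{V}}$ with bimodule bundle $\mathcal{V}$ on which $X \times k$ acts by scalars, and a bundle morphism becomes the corresponding 2-morphism).

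Second I would upgrade the equivalence to a monoidal one. By the construction of the delooping $\mathscr{B}(-)$, the monoidal structure on $\Endcat_{\mathscr{B}\sVectBdl_k}(\ast) = \sVectBdl_k(X)$ is precisely horizontal composition of endomorphisms of the single object, \ie the tensor product of super vector bundles, whereas $\Endcat(\mathscr{I})$ is monoidal under composition of 1-morphisms. Because any morphism of bicategories preserves horizontal composition up to its coherent compositors, the induced functor on endomorphism categories is canonically monoidal; being a fully faithful equivalence, it is then an equivalence of monoidal categories, which is the assertion. The second route through $\prestwoVectBdl_k \to \stwoVectBdl_k$ and the object $X \times k$ gives the same functor $H$, as already observed just before the lemma, so either description may be used.

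The content of the lemma is thus a repackaging of the full faithfulness results proved earlier, with no new mathematical input. The only point requiring genuine care, and hence the main obstacle, is the bookkeeping that turns full faithfulness of a morphism of bicategories into a \emph{monoidal} equivalence on the endomorphism category of the distinguished object, together with the routine verification that the two a priori distinct constructions of $H$ coincide.
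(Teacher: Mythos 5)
Your proposal is correct and follows essentially the same route as the paper: the text preceding the lemma derives the equivalence exactly from the fully faithful morphisms $\mathscr{B}\sVectBdl_k \to \sGrb_k \to \stwoLineBdl_k \to \stwoVectBdl_k$ (under which $\ast \mapsto \mathscr{I}$) and from $\prestwoVectBdl_k \to \stwoVectBdl_k$ (under which $X \times k \mapsto \mathscr{I}$), observing that full faithfulness gives monoidal equivalences on endomorphism categories and that both routes yield the same functor $H$. The lemma in the paper is explicitly a summary of those considerations, so your repackaging, including the care about monoidality of the induced hom-equivalence, matches the intended argument.
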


Now let $\mathscr{V}$ be any super 2-vector bundle over $X$, which we take to be semisimple  at first.
Suppose $\mathcal{E}$ is a super vector bundle over $X$. 
Using the functor $H$ and the tensor product in $\ssstwoVectBdl_k$, we define a 1-morphism $H_{\mathscr{V}}(\mathcal{E}):\mathscr{V} \to \mathscr{V}$ by
\begin{equation*}
\xymatrix@C=6em{\mathscr{V} \cong \mathscr{V} \otimes \mathscr{I} \ar[r]^{\id_{\mathscr{V}} \otimes H(\mathcal{E})} & \mathscr{V} \otimes \mathscr{I} \cong \mathscr{V}\text{.}}
\end{equation*}
Likewise, if $\varphi: \mathcal{E}_1 \to \mathcal{E}_2$ is a morphism of super vector bundles, then we define 2-morphism $H_{\mathscr{V}}(\varphi):H_{\mathscr{V}}(\mathcal{E}_1) \Rightarrow H_{\mathscr{V}}(\mathcal{E}_2)$ in the obvious way using $\id_{\id_{\mathscr{V}}} \otimes H(\varphi)$. This defines a monoidal functor
\begin{equation*}
H_{\mathscr{V}}:\sVectBdl_k(X) \to \Endcat(\mathscr{V})\text{,}
\end{equation*}
which coincides in the case of $\mathscr{V}=\mathscr{I}$ with the functor $H$. 
\\
Explicitly, if $\mathscr{V}=(\pi,\mathcal{A},\mathcal{M},\mu)$ is the super 2-vector bundle, then the 1-morphism $H_{\mathscr{V}}(\mathcal{E})$ has the covering space $Z := Y^{[2]}$, the identity surjective submersion $\zeta:=\id_Z$, and the bimodule bundle over $Z$ is $\mathcal{M} \otimes_k \mathcal{E}$, where $\mathcal{E}$ is understood to be pulled back along the projection $Z \to X$.
The 2-morphism $H_{\mathscr{V}}(\varphi): H_{\mathscr{V}}(\mathcal{E}_1) \Rightarrow H_{\mathscr{V}}(\mathcal{E}_2)$ is given by the intertwiner $\id_{\mathcal{M}} \otimes \varphi: \mathcal{M} \otimes_k \mathcal{E}_1 \to \mathcal{M} \otimes_k \mathcal{E}_2$. 
\\
From this explicit description, it is clear that such a functor $H_{\mathscr{V}}$ exists for \emph{any} super vector bundle $\mathscr{V}$ (not just semisimple ones), as the typical fibre $M \otimes_k E$ of $\mathcal{M} \otimes_k \mathcal{E}$ is implementing if $\mathcal{M}$ is; see \cite[Ex. 3.1.5 (2)]{Kristel2022}.

\begin{lemma}
\label{lem:HV}
The functor $H_{\mathscr{V}}$ is  faithful.  
\end{lemma}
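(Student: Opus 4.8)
The plan is to show directly that $H_{\mathscr{V}}$ is injective on morphisms: if $\varphi,\psi:\mathcal{E}_1 \to \mathcal{E}_2$ are morphisms of super vector bundles over $X$ with $H_{\mathscr{V}}(\varphi) = H_{\mathscr{V}}(\psi)$, then $\varphi = \psi$. First I would unwind what this equality of 2-morphisms means. By the explicit description of $H_{\mathscr{V}}$ given above, both $H_{\mathscr{V}}(\varphi)$ and $H_{\mathscr{V}}(\psi)$ are 2-morphisms between $H_{\mathscr{V}}(\mathcal{E}_1)$ and $H_{\mathscr{V}}(\mathcal{E}_2)$, each represented over $W = Y^{[2]}$ with $\rho = \id_W$ by the intertwiner $\id_{\mathcal{M}}\otimes\varphi$, respectively $\id_{\mathcal{M}}\otimes\psi$, of the bimodule bundles $\mathcal{M}\otimes_k \mathcal{E}_1 \to \mathcal{M}\otimes_k \mathcal{E}_2$ over $Y^{[2]}$ (with $\mathcal{E}_i$ pulled back along $Y^{[2]}\to X$). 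Since the 1-morphisms $H_{\mathscr{V}}(\mathcal{E}_i)$ have $Z_i = Y^{[2]}$ and $\zeta_i = \id$, we have $Z_1 \times_{Y_{12}} Z_2 = Y^{[2]}$, and both representatives already use the canonical $W = Z_1\times_{Y_{12}}Z_2 = Y^{[2]}$ with $\rho = \id$. Hence the identification of 2-morphisms described in item V collapses to the literal equality of these two intertwiners over $Y^{[2]}$.

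Next I would cancel the factor $\mathcal{M}$ fibrewise. Over a point $(y_1,y_2) \in Y^{[2]}$ with common image $x = \pi(y_1) = \pi(y_2)$, the equation reads
\begin{equation*}
\id_{\mathcal{M}_{y_1,y_2}} \otimes \varphi_x = \id_{\mathcal{M}_{y_1,y_2}} \otimes \psi_x
\end{equation*}
as linear maps $\mathcal{M}_{y_1,y_2}\otimes_k (\mathcal{E}_1)_x \to \mathcal{M}_{y_1,y_2}\otimes_k (\mathcal{E}_2)_x$. Because $\mathcal{M}$ is an invertible bimodule bundle, each fibre $\mathcal{M}_{y_1,y_2}$ is an invertible, and in particular nonzero, $k$-vector space (by \cref{lem:fibrewiseinvertibility:c}). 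For a nonzero vector space $V$, the operation $f \mapsto \id_V \otimes f$ is injective on linear maps: choosing $v\in V$ with $v\neq 0$ and comparing $v\otimes \varphi_x(e) = v \otimes \psi_x(e)$ yields $\varphi_x(e) = \psi_x(e)$ for all $e$. Hence $\varphi_x = \psi_x$.

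Finally I would invoke surjectivity of $\pi$: given any $x \in X$, choose $y \in Y$ with $\pi(y) = x$ and apply the previous step at the diagonal point $(y,y) \in Y^{[2]}$, whose image is $x$. This gives $\varphi_x = \psi_x$ for every $x \in X$, and therefore $\varphi = \psi$, establishing faithfulness.

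The argument is essentially a one-line fibrewise cancellation once the formalism is unwound, so the only point needing care is the first step: confirming that equality of 2-morphisms in the plus construction really reduces here to equality of the underlying intertwiners over $Y^{[2]}$, rather than merely after passing to a further common refinement. This is the main, and entirely mild, obstacle, and it trivializes precisely because $H_{\mathscr{V}}$ is constructed using the identity surjective submersions and identity refinements throughout, so that $\mathscr{P}$ and $\mathscr{P}'$ already share the canonical $W$.
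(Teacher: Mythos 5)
Your proof is correct and follows essentially the same route as the paper's: reduce equality of the 2-morphisms to the literal equality $\id_{\mathcal{M}}\otimes\varphi=\id_{\mathcal{M}}\otimes\psi$ of intertwiners over $Y^{[2]}$, then cancel the nonzero finite-dimensional factor $\mathcal{M}$ fibrewise. Your write-up is in fact more careful than the paper's, since you justify both the collapse of the equivalence relation on 2-morphism representatives (via the canonical $W=Y^{[2]}$ with identity refinements) and the fibrewise cancellation, which the paper leaves implicit.
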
    

\begin{proof}
Suppose $\mathcal{E}_1$ and $\mathcal{E}_2$ are super vector bundles, and $\varphi,\varphi': \mathcal{E}_1\to \mathcal{E}_2$ are super vector bundle homomorphisms. An equality $H_{\mathscr{V}}(\varphi)=H_{\mathscr{V}}(\varphi')$ between 2-morphisms implies an equality $\id_{\mathcal{M}} \otimes \varphi_1=\id_{\mathcal{M}} \otimes \varphi_2$ between linear maps $\mathcal{M} \otimes_k \mathcal{E}_1 \to \mathcal{M} \otimes_k \mathcal{E}_2$, and since these are finite-dimensional vector spaces, this implies $\varphi_1=\varphi_2$.
\begin{comment}
In bases, the matrix of $\id_{\mathcal{M}} \otimes \varphi$ is the Kronecker product of the unit matrix with the matrix of $\varphi$. Taking Kronecker products with a fixed non-zero matrix is injective. 
\end{comment}  
This shows that $H_{\mathscr{V}}$ is faithful.   
\end{proof}

Let us now specialize  to \emph{automorphism 2-groups}. 
We recall that a (weak) 2-group is a monoidal groupoid in which every object is invertible with respect to the tensor product.
One example of a 2-group is $\sLineBdlgrpd kX$, the groupoidification  of the monoidal category of super line bundles over $X$. The 2-group $\sLineBdlgrpd kX$ is additionally \emph{symmetric} monoidal, i.e., a Picard groupoid.   

 If $\mathscr{C}$ is a bicategory, and $c$ is an object in $\mathscr{C}$, then its automorphism 2-group 
\begin{equation*}
\AUT(c) := \mathrm{Grpd}(\Endcat(c)^{\times})
\end{equation*}
is obtained from the monoidal category $\Endcat(c)$ by discarding all non-invertible objects (i.e., all non-invertible 1-morphisms $c \to c$), and by discarding all non-invertible morphisms (i.e., all 2-morphisms in $\mathscr{C}$ that are not invertible under vertical composition). 
Note that $\AUT(c)$ is in general not \emph{symmetric} monoidal. 
\\
By this construction, every super 2-vector bundle $\mathscr{V}$ over $X$ has an automorphism 2-group $\AUT(\mathscr{V})$. For a general super 2-vector bundle, \cref{lem:HV} implies that the functor $H_{\mathscr{V}}$ induces a  faithful monoidal functor 
\begin{equation*}
\sLineBdlgrpd kX \to \AUT(\mathscr{V})\text{.}
\end{equation*}
By \cref{lem:HomII}, this functor is an equivalence in case of the trivial 2-vector bundle,
\begin{equation*}
\sLineBdlgrpd kX \cong \AUT(\mathscr{I})\text{,}
\end{equation*}
\ie the automorphisms of $\mathscr{I}$ are precisely the super line bundles.

Furthermore, a result from the theory of bundle gerbes shows in fact that the automorphism 2-group of \emph{every}  line bundle gerbe $\mathscr{G}$ is $\LineBdl_k(X)^{\grp}$, see \cite[Thm.~2.5.4]{waldorf4}. The proof given there generalizes in a straightforward way to the super case. Since the functor $\sGrb_k \to \stwoVectBdl_k$ is fully faithful, this shows that we have another equivalence of 2-groups, 
\begin{equation*}
\sLineBdlgrpd kX \cong \AUT(\mathscr{G})\text{,}
\end{equation*}
for every super line bundle gerbe $\mathscr{G}$.

\subsection{Equivariant 2-vector bundles}

As explained in  \cite[Prop.~2.8]{nikolaus2}, presheaves (of bicategories) on the category of smooth manifolds extend canonically to presheaves on the category of Lie groupoids (with smooth functors). This holds, in particular, for super 2-vector bundles, so that we automatically have a notion of super 2-vector bundles \emph{over Lie groupoids}. In particular, this applies to action groupoids and hence leads automatically to the correct notion of an \emph{equivariant} super 2-vector bundle.

Let $G$ be a Lie group and let $\rho:G \times X \to X$ be a smooth action of $G$ on a smooth manifold $X$. The corresponding action  groupoid $X/\!/ G$ is a Lie groupoid with objects $X$, morphisms $G \times X$, source map $\pr_X:G \times X \to X$, target map $\rho$, and composition $(g_2,g_1x)\circ(g_1,x) := (g_2g_1,x)$.

\begin{definition}[Equivariant 2-vector bundle]
A \emph{$G$-equivariant super 2-vector bundle} over $X$ is a super 2-vector bundle over the Lie groupoid $X/\!/G$.
\end{definition}

Spelling out the details on the basis of  \cite[Def.~2.5]{nikolaus2},
 a $G$-equivariant super 2-vector bundle over $X$ is a triple $(\mathscr{V},\mathscr{P},\phi)$ consisting of a super 2-vector bundle $\mathscr{V}$ over $X$, a 1-isomorphism $\mathscr{P}: \pr_X^{*}\mathscr{V} \to \rho^{*}\mathscr{V}$ of super 2-vector bundles over $G \times X$, and a 2-isomorphism 
\begin{equation*}
\phi: (\id \times \rho)^{*}\mathscr{P} \circ \pr_{23}^{*}\rho^{*}\mathscr{P} \Rightarrow (m \times \id)^{*}\mathscr{P}
\end{equation*}
of 2-vector bundles over $G^2  \times X$, where $m:G^2 \to G$ denotes the product of $G$, such that $\phi$ satisfies a coherence condition over $G^3 \times X$. This becomes more instructive when restricted to single group elements: if $g\in G$, then pulling back $\mathscr{P}$ along $X \to G \times X: x \mapsto (g,x)$ yields a 1-isomorphism $\mathscr{P}_g :\mathscr{V} \to g^{*}\mathscr{V}$ over $X$. Similarly, for two group elements $g_1,g_2\in G$ we obtain from $\phi$ a 2-isomorphism $\phi_{g_1,g_2}:  g_1^{*}\mathscr{P}_{g_2} \circ \mathscr{P}_{g_1}  \Rightarrow \mathscr{P}_{g_1g_2}$ over $X$, and the coherence condition becomes 
\begin{equation*}
\phi_{g_1,g_2g_3} \bullet (g_1^{*}\phi_{g_2,g_3} \circ \id) = \phi_{g_1g_2,g_3} \bullet (\id \circ \phi_{g_1,g_2})\text{.}
\end{equation*}
As explained  in \cite[Def.~2.6]{nikolaus2}, a whole bicategory $\stwoVectBdl_k^{G}(X)$ of $G$-equivariant super 2-vector bundles can be constructed in a canonical way.   

\begin{remark}
The 1-isomorphism $\mathscr{P}$ in the structure of a $G$-equivariant super 2-vector bundle may of course be (induced by) a refinement $\mathscr{R}$, see \cref{sec:framing}. In this case, the 2-isomorphism $\phi$ may be an equality, as refinements form a 1-category. Let us briefly spell out what such a \emph{strict} $G$-equivariant structure is; for this purpose we  denote the involved structure by $\mathscr{V}=(\pi,\mathcal{A},\mathcal{M},\mu)$ and $\mathscr{R}=(\tilde\rho,\varphi,\phi)$. First, $\tilde\rho$ induces a lift of the $G$-action along $\pi:Y \to X$, and hence,  $G$-actions on all fibre products $Y^{[k]}$. Second, $\varphi$ induces a $G$-equivariant structure on the super algebra bundle $\mathcal{A}$ over $Y$. Third, $\phi$ induces a {compatible} $G$-equivariant structure on the bimodule bundle $\mathcal{M}$ over $Y^{[2]}$ in such a way that the intertwiner $\mu$ is $G$-equivariant.
Summarizing, a  $G$-equivariant structure on a super 2-vector bundle $\mathscr{V}$ may in simple cases  consist of lifts of the $G$-action to all of its structure.  
\end{remark}

Maybe the most useful statement about equivariant structure is that it descends to quotients whenever well-behaved quotients exist. In the present case, we have the following statement.

\begin{theorem}
Suppose a Lie group $G$ acts freely and properly on a smooth manifold $X$.
Then, pullback along the projection $X \to X/G$ induces an equivalence of bicategories between super 2-vector bundles on $X/G$ and $G$-equivariant super 2-vector bundles on $X$,
\begin{equation*}
\stwoVectBdl_k(G/X) \cong \stwoVectBdl_k^{G}(X)\text{.}
\end{equation*}
\end{theorem}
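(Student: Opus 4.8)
The plan is to realize the claimed equivalence as an instance of the invariance of $2$-stacks under Morita equivalence of Lie groupoids, applied to the canonical smooth functor from the action groupoid $X /\!\!/ G$ to the quotient manifold $X/G$. Since the action is free and proper, the quotient $X/G$ is a smooth manifold and the projection $\pi: X \to X/G$ is a surjective submersion (indeed a principal $G$-bundle). Regarding $X/G$ as a Lie groupoid with only identity morphisms, the map $\pi$ on objects together with $(g,x)\mapsto x$ on morphisms assemble into a smooth functor $p: X /\!\!/ G \to X/G$, and pullback along $p$ is exactly the pullback functor $\stwoVectBdl_k(X/G) \to \stwoVectBdl_k^{G}(X)$ appearing in the statement, once one uses the extension of presheaves of bicategories to Lie groupoids from \cite[Prop. 2.8]{nikolaus2} and the definition of $G$-equivariant objects as super $2$-vector bundles over $X /\!\!/ G$.

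First I would check that $p$ is a weak (Morita) equivalence of Lie groupoids, i.e. that it is smoothly essentially surjective and smoothly fully faithful. Essential surjectivity is immediate, since on objects $p$ is the surjective submersion $\pi$. For full faithfulness one must show that the smooth map
\begin{equation*}
  G \times X \longrightarrow X \times_{X/G} X, \qquad (g,x) \longmapsto (x, gx),
\end{equation*}
is a diffeomorphism. This is precisely where the hypotheses enter: injectivity is freeness of the action, surjectivity holds because two points of $X$ have the same image in $X/G$ if and only if they lie in the same $G$-orbit, and smoothness of the inverse follows from properness, which guarantees (via the Godement criterion for free proper actions) that $X \times_{X/G} X$ is an embedded submanifold of $X \times X$ and that the orbit map has the required regularity. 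Thus $p$ is a weak equivalence.

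The second, conceptual step is to invoke that a $2$-stack, evaluated on Lie groupoids through the Nikolaus--Schweigert extension, sends weak equivalences of Lie groupoids to equivalences of bicategories. We established earlier that $\stwoVectBdl_k$ is a $2$-stack, so this invariance applies to $p$ and yields an equivalence of bicategories
\begin{equation*}
  p^{*}: \stwoVectBdl_k(X/G) \;\xrightarrow{\ \simeq\ }\; \stwoVectBdl_k(X /\!\!/ G) = \stwoVectBdl_k^{G}(X).
\end{equation*}
Unwinding the identification of the right-hand side with $G$-equivariant super $2$-vector bundles, and observing that $p^{*}$ is pullback along $X \to X/G$ together with the resulting descent (equivariant) data, recovers the asserted equivalence.

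The main obstacle is the Morita invariance used in the last step: one must know that the extension of a $2$-stack to the bicategory of Lie groupoids inverts weak equivalences. This is the genuine input and rests on the descent property built into the plus construction of \cite{nikolaus2}; for a free and proper action it can also be seen concretely, since the weak equivalence $p$ is presented by the \v{C}ech groupoid of the surjective submersion $\pi$, for which descent along $\stwoVectBdl_k$ is exactly the $2$-stack condition verified earlier. By contrast, the differential-geometric step -- that $(g,x)\mapsto(x,gx)$ is a diffeomorphism -- is standard and is where the free--proper hypothesis is used essentially.
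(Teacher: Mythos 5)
Your proposal takes essentially the same route as the paper: both pass to the Nikolaus--Schweigert extension of the 2-stack $\stwoVectBdl_k$ to Lie groupoids, identify the canonical smooth functor $X/\!\!/G \to (X/G)_{dis}$ as a weak equivalence, and invoke the result of \cite{nikolaus2} that pullback along a weak equivalence is an equivalence of bicategories for any 2-stack. The only difference is that you spell out the full-faithfulness part of the weak equivalence (that $(g,x)\mapsto(x,gx)$ is a diffeomorphism onto $X\times_{X/G}X$, which is where freeness and properness enter), a point the paper compresses into the remark that $X \to X/G$ is a surjective submersion.
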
  

\begin{proof}
We derive this from the abstract theory developed in \cite{nikolaus2} and the fact that by our construction, super 2-vector bundles form a 2-stack. Suppose $\mathscr{F}$ is a presheaf of bicategories on the category of smooth manifolds; we will denote its canonical extension to Lie groupoids by the same symbol. This is justified by the fact that, if $X$ is a smooth manifold and $X_{dis}$ denotes the {discrete} Lie groupoid with objects $X$ and only identity morphisms, then $\mathscr{F}(X_{dis})\cong\mathscr{F}(X)$. Any smooth functor $F: \mathscr{X} \to \mathscr{Y}$ between Lie groupoids induces a functor $F^{*}:\mathscr{F}(\mathscr{Y}) \to \mathscr{F}(\mathscr{X})$ between the corresponding bicategories. If $\mathscr{F}$ is a 2-stack, and $F$ is a weak equivalence, then $F^{*}$ is an equivalence $\mathscr{F}(X) \cong \mathscr{F}(Y)$ \cite[Thm.~2.16]{nikolaus2}. Now, in the present situation we consider the evident smooth functor $X/\!/ G \to (X/G)_{dis}$, which is a weak equivalence as the projection $X \to X/G$ is a surjective submersion. Combining and evaluating these facts for the 2-stack $\stwoVectBdl_k$ of super 2-vector bundles yields the claim. 
\end{proof}

\section{Classification of 2-vector bundles}

\label{sec:class2vect}

In this section, we classify super 2-vector bundles of a fixed Morita class $A$, see \cref{def:2vboffixedMoritaclass}. Our classification is based on an idea of Pennig \cite{Pennig2011} and uses the automorphism 2-group of a super algebra $A$. In the first two subsections, we neglect the monoidal structure and classify 2-vector bundles up to isomorphism  as a set. The monoidal structure is then added in \cref{sec:monoidalstructure}.
\Cref{sec:classificationof2linebundles} treats 2-line bundles, for which the classification simplifies.

\subsection{Non-abelian cohomology for algebras}

\label{sec:nonabcohfa}

Let $A$ be a Picard-surjective super algebra (see \cref{Section2VectorSpaces}). In \cite[\S 2.3]{Kristel2022} we have shown that the automorphism 2-group of $A$ as an object in the bicategory $\stwoVect_k$ can be represented by a  crossed module of Lie groups, denoted by $\AUT(A)$.
This crossed module will be central for the classification of super 2-vector bundles. We refer to \cref{sec:crossedmodules} for a quick recollection of  crossed modules.
The crossed module $\AUT(A)$ consists of the Lie group $A^{\times}_0$ of even invertible elements of $A$ and of the Lie group $\Aut(A)$ of even automorphisms of $A$, together with the Lie group homomorphism $i: A_0^{\times} \to \Aut(A)$ that associates to an element $a\in A_0^{\times}$ the conjugation $i(a)$ by $a$, and the action of $\Aut(A)$ on $A_0^{\times}$ by evaluation. 
\begin{comment}
That this structure forms a crossed module is guaranteed by the two evident conditions
\begin{equation*}
i({\varphi(a)})=\varphi \circ i(a) \circ \varphi^{-1}
\quad\text{ and }\quad
i(a)(b)=aba^{-1}\text{.}
\end{equation*}
\end{comment}

The \v Cech cohomology of $X$ with values in the crossed module $\AUT(A)$ is (see \cref{DefNonAbChechCohom}),
\begin{equation} \label{DefNonAbChechCohomAUTA}
\check{\mathrm{H}}^1(X,\AUT(A)):=\mathrm{h}_0(\underline{\mathscr{B}\AUT}(A)^{+}(X))\text{,}
\end{equation}
\ie we consider the Lie 2-groupoid $\mathscr{B}\AUT(A)$ with a single object that is associated to the crossed module $\AUT(A)$, the presheaf of bicategories $\sheaf{\mathscr{B}\AUT}(A)$ represented by $\mathscr{B}\AUT(A)$,  apply the plus-construction, evaluate on $X$, and then take the set of isomorphism classes of objects.
Spelling this out, see \cref{sec:crossedmodules}, an element is represented with respect to an open cover $\{U_{\alpha}\}_{\alpha\in A}$ by a pair $(\varphi,a)$ where $\varphi$ is a collection of smooth maps $\varphi_{\alpha\beta}:U_{\alpha} \cap U_{\beta} \to \Aut(A)$ and $a$ is a collection of smooth maps $a_{\alpha\beta\gamma}:U_{\alpha} \cap U_{\beta}\cap U_{\gamma} \to A_0^{\times}$, such that the cocycle conditions 
\begin{equation} \label{CocycleConditionsNonAb}
i(a_{\alpha\beta\gamma})\circ \varphi_{\beta\gamma} \circ \varphi_{\alpha\beta}=\varphi_{\alpha\gamma}
\quad\text{ and }\quad
a_{\alpha\gamma\delta}\cdot \varphi_{\gamma\delta}(a_{\alpha\beta\gamma})=a_{\alpha\beta\delta}\cdot a_{\beta\gamma\delta}
\end{equation}  
are satisfied. Two cocycles $(\varphi,a)$  and $(\varphi',a')$ are equivalent, if, after passing to a common refinement of the open covers, there exist smooth maps $\varepsilon_{\alpha}:U_{\alpha} \to \Aut(A)$ and $e_{\alpha\beta}:U_{\alpha}\cap U_{\beta}\to A^{\times}$ satisfying
\begin{equation*}
i(e_{\alpha\beta})\circ \varepsilon_{\beta}\circ \varphi_{\alpha\beta}=\varphi'_{\alpha\beta}\circ \varepsilon_{\alpha}
\quad\text{ and }\quad
a'_{\alpha\beta\gamma}\cdot \varphi'_{\beta\gamma}(e_{\alpha\beta})\cdot e_{\beta\gamma}= e_{\alpha\gamma}\cdot \varepsilon_{\gamma}(a_{\alpha\beta\gamma})\text{.}
\end{equation*}

We note the following result about the \v Cech cohomology of Picard-surjective super algebras.

\begin{proposition}
\label{prop:moritainvariance2group}
Suppose $A$ and $B$ are Picard-surjective super algebras. Then, every invertible $A$-$B$-bimodule induces a  bijection $\check{\mathrm{H}}^1(X,\AUT(A)) \cong \check{\mathrm{H}}^1(X,\AUT(B))$. In particular, these  sets coincide whenever $A$ and $B$ are Morita equivalent Picard surjective super algebras.
\end{proposition}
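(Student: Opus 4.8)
The plan is to realize the weak equivalence concretely as a flippable butterfly in the sense of \cref{sec:butterflies}, and then to invoke that a weak equivalence of Lie 2-groups induces a bijection on non-abelian \v Cech cohomology. Let $M$ be the given invertible $A$-$B$-bimodule, with inverse $B$-$A$-bimodule $M^{-1}$. I would introduce the Lie group
\[
E := \{(f,\varphi,\psi) : \varphi \in \Aut(A),\ \psi\in\Aut(B),\ f\colon M\to M \text{ even bijective with } f(amb)=\varphi(a)f(m)\psi(b)\},
\]
the group of even $(\varphi,\psi)$-semilinear automorphisms of $M$, with multiplication $(f,\varphi,\psi)(f',\varphi',\psi')=(f\circ f',\varphi\varphi',\psi\psi')$. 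Together with the four homomorphisms $p\colon E\to\Aut(A)$ and $q\colon E\to\Aut(B)$ (the projections), $\iota\colon A_0^\times\to E$, $a\mapsto(\ell_a,i(a),\id)$ (left multiplication), and $\kappa\colon B_0^\times\to E$, $b\mapsto(r_{b^{-1}},\id,i(b))$ (right multiplication), this assembles into the candidate butterfly linking $\AUT(A)$ and $\AUT(B)$. The crossed-module boundary maps and the $\Aut$-actions are reproduced by $p\iota=i$, $q\kappa=i$, $q\iota=1$, $p\kappa=1$ together with the equivariance identities $e\,\kappa(b)\,e^{-1}=\kappa(q(e)b)$ and $e\,\iota(a)\,e^{-1}=\iota(p(e)a)$, all of which are routine to check from semilinearity.

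The heart of the proof is to show that both diagonals are short exact, i.e. that the butterfly is flippable. For $1\to B_0^\times\xrightarrow{\kappa}E\xrightarrow{p}\Aut(A)\to1$, injectivity of $\kappa$ and the description of $\ker p$ both rest on the Morita fact that the right action identifies $B^{\opp}$ with the algebra of left-$A$-linear endomorphisms of $M$: an element $(f,\id,\psi)$ of $\ker p$ then has $f=r_{b_0}$ for a unique even unit $b_0\in B_0^\times$, and semilinearity forces $\psi=i(b_0)^{-1}$, so that $(f,\id,\psi)=\kappa(b_0^{-1})$. Surjectivity of $p$ is where Picard-surjectivity of $B$ enters: given $\varphi\in\Aut(A)$, the twisted bimodule $\pss{\varphi^{-1}}M$ is again an invertible $A$-$B$-bimodule, so $M^{-1}\otimes_A\pss{\varphi^{-1}}M$ is an invertible $B$-$B$-bimodule, hence by Picard-surjectivity isomorphic to $B_{\psi}$ for some $\psi\in\Aut(B)$; unravelling the resulting isomorphism $\pss{\varphi^{-1}}M\cong M_{\psi}$ produces a $(\varphi,\psi)$-semilinear bijection $f$, i.e. a preimage of $\varphi$ under $p$. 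Exactness of the other diagonal $1\to A_0^\times\xrightarrow{\iota}E\xrightarrow{q}\Aut(B)\to1$ follows by the identical argument with the roles of $A$ and $B$ interchanged, now invoking Picard-surjectivity of $A$. Throughout one must keep careful track of the $\Z_2$-grading (all of $f$, $\varphi$, $\psi$, and the units are even) and of the variance in the twisting; this bookkeeping is the main thing to get right.

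Having produced a flippable butterfly, I would conclude as follows. A flippable butterfly is precisely a weak equivalence of crossed modules, and under the plus construction a weak equivalence of the Lie 2-groups $\AUT(A)$ and $\AUT(B)$ induces an equivalence of the associated 2-stacks $\underline{\mathscr{B}\AUT}(A)^{+}\simeq\underline{\mathscr{B}\AUT}(B)^{+}$; passing to isomorphism classes over $X$ and using the definition \eqref{DefNonAbChechCohomAUTA} yields the claimed bijection $\check{\mathrm{H}}^1(X,\AUT(A))\cong\check{\mathrm{H}}^1(X,\AUT(B))$. The final clause is then immediate, since two super algebras are Morita equivalent exactly when an invertible bimodule between them exists, so the first part applies verbatim. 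I expect the construction and flippability of the butterfly to be the main obstacle: the cohomological transfer is a formal consequence of the general theory of \cref{sec:butterflies} and the functoriality of the plus construction, whereas surjectivity of the projections $p$ and $q$ genuinely requires the Picard-surjectivity hypotheses, and the grading and variance conventions must be handled with care.
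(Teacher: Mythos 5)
Your proposal is correct and takes essentially the same route as the paper: the paper's proof constructs the identical butterfly (its middle group $K$ consists of triples $(\varphi,\psi,f)$ with $f$ an invertible intertwiner $\prescript{}{\varphi^{-1}}M \to M_{\psi}$, which is exactly your group of even $(\varphi,\psi)$-semilinear automorphisms), with the same wing maps $a\mapsto (i(a),\id,l_a)$ and $b \mapsto (\id,i(b),r_{b^{-1}})$, the same use of faithful balancedness of $M$ for the kernel descriptions, the same use of Picard-surjectivity (via $M^{-1}\otimes_A \prescript{}{\varphi^{-1}}M \cong B_\psi$) for surjectivity of the projections, and the same final appeal to the butterfly--cohomology correspondence (\cref{prop:butterflyco}). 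The only point you elide is that $E$ is in fact a \emph{Lie} group, which the paper settles by exhibiting $K$ as a closed subgroup of $\Aut(A)\times\Aut(B)\times\mathrm{GL}_0(M)$.
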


\begin{proof}
By \cite[Prop 2.3.3]{Kristel2022}, any invertible $A$-$B$-bimodule $M$ determines an invertible butterfly  between the crossed modules $\AUT(A)$ and $\AUT(B)$, see \cref{sec:butterflies}. 
Weak equivalences induce bijections in cohomology, see \cref{prop:butterflyco}.
\end{proof}

\subsection{The classification}

Let  $A$ be a super algebra. We consider  the presheaf of bicategories $A\text{-}\sAlgBdlbi$ introduced in \cref{re:moritaclass:a}, consisting of super algebra bundles whose fibres are Morita equivalent to $A$, all implementing bimodule bundles and all even intertwiners. On the other hand, we consider the presheaf of bicategories $\underline{\mathscr{B}\AUT}(A)$ mentioned in \cref{sec:nonabcohfa}.
For a smooth manifold $X$, the bicategory  $\underline{\mathscr{B}\AUT}(A)(X)$ has a single object, the 1-morphisms are smooth maps $X \to \AUT(A)$, and the 2-morphisms are smooth maps $X \to A_0^{\times} \times \Aut(A)$. Here, a pair $(a,\varphi)$ with $a: X \to A_0^{\times}$ and $\varphi:X \to \Aut(A)$, is a 2-morphism from $x \mapsto \varphi(x)$ to $x \mapsto i(a(x)) \circ \varphi(x)$. The vertical composition of 2-morphisms is $(a_2,\varphi_2) \circ (a_1,\varphi_1) := (a_2a_1,\varphi_1)$. The horizontal composition of 1-morphisms is given by the group structure on $\Aut(A)$, and the one of 2-morphisms is given by the semi-direct product $(a_2,\varphi_2) \cdot (a_1,\varphi_1) := (a_2\varphi_2(a)),\varphi_2\circ \varphi_1$; see \cref{sec:crossedmodules} for the general description.

In the following, we  describe a  morphism
\begin{equation}
\label{eq:functorF}
F: \underline{\mathscr{B}\AUT}(A) \to A\text{-}\sAlgBdlbi
\end{equation}
of presheaves of bicategories. 
Over a smooth manifold $X$, 
it sends the unique object
of $\underline{\mathscr{B}\AUT}(A)(X)$ to the trivial algebra bundle $F_X(\ast) := \underline{A}$.
It sends a 1-morphism, \ie a smooth map $\varphi:X \to \Aut(A)$ to the (implementing) bimodule bundle $F_X(\varphi) :=\mathcal{M}_{\varphi} := \underline{A}_{\varphi}$ (see \cref{ExampleTwistedModuleIso}). 
Finally, it sends a 2-morphism, \ie a pair $(a,\varphi)$ of a smooth map $a:X \to A^{\times}$ and a smooth map $\varphi: X \to \Aut(A)$, 
to the intertwiner $F_X(a,\varphi) :=\phi_{a,\varphi}: \mathcal{M}_{\varphi} \to \mathcal{M}_{i(a)\circ \varphi}$ given in each fibre by the canonical isomorphism 
\begin{equation*}
A_{\varphi_x} \cong A \otimes_A A_{\varphi_x} \cong A_{i(a(x))} \otimes_A A_{\varphi_x} \cong A_{i(a(x))\circ \varphi_x}\text{;}
\end{equation*}
explicitly,  $\phi_{a,\varphi}(x,b):=(x,ba^{-1})$.

\begin{lemma}
Above definitions yield a 2-functor $F_X: \underline{\mathscr{B}\AUT}(A)(X) \to A\text{-}\sAlgBdlbi(X)$.
\end{lemma}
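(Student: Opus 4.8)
The plan is to verify the axioms of a $2$-functor of bicategories directly, using the explicit formula $\phi_{a,\varphi}(p,x)=(p,xa^{-1})$ as the main computational tool, and to draw the coherence and composition data from \cref{lem:framing}. First I would confirm that $F_X$ lands where it should. The hypothesis that $\varphi$ be \emph{locally constant} is exactly what guarantees that $\mathcal{M}_\varphi=X\times A_\varphi$ is a genuine, locally trivial bimodule bundle (on each connected component $A_\varphi$ is a fixed $A$-$A$-bimodule); since $\varphi\in\Aut(A)$, part~(b) of \cref{lem:framing} together with \cref{lem:fibrewiseinvertibility:c} shows it is invertible, hence an admissible $1$-morphism of $A\text{-}\prestwoVectBdl(X)$. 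For a $2$-morphism $(a,\varphi)$, the requirement that $i(a)$ be locally constant is precisely what makes the target $i(a)\circ\varphi$ again a locally constant map into $\Aut(A)$, so that $\mathcal{M}_{i(a)\circ\varphi}$ is defined; the map $(p,x)\mapsto(p,xa(p)^{-1})$ is then a smooth bundle endomorphism of $X\times A$ covering $\id_X$, which is fibrewise the even invertible intertwiner of \cref{lem:framing:a} and invertible by \cref{lem:fibrewiseinvertibility:a}. Smoothness throughout is automatic: all bundles in sight are trivial, so a bundle map is smooth as soon as its fibrewise description varies smoothly over the base, which here it does through the smooth map $a\colon X\to A^{\times}$.

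Next I would assemble the coherence data and dispatch the easy axioms. The unitor is the identity, because $F_X(\id_\ast)=X\times A_{\id_A}$ is literally the identity bimodule bundle of $X\times A$. The compositors are the fibrewise maps $c_{\varphi,\psi}$ of \cref{lem:framing:a}, which are smooth bundle isomorphisms $\mathcal{M}_\psi\otimes\mathcal{M}_\varphi\cong\mathcal{M}_{\psi\circ\varphi}$, and their associativity coherence is exactly the commuting square recorded in \cref{lem:framing:a}, read fibrewise. On $2$-cells there are then two quick checks: preservation of identity $2$-morphisms, which is immediate since $\phi_{1,\varphi}=\id$, and preservation of vertical composition, which is the fibrewise identity $\phi_{a',i(a)\circ\varphi}\circ\phi_{a,\varphi}=\phi_{a'a,\varphi}$ following from $x\mapsto xa^{-1}\mapsto xa^{-1}a'^{-1}=x(a'a)^{-1}$, together with the fact that vertical composition in $\AUT(A)$ is multiplication in $A_0^{\times}$.

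The one genuinely structural point — and the step I expect to be the main obstacle — is compatibility of the compositors with horizontal composition of $2$-morphisms. Horizontal composition of $a\colon\varphi_1\Rightarrow i(a)\varphi_1$ and $b\colon\varphi_2\Rightarrow i(b)\varphi_2$ in $\mathscr{B}\AUT(A)$ is the element $a\,\varphi_1(b)\in A_0^{\times}$, the crossed-module relation $i(\varphi_1(b))=\varphi_1\circ i(b)\circ\varphi_1^{-1}$ being what identifies its target with $i(a)\varphi_1\circ i(b)\varphi_2$; I must verify that the square
\[
\xymatrix@C=5em{
\mathcal{M}_{\varphi_1}\otimes\mathcal{M}_{\varphi_2}\ar[r]^-{\phi_{a,\varphi_1}\otimes\phi_{b,\varphi_2}}\ar[d]_{c_{\varphi_2,\varphi_1}} & \mathcal{M}_{i(a)\varphi_1}\otimes\mathcal{M}_{i(b)\varphi_2}\ar[d]^{c_{i(b)\varphi_2,\,i(a)\varphi_1}} \\
\mathcal{M}_{\varphi_1\circ\varphi_2}\ar[r]_-{\phi_{a\varphi_1(b),\,\varphi_1\circ\varphi_2}} & \mathcal{M}_{i(a)\varphi_1\,\circ\, i(b)\varphi_2}
}
\]
commutes. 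Tracing $x\otimes y$ both ways and using $c_{\phi,\psi}(u\otimes v)=u\,\psi(v)$ from \cref{lem:framing:a} reduces both composites to $x\,\varphi_1(y)\,\varphi_1(b)^{-1}a^{-1}$, where one expands $(i(a)\varphi_1)(yb^{-1})=a\,\varphi_1(y)\varphi_1(b)^{-1}a^{-1}$; this is the one place where the twisting conventions for $A_\varphi$ and the evaluation action of $\Aut(A)$ on $A_0^{\times}$ must be matched exactly. Since every map produced is fibrewise built from $c$, from the formula $\phi_{a,\varphi}(p,x)=(p,xa^{-1})$, and from the smooth map $a$ over trivial bundles, smoothness is again automatic, and all pseudofunctor axioms hold, so $F_X$ is a $2$-functor.
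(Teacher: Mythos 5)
Your proposal is correct and takes essentially the same approach as the paper's proof: the compositors of \cref{lem:framing:a}, the identity $\phi_{a_2,i(a_1)\circ\varphi}\circ\phi_{a_1,\varphi}=\phi_{a_2a_1,\varphi}$ for vertical composition, and a diagram chase for horizontal composition in which both composites reduce to the same element ($x\,\varphi_1(y)\,\varphi_1(b)^{-1}a^{-1}$ in your labelling, which is precisely the paper's computation with the roles of the two indices interchanged). The additional checks you spell out (smoothness, well-definedness, unitors, associator coherence) are left implicit in the paper but are correct.
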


\begin{proof} 
$F_X$ preserves the composition of 1-morphisms under the compositor 
\begin{equation*}
\mathcal{M}_{\varphi_2} \circ \mathcal{M}_{\varphi_1} \cong \mathcal{M}_{\varphi_2\circ \varphi_1},
\end{equation*}
given fibrewise by the map $A_{\varphi_2} \otimes_A A_{\varphi_1} \cong A_{\varphi_2 \circ\varphi_1}:a\otimes b\mapsto a\varphi_2(b)$. Moreover, 
$F_X$ preserves the vertical composition of 2-morphisms because of the equality $\phi_{a_2,i(a_1)\circ \varphi} \circ \phi_{a_1,\varphi} = \phi_{a_2a_1,\varphi}$ of intertwiners, which follows immediately from the definition of $\phi_{a,\varphi}$. 
\\
It is less obvious that $F_X$ preserves the horizontal composition of 2-morphisms. Consider two 2-morphisms $(a_1,\varphi_1): \varphi_1 \Rightarrow i(a_1)\circ \varphi_1$ and $(a_2,\varphi_2): \varphi_2 \Rightarrow i(a_2)\circ \varphi_2$, whose horizontal composition is, cf. \cref{eq:semidirect}: 
\begin{equation*}
(a_2\varphi_2(a_1),\varphi_2\circ \varphi_1): \varphi_2\circ \varphi_1 \Rightarrow i(a_2\varphi_2(a_1)) \circ \varphi_2 \circ \varphi_1\text{.}
\end{equation*}
On the other side, we consider the images under $F_X$, i.e., the intertwiners $\phi_{a_1,\varphi_1}: \mathcal{M}_{\varphi_1} \to \mathcal{M}_{i(a_1) \circ \varphi_1}$ and $\phi_{a_2,\varphi_2}: \mathcal{M}_{\varphi_2} \to \mathcal{M}_{i(a_2) \circ \varphi_2}$ and their tensor product 
\begin{equation*}
\phi_{a_2,\varphi_2} \otimes \phi_{a_1,\varphi_1}: \mathcal{M}_{\varphi_2} \otimes_{\mathcal{A}} \mathcal{M}_{\varphi_2} \to \mathcal{M}_{i(a_2) \circ \varphi_2} \otimes_{\mathcal{A}}  \mathcal{M}_{i(a_1) \circ \varphi_1}\text{.}
\end{equation*}
The condition that this corresponds, under the compositors, to $\phi_{a_2\varphi_2(a_1),\varphi_2\circ\varphi_1}$ is the commutativity of the diagram
\begin{equation*}
\xymatrix{A_{\varphi_2} \otimes_A A_{\varphi_1} \ar[r] \ar[d] & A_{i(a_2)\circ \varphi_2} \otimes_A A_{i(a_1) \circ \varphi_1} \ar[d] \\ A_{\varphi_2\circ\varphi_1} \ar[r] & A_{i(a_2\varphi_2(a_1)) \circ \varphi_2 \circ \varphi_1}}
\quad\quad
\xymatrix{b\otimes c\ar@{|->}[r] \ar@{|->}[d] & b a_2^{-1} \otimes c a_1^{-1} \ar@{|->}[d] \\ b\varphi_2(c) \ar@{|->}[r] & b\varphi_2(c)\varphi_2(a_1)^{-1}a_2^{-1}\text{.}}
\end{equation*}
This is indeed commutative; this finishes the proof that $F_X$ is a 2-functor.
\end{proof}

It is obvious that $F_X$ is compatible with the pullback along smooth maps between contractible manifolds; hence $X \mapsto F_X$ is indeed a morphism $F$ of presheaves of bicategories.  

Let $\mathrm{Grpd}(A\text{-}\sAlgBdlbi)$ denote  the 2-groupoidification of the presheaf of super algebra bundles of Morita class $A$, obtained by discarding all non-invertible 1-morphisms and all non-invertible intertwiners. 
By \cite[Lemmas 2.1.3 and 4.2.8]{Kristel2022}, the  morphism $F$ factors through the inclusion $\mathrm{Grpd}(A\text{-}\sAlgBdlbi)\subset A\text{-}\sAlgBdlbi$. We denote by $\contrMfd\subset \Mfd$ the full subcategory of all smooth manifolds with all connected components contractible, and infer the following result.

\begin{lemma}
\label{lem:presheavesoncontractible}
Suppose $A$ is a Picard-surjective super algebra.
Then, the morphism $F$ of \cref{eq:functorF} induces an isomorphism
\begin{equation*}
\underline{\mathscr{B}\AUT}(A)|_{\contrMfd} \cong \mathrm{Grpd}(A\text{-}\sAlgBdlbi)|_{\contrMfd}
\end{equation*}
between the restrictions of both presheaves to the  subcategory $\contrMfd \subset \Mfd$.
\end{lemma}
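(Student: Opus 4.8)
The plan is to prove that for every $X\in\contrMfd$ the 2-functor $F_X$ is an equivalence of bicategories, naturally in $X$; this is what the stated isomorphism of presheaves amounts to (the source $\underline{\mathscr{B}\AUT}(A)^{\const}(X)$ has a single object $\ast$ with $F_X(\ast)=X\times A$, so the claim cannot be a strict bijection on objects, and I read \quot{isomorphism} in the weak, objectwise-biequivalence sense). Since $F$ is already a morphism of presheaves of bicategories, it remains only to check, for each such $X$, the three conditions characterising a biequivalence: (i) $F_X$ is essentially surjective on objects; (ii) the induced functor on hom-categories $\mathrm{Hom}(\ast,\ast)\to\mathrm{Hom}_{A\text{-}\prestwoVectBdl^{\grp}(X)}(X\times A,X\times A)$ is essentially surjective (essential fullness on $1$-morphisms); and (iii) the same functor is fully faithful (on $2$-morphisms). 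Throughout I use the standing assumption $\mathrm{HH}^1(A)=0$, so that $\mathrm{Out}_0(A)=\Aut(A)/i(A_0^{\times})$ is discrete by \cref{lem:autlift}.

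For (i) I would argue componentwise, so assume $X$ connected and contractible. A super algebra bundle $\mathcal{A}$ of Morita class $A$ has a typical fibre $A'$ that is Morita equivalent to $A$, and, being a fibre bundle with structure group the Lie group $\Aut(A')$ over a contractible base, it is trivialisable: $\mathcal{A}\cong X\times A'$. Choosing an invertible $A$-$A'$-bimodule $N$ implementing the Morita equivalence, the trivial bimodule bundle $X\times N$ is fibrewise invertible, hence invertible by \cref{lem:fibrewiseinvertibility:c}, and exhibits $\mathcal{A}\cong X\times A'\cong X\times A=F_X(\ast)$ in $A\text{-}\prestwoVectBdl^{\grp}(X)$.

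Condition (iii) is a direct computation and needs no contractibility. Given $\varphi,\varphi'$ and an invertible intertwiner $\Phi\colon\mathcal{M}_{\varphi}\to\mathcal{M}_{\varphi'}$ of bimodule bundles, I set $b(x):=\Phi_x(1)\in A_0^{\times}$, which depends smoothly on $x$; fibrewise \cref{lem:framing:c} identifies $\Phi_x$ with $\rho_{b(x)}$ and forces $i(b(x))=\varphi\circ\varphi'^{-1}$, so $i(b)$ is locally constant. Putting $a:=b^{-1}$ gives a $2$-morphism $(a,\varphi)$ of $\underline{\mathscr{B}\AUT}(A)^{\const}(X)$ with $\varphi'=i(a)\circ\varphi$ and $F_X(a,\varphi)=\phi_{a,\varphi}=\Phi$, proving surjectivity; injectivity is immediate from $\phi_{a,\varphi}(p,x)=(p,xa^{-1})$. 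Hence the hom-functor is a bijection on morphism sets between any $\varphi,\varphi'$, as required.

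The main work, and the principal obstacle, is (ii). Let $\mathcal{M}$ be an invertible $(X\times A)$-$(X\times A)$-bimodule bundle; I may take $X$ connected and contractible. By \cref{lem:fibrewiseinvertibility:c} each fibre $\mathcal{M}_x$ is an invertible $A$-$A$-bimodule, so Picard-surjectivity of $A$ gives $\mathcal{M}_x\cong A_{\varphi_x}$ for some $\varphi_x\in\Aut(A)$, with well-defined class $[\varphi_x]\in\mathrm{Out}_0(A)$ by \cref{lem:framing:c}. Trivialising $\mathcal{M}$ locally as a bimodule bundle writes $\mathcal{M}_x\cong {}_{g(x)}(A_{\psi_0})_{h(x)}\cong A_{g(x)^{-1}\psi_0 h(x)}$ with $g,h$ continuous into $\Aut(A)$; composing with the projection to the \emph{discrete} group $\mathrm{Out}_0(A)$ shows $x\mapsto[\varphi_x]$ is locally constant, hence constant on $X$. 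Fixing a representative $\varphi\in\Aut(A)$ of this class, viewed as the constant map $\varphi\colon X\to\Aut(A)$, we get $\mathcal{M}_x\cong A_{\varphi}$ fibrewise. To globalise, I consider the bundle $\underline{\mathrm{Iso}}(\mathcal{M},\mathcal{M}_{\varphi})$ of fibrewise bimodule isomorphisms: by \cref{lem:framing:c} its fibres are torsors under $\mathrm{Aut}_{A\text{-}A}(A_{\varphi})=Z(A)_0^{\times}$, so it is a smooth principal $Z(A)_0^{\times}$-bundle over $X$; being over a contractible base it admits a global section, i.e. an isomorphism $\mathcal{M}\cong\mathcal{M}_{\varphi}=F_X(\varphi)$. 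This is where Picard-surjectivity (to reach the form $A_{\varphi_x}$), the hypothesis $\mathrm{HH}^1(A)=0$ (to make the out-class locally constant), and contractibility (to trivialise the $Z(A)_0^{\times}$-bundle) all enter simultaneously, and assembling them carefully is the crux of the argument. Naturality of all constructions in $X$ is routine, completing the verification that $F$ is an objectwise biequivalence over $\contrMfd$.
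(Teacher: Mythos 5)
Your steps (i) and (iii) are essentially the paper's own: objects are handled by trivializing the algebra bundle over the contractible base and inserting a constant invertible bimodule bundle, and 2-morphisms are handled by \cref{lem:framing:c}. The problem is step (ii). You invoke $\mathrm{HH}^1(A)=0$ as a \quot{standing assumption}, but \cref{lem:presheavesoncontractible} has no such hypothesis: its only assumption is Picard-surjectivity, and this separation is deliberate --- the remark following \cref{th:class2vect} explicitly splits the two hypotheses of the classification theorem, attributing \cref{prop:classpicsur} (hence this lemma) to Picard-surjectivity alone, and reserving $\mathrm{HH}^1(A)=0$ for \cref{prop:cohoconst}. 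Moreover, the Hochschild condition is not removable from your route: it is what makes $\mathrm{Out}_0(A)$ discrete, which you use twice --- once to conclude that the fibrewise outer class $x\mapsto[\varphi_x]$ is locally constant, and once more (implicitly) to obtain smooth local sections of your bundle $\underline{\mathrm{Iso}}(\mathcal{M},\mathcal{M}_\varphi)$, i.e.\ local \emph{genuine} bimodule isomorphisms rather than intertwiners along automorphisms. Without discreteness the local-constancy step fails outright: a smooth path $\tau$ in $\Aut(A)$ with non-constant outer class yields an invertible bimodule bundle with fibres $A_{\tau(x)}$, locally trivial in the paper's sense (the trivialization being an intertwiner along the automorphisms $\tau(x)$), which is not isomorphic in $\sBimodBdl$ to any $\mathcal{M}_\varphi$ with $\varphi$ locally constant. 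So as written, your argument proves a strictly weaker statement than the lemma.

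For comparison, the paper's proof of (ii) is exactly where your extra hypothesis is avoided: it asserts that every $A$-$A$-bimodule bundle over a contractible manifold is trivializable, i.e.\ isomorphic to a \emph{constant} bundle $X\times M$ (argued \quot{analogously to vector bundles}), and then applies Picard-surjectivity a single time to the typical fibre, obtaining $M\cong A_\varphi$ with $\varphi$ constant for free; no outer-class analysis and no $Z(A)_0^{\times}$-gluing occur. If you want to keep your fibrewise approach but stay within the lemma's stated hypotheses, the statement you must supply is precisely this global trivializability --- equivalently, that an invertible $(X\times A)$-$(X\times A)$-bimodule bundle over a contractible base has a typical fibre \emph{as a bimodule}; your own worry about varying outer classes shows that this is the real content of the step, and the paper's remark after \cref{th:class2vect} ties exactly this typical-fibre property to the condition $\mathrm{HH}^1(A)=0$. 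Note finally that your weaker version does suffice for the only place the lemma is used, namely \cref{th:class2vect}, where both hypotheses are in force; but it is not a proof of \cref{lem:presheavesoncontractible} as stated.
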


\begin{proof}
It suffices to show that the 2-functor $F_X$ is an equivalence of bicategories whenever $X$ is contractible. 
That $F_X$ is essentially surjective comes from the fact that algebra bundles over contractible manifolds are trivializable. This can be proved analogously to the corresponding fact for vector bundles, see \cite[Prop.~1.7 and Cor. 1.8]{Hatcher2017}. Next we prove that the Hom-functor 
\begin{equation*}
F_X(\ast,\ast): \underline{\AUT}(A)(X) \to \underline{A}\text{-}\underline{A}\text{-}\sBimodBdl(X)^{\times}
\end{equation*}
is an equivalence of categories. Here, $(..)^{\times}$ denotes the subcategory of invertible bimodule bundles and invertible intertwiners, which appears here because of the 2-groupoidification. 
First of all, by the same argument as before, every  $\underline{A}$-$\underline{A}$-bimodule bundle is trivializable as a vector bundle over the contractible manifold $X$, and hence, by \cref{def:bimodulebundle}, of the form $\pss{\phi}{}\underline{M}_\psi$, where $M$ is an (invertible) $A$-$A$-bimodule and $\phi, \psi: X \to \Aut(A)$ are smooth maps. 
As $A$ is Picard-surjective, we have $M \cong A_{\varphi_0}$ for some $\varphi_0 \in \Aut(A)$.
We obtain an isomorphism of module bundles
\begin{equation*}
\pss{\phi}{}\underline{M}_\psi \cong \pss{\phi}{}\underline{A}_{\varphi_0 \circ \psi} \cong \underline{A}_{\phi^{-1} \circ \varphi_0 \circ \psi}\text{,}
\end{equation*}
where  the last isomorphism is obtained by  applying the isomorphism of bimodules $\pss{\varphi}{}A \mapsto A_{\varphi^{-1}}$, $a \mapsto \varphi^{-1}(a)$ fibrewise.
This shows that $F_X(\ast,\ast)$ is essentially surjective. Finally, on the level of 2-morphisms, that $F_X$ is fully faithful follows from \cite[Lemma 2.1.3 (d)]{Kristel2022}.
\end{proof}

If two presheaves of bicategories become isomorphic when restricted to the subcategory $\contrMfd$ of manifolds with contractible connected components, then the plus construction will associate to them \emph{isomorphic} presheaves. The reason for this is the existence of good open covers on manifolds, in combination with \cref{lem:refinementofss}. More precisely, let us denote by $\mathscr{F}^{c+}$ a variant of the plus construction in which the domains $Y$, $Z$, $W$ of all surjective submersions that appear in the description given in \cref{sec:2stack2vec} are objects of $\contrMfd$. On one side, we obtain an evident inclusion $\mathscr{F}^{c+} \to \mathscr{F}^{+}$ of presheaves, and we claim that this is an isomorphism.  For instance, essential surjectivity follows from \cref{lem:refinementofss} by choosing a good open cover of $X$ with local sections into the surjective submersion $\pi:Y \to X$ of a given super 2-vector bundle. Analogous considerations show essential surjectivity on the level of 1-morphisms and surjectivity on the level of 2-morphisms. On the other side, the presheaf $\mathscr{F}^{c+}$ evaluates the given presheaf $\mathscr{F}$ only on objects of $\contrMfd$. This proves above claim, and thus, \cref{lem:presheavesoncontractible} implies the following result. 

\begin{proposition}
\label{prop:classpicsur}
Let $A$ be a Picard-surjective super algebra. Then, the morphism $F$ of \cref{eq:functorF} induces an isomorphism of 2-stacks
\begin{equation*}
\underline{\mathscr{B}\AUT}(A)^{+} \cong \mathrm{Grpd}(A\text{-}\sAlgBdlbi)^{+}\text{.}
\end{equation*}
\end{proposition}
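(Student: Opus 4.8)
The plan is to combine \cref{lem:presheavesoncontractible}, which provides an isomorphism of the two presheaves after restriction to the category $\contrMfd$ of manifolds with contractible connected components, with the general principle that the plus construction of a presheaf of bicategories is determined by its restriction to $\contrMfd$. Writing $\mathscr{F} := \underline{\mathscr{B}\AUT}(A)^{\const}$ and $\mathscr{G} := A\text{-}\prestwoVectBdl^{\grp}$, the morphism $F$ induces $F^{+}: \mathscr{F}^{+} \to \mathscr{G}^{+}$, and the goal is to show that $F^{+}$ is an isomorphism. Since $F$ restricts to an isomorphism $\mathscr{F}|_{\contrMfd} \cong \mathscr{G}|_{\contrMfd}$ by \cref{lem:presheavesoncontractible}, it suffices to prove that $\mathscr{H}^{+}$ depends, naturally in $\mathscr{H}$, only on $\mathscr{H}|_{\contrMfd}$.

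To formalise this I would use the variant $\mathscr{H}^{c+}$ of the plus construction introduced in the paragraph preceding the proposition, but with the surjective submersions $Y \to X$ (and the $Z$, $W$ of $1$- and $2$-morphisms) required to arise from \emph{good} open covers, so that not merely the domains but all of their finite fibre products $Y^{[k]}$, $Z^{[k]}$ lie in $\contrMfd$. With this stipulation $\mathscr{H}^{c+}$ evaluates $\mathscr{H}$ only on objects of $\contrMfd$, so a restricted isomorphism $F|_{\contrMfd}$ induces an isomorphism $F^{c+}: \mathscr{F}^{c+} \cong \mathscr{G}^{c+}$ compatible with $F^{+}$ under the canonical inclusions $\mathscr{H}^{c+} \to \mathscr{H}^{+}$.

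The remaining step is to show that these inclusions $\mathscr{H}^{c+} \to \mathscr{H}^{+}$ are themselves isomorphisms of presheaves. Full faithfulness is immediate, since $\mathscr{H}^{c+}$ only restricts the class of admissible submersions. For essential surjectivity on objects, given a descent object over $X$ with submersion $\pi: Y \to X$, I would choose a good open cover $\{U_\alpha\}$ of $X$ together with local sections $\sigma_\alpha: U_\alpha \to Y$, form $Y' := \coprod_\alpha U_\alpha$ with $\sigma: Y' \to Y$, and invoke \cref{lem:refinementofss} to obtain $\mathscr{V}^{\sigma} \cong \mathscr{V}$ with $\mathscr{V}^{\sigma}$ now defined over a good cover, hence in $\mathscr{H}^{c+}$. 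The same refinement argument, applied to the surjective submersions of $1$-morphisms and of $2$-morphisms, gives essential surjectivity on $1$-morphisms and surjectivity on $2$-morphisms. Chaining $\mathscr{F}^{+} \cong \mathscr{F}^{c+} \cong \mathscr{G}^{c+} \cong \mathscr{G}^{+}$ then yields the claim.

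The main obstacle is precisely the control of fibre products: to guarantee that $\mathscr{H}^{c+}$ sees only $\mathscr{H}|_{\contrMfd}$ one must keep all the higher fibre products $Y^{[k]}$, and not merely $Y$ itself, inside $\contrMfd$, which fails for an arbitrary cover by contractible sets and is the reason good covers, with their contractible finite intersections, are indispensable. A secondary point requiring care is coherence: the level-wise refinements must be organised, using the naturality of \cref{lem:refinementofss}, into an actual isomorphism of presheaves of bicategories rather than just a collection of equivalences of the individual bicategories $\mathscr{H}^{c+}(X) \to \mathscr{H}^{+}(X)$.
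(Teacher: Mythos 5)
Your proposal is correct and follows essentially the same route as the paper: the paper likewise introduces the variant $\mathscr{F}^{c+}$ of the plus construction with domains in $\contrMfd$, shows that the inclusion $\mathscr{F}^{c+}\to\mathscr{F}^{+}$ is an isomorphism using good open covers together with \cref{lem:refinementofss}, observes that $\mathscr{F}^{c+}$ evaluates the underlying presheaf only on objects of $\contrMfd$, and concludes via \cref{lem:presheavesoncontractible}. Your explicit insistence that all fibre products $Y^{[k]}$, $Z^{[k]}$ (and not merely the domains $Y$, $Z$, $W$) lie in $\contrMfd$ is a mild sharpening of the paper's formulation, which imposes the $\contrMfd$-condition only on the domains yet needs exactly your stronger version for its claim that $\mathscr{F}^{c+}$ sees only $\mathscr{F}|_{\contrMfd}$.
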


Now we are in position to present our classification theorem.

\begin{theorem}
\label{th:class2vect}
For any Picard-surjective super algebra $A$, there is a canonical bijection
\begin{equation*}
\mathrm{h}_0(A\text{-}\stwoVectBdl(X)) \cong \check{\mathrm{H}}^1(X,\AUT(A)).
\end{equation*}
In other words, super 2-vector bundles over $X$ of Morita class $A$ are classified by the \v Cech cohomology of $X$ with values in the crossed module $\AUT(A)$.
\end{theorem}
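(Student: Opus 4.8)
The plan is to assemble the claimed bijection by concatenating the two structural results \cref{prop:classpicsur} and \cref{prop:cohoconst}, after first reducing the computation of $\mathrm{h}_0$ from the full pre-2-stack $A\text{-}\prestwoVectBdl$ to its 2-groupoidification $A\text{-}\prestwoVectBdl^{\grp}$. By \cref{re:moritaclass:a} we have $A\text{-}\stwoVectBdl=(A\text{-}\prestwoVectBdl)^{+}$, so the left-hand side of the theorem is $\mathrm{h}_0((A\text{-}\prestwoVectBdl)^{+}(X))$; by the definition \eqref{DefNonAbChechCohomAUTA} together with \cref{prop:cohoconst}, the right-hand side is $\mathrm{h}_0((\underline{\mathscr{B}\AUT}(A)^{\const})^{+}(X))$. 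Thus the task is to connect these two plus constructions.

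The key reduction I would establish is that, for the purpose of computing $\mathrm{h}_0$ of a plus construction, only the invertible 1-morphisms and invertible 2-morphisms of the underlying pre-2-stack play a role, so that
\begin{equation*}
\mathrm{h}_0((A\text{-}\prestwoVectBdl)^{+}(X)) = \mathrm{h}_0((A\text{-}\prestwoVectBdl^{\grp})^{+}(X))\text{.}
\end{equation*}
This I would verify by inspecting the explicit description of the plus construction in \cref{sec:2stack2vec}. An object $\mathscr{V}=(\pi,\mathcal{A},\mathcal{M},\mu)$ already has $\mathcal{M}$ an invertible bimodule bundle and $\mu$ an invertible intertwiner, hence is literally a descent object of the groupoidified presheaf $A\text{-}\prestwoVectBdl^{\grp}$; so the two plus constructions have the same objects over $X$. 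Moreover, by \cref{LemmaInvertibilityA,LemmaInvertibilityB}, any isomorphism $\mathscr{V}_1\to\mathscr{V}_2$ in $(A\text{-}\prestwoVectBdl)^{+}$ necessarily has invertible underlying bimodule bundle $\mathcal{P}$ and invertible underlying intertwiner $\phi$, hence is already an isomorphism in $(A\text{-}\prestwoVectBdl^{\grp})^{+}$. Therefore both the objects and the isomorphism relation agree, giving equal sets of isomorphism classes.

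With this reduction in hand, \cref{prop:classpicsur} (using Picard-surjectivity of $A$) yields an isomorphism of 2-stacks $(A\text{-}\prestwoVectBdl^{\grp})^{+} \cong (\underline{\mathscr{B}\AUT}(A)^{\const})^{+}$, so their sets of isomorphism classes over $X$ coincide, and \cref{prop:cohoconst} (using $\mathrm{HH}^1(A)=0$) identifies the latter with $\check{\mathrm{H}}^1(X,\AUT(A))$. Composing the whole chain gives
\begin{equation*}
\mathrm{h}_0(A\text{-}\stwoVectBdl(X)) = \mathrm{h}_0((A\text{-}\prestwoVectBdl^{\grp})^{+}(X)) \cong \mathrm{h}_0((\underline{\mathscr{B}\AUT}(A)^{\const})^{+}(X)) \cong \check{\mathrm{H}}^1(X,\AUT(A))\text{,}
\end{equation*}
which is the desired bijection; its canonicity is inherited from the canonicity of the morphism $F$ of \eqref{eq:functorF} and of each identification in the chain.

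The main obstacle is precisely the reduction step: one must check carefully, against the componentwise description of the plus construction, that passing to the 2-groupoidification changes neither the objects nor the isomorphism relation after stackification. The subtlety is that $A\text{-}\prestwoVectBdl$ does contain genuinely non-invertible 1-morphisms (for instance the canonical non-invertible morphism $\mathcal{A}\to\mathscr{I}$ discussed after \cref{lem:twistedmodulebundles}), and one has to confirm that these are irrelevant once one restricts attention to $\mathrm{h}_0$ — which is exactly the content secured by \cref{LemmaInvertibility}. Everything else in the argument is a formal concatenation of results already established.
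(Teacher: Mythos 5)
Your proposal is correct and takes essentially the same route as the paper: the paper's proof consists of exactly your chain, written as $\mathrm{h}_0(A\text{-}\prestwoVectBdl^{+}(X)) = \mathrm{h}_0((A\text{-}\prestwoVectBdl^{+})^{\grp}(X)) = \mathrm{h}_0((A\text{-}\prestwoVectBdl^{\grp})^{+}(X))$, with the commutation of 2-groupoidification and the plus construction justified by \cref{LemmaInvertibility}, followed by \cref{prop:classpicsur,prop:cohoconst}. Your "key reduction" is just this commutation unpacked by hand against the explicit description of the plus construction, using the same \cref{LemmaInvertibility} in the same role.
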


\begin{proof}
We note that
\begin{align*}
\mathrm{h}_0(A\text{-}\stwoVectBdl(X)) &= \mathrm{h}_0((A\text{-}\sAlgBdlbi)^{+}(X))
\\&= \mathrm{h}_0(\mathrm{Grpd}((A\text{-}\sAlgBdlbi)^{+}(X)))
\\&= \mathrm{h}_0(\mathrm{Grpd}(A\text{-}\sAlgBdlbi)^{+}(X))\text{,}
\end{align*}
where the first equality holds by definition of super 2-vector bundles, and the second holds because 2-groupoidification preserves the set of isomorphism classes of objects, and the third holds because 2-groupoidification commutes with the plus construction (this follows from \cref{LemmaInvertibility}). 
The claim follows then from \cref{prop:classpicsur} and  \cref{DefNonAbChechCohomAUTA}.
\end{proof}

\begin{remark}
The condition of being Picard-surjective can be achieved at any time by passing to a Morita equivalent super algebra: this is possible because every super algebra is Morita equivalent to a Picard-surjective one (\cite[Prop.~A.2]{Kristel2022})  and $\mathrm{h}_0(A\text{-}\stwoVectBdl(X))$ is Morita invariant by \cref{lem:Moritaclass2vect}.
\end{remark}

A result of Baez and Stevenson \cite{baez8} shows that the geometric realization $|\Gamma|$ of  (the Lie 2-group associated to) a crossed module $\Gamma$ is a topological group, whose classifying space $\mathrm{B}|\Gamma|$  represents the cohomology with values in $\Gamma$, 
\begin{equation*}
  \check{\mathrm{H}}^1(X,\Gamma) \cong [X,\mathrm{B}|\Gamma|].
\end{equation*}
Combining this with \cref{th:class2vect}, we obtain the following.  

\begin{corollary}
For any Picard-surjective super algebra $A$, $\mathrm{B}|\AUT(A)|$ is a classifying space for super 2-vector bundles of Morita class $A$.
\end{corollary}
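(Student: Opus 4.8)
The plan is to deduce the statement by composing two bijections that are already at our disposal: the classification of \cref{th:class2vect} and the representability theorem of Baez--Stevenson \cite{baez8}. Writing $\Gamma := \AUT(A)$, I would first invoke \cref{th:class2vect}, whose hypotheses (Picard-surjectivity of $A$ and $\mathrm{HH}^1(A)=0$) are exactly those assumed in the corollary, to obtain for every smooth manifold $X$ a bijection $\mathrm{h}_0(A\text{-}\stwoVectBdl(X)) \cong \check{\mathrm{H}}^1(X,\Gamma)$. I would then apply the Baez--Stevenson result, which exhibits the geometric realization $|\Gamma|$ as a topological group whose classifying space represents non-abelian $\Gamma$-cohomology, giving $\check{\mathrm{H}}^1(X,\Gamma) \cong [X,\mathrm{B}|\Gamma|]$. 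Composing these two identifications yields $\mathrm{h}_0(A\text{-}\stwoVectBdl(X)) \cong [X,\mathrm{B}|\AUT(A)|]$, which is precisely the assertion that $\mathrm{B}|\AUT(A)|$ classifies super 2-vector bundles of Morita class $A$.

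The one point that genuinely requires care, and where I expect the only real work, is \emph{naturality} of the composite bijection in the variable $X$: a bona fide classifying-space statement demands that for a smooth map $f:X' \to X$ the pullback $f^{*}$ of super 2-vector bundles on the left corresponds to precomposition $(-)\circ f$ on $[X,\mathrm{B}|\Gamma|]$ on the right. The Baez--Stevenson isomorphism is natural by construction, being the representability of a functor. For the left-hand bijection I would revisit the proof of \cref{th:class2vect} and check that each intermediate step is a morphism of presheaves, hence strictly compatible with pullback: the defining identity $A\text{-}\stwoVectBdl = (A\text{-}\prestwoVectBdl)^{+}$, the passage to the groupoidification, the isomorphism of \cref{prop:classpicsur} induced by the presheaf morphism $F$, and the isomorphism of \cref{prop:cohoconst} are all natural in $X$ because $F$ and the plus construction are functorial. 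Applying $\mathrm{h}_0$ then produces a natural transformation of set-valued functors, so the composite is a natural bijection.

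A secondary subtlety I would address is the passage between the smooth and the topological settings: the left-hand side is built from smooth manifolds and smooth \v{C}ech data, whereas $[X,\mathrm{B}|\Gamma|]$ denotes homotopy classes of \emph{continuous} maps. This gap is bridged by the standard facts that on a paracompact smooth manifold every continuous map into a space is homotopic to one arising from smooth data and that smooth and continuous homotopy classes agree; since Baez--Stevenson already phrase their theorem to accommodate this, I expect no new argument beyond citing it correctly. With naturality in hand, the composite identifies the functor $X \mapsto \mathrm{h}_0(A\text{-}\stwoVectBdl(X))$ with the representable functor $[-,\mathrm{B}|\AUT(A)|]$, which completes the proof.
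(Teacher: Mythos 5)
Your proof is correct and is exactly the paper's argument: the corollary is obtained by composing the bijection of \cref{th:class2vect} with the Baez--Stevenson representability result $\check{\mathrm{H}}^1(X,\Gamma) \cong [X,\mathrm{B}|\Gamma|]$ from \cite{baez8}. Your additional attention to naturality in $X$ and to the smooth-versus-continuous comparison is sensible diligence, but it does not change the route --- the paper treats these points as implicit in the functoriality of the constructions and in the cited theorem.
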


For the convenience of the reader, we shall spell out explicitly  procedures that realize the bijection of \cref{th:class2vect}, obtained by passing through all intermediate steps described above.
Given a cocycle $(\varphi,a)$ in $\check{\mathrm{H}}^1(X,\AUT(A))$, we construct the following super 2-vector bundle over $X$. 
Its surjective submersion $Y$ is the disjoint union of the open sets $U_{\alpha}$ of the cover on which $(\varphi,a)$ is defined. The algebra bundle is the trivial algebra bundle $\underline{A}$ over $Y$. 
Over $Y^{[2]}$, which is the disjoint union of double intersections $U_{\alpha}\cap U_{\beta}$, we have the map $\varphi:Y^{[2]} \to \Aut(A)$, to which we associate the bimodule bundle  $\underline{A}_{\varphi}$, the fibre of which is $A_{\varphi_x}$. 
Over $Y^{[3]}$, we have the three automorphisms $\varphi_{12}$, $\varphi_{23}$ and $\varphi_{13}$ defined by $\varphi_{ij}:= \pr_{ij}^{*}\varphi$, and we have $i(a) \circ \varphi_{23}\circ \varphi_{12} = \varphi_{13}$ due to the cocycle condition, where $a: Y^{[3]} \to A^{\times}$. 
At each point, $a$ defines an invertible even intertwiner of $A$-$A$-bimodules  
\begin{equation*}
\underline{A}_{\varphi_{23}} \otimes_{\underline{A}} \underline{A}_{\varphi_{12}}\cong \underline{A}_{\varphi_{23} \circ \varphi_{12}}\cong \underline{A} \otimes_{\underline{A}} \underline{A}_{\varphi_{23}\circ \varphi_{12}}\cong \underline{A}_{i(a)} \otimes_{\underline{A}}  \underline{A}_{\varphi_{23}\circ\varphi_{12}}\cong \underline{A}_{i(a) \circ \varphi_{23} \circ \varphi_{12}}\cong \underline{A}_{\varphi_{13}}\text{.}
\end{equation*}
These form the required associative intertwiner $\mu: \underline{A}_{\varphi_{23}} \otimes_{\mathcal{A}} \underline{A}_{\varphi_{12}} \to \underline{A}_{\varphi_{13}}$ of bimodule bundles over $Y^{[3]}$.

Conversely, given a super 2-vector bundle $\mathscr{V} = (\pi, \mathcal{A}, \mathcal{M}, \mu)$ in $A\text{-}\stwoVectBdl(X)$, we 
may first pass via \cref{prop:trivialbundles} to an isomorphic one where $\mathcal{A}$ and $\mathcal{M}$ are trivial in the sense of \cref{re:typicalfibre,ExampleTwistedModuleIso}.
On each connected component $X_i$, the trivial super algebra bundle has a typical fibre $A_i$ which is Morita equivalent to $A$. 
Choosing invertible $A$-$A_i$-bimodules $M_i$, one can construct a 1-isomorphism that takes us to a super 2-vector bundle whose super algebra bundle is the trivial bundle $\underline{A}$ over $Y$ and whose bimodule bundle  is still trivial. 
Because of Picard-surjectivity of $A$, we may then identify the  bimodule bundle with $\underline{A}_{\varphi}$, where $\varphi$ is a map $\varphi:Y^{[2]}\to \Aut(A)$ (see the argument in the proof of \cref{lem:presheavesoncontractible}). We write again $\varphi_{ij}:= \pr_{ij}^{*}\varphi$.  Over $Y^{[3]}$, we obtain an invertible even intertwiner
\begin{equation*}
\mu: \underline{A}_{\varphi_{23}} \otimes_{\underline{A}} \underline{A}_{\varphi_{12}}\to \underline{A}_{\varphi_{13}}
\end{equation*}
of $\underline{A}$-$\underline{A}$-bimodule bundles. 
Under the isomorphism $\underline{A}_{\varphi_{23}} \otimes_{\underline{A}} \underline{A}_{\varphi_{12}}\cong \underline{A}_{\varphi_{23} \circ \varphi_{12}}$, this becomes an even intertwiner $\underline{A}_{\varphi_{23} \circ \varphi_{12}}\cong \underline{A}_{\varphi_{13}}$ which by \cite[Lemma 2.1.3 (d)]{Kristel2022} corresponds to a unique smooth map $a:Y^{[3]} \to A^{\times}$ such that $i(a)\circ \varphi_{23}\circ \varphi_{12}= \varphi_{13}$. 
Finally, using \cref{lem:refinementofss} one can now achieve that $Y$ is the disjoint union of open sets; then, $(\varphi,a)$ is a cocycle representing the class of $\mathscr{V}$ in $\check{\mathrm{H}}^1(X,\AUT(A))$.

\begin{remark}
\label{rem:ungradedps}
Ungraded algebras are rarely Picard surjective when regarded as super algebras concentrated in even degrees. For that reason, one cannot simply apply the results of this section to ungraded algebras. However, one can proceed in close analogy. An ungraded algebra is called \emph{ungraded Picard-surjective} in the sense that every \emph{ungraded} invertible $A$-$A$-bimodule is induced from an automorphism of $A$. The presheaf morphism $F$ becomes a morphism
\begin{equation*}
F: \underline{\mathscr{B}\AUT}(A) \to A\text{-}\AlgBdlbi
\end{equation*}
to \emph{ungraded} algebra bundles. The ungraded analogue of
\Cref{lem:presheavesoncontractible} holds, and hence \cref{prop:classpicsur} has as an ungraded analogue an isomorphism   
\begin{equation*}
\underline{\mathscr{B}\AUT}(A)^{+} \cong \mathrm{Grpd}(A\text{-}\AlgBdlbi)^{+}\text{.}
\end{equation*}
We obtain a classification result analogous to \cref{th:class2vect}, 
\begin{equation*}
\mathrm{h}_0(A\text{-}\twoVectBdl(X)) \cong \check{\mathrm{H}}^1(X,\AUT(A))\text{.}
\end{equation*} 
Likewise, $\mathrm{B}|\AUT(A)|$ is a classifying space for ungraded 2-vector bundles. 
\end{remark}

To close this section, we  discuss the  canonical inclusion $\mathscr{B}Z(A)_0^{\times} \to \AUT(A)$, its induced map in cohomology, and its geometric interpretation by super 2-vector bundles. 
Here, $\mathscr{B}Z(A)_0^{\times}$ denotes the crossed module $Z(A)_0^{\times} \to \ast$, with the trivial action. 
 The inclusion $\mathscr{B}Z(A)_0^{\times} \to \AUT(A)$ is the strict homomorphism of crossed modules given by the inclusion $Z(A)_0^{\times}\subset A_0^{\times}$. Note that the cohomology with values in the crossed module  $\mathscr{B}Z(A)_0^{\times}$ is the ordinary degree two \v Cech cohomology with values in the sheaf of smooth $Z(A)_0^{\times}$-valued functions (see \cref{RemarkCheckCohomologyAppendix}). 
 Thus, the map induced by $\mathscr{B}Z(A)_0^{\times} \to \AUT(A)$ is a map 
\begin{equation*}
\check{\mathrm{H}}^2(X,\sheaf{Z(A)}^{\times}_0)\to \check{\mathrm{H}}^1(X,\AUT(A))\text{,}
\end{equation*}
and it sends a \v Cech cocycle $a_{\alpha\beta\gamma}:U_{\alpha}\cap U_{\beta}\cap U_{\gamma} \to Z(A)_0^{\times}$ to the $\AUT(A)$-cocycle $(1,a)$.

For any abelian Lie group $K$, the cohomology group $\check{\mathrm{H}}^2(X,K)$ classifies  $K$-principal bundle gerbes over $X$. These are principal bundle versions of the bundle gerbes discussed in \cref{sec:bundlegerbes}, and defined as
\begin{equation*}
\Grb_{K} := \mathscr{B}(\Bdl {K})^{+}\text{,}
\end{equation*} 
where $\Bdl K$ is the monoidal stack of principal $K$-bundles. In our case, $K=Z(A)_0^{\times}$ comes with a monoidal functor
\begin{equation*}
\Bdl {Z(A)_0^{\times}}(X) \to A\text{-}A\text{-}\sBimodBdl(X)\text{,}
\end{equation*}
obtained by associating to a principal $Z(A)_0^{\times}$-bundle $\mathcal{Z}$ the vector bundle $\mathcal{Z} \times_{Z(A)_0^{\times}} A$, which becomes a bundle of $A$-$A$-bimodules in the obvious way. In turn, we obtain
a morphism of presheaves $\mathscr{B}(\Bdl{Z(A)_0^{\times}}) \to A\text{-}\sAlgBdlbi$, which under the plus construction yields a morphism
\begin{equation}
\label{eq:bundlegerbesto2vectbdl}
\Grb_{Z(A)_0^{\times}} \to A\text{-}\stwoVectBdl\text{,}
\end{equation}
for any super algebra $A$. In particular, if $A$ is central (i.e., $Z(A)_0^{\times}=k^{\times}$) this 2-functor coincides with the 2-functor \cref{eq:ungbgii2vb}.
The following result follows directly from either the cocycle description or the abstract stackification procedure.

\begin{proposition}
Let $A$ be a Picard-surjective super algebra. 
The map in cohomology induced by the inclusion $\mathscr{B}Z(A)_0^{\times} \to \AUT(A)$ corresponds to the 2-functor \eqref{eq:bundlegerbesto2vectbdl} that sends principal $Z(A)_0^{\times}$-bundle gerbes to super 2-vector bundles. In other words, the diagram
\begin{equation*}
\xymatrix{\mathrm{h_0}(\Grb_{Z(A)_0^{\times}} (X)) \ar[d]_{\cong} \ar[r] & \mathrm{h_0}(A\text{-}\stwoVectBdl_k(X)) \ar[d]^{\cong} \\  \check{\mathrm{H}}^2(X,\sheaf{Z(A)}^{\times}_0) \ar[r] & \check{\mathrm{H}}^1(X,\AUT(A))}
\end{equation*}  
is commutative.
\end{proposition}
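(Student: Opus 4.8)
The plan is to verify commutativity of the square directly on cocycles, using the explicit description of the classification bijection of \cref{th:class2vect} spelled out just after its proof, together with the explicit description of the map induced by $\mathscr{B}Z(A)_0^\times \to \AUT(A)$ given immediately before the statement. Fix a good open cover $\{U_\alpha\}$ of $X$; by the existence of good covers and \cref{lem:refinementofss} every object in sight can be represented over $Y=\coprod_\alpha U_\alpha$, so it suffices to compare cocycles on this one cover.

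First I would trace the \emph{down-then-right} path. A principal $Z(A)_0^\times$-bundle gerbe, trivialized over the cover, is classified by a \v Cech $2$-cocycle $a_{\alpha\beta\gamma}\colon U_\alpha\cap U_\beta\cap U_\gamma \to Z(A)_0^\times$, which is its image under the left vertical isomorphism $\mathrm{h}_0(\Grb_{Z(A)_0^\times}(X))\cong \check{\mathrm{H}}^2(X,Z(A)_0^\times)$. By the description of the bottom map, the class $[a]$ is then sent to the class of the $\AUT(A)$-cocycle $(1,a)$, where $1$ denotes the constant map with value $\id_A\in\Aut(A)$.

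Next I would trace the \emph{right-then-down} path. Applying the $2$-functor \eqref{eq:bundlegerbesto2vectbdl} to the same trivialized gerbe produces the super $2$-vector bundle whose surjective submersion is $Y$, whose algebra bundle is the trivial bundle $Y\times A$, whose bimodule bundle over $Y^{[2]}$ is the associated bundle $\mathcal{Z}\times_{Z(A)_0^\times}A$, and whose intertwiner over $Y^{[3]}$ is induced from the gerbe multiplication. In the chosen trivializations the $Z(A)_0^\times$-bundle $\mathcal{Z}$ is trivial, so the associated bimodule bundle is $U_\alpha\cap U_\beta \times A$ carrying the \emph{untwisted} $A$-$A$-bimodule structure; thus in the language of the classification procedure its automorphism datum is $g_{\alpha\beta}=\id_A$, i.e.\ $g\equiv 1$, already locally constant in the sense of \cref{lem:constantautos}. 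It then remains to compute the central datum: the induced intertwiner $A_{\id}\otimes_A A_{\id}\to A_{\id}$ over $U_\alpha\cap U_\beta\cap U_\gamma$, obtained by pushing the gerbe multiplication through $\mathcal{Z}\times_{Z(A)_0^\times}A$, is, after the canonical identification $A\otimes_A A\cong A$, multiplication by the central element $a_{\alpha\beta\gamma}$. By \cref{lem:framing:c} (for $\phi=\psi=\id_A$, where the relevant set of units reduces to $Z(A)_0^\times$) such an intertwiner corresponds to precisely this element of $Z(A)_0^\times\subset A_0^\times$, so the cocycle extracted by the right vertical isomorphism of \cref{th:class2vect} is again $(1,a)$. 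Hence the two paths agree.

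The main obstacle is purely one of conventions: one must check that the identifications used by the associated-bundle functor, by the gerbe multiplication, and by the bijection of \cref{lem:framing:c} (in which $\rho_b(x)=xb$) are threaded consistently, so that the extracted central cocycle is $a$ rather than $a^{-1}$ — an inversion would be fatal, since $a\mapsto a^{-1}$ acts as $-\id$ on $\check{\mathrm{H}}^2(X,Z(A)_0^\times)$. Once the book-keeping is fixed so that the bottom map and the extraction of \cref{th:class2vect} use matching conventions, commutativity is immediate. Alternatively, and more cleanly, one can avoid the cocycle computation altogether by working one level below stackification: both vertical isomorphisms and the top $2$-functor are obtained by applying the plus construction and $\mathrm{h}_0$ to morphisms of presheaves (the right-hand one via \cref{prop:classpicsur} and \cref{prop:cohoconst}), so by functoriality it suffices to exhibit a commuting square of presheaf morphisms over $\contrMfd$ relating the crossed-module inclusion $\mathscr{B}Z(A)_0^\times\to\AUT(A)$, the morphism $F$ of \eqref{eq:functorF}, and the associated-bundle morphism $\mathscr{B}(\Bdl{Z(A)_0^\times})\to A\text{-}\prestwoVectBdl$. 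Over contractible manifolds both composites send a central element to the trivial algebra bundle equipped with the identity-twisted bimodule bundle and the multiplication intertwiner, so the square commutes on the nose, and the plus construction transports this to the asserted diagram.
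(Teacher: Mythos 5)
Your proposal is correct and matches the paper exactly: the paper's proof is the one-line assertion that the statement ``follows directly from either the cocycle description or the abstract stackification procedure,'' and your two arguments are precisely these two routes, worked out in detail. In particular, your cocycle trace (extracting $(1,a)$ on both paths via \cref{lem:framing:c}) and your presheaf-level square transported by the plus construction are faithful elaborations of what the paper leaves implicit.
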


\subsection{Monoidal structure}

\label{sec:monoidalstructure}

Let $A$ and $B$ be super algebras. There is a strict homomorphism
\begin{equation}
\label{eq:stricthom}
m:\AUT(A) \times \AUT(B) \to \AUT(A \otimes B)
\end{equation}
of crossed modules, given by the maps
\begin{align*}
A_0^{\times} \times B_0^{\times} &\to (A\otimes B)_0^{\times};& (a,b) &\mapsto a \otimes b
\\
\mathrm{Aut}(A) \times \mathrm{Aut}(B) &\to \mathrm{Aut}(A\otimes B);& (\varphi_1,\varphi_2) &\mapsto \varphi_1\otimes\varphi_2 
\end{align*}
\begin{comment}
It is to check that $i({a}) \otimes i({b})= i({a \otimes b})$, which is true because $a$ and $b$ are even.
For the actions, the condition is only that $\varphi(a) \otimes \psi(b) = (\varphi \otimes \psi)(a \otimes b)$.
\end{comment}
It induces a map in cohomology,
\begin{equation}
\label{eq:productnonabco}
\check{\mathrm{H}}^1(X,\AUT(A)) \times \check{\mathrm{H}}^1(X,\AUT(B)) \to \check{\mathrm{H}}^1(X,\AUT(A \otimes B))\text{.}
\end{equation}

We want to show  that this map corresponds to the tensor product of super 2-vector bundles. 
\begin{comment}
Here, we work in the symmetric monoidal 2-stack $\stwoVectBdlgrpd k-$ -- since our classification only affects isomorphism classes of objects, the restriction to invertible bimodules is irrelevant.  
\end{comment}
We claim that the presheaf morphism 
\begin{equation*}
F: \underline{\mathscr{B}\AUT}(A) \to A\text{-}\sAlgBdlbi
\end{equation*}
of \eqref{eq:functorF} is compatible with the tensor product of super algebras, in the sense that the diagram
\begin{equation} 
\label{DiagramProduct}
\begin{aligned}
\xymatrix@C=4em{
\underline{\mathscr{B}\AUT}(A) \times \underline{\mathscr{B}\AUT}(B) \ar[d]_{F \times F} \ar[r]^-{\mathscr{B}M} & \underline{\mathscr{B}\AUT}(A \otimes B) \ar[d]^{F} \\ 
A\text{-}\sAlgBdlbi \times B\text{-}\sAlgBdlbi \ar[r]_-{\otimes} & (A\otimes B)\text{-}\sAlgBdlbi
}
\end{aligned}
\end{equation}
of presheaves of bicategories is (strictly!) commutative.
On the level of objects, this is clear. 
On the level of 1-morphisms, the diagram commutes since $\underline{A}_{\varphi_1} \otimes_k \underline{B}_{\varphi_2} = (\underline{A}\otimes_k \underline{B})_{\varphi_1\otimes \varphi_2}$ (the identity on $\underline{A} \otimes \underline{B}$ is an intertwiner). 
On the level of 2-morphisms, the diagram commutes because the intertwiner $\phi_{a,\varphi}$ in the definition of $F$ on 2-morphisms satisfies (by inspection) the identity
\begin{equation*}
\phi_{a,\varphi_A} \otimes \phi_{b,\varphi_B} = \phi_{a\otimes b,\varphi_A \otimes \varphi_B}\text{;}
\end{equation*}
this is precisely the coincidence between clockwise and counter-clockwise directions.

\begin{proposition}
\label{prop:classmult}
For any pair of super algebras $A$ and $B$, the diagram
\begin{equation*}
\xymatrix{
\check{\mathrm{H}}^1(X,\AUT(A)) \times \check{\mathrm{H}}^1(X,\AUT(B)) \ar[d]_{F \times F} \ar[r] & \check{\mathrm{H}}^1(X,\AUT(A\otimes B)) \ar[d]^{F} \\ 
\mathrm{h}_0(A\text{-}\stwoVectBdl(X)) \times \mathrm{h}_0(B\text{-}\stwoVectBdl(X)) \ar[r] & \mathrm{h}_0((A\otimes B)\text{-}\stwoVectBdl(X))
}
\end{equation*}
is commutative.

\end{proposition}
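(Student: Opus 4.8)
The first claim, that $\mathrm{HH}^1(A \otimes B) = 0$, follows from the Künneth theorem for graded Hochschild cohomology. Since $A$ and $B$ are finite-dimensional over $k$, tensoring projective bimodule resolutions of $A$ and of $B$ yields a projective $(A\otimes B)$-bimodule resolution of $A\otimes B$, and because all chain groups are $k$-vector spaces (hence flat) one obtains a graded isomorphism $\mathrm{HH}^n(A\otimes B) \cong \bigoplus_{p+q=n}\mathrm{HH}^p(A)\otimes\mathrm{HH}^q(B)$. In degree one this reads $\mathrm{HH}^1(A\otimes B)\cong \bigl(\mathrm{HH}^0(A)\otimes\mathrm{HH}^1(B)\bigr)\oplus\bigl(\mathrm{HH}^1(A)\otimes\mathrm{HH}^0(B)\bigr)$, which vanishes by hypothesis. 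Alternatively one can argue directly with derivations: an even derivation of $A\otimes B$ restricts to derivations of $A\otimes 1$ and $1\otimes B$ valued in the bimodule $A\otimes B$, and identifying $\mathrm{Der}(A,A\otimes B)\cong\mathrm{Der}(A)\otimes B$ together with $\mathrm{HH}^1(A)=\mathrm{HH}^1(B)=0$ shows it is inner.

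For the commutativity of the diagram, the plan is to deduce it from the strict commutativity of \eqref{DiagramProduct}, which was verified above, by applying the plus construction. First I would observe that all vertical maps in the statement are, by definition, the classification maps underlying \cref{th:class2vect}: on each factor this is the composite of the isomorphism $\check{\mathrm{H}}^1(X,\AUT(A))\cong\mathrm{h}_0((\underline{\mathscr{B}\AUT}(A)^{\const})^{+}(X))$ from \cref{prop:cohoconst} (which uses $\mathrm{HH}^1(A)=0$, and for the right-hand factor now also the first part of this proposition) with the map $\mathrm{h}_0(F^{+})$ induced by the plus construction of the morphism $F$ of \eqref{eq:functorF}, followed by the identification $\mathrm{h}_0((A\text{-}\prestwoVectBdl^{\grp})^{+}(X))=\mathrm{h}_0(A\text{-}\stwoVectBdl(X))$ from the proof of \cref{th:class2vect} and \cref{re:moritaclass:a}. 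These vertical maps are defined, and the argument goes through, without assuming Picard-surjectivity; that hypothesis is only needed to make them bijections, so it plays no role here.

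Next I would apply the plus construction to \eqref{DiagramProduct}. Since the plus construction is functorial and preserves finite products, a strictly commutative square of presheaves of bicategories becomes a commutative square of $2$-stacks, and passing to $\mathrm{h}_0(-)(X)$ yields a commutative square of sets. It then remains to identify its four arrows with those in the statement. The vertical arrows are the $\mathrm{h}_0(F^{+})$ described above. The bottom arrow is the $2$-stackification of the fibrewise tensor product $\otimes\colon A\text{-}\prestwoVectBdl\times B\text{-}\prestwoVectBdl\to(A\otimes B)\text{-}\prestwoVectBdl$, which by the construction of the monoidal structure in \cref{SectionTensorProduct} (passing to common refinements of surjective submersions) is precisely the tensor product of super $2$-vector bundles. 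The top arrow is induced by $\mathscr{B}M$ restricted to the $\const$-subpresheaves; since the strict homomorphism $M$ of \eqref{eq:stricthom} acts on the $\Aut$-level by $(\varphi_1,\varphi_2)\mapsto\varphi_1\otimes\varphi_2$ it preserves local constancy, so $\mathscr{B}M$ does restrict to the $\const$-subpresheaves, and under the isomorphisms of \cref{prop:cohoconst} the induced map agrees with the cohomology map \eqref{eq:productnonabco}. This matches all four arrows and establishes the claim.

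The main obstacle I anticipate is the identification of the $2$-stackified presheaf-level tensor product with the tensor product of super $2$-vector bundles: the interaction of the plus construction with monoidal structures is only sketched in \cref{SectionTensorProduct} and otherwise deferred, so one must check directly that stackifying $\otimes$ reproduces the common-refinement tensor product defined there. Everything else, namely the functoriality of the plus construction, its compatibility with finite products, and the bookkeeping relating the $\const$-versions to the ordinary non-abelian \v{C}ech cohomology, is formal.
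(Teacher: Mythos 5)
Your proof follows essentially the same route as the paper's: the vanishing of $\mathrm{HH}^1(A\otimes B)$ via the K\"unneth (tensor product) formula for Hochschild cohomology, and the commutativity of the diagram by applying the functorial plus construction to the strictly commutative square \eqref{DiagramProduct} and passing to isomorphism classes. Your write-up is in fact more careful than the paper's two-line argument---in particular flagging that $\mathscr{B}M$ restricts to the $\const$-subpresheaves and that the stackified presheaf-level tensor product must be identified with the common-refinement tensor product of super 2-vector bundles---but these are elaborations of, not deviations from, the paper's proof.
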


\begin{proof}
In view of \cref{DefNonAbChechCohom}, the commutativity of the diagram then follows from applying the (functorial) plus-construction to the diagram \eqref{DiagramProduct} and passing to isomorphism classes.
\end{proof}

\subsection{Classification of super 2-line bundles}

\label{sec:classificationof2linebundles}

A central simple super algebra has by definition $Z(A)^{\times}_0=k^{\times}$, and $\mathrm{Pic}(A)=\Z_2$, with representatives given by $A$ and $\Pi A$, see \cite[Remark 2.2.5 (1)]{Kristel2022}. 
We consider the trivial crossed module $k^{\times} \to \Z_2$, which is given by the zero map and the trivial action of $\Z_2$ on $k^{\times}$. 
It can be written as a direct product crossed module $\mathscr{B}k^{\times} \times (\Z_2)_{dis}$.

\begin{lemma}
\label{prop:classcssa}
If $A$ is a Picard-surjective central simple super algebra over $k$, then there is a bijection
\begin{equation} \label{BijectionH1AutAH1Z2H2ktimes}
\check{\mathrm{H}}^1(X,\AUT(A)) \cong \mathrm{H}^1(X,\Z_2) \times \check{\mathrm{H}}^2(X,\underline{k}^{\times})\text{.}
\end{equation}
\end{lemma}

\begin{proof}
By \cite[Prop.~2.3.4]{Kristel2022} there is a canonical weak equivalence $\AUT(A) \cong \mathscr{B}k^{\times} \times (\Z_2)_{dis}$, whose definition we recall briefly.  
         It is established by a butterfly (see \cref{sec:butterflies})
\begin{equation} 
\label{Butterflyclasscssa}
\begin{aligned}
        \xymatrix{k^{\times} \ar[dd]_{0} \ar[dr]^{i_1} && A_0^{\times} \ar[dl]_{i_2}  \ar[dd]^{c} \\ & K \ar[dr]_{p_2} \ar[dl]^{p_1} & \\ \Z_2  && \Aut(A)  .}
\end{aligned}
\end{equation}
Here, $K$ consists of triples $(\varepsilon,u,\phi)\in\Z_2 \times \underline{\mathrm{GL}}(A) \times \Aut(A)$ where $u: \epsilon A \to A_{\phi}$ is an even invertible intertwiner of $A$-$A$-bimodules, with
\begin{equation*}
\epsilon A = \begin{cases}A & \epsilon=0 \\
\Pi A & \epsilon=1\text{.}\ \\
\end{cases}
\end{equation*}
The group homomorphisms $i_1$ and $i_2$ are defined by
$i_1(\lambda) := (0,\lambda,\id)$ and $i_2(a) := (0,r_{a^{-1}},i(a))$, 
where $r_a: A \to A$ is right multiplication by $a$. The group homomorphisms $p_1$ and $p_2$ are the projections to $\varepsilon$ and $\phi$, respectively.
Given this butterfly, \cref{prop:butterflyco} shows the claim.  
 \end{proof}

\begin{remark}
\label{re:classcssa}
On the level of cocycles, the bijection
\begin{equation}
\label{eq:bijcoh}
\check{\mathrm{H}}^1(X,\AUT(A)) \cong \mathrm{H}^1(X,\Z_2) \times \check{\mathrm{H}}^2(X,\underline{k}^{\times})
\end{equation}
is obtained as follows, see \cref{sec:cmc}.
Recall that with respect to an open cover $\{U_{\alpha}\}_{\alpha\in I}$, an element in $\check{\mathrm{H}}^1(X,\AUT(A))$ is represented by a pair $(\varphi,a)$ where $\varphi$ is a collection of smooth maps $\varphi_{\alpha\beta}:U_{\alpha} \cap U_{\beta} \to \Aut(A)$ and $a$ is a collection of smooth maps $a_{\alpha\beta\gamma}:U_{\alpha} \cap U_{\beta}\cap U_{\gamma} \to A_0^{\times}$, such that the cocycle conditions \eqref{CocycleConditionsNonAb} are satisfied. 
By passing to a smaller open cover, we may assume that there are smooth maps $\epsilon_{\alpha\beta}:U_{\alpha}\cap U_{\beta} \to \Z_2$ and $u_{\alpha\beta}:U_{\alpha}\cap U_{\beta} \to \underline{\mathrm{GL}}(A)$, such that $u_{\alpha\beta}(x):\epsilon_{\alpha\beta}(x)A \to A_{\varphi_{\alpha\beta}(x)}$ intertwines the $A$-$A$-bimodule action at each $x\in U_{\alpha}\cap U_{\beta}$. Then $\epsilon_{\alpha\beta}$ is a 2-cocycle and gives the class in $\mathrm{H}^1(X,\Z_2)$, and the linear map $u_{\alpha\gamma} \circ u_{\alpha\beta}^{-1}\circ u_{\beta\gamma}^{-1} \circ r_{\alpha_{\alpha\beta\gamma}} :A \to A$ is scalar multiplication by an element $\lambda_{\alpha\beta\gamma}: U_{\alpha}\cap U_{\beta}\cap U_{\gamma} \to k^{\times}$, which  is a 3-cocycle and gives the class in $\mathrm{H}^2(X,\underline{k}^{\times})$. 
\end{remark}

Next, we investigate how the bijection of \cref{prop:classcssa} is compatible with the tensor product of super algebras. We recall that the tensor product of central simple algebras is again central simple, and note that the tensor product of Picard-surjective central simple super algebras is again Picard-surjective (\cite[Lemma A.1]{Kristel2022}).  

\begin{lemma}
\label{lem:classcohmult}
Let $A$ and $B$ Picard-surjective central simple super algebras. Then, the diagram
\begin{equation*}
\xymatrix{\check{\mathrm{H}}^1(X,\AUT(A)) \times \check{\mathrm{H}}^1(X,\AUT(B)) \ar[d]\ar[r] & \check{\mathrm{H}}^1(X,\AUT(A\otimes B)) \ar[d] \\ (\mathrm{H}^1(X,\Z_2) \times \check{\mathrm{H}}^2(X,\underline{k}^{\times})) \times (\mathrm{H}^1(X,\Z_2) \times \check{\mathrm{H}}^2(X,\underline{k}^{\times})) \ar[r] & \mathrm{H}^1(X,\Z_2) \times \check{\mathrm{H}}^2(X,\underline{k}^{\times})}
\end{equation*}
is commutative, where the vertical arrows are the bijections of \cref{prop:classcssa}, the arrow on the top is the multiplication map of \cref{eq:productnonabco}, and the map on the bottom is defined, similarly as in \cref{th:classalg}, by
\begin{equation} \label{GroupStructureDonovanKaroubiPartial}
((\alpha_1,\alpha_2) , (\beta_1,\beta_2)) \mapsto (\alpha_1+\beta_1,(-1)^{\alpha_1\cup\beta_1}\alpha_2\beta_2)\text{.}
\end{equation}  
\end{lemma}

\begin{proof}
We prove this on the level of cocycles. Suppose $(\varphi,a)$ and $(\varphi',a')$ represent elements in $\check{\mathrm{H}}^1(X,\AUT(A))$ and $\check{\mathrm{H}}^1(X,\AUT(B))$, respectively, with respect to the same open cover. We first perform the counter-clockwise calculation.
\\
We work in the notation of the proof of \cref{prop:classcssa,re:classcssa}.
We consider $\epsilon_{\alpha\beta},\epsilon'_{\alpha\beta}:U_{\alpha}\cap U_{\beta} \to \Z_2$ as well as  $u_{\alpha\beta}:U_{\alpha}\cap U_{\beta} \to \underline{\mathrm{GL}}(A)$ and $u'_{\alpha\beta}:U_{\alpha}\cap U_{\beta} \to \underline{\mathrm{GL}}(B)$ such that $(\epsilon_{\alpha\beta},u_{\alpha\beta}, \varphi_{\alpha\beta})$ lifts $\varphi_{\alpha\beta}$ to $K_A$ and $(\epsilon'_{\alpha\beta},u'_{\alpha\beta},\varphi_{\alpha\beta})$ lifts $\varphi'_{\alpha\beta}$ to $K_B$. 
Here, $K_A$ and $K_B$ are the Lie groups in the middle of the butterflies \eqref{Butterflyclasscssa} belonging to $A$ and $B$, respectively. 
We recall from \cref{re:classcssa} that the homomorphism $u_{\alpha\gamma} \circ u_{\alpha\beta}^{-1} \circ u_{\beta\gamma}^{-1}\circ r_{a_{\alpha\beta\gamma}}:A \to A$ is scalar multiplication by a unique smooth map element $\lambda_{\alpha\beta\gamma}:U_{\alpha}\cap U_{\beta}\cap U_{\gamma} \to k^{\times}$, and that $(\epsilon, \lambda)$ is the image of $(\varphi,a)$ under the identification \eqref{BijectionH1AutAH1Z2H2ktimes}. 
The same holds for the primed quantities. 
The product of $(\epsilon, \lambda)$ with $(\epsilon',\lambda')$ is $(\epsilon+\epsilon',(-1)^{\epsilon\cup\epsilon'}\lambda\lambda')$, this is the result of the counter-clockwise calculation. 
We recall from the definition of the cup product in \v Cech cohomology  that $((-1)^{\epsilon\cup\epsilon'}\lambda\lambda')_{\alpha\beta\gamma}=(-1)^{ \epsilon_{\alpha\beta}\epsilon'_{\beta\gamma}}\lambda_{\alpha\beta\gamma}\lambda'_{\alpha\beta\gamma} $.
\\
Clockwise, we consider the product $(\varphi\otimes \varphi',a\otimes a')$ under the strict homomorphism $m$ in \eqref{eq:stricthom}. 
We have a map
\begin{equation*}
K_A \times K_B \to K_{A\otimes B}
\end{equation*}
defined by 
\begin{equation*}
((\varepsilon_A,u_A, \varphi_A),(\varepsilon_B,u_B,\varphi_B)) \mapsto (\varepsilon_A+\varepsilon_B,  u_A \otimes (\eta_{\varepsilon_A} \circ u_B), \varphi_A \otimes \varphi_B)\text{,}
\end{equation*}
where $\eta_{\varepsilon_A}:B \to B$ is defined by $b \mapsto (-1)^{\varepsilon_A |b|}b$. 
The image of our previously chosen lifts $(\varphi_{\alpha\beta},\varepsilon_{\alpha\beta},u_{\alpha\beta})$ and $(\varphi_{\alpha\beta},\varepsilon_{\alpha\beta},u_{\alpha\beta})$ under this map has $\varphi_{\alpha\beta} \otimes \varphi'_{\alpha\beta}$ in the first component, which shows that it provides a correct lift. 
In the second component, it has $\varepsilon_{\alpha\beta} + \varepsilon'_{\alpha\beta}$, which shows that our diagram is commutative in its $\mathrm{H}^1(X,\Z_2)$-factor. In the third component, it has the homomorphism $v_{\alpha\beta} := u_{\alpha\beta} \otimes (\eta_{\varepsilon_{\alpha\beta}}\circ u'_{\alpha\beta})$. We need to compute the homomorphism
\begin{equation*}v_{\alpha\gamma} \circ v_{\alpha\beta}^{-1} \circ v_{\beta\gamma}^{-1}\circ r_{a_{\alpha\beta\gamma} \otimes a'_{\alpha\beta\gamma}}:A \otimes B \to A \otimes B\text{.}
\end{equation*}
In the first tensor factor, this is just $u_{\alpha\gamma} \circ u_{\alpha\beta}^{-1} \circ u_{\beta\gamma}^{-1}\circ r_{a_{\alpha\beta\gamma}}$ and hence scalar multiplication with $\lambda_{\alpha\beta\gamma}$. 
In the second factor we compute
\begin{align*}
&\mqquad(\eta_{\varepsilon_{\alpha\gamma}} \circ u'_{\alpha\gamma} \circ u'^{-1}_{\alpha\beta} \circ \eta_{\varepsilon_{\alpha\beta}}^{-1} \circ u'^{-1}_{\beta\gamma}\circ \eta_{\varepsilon_{\beta\gamma}}^{-1})(b)
\\&=(-1)^{\varepsilon_{\beta\gamma}|b|} (\eta_{\varepsilon_{\alpha\gamma}} \circ u'_{\alpha\gamma} \circ u'^{-1}_{\alpha\beta} \circ \eta_{\varepsilon_{\alpha\beta}}^{-1} )( u'^{-1}_{\beta\gamma}(b))
\\&=(-1)^{\varepsilon_{\beta\gamma}|b|+\varepsilon_{\alpha\beta}(|b|+\varepsilon_{\beta\gamma}')} \eta_{\varepsilon_{\alpha\gamma}} (( u'_{\alpha\gamma} \circ  u'^{-1}_{\alpha\beta} \circ u'^{-1}_{\beta\gamma})(b))
\\&=(-1)^{\varepsilon_{\beta\gamma}|b|+\varepsilon_{\alpha\beta}(|b|+\varepsilon_{\beta\gamma}') + \varepsilon_{\alpha\gamma}(|b| + \varepsilon'_{\beta\gamma} + \varepsilon'_{\alpha\beta} + \varepsilon'_{\alpha\gamma})} ( u'_{\alpha\gamma} \circ  u'^{-1}_{\alpha\beta} \circ u'^{-1}_{\beta\gamma})(b)
\\&=(-1)^{\varepsilon_{\alpha\beta}\varepsilon_{\beta\gamma}' } ( u'_{\alpha\gamma} \circ  u'^{-1}_{\alpha\beta} \circ u'^{-1}_{\beta\gamma})(b)\text{,}
\end{align*} 
where the last step uses the cocycle conditions for $\varepsilon$ and $\varepsilon'$. This shows that we get $(-1)^{\varepsilon_{\alpha\beta}\varepsilon_{\beta\gamma}' }\lambda'_{\alpha\beta\gamma}$ in the second factor. This completes the proof that the diagram is commutative.
\end{proof}

The above results can be used to classify all super 2-line bundles, and the result is the following.

\begin{theorem}
\label{th:classline2bundles}
For any manifold $X$, the set $\mathrm{h}_0(\stwoLineBdl_k(X))$ of isomorphism classes of super 2-line bundles over $X$ forms an abelian group, and there is a canonical isomorphism of groups
\begin{equation}
\label{th:eqclassline2bundles}
\mathrm{h}_0(\stwoLineBdl_k(X)) \cong\mathrm{H}^0(X,\mathrm{BW}_k) \times \mathrm{H}^1(X,\Z_2) \times \check{\mathrm{H}}^2(X,\underline{k}^{\times})
\end{equation} 
with respect to the group structure \eqref{GroupStructureDonovanKaroubi}. 
Moreover, this group isomorphism extends the Donovan-Karoubi classification of central simple super algebra bundles. In other words, the diagram
\begin{equation*}
\xymatrix{\mathrm{h_0}(\cssAlgBdlbi(X)) \ar[d]_{\text{\normalfont Donovan-Karoubi, \cref{th:classalg}}} \ar[r]^{\cref{InclusionFunctorAlgebraBundles}} & \mathrm{h_0}(\stwoLineBdl_k(X)) \ar[d]^{\eqref{th:eqclassline2bundles}} \\ \mathrm{H}^0(X,\mathrm{BW}_k) \times \check{\mathrm{H}}^1(X,\Z_2) \times \mathrm{Tor}(\check{\mathrm{H}}^2(X,\underline{k}^{\times})) \ar@{^(->}[r] & \mathrm{H}^0(X,\mathrm{BW}_k) \times\check{\mathrm{H}}^1(X,\Z_2) \times \check{\mathrm{H}}^2(X,\underline{k}^{\times})\text{.}}
\end{equation*}  
of groups and group homomorphisms is commutative. 
\end{theorem}

\cref{th:classline2bundles} is also a result of Mertsch's PhD thesis \cite[Thm.~2.2.6]{Mertsch2020},  obtained there by  explicitly extracting cocycles from 2-line bundles (called {\quot{algebra bundle gerbes} there}) and a reconstruction procedure. Here we have presented it as a consequence of our more general classification result \cref{th:class2vect} and our computations of \v Cech cohomology groups. 

\begin{proof}
 If $A$ is a Picard-surjective central simple super algebra, combining the classification result of \cref{th:class2vect} with \cref{prop:classcssa}, we obtain a bijection
\begin{equation*}
\mathrm{h}_0(A\text{-}\stwoVectBdl(X)) \cong \check{\mathrm{H}}^1(X,\Z_2) \times \check{\mathrm{H}}^2(X,\underline{k}^{\times})\text{.} 
\end{equation*}
Since every super 2-line bundle over a connected manifold has a unique Morita class (\cref{lem:Moritaclass2vect}) and  each Morita equivalence class has a Picard-surjective representative (\cite[Prop.~A.2]{Kristel2022}), we have for a connected smooth manifold $X$
\begin{equation*}
\mathrm{h}_0(\stwoLineBdl_k(X)) = \coprod_{[A]\in \mathrm{BW}_k}\mathrm{h}_0(A\text{-}\stwoVectBdl_k(X))\text{.}
\end{equation*} 
Combining these two results, we obtain a bijection
\begin{equation*}
\mathrm{h}_0(\stwoLineBdl_k(X)) \cong \mathrm{BW}_k \times \mathrm{H}^1(X,\Z_2) \times \check{\mathrm{H}}^2(X,\underline{k}^{\times})\text{.} 
\end{equation*}
Over a general, not necessarily connected, manifold $X$ this gives a bijection
\begin{equation*}
\mathrm{h}_0(\stwoLineBdl_k(X)) \cong \mathrm{H}^0(X,\mathrm{BW}_k) \times \mathrm{H}^1(X,\Z_2) \times \check{\mathrm{H}}^2(X,\underline{k}^{\times})\text{.}
\end{equation*}
\cref{lem:classcohmult} and \cref{prop:classmult} imply then that this bijection becomes a homomorphism of monoids, upon declaring the monoid structure on the right hand side to be given by the formula \eqref{GroupStructureDonovanKaroubi}, extending \eqref{GroupStructureDonovanKaroubiPartial}.
 Indeed, one can check by an explicit calculation that the inclusion of super algebra bundles into super 2-line bundles via the functor \eqref{InclusionFunctorAlgebraBundles} of \cref{sec:inclusionofalgebrabundles} corresponds precisely to this inclusion of groups. 
\end{proof}

In the remainder of this subsection we derive some consequences of \cref{th:classline2bundles}. First of all, we remark the following fact about elements of a group.

\begin{corollary}
\label{co:line2bundleinvertible}
Every super 2-line bundle is invertible with respect to the tensor product.
\end{corollary}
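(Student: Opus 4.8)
The plan is to read the statement off directly from the group structure established in \cref{th:classline2bundles}, so that no new geometric construction is needed. First I would recall that \cref{th:classline2bundles} asserts not merely a bijection but an isomorphism of \emph{groups}, and that (via \cref{lem:classcohmult} and \cref{prop:classmult}) the group operation on $\mathrm{h}_0(\stwoLineBdl_k(X))$ is precisely the one induced by the tensor product of super 2-vector bundles. The neutral element of this group is the class of the tensor unit, namely the trivial 2-vector bundle $\mathscr{I}$; note that $\mathscr{I}$ is itself a super 2-line bundle, since its Morita class is $k$, which is a central simple super algebra.

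Then, given any super 2-line bundle $\mathscr{V}$, the class $[\mathscr{V}]$ admits a group inverse $[\mathscr{W}]$ represented by some super 2-line bundle $\mathscr{W}$, i.e. $[\mathscr{V}]\cdot[\mathscr{W}] = [\mathscr{I}]$. Unwinding the definition of the group operation, this is exactly an isomorphism $\mathscr{V}\otimes\mathscr{W}\cong\mathscr{I}$ in $\stwoVectBdl_k(X)$. Since the monoidal structure on $\stwoVectBdl_k(X)$ is symmetric, the same $\mathscr{W}$ satisfies $\mathscr{W}\otimes\mathscr{V}\cong\mathscr{I}$ as well, and hence $\mathscr{V}$ is invertible with respect to the tensor product, as claimed.

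There is essentially no obstacle to overcome: the corollary is a formal consequence of the fact that $\mathrm{h}_0(\stwoLineBdl_k(X))$ is a group rather than merely a monoid, and the only point requiring a word of care is to spell out that the \emph{existence of inverses} in this group is literally the assertion of invertibility of the underlying objects. I would also note in passing that this discharges the forward reference to \cref{co:line2bundleinvertible} made earlier, in the brief argument that super 2-line bundles are precisely the invertible objects of $\stwoVectBdl_k$.
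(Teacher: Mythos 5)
Your proof is correct and is essentially the paper's own argument: the paper introduces this corollary as a "fact about elements of a group," i.e., as the formal observation that the tensor-product-induced group structure on $\mathrm{h}_0(\stwoLineBdl_k(X))$ established in \cref{th:classline2bundles} provides an inverse class $[\mathscr{W}]$ for every $[\mathscr{V}]$, which unwinds to an isomorphism $\mathscr{V}\otimes\mathscr{W}\cong\mathscr{I}$. Your additional remarks (that $\mathscr{I}$ is the neutral element, that symmetry of the monoidal structure gives the other-sided inverse, and that this discharges the forward reference in the earlier proposition on invertible objects) match the paper's intent exactly.
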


Next we look at the relation with super algebra bundles. 
In case of $k=\R$, we see that all elements of $\check{\mathrm{H}}^2(X,\underline{k}^{\times}) \cong \h^2(X, \Z_2)$ are torsion, so that the map at the bottom of the diagram in  \cref{th:classline2bundles} is the identity. This means that the plus construction, by which we passed from central simple algebra bundles to super 2-line bundles, has, up to isomorphism, not added any new objects. Hence we have the following.

\begin{corollary}
\label{co:csgastack}
The pre-2-stack
$\cssAlgBdlbi_{\R}$  of real central simple super algebra bundles is a 2-stack, and the canonical inclusion \begin{equation*}
\cssAlgBdlbi_{\R}\to \stwoLineBdl_{\R}
\end{equation*}
is an isomorphism of 2-stacks.
\end{corollary}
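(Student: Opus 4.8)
The plan is to show that the canonical inclusion $\iota\colon \prestwoLineBdl_{\R} \to \stwoLineBdl_{\R}=(\prestwoLineBdl_{\R})^{+}$ is an equivalence of bicategories over every smooth manifold $X$. Since $\iota$ is manifestly compatible with pullbacks, this will upgrade to an isomorphism of presheaves of bicategories, and because $\stwoLineBdl_{\R}$ is a 2-stack by \cite[Thm.~3.3]{nikolaus2}, the pre-2-stack $\prestwoLineBdl_{\R}$ will inherit the 2-stack property, giving both assertions of the corollary at once.

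First I would record that $\prestwoLineBdl_{\R}$ is a pre-2-stack (\cref{lem:pre2stack}), so that by the defining property of the plus construction \cite[Thm.~3.3]{nikolaus2} the unit morphism $\iota$ is \emph{fully faithful} over each $X$, i.e.\ the induced functors on Hom-categories are equivalences. Consequently $\iota$ will be a biequivalence precisely when it is essentially surjective on objects, and full faithfulness already forces the induced map $\mathrm{h}_0(\iota)\colon \mathrm{h}_0(\prestwoLineBdl_{\R}(X)) \to \mathrm{h}_0(\stwoLineBdl_{\R}(X))$ to be injective (an invertible $1$-morphism between objects in the image descends, up to isomorphism, to an invertible $1$-morphism between the preimages). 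Thus the whole statement reduces to surjectivity of $\mathrm{h}_0(\iota)$.

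That surjectivity I would read off from the commutative square in \cref{th:classline2bundles}, where $\mathrm{h}_0(\iota)$ is identified with the inclusion
\begin{equation*}
\mathrm{H}^0(X,\mathrm{BW}_{\R}) \times \mathrm{H}^1(X,\Z_2) \times \mathrm{Tor}(\check{\mathrm{H}}^2(X,\underline{\R}^{\times})) \hookrightarrow \mathrm{H}^0(X,\mathrm{BW}_{\R}) \times \mathrm{H}^1(X,\Z_2) \times \check{\mathrm{H}}^2(X,\underline{\R}^{\times}).
\end{equation*}
For $k=\R$ the short exact sequence of sheaves $0\to \underline{\R}_{>0}\to \underline{\R}^{\times}\to \Z_2\to 0$, together with the vanishing of $\check{\mathrm{H}}^{\geq 1}(X,\underline{\R}_{>0})$ (the sheaf $\underline{\R}_{>0}\cong\underline{\R}$ is fine, admitting partitions of unity), yields $\check{\mathrm{H}}^2(X,\underline{\R}^{\times})\cong \mathrm{H}^2(X,\Z_2)$, which is entirely $2$-torsion. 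Hence $\mathrm{Tor}(\check{\mathrm{H}}^2(X,\underline{\R}^{\times}))=\check{\mathrm{H}}^2(X,\underline{\R}^{\times})$ and the displayed inclusion is the identity, exactly as observed in the remark preceding the corollary. Therefore $\mathrm{h}_0(\iota)$ is a bijection.

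Combining these steps, $\iota$ is fully faithful and essentially surjective over each $X$, hence a biequivalence, which completes the argument. The only genuinely substantive input is the torsion computation $\check{\mathrm{H}}^2(X,\underline{\R}^{\times})\cong\mathrm{H}^2(X,\Z_2)$; I expect the point most worth stating carefully to be the logical passage from a bijection on $\mathrm{h}_0$ to an equivalence of bicategories, which is legitimate here \emph{only} because full faithfulness is supplied for free by the plus construction. Without that ingredient a bijection on isomorphism classes of objects would not by itself imply an equivalence, so I would make sure to foreground the fully faithful functor of \cref{InclusionFunctorAlgebraBundles} as the hypothesis that makes the reduction to $\mathrm{h}_0$ valid.
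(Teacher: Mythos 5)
Your proposal is correct and follows essentially the same route as the paper: the paper also deduces the corollary from the commutative classification square of \cref{th:classline2bundles}, observing that for $k=\R$ every class in $\check{\mathrm{H}}^2(X,\underline{\R}^{\times})\cong\mathrm{H}^2(X,\Z_2)$ is torsion, so the plus construction adds no new objects up to isomorphism, while full faithfulness of the inclusion comes for free from \cite[Thm.~3.3]{nikolaus2}. Your explicit sheaf-sequence computation and the careful reduction from a bijection on $\mathrm{h}_0$ to a biequivalence merely spell out steps the paper leaves implicit.
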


In spite of \cref{co:csgastack}, it still makes sense to use the bigger bicategory $\stwoLineBdl_{\R}$ compared to $\cssAlgBdlbi_{\R}$. For example, $\R$-bundle gerbes are elements in $\stwoLineBdl_{\R}$ but do not determine \emph{canonical} algebra bundles.

\begin{remark}
\label{re:prestwolinebdlnota2stack}
\label{re:notastack}
In case of $k=\C$, we see that the classification of complex super 2-line bundles does not coincide with the classification of the bicategory of algebra bundles: super 2-line bundles may represent non-torsion elements in $\check{\mathrm{H}}^2(X,\underline{\C}^{\times}) \cong\mathrm{H}^3(X,\Z)$. This shows that the plus construction has added new objects, and it shows again that $\cssAlgBdlbi_{\C}$ is not a 2-stack. 
\end{remark}

The next consequences of \cref{th:classline2bundles} concern the relation between line 2-bundles, bundle gerbes, and algebra bundles, which are all objects  in the bicategory of super 2-line bundles. First of all, we have the following obvious statement.

\begin{corollary}
\label{co:line2bundlesandalgebrabundles}
Let $\mathscr{L}$ be a super 2-line bundle. Then, $\mathscr{L}$ is an ordinary super algebra bundle (i.e., $\mathscr{L}\cong \mathcal{A}$  for a central simple super algebra bundle $\mathcal{A}$) if and only if the class of $\mathscr{L}$ in $\check {\mathrm{H}}^2(X,\underline{k}^{\times})$ is torsion.
\end{corollary}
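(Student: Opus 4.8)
The plan is to read the statement off directly from the classification in \cref{th:classline2bundles}, combined with the full faithfulness of the inclusion of super algebra bundles into super 2-line bundles. First I would recall from \cref{sec:inclusionofalgebrabundles} that the functor $\mathscr{A}$ provides a fully faithful morphism $\prestwoLineBdl_k(X) \to \stwoLineBdl_k(X)$, whose objects on the source side are, by \cref{lemma:invertibleobjectsinbundles}, precisely the central simple super algebra bundles. A fully faithful functor is injective on isomorphism classes: if $\mathscr{A}(\mathcal{A}) \cong \mathscr{A}(\mathcal{B})$, then fullness produces a preimage of that isomorphism, and fullness together with faithfulness forces it to be invertible. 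Hence a super 2-line bundle $\mathscr{L}$ is isomorphic to a central simple super algebra bundle if and only if its class $[\mathscr{L}]$ lies in the image of the induced map $\mathrm{h}_0(\prestwoLineBdl_k(X)) \to \mathrm{h}_0(\stwoLineBdl_k(X))$.

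Next I would invoke the commutative square stated in \cref{th:classline2bundles}. Its vertical arrows are the classifying bijections (Donovan--Karoubi via \cref{th:classalg} on the left, and \cref{th:classline2bundles} itself on the right), while its bottom arrow is the canonical inclusion
\[
\mathrm{H}^0(X,\mathrm{BW}_k) \times \check{\mathrm{H}}^1(X,\Z_2) \times \mathrm{Tor}(\check{\mathrm{H}}^2(X,\underline{k}^{\times})) \hookrightarrow \mathrm{H}^0(X,\mathrm{BW}_k) \times \check{\mathrm{H}}^1(X,\Z_2) \times \check{\mathrm{H}}^2(X,\underline{k}^{\times}),
\]
which is the identity on the first two factors and the inclusion of the torsion subgroup in the third. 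Because both vertical arrows are bijections, the image of the top map corresponds, under the right-hand bijection, exactly to the image of this bottom inclusion, namely the set of triples whose third component is a torsion class in $\check{\mathrm{H}}^2(X,\underline{k}^{\times})$.

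Combining the two steps yields the corollary: $\mathscr{L}$ is isomorphic to a central simple super algebra bundle precisely when $[\mathscr{L}]$ has torsion third component, and the third component of $[\mathscr{L}]$ under the bijection of \cref{th:classline2bundles} is by definition its class in $\check{\mathrm{H}}^2(X,\underline{k}^{\times})$. Since all the substantive work is already packaged into \cref{th:classline2bundles}, there is essentially no obstacle; the only point demanding any care is to justify that ``comes from an algebra bundle'' means ``lies in the image on $\mathrm{h}_0$'', which is legitimate exactly because $\mathscr{A}$ is fully faithful and hence injective on isomorphism classes. No reduction to connected $X$ is needed, as the classification already applies to general $X$.
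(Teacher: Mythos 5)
Your proof is correct and is essentially the paper's own argument: the paper presents this corollary as an immediate ("obvious") consequence of the commutative diagram in \cref{th:classline2bundles}, whose two vertical bijections identify the image of $\mathrm{h}_0(\prestwoLineBdl_k(X)) \to \mathrm{h}_0(\stwoLineBdl_k(X))$ with exactly those classes whose component in $\check{\mathrm{H}}^2(X,\underline{k}^{\times})$ is torsion, which is precisely what you spell out. The only remark is that your appeal to full faithfulness of the inclusion is superfluous: \quot{$\mathscr{L}$ is isomorphic to an algebra bundle} is by definition the same as \quot{$[\mathscr{L}]$ lies in the image on $\mathrm{h}_0$}, with no injectivity on isomorphism classes needed.
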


Since super bundle gerbes are in particular super line 2-bundles, we have as a special case of \cref{co:line2bundlesandalgebrabundles} the following result.

\begin{corollary}
\label{co:bundlegerbesandalgebrabundles}
Let $\mathscr{G}$ be a super bundle gerbe. Then, $\mathscr{G}$ is isomorphic to a super algebra bundle (as super 2-vector bundles) if and only if the Dixmier-Douady class of $\mathscr{G}$ is torsion. 
\end{corollary}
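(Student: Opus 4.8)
The plan is to derive this corollary directly from \cref{co:line2bundlesandalgebrabundles} by identifying the two cohomological invariants that appear. First I would observe that, by \cref{prop:inclusionsuperlinebundlegerbes}, a super bundle gerbe $\mathscr{G}$ is in particular a super 2-line bundle, and by \cref{lem:bundlegerbesand2vectorbundles} it is of Morita class $k$; hence both \cref{th:classline2bundles} and \cref{co:line2bundlesandalgebrabundles} apply to it. In particular, \cref{co:line2bundlesandalgebrabundles} already tells us that $\mathscr{G}$ is isomorphic, as a super 2-vector bundle, to a (necessarily central simple) super algebra bundle if and only if the component of its classifying class lying in $\check{\mathrm{H}}^2(X,\underline{k}^{\times})$ is torsion. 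It therefore only remains to match this component with the Dixmier-Douady class of $\mathscr{G}$.

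Next I would note that, since $\mathscr{G}$ has Morita class $k$, its $\mathrm{H}^0(X,\mathrm{BW}_k)$-component under the isomorphism of \cref{th:classline2bundles} is the trivial Brauer-Wall class, so that $\mathscr{G}$ is classified by a pair $(\alpha_1,\alpha_2) \in \mathrm{H}^1(X,\Z_2) \times \check{\mathrm{H}}^2(X,\underline{k}^{\times})$, where $\alpha_1$ records the fibrewise grading twist of the super line bundle $\mathcal{L}$ of $\mathscr{G}$ over $Y^{[2]}$ and $\alpha_2$ is the Dixmier-Douady class. I expect this to be visible from the cocycle extraction described in \cref{re:classcssa}: applying that procedure to the super 2-line bundle associated to $\mathscr{G}=(\pi,\mathcal{L},\mu)$ under the inclusion \cref{eq:inclusionBsVect} (whose algebra bundle is $Y \times k$ and whose bimodule bundle is $\mathcal{L}$), the integers $\varepsilon_{\alpha\beta}$ become exactly the parities of $\mathcal{L}$, while the scalars $x_{\alpha\beta\gamma}$ become the transition cocycle of $\mathcal{L}$ assembled with $\mu$, i.e. the standard \v Cech representative of the Dixmier-Douady class. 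More abstractly, one may instead argue that the fully faithful morphism of 2-stacks $\sGrb_k \to \stwoLineBdl_k$ induces an injection on $\mathrm{h}_0$ compatible with the classifying isomorphisms, carrying the Dixmier-Douady class onto the $\check{\mathrm{H}}^2(X,\underline{k}^{\times})$-factor.

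With this identification in hand, the corollary is immediate in both directions: by \cref{co:line2bundlesandalgebrabundles}, $\mathscr{G}$ is isomorphic to a super algebra bundle if and only if $\alpha_2$ is torsion, which is precisely the condition that the Dixmier-Douady class of $\mathscr{G}$ is torsion.

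The main obstacle will be the bookkeeping in the second paragraph, namely verifying that the cocycle $(\varepsilon,x)$ produced by the general extraction of \cref{th:class2vect}, as made explicit in \cref{re:classcssa}, genuinely reproduces the classical Dixmier-Douady cocycle of the underlying line bundle gerbe and not some twisted variant of it. I expect this to be routine but tedious, and I would streamline it by relying on the abstract compatibility of the two classifying isomorphisms rather than on an explicit cochain comparison. Finally, I would remark that for $k=\R$ the statement is vacuous, since every class in $\check{\mathrm{H}}^2(X,\underline{\R}^{\times})$ is torsion, consistent with \cref{co:csgastack}; the only genuine content occurs for $k=\C$, where $\check{\mathrm{H}}^2(X,\underline{\C}^{\times}) \cong \mathrm{H}^3(X,\Z)$ can contain non-torsion elements.
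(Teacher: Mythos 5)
Your proposal is correct and follows essentially the same route as the paper, which derives \cref{co:bundlegerbesandalgebrabundles} directly as a special case of \cref{co:line2bundlesandalgebrabundles} applied to a bundle gerbe viewed as a super 2-line bundle of Morita class $k$. The extra care you devote to matching the $\check{\mathrm{H}}^2(X,\underline{k}^{\times})$-component of the classifying isomorphism with the Dixmier-Douady class is a point the paper leaves implicit, and your identification of it (via the cocycle extraction of \cref{re:classcssa} or the compatibility of the inclusion $\sGrb_k \to \stwoLineBdl_k$ with the classifications) is the right way to fill that in.
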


Finally, using the group structure in \cref{th:classline2bundles}, the following becomes true. 

\begin{corollary}
Let $\mathscr{L}$ be a super 2-line bundle. Then, $\mathscr{L}\cong \mathcal{A} \otimes \mathscr{G}$ for a central simple super algebra bundle $\mathcal{A}$ and a super line bundle gerbe $\mathscr{G}$.
\end{corollary}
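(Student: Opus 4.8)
The plan is to deduce the statement directly from the group-theoretic classification of \cref{th:classline2bundles}, rather than to build the decomposition by hand. By that theorem the isomorphism class of $\mathscr{L}$ is recorded by a triple $(\alpha_0,\alpha_1,\alpha_2)\in \mathrm{H}^0(X,\mathrm{BW}_k)\times \mathrm{H}^1(X,\Z_2)\times \check{\mathrm{H}}^2(X,\underline{k}^{\times})$, and the tensor product of super 2-line bundles corresponds to the group law \cref{eq:grpstr}. The idea is to split this triple as
\[
(\alpha_0,\alpha_1,\alpha_2)=(\alpha_0,\alpha_1,1)\cdot(0,0,\alpha_2),
\]
where $1$ denotes the neutral element of $\check{\mathrm{H}}^2(X,\underline{k}^{\times})$, and then to realize the two factors by a central simple super algebra bundle and by a bundle gerbe, respectively.

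First I would realize $(\alpha_0,\alpha_1,1)$ by a central simple super algebra bundle. This is possible by the Donovan--Karoubi classification \cref{th:classalg}: central simple super algebra bundles realize exactly the subgroup $\mathrm{H}^0(X,\mathrm{BW}_k)\times \mathrm{H}^1(X,\Z_2)\times \mathrm{Tor}(\check{\mathrm{H}}^2(X,\underline{k}^{\times}))$, and the third entry of $(\alpha_0,\alpha_1,1)$ is trivial, hence in particular torsion. Thus there is a central simple super algebra bundle $\mathcal{A}$, regarded as a super 2-line bundle as in \cref{sec:inclusionofalgebrabundles}, whose class is $(\alpha_0,\alpha_1,1)$. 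Next I would realize $(0,0,\alpha_2)$ by an ordinary, trivially graded line bundle gerbe $\mathscr{G}$ with Dixmier--Douady class $\alpha_2\in\check{\mathrm{H}}^2(X,\underline{k}^{\times})$: such a bundle gerbe has Morita class $k$ (so its $\alpha_0$-entry is $0$) and trivial grading data (so its $\alpha_1$-entry is $0$), and ordinary line bundle gerbes are classified by their Dixmier--Douady class, so every element of $\check{\mathrm{H}}^2(X,\underline{k}^{\times})$ is attained.

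Finally I would combine the two factors: by \cref{th:classline2bundles} the class of $\mathcal{A}\otimes\mathscr{G}$ is the product $(\alpha_0,\alpha_1,1)\cdot(0,0,\alpha_2)$ computed with \cref{eq:grpstr}. The only point requiring care is the cup-product sign $(-1)^{\alpha_1\cup 0}$, which is trivial precisely because I arranged for all of the $\Z_2$-data to sit in the algebra-bundle factor while keeping $\mathscr{G}$ ungraded; the product therefore equals $(\alpha_0,\alpha_1,\alpha_2)$, the class of $\mathscr{L}$. Since the classification map is a bijection, this forces $\mathscr{L}\cong\mathcal{A}\otimes\mathscr{G}$. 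I expect the main (indeed the only) subtlety to be this bookkeeping with the twisted group structure; everything else is a formal consequence of the already-established classification isomorphisms.
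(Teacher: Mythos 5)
Your proposal is correct and takes essentially the same route as the paper: the paper's proof is nothing more than the assertion that the corollary follows from the group structure in \cref{th:classline2bundles}, and your splitting $(\alpha_0,\alpha_1,\alpha_2)=(\alpha_0,\alpha_1,1)\cdot(0,0,\alpha_2)$ --- with the first factor realized by a central simple super algebra bundle via \cref{th:classalg} and the commutative diagram in \cref{th:classline2bundles}, and the second by a line bundle gerbe of Dixmier--Douady class $\alpha_2$ --- is precisely the natural way of spelling that assertion out. Your bookkeeping with the twisted group law \cref{eq:grpstr} is also right, since $\alpha_1\cup 0=0$ kills the sign.
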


\setsecnumdepth{1}

\section{Algebra bundles and lifting gerbes}

\label{sec:liftinggerbes}

In this section, we give several examples for 2-vector bundles and corresponding morphisms which fit in the following abstract setup. Let $G$ be a $\Z_2$-graded Lie group.
This means that $G$ comes with a homomorphism to $\Z_2$. Equivalently, $G$ is a disjoint union $G = G^0 \cup G^1$ with $G^0$ a normal subgroup.
An action of $G$ on a super vector space $V$ is \emph{even} if $G^0$ and $G^{1}$ act by even, respectively odd automorphisms.

Let $Z$ be an abelian Lie group and let $Z \rightarrow \hat{G} \stackrel{\rho}{\rightarrow} G$ be a central extension (observe here that $\hat{G}$ picks up a $\Z_2$ grading by pre-composition with $\rho$). 
Let $\pi: P\rightarrow X$ be a principal $G$-bundle over a manifold $X$.
A natural question is then whether the structure group of $P$ can be lifted to $\hat{G}$; in other words, we ask for a $\hat{G}$-principal bundle $\hat{P} \rightarrow X$ together with a fibre-preserving map ${\rho}_P: \hat{P} \rightarrow P$ that intertwines the group actions (along $\rho : \hat{G} \rightarrow G$). 
To answer this question, one considers the associated \emph{lifting gerbe} $\mathscr{G}_P$, which is the principal $Z$-bundle gerbe schematically depicted as follows.
\begin{equation}
 \label{DepictionSpinLifting}
 \mathscr{G}_P= \left ( 
\begin{aligned}
\xymatrix{ & \delta^{*}\hat G \ar[r] \ar[d] & \hat G \ar[d] \\ P \ar[d]^{\pi} & P^{[2]} \ar[r]^-{\delta} \ar@<0.5ex>[l]^{\pr_2}\ar@<-0.5ex>[l]_{\pr_1} & G \\ X} 
\end{aligned}
\right )
\end{equation}
Here, $\delta: P^{[2]} \rightarrow G$ is the map determined by 
\begin{equation*}
p_2 \cdot \delta(p_1, p_2) = p_1.
%p_2 \cdot \delta(p_2, p_1) = p_1,
\end{equation*}
Explicitly, the fibres $(\delta^* \hat{G})_{p_1, p_2}$ consist of all $\hat{g} \in \hat{G}$ with $\rho(\hat{g}) = \delta(p_1, p_2)$.
The bundle gerbe product 
\begin{equation*}
  \mu : (\delta^* \hat{G})_{p_2, p_3} \times_Z (\delta^* \hat{G})_{p_1, p_2} \to (\delta^* \hat{G})_{p_1, p_3}
\end{equation*}
 over $P^{[3]}$, which is not depicted in \eqref{DepictionSpinLifting}, is just given by group multiplication in $\hat{G}$, observing that if $\hat{g}_{12} \in (\delta^*\hat{G})_{p_1, p_2}$ and $\hat{g}_{23} \in (\delta^*\hat{G})_{p_2, p_3}$, then $\hat{g}_{23} \cdot \hat{g}_{12}  \in (\delta^*\hat{G})_{p_1, p_3}$.
%
\begin{comment}
$\hat{g}_{12} \in \hat{G}$  is an elements of $(\delta^*\hat{G})_{p_1, p_2}$ are such that $\rho(\hat{g}_{12}) = \delta(p_1, p_2)$. To see that $ \hat{g}_{23} \cdot \hat{g}_{12} \in (\delta^*\hat{G})_{p_1, p_3}$, we have to check that $\rho(\hat{g}_{23} \cdot \hat{g}_{12}) = \delta(p_1, p_3)$. Since 
\begin{equation*}
\rho(\hat{g}_{23} \cdot \hat{g}_{2123}) = \rho(\hat{g}_{23})\rho(\hat{g}_{12}) = \delta(p_2, p_3)\delta(p_1, p_2), 
\end{equation*}
this is just the equation
\begin{equation*}
  \delta(p_2, p_3) \cdot \delta(p_1, p_2) = \delta(p_1, p_3), 
\end{equation*}
which follows from observing that both sides map $p_1$ to $p_3$. 
Notice that this forces to define $\delta$ above and not the other way around.
\end{comment}
%
\,\\
The lifting theory of Murray, see \cite[Section 4]{Murray1996}, tells us that a lift of the structure group exists if and only if $\mathscr{G}_P$ is trivializable, and the category of lifts is equivalent to the category of trivializations of $\mathscr{G}_P$, see \cite[Thm.~2.1]{waldorf13}.
\\
In the following we suppose that $Z \subset k^\times$ for  $k=\R$ or $\C$. Then, the lifting gerbe $\mathscr{G}_P$ can be identified with a super line bundle gerbe, whose super line bundle over $P^{[2]}$ is the associated line bundle $\mathcal{L}_{\hat G} := \delta^{*}\hat G \times_Z k$, which becomes a \emph{super} line bundle using the map $P^{[2]} \stackrel{\delta}{\to} G \to \Z_2$. 
We remark that -- under the correspondence between super line bundle gerbes and twisting of K-theory, see \cref{prop:twistings} --  $\mathscr{G}_P$ corresponds to the twisting described in \cite[Ex. 2.27]{Freed2011a} associated to the graded central extension $Z \to \hat G \to G$ and the principal bundle $P$. 

The super line bundle gerbe $\mathscr{G}_P$,
in turn,  defines a super  2-line bundle over $X$ (also denoted by $\mathscr{G}_{P}$), via the functor from \cref{sec:bundlegerbes}. It is then natural to ask if it is isomorphic to a super algebra bundle, i.e., if the lifting obstruction can also be described by a super algebra bundle over $X$. By \cref{co:line2bundlesandalgebrabundles} we know that this is the case if and only if the Dixmier-Douady class of $\mathscr{G}_P$ is torsion. A recent paper of Roberts \cite{Roberts2021} indeed shows that most lifting gerbes are torsion, for instance, whenever $X$ is connected and  $\pi_1(X)$ is finite.

In this section, we consider the problem of constructing a super algebra bundle $\mathcal{A}$ that is isomorphic to the lifting gerbe $\mathscr{G}_P$ in the bicategory of super 2-line bundles, together with an isomorphism $\mathscr{G}_P\cong \mathcal{A}$.    
An important ingredient to our solution of this problem is the following notion, which already turned out to be useful in an infinite-dimensional setting, see \cite[Def.~2.2.12, \S 2.5]{kristel2020smooth}.

\begin{definition}[Equivariant module]
\label{DefinitionRepCentralExtension}
Suppose $A$ is a super algebra on which a graded Lie group $G$ acts by super algebra automorphisms. Suppose $Z \to \hat G \to G$ is a central extension with $Z \subset k^{\times}$.
An  $A$-module $F$ is called \emph{$\hat G$-equivariant} if it is equipped with an even linear action of $\hat G$ such that $Z \subset \hat G$ acts by scalar multiplication, and such that the condition
\begin{equation}
\label{eq:IntertwinerCondition}
{g} \cdot (a \lact v) = g(a) \lact ({g} \cdot v)
\end{equation}
is satisfied for all $v \in F$, $a \in A$ and $g \in G$.
\end{definition}

\begin{example}\label{ex:InnerAuto}
        Let $A$ be a central super algebra (for example $M_n(k)$).
        Let $\Inn(A)$ be the group of inner automorphisms of $A$ and assume that $\Inn(A)$ is equipped with a grading that induces the usual grading on the group $A^{\times}$ of invertible elements in $A$, which is a central extension
\begin{equation*}
 k^{\times} \to A^{\times} \to \mathrm{Inn}(A) \text{.}
\end{equation*}
        Then, any $A$-module $F$ is automatically  $A^{\times}$-equivariant  in the sense of \cref{DefinitionRepCentralExtension}.
\end{example} 

More examples for \cref{DefinitionRepCentralExtension} will be given in the applications below. We shall first explain how \cref{DefinitionRepCentralExtension} is used to construct a super algebra bundle $\mathcal{A}$ and an isomorphism $\mathscr{G}_P\cong \mathcal{A}$.

Given a $\hat G$-equivariant $A$-module $F$ as in \cref{DefinitionRepCentralExtension}, we first construct a  $\mathscr{G}_P$-twisted vector bundle denoted  $\mathscr{F}=(\mathcal{F},\phi)$, in the sense explained in \cref{sec:bundlegerbes}.
Its super vector bundle over $P$ is the trivial bundle $\mathcal{F}:=\underline{F}$. 
The bundle morphism over $P^{[2]}$,
\begin{equation*}
\phi: \pr_2^{*}  \mathcal{F} \otimes \mathcal{L}_{\hat G} \to \pr_1^{*}\mathcal{F}\text{,} 
\end{equation*}
is defined over a point $(p_1,p_2)\in P^{[2]}$ by
\begin{equation*}
\phi_{p_1,p_2}: \mathcal{F}_{p_2} \otimes (\mathcal{L}_{\hat G})_{p_1, p_2} \to \mathcal{F}_{p_1}, \qquad (p_2, v) \otimes [\hat{g}, \lambda] \mapsto (p_1, \lambda \hat{g} v) \text{.}
\end{equation*}
It is straightforward to show that this is a well-defined, smooth bundle morphism and fits into the required commutative diagram \cref{eq:gerbemodulecond}.
Note, in particular, that well-definedness of $\phi$ requires the condition that $Z \subset k^{\times}$ acts by scalar multiplication. 
%
\begin{comment}
For the well-definedness, let $\hat g'$ be another choice; then, $\hat g'=\hat g z$ for $z\in Z\subset k$. Since $Z$ acts by scalar multiplication on $F$, we have
\begin{equation*}
(p_1, \hat g'^{-1} v) \otimes \hat g' = (p_1, z^{-1}\hat g^{-1} v) \otimes \hat g z=  (p_1, \hat g^{-1} v) \otimes \hat g \text{.}
\end{equation*}
The identity is over a point $(p_3,p_2,p_1)$:
\begin{equation*}
\phi_{p_3,p_1} = (\id \otimes \mu_{p_3,p_2,p_1})\circ  (\phi_{p_2,p_1} \otimes \id) \circ \phi_{p_3,p_2}\text{.}
\end{equation*}
We may choose $\hat g_{12}$ over $\delta(p_2,p_1)$ and $\hat g_{23}$ over $\delta_{p_3,p_2}$. Note that $\hat g_{13}:=\hat g_{23}\hat g_{12}$ is then over $\delta(p_3,p_2)\delta(p_2,p_1)=\delta(p_3,p_1)$.
To check the identity, we have
\begin{align*}
\phi_{p_3,p_1}(p_3,v) &= (p_1, \hat g_{13}^{-1} v) \otimes \hat g_{13}
\\&= (p_1, \hat g_{12}^{-1}  \hat g_{23}^{-1} v) \otimes  \mu_{p_3,p_2,p_1}(\hat g_{12} \otimes \hat g_{23})
\\&= (\id \otimes \mu_{p_3,p_2,p_1})  ((p_1, \hat g_{12}^{-1}  \hat g_{23}^{-1} v) \otimes \hat g_{12} \otimes \hat g_{23})
\\&= (\id \otimes \mu_{p_3,p_2,p_1})  (\phi_{p_2,p_1} (p_2, \hat g_{23}^{-1} v) \otimes \hat g_{23})
\\&= ((\id \otimes \mu_{p_3,p_2,p_1})\circ  (\phi_{p_2,p_1} \otimes \id) \circ \phi_{p_3,p_2})(p_3,v)\text{.}
\end{align*}
\end{comment}
%
\, \\
Next, we define a {super}algebra bundle over $X$,  namely, the associated bundle
\begin{equation*}
\mathcal{A} := P \times_G A\text{.}
\end{equation*}
We upgrade $\mathscr{F}=(\mathcal{F},\phi)$ to a $\mathscr{G}_P$-twisted $\mathcal{A}$-module bundle (\cref{def:twistedmodulebundle}), using  condition \cref{eq:IntertwinerCondition} on our representations. For this purpose, we define on $\mathcal{F}$ the left $\pi^{*}\!\mathcal{A}$-module bundle structure defined fibrewise at $p\in P$ over $x\in X$ by $\mathcal{A}_x \otimes \mathcal{F}_p \to \mathcal{F}_p : ([p,a],(p,v))\mapsto (p,a\lact v)$. 
%
\begin{comment}
Since $A_P$ is an associated bundle, its elements are equivalence classes $[p, a]$, $p \in P$, $a \in A$, subject to the relation $[p \cdot g, a] = [p, g(a)]$. 
\end{comment}
%
Again, it is straightforward to show that this gives a well-defined smooth bundle morphism, and that $\phi$ is $\mathcal{A}$-linear; this completes the construction of a $\mathscr{G}_P$-twisted $\mathcal{A}$-module bundle $\mathscr{F}$. By \cref{lem:twistedmodulebundles} the category of $\mathscr{G}_P$-twisted $\mathcal{A}$-module bundles and the category of 1-morphisms $\mathscr{G}_P \to \mathcal{A}$ are canonically isomorphic; this allows us to see $\mathscr{F}$ as a 1-morphism $\mathscr{F}: \mathscr{G}_P \to \mathcal{A}$. Let us summarize this and state some properties.

\begin{theorem}
\label{Thm:CanonicalIsomorphism:Representation}
Let $Z \to \hat G \to G$ be a central extension of a graded Lie group $G$, with $Z \subset k^{\times}$. Let $P$ be a principal $G$-bundle over $X$, let $A$ be a superalgebra on which $G$ acts by super algebra automorphisms, and let $F$ be a $\hat G$-equivariant $A$-module. Let $\mathscr{G}_P$ be the lifting gerbe and $\mathcal{A}:= P \times_G A$ be the associated super algebra bundle.      Then, the 1-morphism
\begin{equation*}
\mathscr{F}: \mathscr{G}_P \to \mathcal{A}
\end{equation*}
of super 2-vector bundles over $X$ is an isomorphism, $\mathscr{G}_P\cong \mathcal{A}$, if and only if  $F$ is a Morita equivalence between $A$ and $k$. In this case, $\mathscr{F}$ induces  a canonical equivalence between the category of lifts of $P$ and the category of invertible left $\mathcal{A}$-module bundles.
\end{theorem}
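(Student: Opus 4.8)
The plan is to realize the asserted equivalence as a composite of three canonical equivalences of categories, with the isomorphism $\mathscr{F}\colon\mathscr{G}_P\cong\mathcal{A}$ produced in the first part of the theorem as the pivotal input. First I would invoke the lifting theory of Murray and Waldorf \cite{Murray1996,waldorf13}: the category of lifts of the structure group of $P$ along $\rho\colon\hat G\to G$ is canonically equivalent to the category of trivializations of the lifting gerbe $\mathscr{G}_P$. Since $\mathscr{G}_P$ is an invertible object (a super 2-line bundle), a trivialization may equivalently be presented as an invertible 1-morphism $\mathscr{S}\colon\mathscr{I}\to\mathscr{G}_P$, with the 2-morphisms between such as its morphisms; this reduces the task to comparing these invertible 1-morphisms with invertible left $\mathcal{A}$-module bundles.

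For the second step I would transport trivializations across $\mathscr{F}$. As $\mathscr{F}$ is a 1-isomorphism, postcomposition $\mathscr{S}\mapsto\mathscr{F}\circ\mathscr{S}$ defines an equivalence $\Homcat_{\stwoVectBdl_k(X)}(\mathscr{I},\mathscr{G}_P)\to\Homcat_{\stwoVectBdl_k(X)}(\mathscr{I},\mathcal{A})$, and since $\mathscr{F}$ is invertible this functor sends invertible objects to invertible objects and conversely (if $\mathscr{F}\circ\mathscr{S}$ is invertible, so is $\mathscr{S}=\mathscr{F}^{-1}\circ(\mathscr{F}\circ\mathscr{S})$); hence it restricts to an equivalence between trivializations of $\mathscr{G}_P$ and invertible 1-morphisms $\mathscr{I}\to\mathcal{A}$. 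Concretely this is exactly the untwisting procedure of \cref{re:untwisting} applied to the twisted module bundle $\mathscr{F}$, sending a trivialization $\mathscr{T}$ to $\mathscr{F}^{\mathscr{T}}=\mathscr{F}\circ\mathscr{T}^{-1}$. For the third step I would use the full faithfulness of the algebra-bundle inclusion of \cref{sec:inclusionofalgebrabundles} to identify $\Homcat_{\stwoVectBdl_k(X)}(\mathscr{I},\mathcal{A})=\Homcat_{\stwoVectBdl_k(X)}(X\times k,\mathcal{A})\cong\sBimodBdl_{\mathcal{A},X\times k}(X)$, the category of left $\mathcal{A}$-module bundles; under this identification \cref{LemmaInvertibility} together with \cref{lem:fibrewiseinvertibility} show that the invertible 1-morphisms correspond precisely to the invertible left $\mathcal{A}$-module bundles, i.e.\ those whose fibres $\mathcal{E}_x$ are Morita equivalences between $\mathcal{A}_x$ and $k$.

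Composing these three equivalences yields the desired canonical equivalence between the category of lifts of $P$ and the category of invertible left $\mathcal{A}$-module bundles, and canonicity is inherited step by step. I expect no step here to be mathematically deep: the entire force of the theorem resides in the earlier construction of $\mathscr{F}$ and the criterion that it is an isomorphism exactly when $F$ is a Morita equivalence, which we assume. The one genuine obstacle is bookkeeping of variances—keeping track of the source versus target of trivializations, and of the left- versus right-module conventions that come from the rule that 1-morphisms $A\to B$ are $B$-$A$-bimodules—so that the invertible objects are matched correctly on both ends. This is where I would be most careful, verifying in particular that the untwisting of \cref{re:untwisting} indeed lands in \emph{left} $\mathcal{A}$-module bundles and that invertibility is detected fibrewise through \cref{lem:fibrewiseinvertibility}.
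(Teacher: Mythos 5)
Your treatment of the equivalence of categories coincides with the paper's own proof: compose with the 1-isomorphism $\mathscr{F}$ to transport Hom-categories, identify one side with invertible left $\mathcal{A}$-module bundles via the fully faithful inclusion of super algebra bundles (\cref{sec:inclusionofalgebrabundles}), and the other with trivializations of $\mathscr{G}_P$ and hence, by Murray's lifting theory, with lifts of $P$. The only difference is variance: the paper composes \emph{into} $\mathscr{I}$, obtaining $\Isocat_{\stwoVectBdl_k(X)}(\mathcal{A},\mathscr{I})\cong\Isocat_{\stwoVectBdl_k(X)}(\mathscr{G}_P,\mathscr{I})$, whereas you compose \emph{out of} $\mathscr{I}$; both work, and your direction has the small advantage of landing on left $\mathcal{A}$-module bundles on the nose, at the price of having to invert trivializations to present them as 1-morphisms $\mathscr{I}\to\mathscr{G}_P$ (harmless, since these categories are groupoids).

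The genuine gap is that you prove only the last assertion of the theorem. The statement also claims (i) that $\mathscr{F}$ canonically induces a 1-morphism $\mathscr{G}_P\to\mathcal{A}$, and (ii) that this 1-morphism is invertible \emph{if and only if} $F$ is a Morita equivalence between $A$ and $k$; you explicitly take these as given (\quot{the criterion \dots\ which we assume}), and in particular the \quot{only if} direction of (ii) is never addressed. These are not hypotheses of the theorem but claims to be established. In the paper both follow in one stroke from \cref{lem:twistedmodulebundles}: the assignment $\mathscr{E}\mapsto\mathscr{P}_{\mathscr{E}}$ is an isomorphism between the category of $\mathscr{G}_P$-twisted $\mathcal{A}$-module bundles and $\Homcat_{\stwoVectBdl_k(X)}(\mathscr{G}_P,\mathcal{A})$, and $\mathscr{P}_{\mathscr{E}}$ is invertible precisely when all fibres of the underlying vector bundle are Morita equivalences --- applied to $\mathscr{F}$, whose fibres are all equal to $F$, this is exactly (i) and (ii). Since you already invoke the surrounding machinery (\cref{re:untwisting}, \cref{LemmaInvertibility}, \cref{lem:fibrewiseinvertibility}), the repair is simply to cite \cref{lem:twistedmodulebundles} and make this step explicit rather than treating the invertibility criterion as an input.
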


\begin{proof}
 \cref{lem:twistedmodulebundles}  shows that $\mathscr{F}$ is invertible if and only if the fibres of $\mathcal{F}$ are Morita equivalences; here, these fibres are all $F$. Finally, composition with the isomorphism $\mathscr{F}: \mathscr{G}_P \to \mathcal{A}$ yields an equivalence of categories of 1-isomorphisms
\begin{equation*}
\Isocat_{\stwoVectBdl_k(X)}(\mathcal{A},\mathscr{I}) \cong \Isocat_{\stwoVectBdl_k(X)}(\mathscr{G}_P,\mathscr{I})\text{.}
\end{equation*}
By the fully faithful inclusion of super algebra bundles (\cref{sec:inclusionofalgebrabundles}), the left hand side is equivalent to the category of invertible left $\mathcal{A}$-module bundles. By the fully faithful inclusion of super bundle gerbes, the right hand side is equivalent to the category of trivializations of $\mathscr{G}_P$, which in turn is equivalent to the category of lifts of $P$. 
\end{proof}

We now discuss several applications of this theorem. The first examples come from classical spin geometry.
First, let $G = \mathrm{O}_d$ act on the real Clifford algebra $A = \Cl_d \otimes \Cl_{-d} \cong \Cl_d \otimes \Cl_d^{\opp}$ via its standard action \emph{on the first factor only}. We may consider $F=\Cl_d$ as an $A$-module in the obvious way (\ie $\Cl_{d}$ and $\Cl_{-d}$ act by left- and right multiplication respectively), and note that it is a Morita equivalence to the ground field $k=\R$.
The group $\hat{G} = \mathrm{Pin}^-_d \subset \Cl_d$ is a central extension
\begin{equation*}
\Z_2 \to \mathrm{Pin}^-_d \to \O_d
\end{equation*}
 and $\mathrm{Pin}^-_d$ acts on $F$ by left multiplication.
It is straightforward to check that this turns $F$ into a $\Pin_d^{-}$-equivariant $A$-module. We obtain from \cref{Thm:CanonicalIsomorphism:Representation} the following result.
\begin{corollary}
\label{co:pin-}
  Let $X$ be a Riemannian manifold and let $\mathscr{G}^{\mathrm{Pin}_d^-}_{\mathrm{O}_d}(X)$ be the lifting gerbe for lifting the structure group of the frame bundle from $\mathrm{O}_d$ to $\mathrm{Pin}_d^-$. Then, we have a canonical 1-isomorphism of 2-vector bundles
  \begin{equation*}
        \mathscr{G}^{\mathrm{Pin}_d^-}_{\mathrm{O}_d}(X) \cong \Cl(TX) \otimes \Cl_{-d}.
\end{equation*}
\end{corollary}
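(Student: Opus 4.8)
The plan is to obtain the statement as a direct specialization of \cref{Thm:CanonicalIsomorphism:Representation} to the data assembled in the paragraphs preceding the corollary. First I would take $P$ to be the orthonormal frame bundle of the Riemannian manifold $X$, which is a principal $\O_d$-bundle, so that by definition the lifting gerbe $\mathscr{G}^{\Pin_d^-}_{\O_d}(X)$ is exactly $\mathscr{G}_P$ for the central extension $\Z_2 \to \Pin_d^- \to \O_d$. With $A := \Cl_d \otimes \Cl_{-d}$, $F := \Cl_d$, and the $F$-implementation $\Pin_d^-$ described above, \cref{Thm:CanonicalIsomorphism:Representation} then furnishes a canonical $1$-morphism $\mathscr{F}: \mathscr{G}_P \to \mathcal{A}$ of super $2$-vector bundles, where $\mathcal{A} = P \times_{\O_d} A$ is the associated algebra bundle.

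To upgrade this $1$-morphism to an isomorphism, I would invoke the invertibility criterion of \cref{Thm:CanonicalIsomorphism:Representation}: $\mathscr{F}$ is an isomorphism precisely when $F$ is a Morita equivalence between $A$ and $k=\R$. This is the one genuinely algebraic point. I would verify it by recalling the standard identification $\Cl_{-d} \cong \Cl_d^{\opp}$ together with the fact that $\Cl_d$ is central simple, so that the action map realizes an even isomorphism $A = \Cl_d \otimes \Cl_d^{\opp} \cong \End_\R(\Cl_d)$ under which $F = \Cl_d$ becomes the defining module; the defining module of a graded endomorphism algebra $\End_\R(F)$ is always a Morita equivalence between $\End_\R(F)$ and $\R$. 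One must check that all of these isomorphisms are even and that the left and right actions match the bimodule conventions of the theorem, but this is exactly the content of $\Cl_d$ being an invertible object of $\stwoVect_\R$.

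The remaining step is to identify the associated algebra bundle $\mathcal{A} = P \times_{\O_d}(\Cl_d \otimes \Cl_{-d})$ with $\Cl(TX) \otimes \Cl_{-d}$. Here I would use the naturality of the Clifford-algebra construction: since $TX = P \times_{\O_d} \R^d$ and $\O_d$ acts on $\Cl_d = \Cl(\R^d)$ by the algebra automorphisms extending its defining action on $\R^d$, one has a canonical identification $\Cl(TX) \cong P \times_{\O_d} \Cl_d$ of super algebra bundles. Because $\O_d$ acts trivially on the second tensor factor $\Cl_{-d}$, the associated-bundle functor carries the (super) tensor product through, giving $\mathcal{A} \cong (P \times_{\O_d} \Cl_d) \otimes \Cl_{-d} \cong \Cl(TX) \otimes \Cl_{-d}$, where $\Cl_{-d}$ is viewed as a trivial super algebra bundle. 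Composing with $\mathscr{F}$ then yields the asserted canonical isomorphism.

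The hard part will not be any single computation but rather the careful bookkeeping of gradings: ensuring that the twisted conjugation action of $\O_d$ on $\Cl_d$, the left-multiplication action of $\Pin_d^-$ on $F$, and the graded tensor-product conventions are all mutually consistent, so that the abstract hypotheses of \cref{Thm:CanonicalIsomorphism:Representation} are met on the nose and the final identification of $\mathcal{A}$ is an isomorphism of \emph{super} algebra bundles rather than merely of ungraded ones.
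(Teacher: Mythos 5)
Your proposal is correct and follows exactly the paper's own route: the paper proves this corollary by setting up the same data ($G=\O_d$ acting on $A=\Cl_d\otimes\Cl_{-d}$ through the first factor, $F=\Cl_d$ as a Morita equivalence to $\R$, and $\Pin_d^-$ as the $F$-implementation) and then applying \cref{Thm:CanonicalIsomorphism:Representation}. The only difference is that you spell out the verification that $F$ is a Morita equivalence and the identification $P\times_{\O_d}(\Cl_d\otimes\Cl_{-d})\cong\Cl(TX)\otimes\Cl_{-d}$, which the paper leaves implicit.
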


The second part of \cref{Thm:CanonicalIsomorphism:Representation} gives in the present situation a new proof of the well-known fact that the category of  $\mathrm{Pin}_d^-$-structures on $X$ is equivalent to the category of invertible $(\Cl(TX) \otimes \Cl_{-d})$-module bundles.  The advantage of our new proof is that this  abstract equivalence is exhibited as an equivalence between Hom-categories in a single bicategory, established by composition with a 1-isomorphism. 
Passing through our constructions, it turns out that this sends a $\mathrm{Pin}_d^-$-structure on $X$ to the real spinor bundle of Lawson and Michelsohn \cite{Lawson1989}.
In case that $X$ is oriented, in which case the structure group of the frame bundle is already reduced to $\mathrm{SO}_d$, there is a variation of this statement where $G = \mathrm{SO}_d$ and $\hat{G} = \mathrm{Spin}_d$. 
In this case, we have an analogous statement for the obstruction gerbe for the lift from $\mathrm{SO}_d$ to $\mathrm{Spin}_d$, which reads
\begin{equation*}
  \mathscr{G}^{\mathrm{Spin}_d}_{\mathrm{SO}_d}(X) \cong \Cl(TX) \otimes \Cl_{-d}.
\end{equation*}
\begin{comment}
Moreover, the category of  $\Spin_d$-structures on $X$ is equivalent to the category of invertible $(\Cl(TX) \otimes \Cl_{-d})$-module bundles, and this equivalence takes a $\Spin_d$-structure to the real spinor bundle of $X$.
\end{comment}

There is also a variation for complex scalars. 
Here $G = \mathrm{O}_d$ and $\hat{G} = \mathrm{Pin}^\C_d$. If $d$ is even, we set $A = \CCl_d$ and $F$ is one of the spin representations $\Delta_d^\pm$, which is a Morita equivalence to $k= \C$. Again, the action of $G$ on $A$ is the standard one and $\mathrm{Pin}^\C_d$ acts on $F$ through the inclusion $\mathrm{Pin}_d^\C \subset \CCl_d$. If $d$ is odd, then we take $A = \CCl_{d+1} = \CCl_d \otimes \CCl_1$ (on which $G$ acts on the first factor only), together with $F = \Delta_{d+1}^\pm$. We obtain the following.

\begin{corollary}
\label{co:pinc}
Let $X$ be a $d$-dimensional Riemannian manifold and let $\mathscr{G}^{\mathrm{Pin}^\C_d}_{\mathrm{O}_d}(X)$ be the lifting gerbe for lifting the structure group of the frame bundle from $\mathrm{O}_d$ to $\mathrm{Pin}_d^\C$. Then, each of the two possible choices  of spin representation provides a canonical isomorphism
\begin{equation*}
  \mathscr{G}^{\mathrm{Pin}^\C_d}_{\mathrm{O}_d}(X) \cong \begin{cases} \CCl(TX)  & \text{if } $d$ \text{ is even;} \\ \CCl(TX) \otimes \CCl_1 & \text{if } $d$ \text{ is odd.} \end{cases}
\end{equation*} 
\end{corollary}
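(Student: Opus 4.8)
The plan is to apply the general machinery of \cref{Thm:CanonicalIsomorphism:Representation} in the complex setting, exactly as was done in the real case for \cref{co:pin-}. The essential task is to verify that the data specified just before the corollary---namely the group $G = \O_d$, its central extension $\hat G = \Pin_d^{\C}$, the super algebra $A$, and the module $F$---actually fit the hypotheses of \cref{DefinitionRepCentralExtension}, so that the theorem delivers the claimed 1-isomorphism. First I would treat the even and odd cases separately, since the definitions of $A$ and $F$ differ between them. In the even case $d$, I set $A = \CCl_d$ with its standard $\O_d$-action and take $F = \Delta_d^{\pm}$; in the odd case I set $A = \CCl_{d+1} = \CCl_d \otimes \CCl_1$ with $\O_d$ acting on the first tensor factor only, and take $F = \Delta_{d+1}^{\pm}$.

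The key steps are as follows. I would first check that $F$ is a Morita equivalence between $A$ and $k = \C$; this is the content of the classical fact that the complex Clifford algebra $\CCl_{2m}$ is isomorphic to a matrix algebra $\End(\Delta)$ with $\Delta$ the irreducible spin representation, so that $\Delta$ is the standard Morita bimodule implementing $\CCl_{2m} \sim_{\mathrm{Morita}} \C$. In the even case $d = 2m$ this is immediate, and in the odd case $d = 2m-1$ the algebra $\CCl_{d+1} = \CCl_{2m}$ is again an even-dimensional complex Clifford algebra, so $\Delta_{d+1}^{\pm}$ is once more a matrix-algebra-to-ground-field Morita equivalence. Second, I would verify the $F$-implementation condition \cref{eq:IntertwinerCondition}: the inclusion $\Pin_d^{\C} \subset \CCl_d$ lets $\hat G$ act on $F$ by restriction of the module structure (in the odd case, acting on the $\CCl_d$ factor), and the intertwining identity $\hat g \cdot (a \lact v) = g(a) \lact (\hat g \cdot v)$ holds because the $\O_d$-action on $A$ is precisely conjugation by elements of $\Pin_d^{\C}$ under the standard twisted adjoint action. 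Third, I would confirm that $Z = \C^{\times}$ (the kernel of $\Pin_d^{\C} \to \O_d$) acts on $F$ by scalar multiplication, which is clear since $Z$ sits inside the center-acting scalars of $\CCl_d$. With all three conditions in place, \cref{Thm:CanonicalIsomorphism:Representation} applies and yields the canonical 1-isomorphism $\mathscr{G}^{\Pin_d^{\C}}_{\O_d}(X) \cong \mathcal{A}$, where $\mathcal{A} = P \times_{\O_d} A$ is the associated algebra bundle.

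The last step is to identify the associated bundle $\mathcal{A} = P \times_{\O_d} A$ with the Clifford algebra bundle of $TX$. Since $A = \CCl_d$ carries the standard $\O_d$-action and $P$ is the orthonormal frame bundle, the associated bundle $P \times_{\O_d} \CCl_d$ is by definition $\CCl(TX)$; in the odd case, the inert second tensor factor $\CCl_1$ passes through the associated-bundle construction untouched (as $\O_d$ acts trivially on it), giving $\CCl(TX) \otimes \CCl_1$. This yields precisely the two branches of the stated formula.

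The main obstacle I anticipate is not any single hard computation but rather the careful bookkeeping of the $\Z_2$-gradings and the twisted-adjoint conventions across the even/odd dichotomy. In particular, one must be sure that the grading on $\Pin_d^{\C}$ induced by $\Pin_d^{\C} \to \O_d \to \Z_2$ matches the grading under which $F$ becomes an \emph{even} $\hat G$-representation in the sense required by \cref{DefinitionRepCentralExtension}, and that in the odd case the choice to enlarge $A$ to $\CCl_{d+1}$ (rather than working with the non-central $\CCl_d$ directly) is exactly what makes $A$ central simple and hence $F$ a genuine Morita equivalence---this is the structural reason the odd case needs the extra $\CCl_1$ factor. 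Verifying \cref{eq:IntertwinerCondition} together with the parity compatibility is where sign errors are most likely to creep in, so that is the step I would write out most carefully.
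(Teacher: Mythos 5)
Your proposal is correct and follows essentially the same route as the paper: the corollary is obtained by feeding the data $G=\O_d$, $\hat G=\Pin_d^\C$, $A=\CCl_d$ (respectively $\CCl_{d+1}=\CCl_d\otimes\CCl_1$ for $d$ odd) and $F=\Delta_d^{\pm}$ (respectively $\Delta_{d+1}^{\pm}$) into \cref{Thm:CanonicalIsomorphism:Representation}, checking that this constitutes an $F$-implementation in the sense of \cref{DefinitionRepCentralExtension} with $F$ a Morita equivalence to $\C$, and then identifying $P\times_{\O_d}A$ with $\CCl(TX)$, respectively $\CCl(TX)\otimes\CCl_1$. The only cosmetic slip is calling the kernel of $\Pin_d^\C\to\O_d$ all of $\C^{\times}$ (conventionally it is $\U(1)$), which is harmless since the definition only requires $Z\subset k^{\times}$ acting by scalars.
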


Again, the second part of \cref{Thm:CanonicalIsomorphism:Representation} gives in the present situation (say, $d$ even) a new proof of the well-known fact that  the category of  $\mathrm{Pin}_d^\C$-structures on $X$ is equivalent to the category of invertible $\CCl(TX)$-module bundles. For $F=\Delta_d^{+}$, this equivalence takes a $\mathrm{Pin}_d^\C$-structure on $X$ to the graded spinor bundle, and for $F=\Delta_d^{-}$, to the its grading reversal. If $d$ is odd, then the category of  $\mathrm{Pin}_d^\C$-structures on $X$ is equivalent to the category of invertible $(\CCl(TX) \otimes \CCl_1)$-module bundles. 
For $F=\Delta_{d+1}^{+}$, this equivalence takes a $\mathrm{Pin}_d^\C$-structure $P$ on $X$ to the graded $(\CCl(TX) \otimes \CCl_1)$-module bundle $\mathcal{F} = P \times_{\mathrm{Pin}_d^\C} \Delta_{d+1}^{+}$.
To obtain the usual ungraded spinor bundle in odd dimensions this way, one takes the even subbundle $\mathcal{F}_0$ of $\mathcal{F}$, which has an action of $\CCl(TX)$ given by modifying the previous action by the extra vector $e \in \CCl_1$ (Explicitly, it is given by $v \lact_\mathrm{mod} \psi := e \lact v \lact \psi$ for $v \in TX$ and $\psi \in \mathcal{F}_0$.)
\\
As before, there is a variation on this statement in the case that $X$ is oriented and $G = \mathrm{SO}_d$, $\hat{G} = \mathrm{Spin}_d^\C$.

We finish this section with some infinite-dimensional examples. 
These do not, strictly speaking, fit our setup.
Instead, one would need to define super 2-vector bundles modeled on suitable bicategories of C$^*$-algebras or von Neumann algebras instead of the category of finite-dimensional algebras used here (see \cite{Pennig2011} or \cite{Brouwer2003}).
However, due to issues with smoothness in this operator algebraic setup, this will often only yield \emph{continuous} 2-vector bundles.
\\
Let $H$ be a separable complex Hilbert space and let $G = \mathrm{PU}(H) = \mathrm{U}(H) /\mathrm{U}(1)$ be the corresponding projective unitary group. $\mathrm{PU}(H)$ acts by conjugation on the algebra $A = \mathbb{K}(H)$ of compact operators on $H$. Now, $F=H$ is a module for $\mathbb{K}(H)$, but $\mathrm{PU}(H)$ does not act on $H$. Instead, the central extension
\begin{equation*}
\U(1) \to \U(H) \to \PU(H)
\end{equation*}
together with the standard action of $\U(H)$ on $H$ turns $H$ into (a topological version of) a $\U(H)$-equivariant $\mathbb{K}(H)$-module;  in particular, condition  \eqref{eq:IntertwinerCondition} is satisfied.
Suppose now that $P$ is a principal $\PU(H)$-bundle over $X$. We consider the lifting gerbe $\mathscr{G}_P$ and the associated bundle $\mathcal{A}:=P \times_{\mathrm{PU}(H)} \mathbb{K}(H)$ of compact operators. It is well known that the (bundle-gerbe-theoretic) Dixmier-Douady class of $\mathscr{G}_P$ coincides with the (operator algebraic) Dixmier-Douady class of $\mathcal{A}$. In a setting of continuous 2-vector bundles, this coincidence obtains a nice new explanation, via (a continuous version of) \cref{Thm:CanonicalIsomorphism:Representation}.  
\\
Indeed, \cref{Thm:CanonicalIsomorphism:Representation} provides a $\mathscr{G}_P$-twisted $\mathcal{A}$-module bundle $\mathscr{H}$, which gives a 1-morphism
\begin{equation*}
\mathscr{H}:\mathscr{G}_P \to \mathcal{A}\text{.}
\end{equation*} 
In fact, if the bicategory of infinite-dimensional algebras used to define notion of topological 2-vector bundles is such that $H$ provides a Morita equivalence from $\mathbb{K}(H)$ to $\C$, then $\mathscr{H}$ is in fact an isomorphism. 
For example, this is the case in the category of C$^{\ast}$-algebras and Hilbert modules discussed in the work of Pennig \cite{Pennig2011}.
Then, the 1-morphism $\mathscr{H}: \mathscr{G}_P \to \mathcal{A}$ is a 1-isomorphism, and it is clear that the 2-vector bundles $\mathscr{G}_P$ and $\mathcal{A}$ must have the same Dixmier-Douady class.

Another example is the treatment of spinor bundles on the loop space $LM$ of a string manifold $M$, which is carried out rigorously in a Fr\'echet setting in \cite{kristel2020smooth}.
There, $P$ is the frame bundle of $LM$, which is a Fr\'echet principal bundle for the loop group $G=L\Spin(d)$, where $d=\dim (M)$. Let $V$ be a  Hilbert space of \quot{spinors on the circle}, which is equipped with a real structure, and with a Lagrangian subspace $L \subset V$. The unitary group of $V$ has a famous subgroup, the restricted orthogonal group $\O_L(V)$ of unitary operators that commute with the real structure and fix the equivalence class of $L$.
The restricted orthogonal group is a Banach Lie group, and there are Lie group homomorphisms
\begin{equation*}
L\Spin(d) \to L\SO(d) \to \O_L(V)\text{.}
\end{equation*}
We consider the Clifford C$^{*}$-algebra $A=\Cl(V)$ of Plymen-Robinson \cite{PR95}, on which $\O_L(V)$ acts by Bogoliubov automorphisms. Thus, the group $G=L\Spin(d)$ acts on $A$.
Next, we consider the Fock space $F=F_L$ associated to the Lagrangian subspace $L$, which is a $\Cl(V)$-module. The universal central extension
\begin{equation*}
\U(1) \to \widetilde{L\Spin}(d) \to L\Spin(d)
\end{equation*}
acts $F$ by unitary operators, turning $F$ into a $\widetilde{L\Spin}(d)$-equivariant $\Cl(V)$-module. It  satisfies condition \cref{eq:IntertwinerCondition}, which is in this case traditionally written as
\begin{equation*}
\theta_g(a) = UaU^{*}\text{,}
\end{equation*}
where $g\in L\Spin(d)$, $\theta_g$ denotes its Bogoliubov automorphism,  $a\in \Cl(V)$, and $U\in \widetilde{L\Spin}(d)$ projecting to $g$. 
The precise infinite-dimensional analogue of the general setting is explained in \cite[\S 2.4]{kristel2020smooth}, and the fact that the constructions outlined above match this setting is proved in \cite[Thm.~3.2.9]{kristel2020smooth}. 
\\
 The bundle gerbe $\mathscr{S}:=\mathscr{G}_P$ in this situation is the \emph{spin lifting gerbe} on loop space. The algebra bundle $\mathrm{Cl}(P):=\mathcal{A} = P \times_{L\Spin(d)} \Cl(V)$ is the Clifford algebra bundle on loop space. The $\mathscr{S}$-twisted $\mathcal{A}$-module bundle $\mathscr{F}$ is the \emph{twisted spinor bundle on loop space}. While all these structures have a well-defined meaning in the setting  rigged C$^{*}$-algebras and rigged Hilbert space bundles, we currently do not have a complete setting of infinite-dimensional smooth 2-vector bundles. 
In such a suitable setting, we would be able to interpret the twisted spinor bundle $\mathscr{F}$ as a 1-morphism
\begin{equation*}
\mathscr{F}: \mathscr{S} \to \Cl(P) 
\end{equation*}
of 2-vector bundles over $LM$, in analogy to the finite-dimensional cases discussed in \cref{co:pin-,co:pinc}.

\begin{appendix}

\setsecnumdepth{1}

\section{Cohomology with values in crossed modules}

\label{sec:crossedmodules}

\label{sec:butterflies}

A \emph{crossed module $\Gamma$ of Lie groups}  of a Lie group homomorphism $t:H \to G$ between two Lie groups and of a left action $h\mapsto \prescript{g}{}h$ of $G$ on $H$ by group homomorphisms, such that $t(\prescript{g}{}h)=g t(h)g^{-1}$ and $\prescript{t(x)}{}h=xhx^{-1}$ hold for all $g\in G$ and $h,x\in H$ \cite{Mackenzie1989}. 

\begin{example}
An \emph{abelian} Lie group $A$ induces a crossed module, denoted $\mathscr{B}A$, with groups  $H:=A$ and $G:=\{e\}$. Any Lie group $G$ induces another crossed module, denoted $G_{dis}$, with groups $G$ and $H:= G$, $t=\id_G$ and the conjugation action of $G$ on itself. 
\end{example}

\label{sec:cmc}

We need in \cref{sec:nonabcohfa} and below the passage from a crossed module $\Gamma$ to the corresponding Lie 2-groupoid, denoted $\mathscr{B}\Gamma$. 
The rationale here is that smooth crossed modules correspond to strict Lie 2-groups, which in turn correspond to Lie 2-groupoid with a single object.
  Suppose  that a crossed module $\Gamma$ is given by Lie groups $G$ and $H$ and a Lie group homomorphism $t:H \to G$. The associated Lie 2-groupoid $\mathscr{B}\Gamma$ has a single object, its manifold of 1-morphisms is $G$, and its manifold of 2-morphisms is $H \times G$, where $(h,g)$ is considered as a 2-morphism from $g$ to $t(h)g$. The composition of 1-morphisms of the multiplication of $G$,  the vertical composition of 2-morphisms is $(h',g') \circ (h,g)= (h'h,g)$, and the horizontal composition of 2-morphisms is given by the semi-direct product formula
\begin{equation}
\label{eq:semidirect}
(h_2,g_2) \bullet (h_1,g_1) = (h_2 \prescript{g_2}{}h_1,g_2g_1)\text{.} 
\end{equation}

\begin{definition}[Cohomology with values in a crossed module]
\label{DefNonAbChechCohom}
Let $\Gamma$ be a crossed module of Lie groups, and let $X$ be a smooth manifold.
The \emph{\v Cech cohomology of $X$ with coefficients in a crossed module $\Gamma$} is
\begin{equation*}
\check{\mathrm{H}}^1(X,\Gamma):=\mathrm{h}_0(\underline{\mathscr{B}\Gamma}^{+}(X))\text{.}
\end{equation*}
\end{definition}

That is, we consider the 2-groupoid $\mathscr{B}\Gamma$ 
with a single object, the presheaf of bicategories  $\underline{\mathscr{B}\Gamma}$ of level-wise smooth functions to $\mathscr{B}\Gamma$,  2-stackify using the plus-construction, evaluate at $X$, and then take its set of isomorphism classes of objects.
See \cite[\S A.3]{Nikolause} for this elegant definition. 

In \cite[\S A.3]{Nikolause} it is explained how to spell out  \cref{DefNonAbChechCohom} in terms of concrete cocycles. For this purpose, one performs the plus construction with respect to a surjective submersion obtained as the disjoint union of the members of an open cover $\{A_{\alpha}\}_{\alpha\in A}$ of $X$. Then, a class in $\check{\mathrm{H}}^1(X,\Gamma)$ is represented by a pair $(g,a)$ where $g$ is a collection of smooth maps $g_{\alpha\beta}:U_{\alpha} \cap U_{\beta} \to G$ and $a$ is a collection of smooth maps $a_{\alpha\beta\gamma}:U_{\alpha} \cap U_{\beta}\cap U_{\gamma} \to H$, such that the cocycle conditions 
\begin{equation} 
\label{CocycleConditionsNonAb2}
t(a_{\alpha\beta\gamma})\cdot  g_{\beta\gamma}  \cdot g_{\alpha\beta}=g_{\alpha\gamma}
\quad\text{ and }\quad
a_{\alpha\gamma\delta}\cdot \prescript{g_{\gamma\delta}}{}a_{\alpha\beta\gamma}=a_{\alpha\beta\delta}\cdot a_{\beta\gamma\delta}
\end{equation}  
are satisfied. Two cocycles $(g,a)$  and $(g',a')$ are equivalent, if -- after passing to a common refinement of the open covers -- there exist smooth maps $h_{\alpha}:U_{\alpha} \to G$ and $e_{\alpha\beta}:U_{\alpha}\cap U_{\beta}\to H$ satisfying
\begin{equation}
\label{CoboundaryConditionsNonAb}
t(e_{\alpha\beta})\cdot h_{\beta}\cdot g_{\alpha\beta}=g'_{\alpha\beta}\cdot h_{\alpha}
\quad\text{ and }\quad
a'_{\alpha\beta\gamma}\cdot \prescript{g'_{\beta\gamma}}{}{e_{\alpha\beta}}\cdot e_{\beta\gamma}= e_{\alpha\gamma}\cdot \prescript{h_{\gamma}}{}a_{\alpha\beta\gamma}\text{.}
\end{equation}

\begin{remark}\label{RemarkCheckCohomologyAppendix}
It is straightforward to see, either from the definition above or from the cocycle description, that for a Lie group $G$ we obtain $\check{\mathrm{H}}^1(X,G_{dis})= \check{\mathrm{H}}^1(X,\underline{G})$, \ie the ordinary \v Cech cohomology with values in the sheaf of smooth $G$-valued functions, and for an abelian Lie group $A$ we obtain $\check{\mathrm{H}}^1(X,\mathscr{B}A)=\check{\mathrm{H}}^2(X,\underline{A})$. \end{remark}

\begin{remark}
The \v Cech cohomology $\check{\mathrm{H}}^1(X,\Gamma)$ of $X$ with values in $\Gamma$ is often called the \quot{non-abelian (\v Cech)} cohomology. Indeed, in contrast to ordinary cohomology,  $\check{\mathrm{H}}^1(X,\Gamma)$ is not a group, but only a pointed set. 
\end{remark}

The easiest and most natural kind of morphism one can consider between crossed modules, called \emph{strict homomorphism}, is a pair of group homomorphisms $G \to G'$ and $H \to H'$ that strictly respect all structure of the crossed modules. 
A strict homomorphism of crossed modules induces in an obvious way a map between the corresponding cohomologies, by just composing cocycles with the group homomorphisms. 
However, this does not give the correct notion of isomorphism between the associated 2-stacks $\sheaf{\mathscr{B}\Gamma}^{+}$.
 It is proved by Aldrovandi-Noohi  \cite{Aldrovandi2009} that \emph{invertible butterflies} between crossed modules $\Gamma_1$ and $\Gamma_2$ give the correct notion. An invertible butterfly between crossed modules $t_1: H_1 \to G_1$ and $t_2: H_2 \to G_2$ consists of a Lie group $K$ together with Lie group homomorphisms  that make up a commutative diagram 
\begin{equation}
\label{eq:butterfly}
\begin{aligned}
\xymatrix{H_1 \ar[dd]_{t_1} \ar[dr]^{i_1} && H_2 \ar[dd]^{t_2}\ar[dl]_{i_2} \\ & K \ar[dl]^{p_1}\ar[dr]_{p_2} \\ G_1 && G_2\text{,}}
\end{aligned}
\end{equation}
such that both diagonal sequences are short exact sequences of Lie groups, and the equations
\begin{equation} \label{RelationsButterfly}
i_1(\prescript{p_1(x)}{}h_1) = xi_1(h_1)x^{-1}
\quad\text{ and }\quad
i_2(\prescript{p_2(x)}{}h_2) = xi_2(h_2)x^{-1}
\end{equation} 
hold for all $h_1\in H_1$, $h_2\in H_2$ and $x\in K$. Since an invertible butterfly establishes an equivalence of 2-stacks $\sheaf{\mathscr{B}\Gamma_1}^{+}\cong \sheaf{\mathscr{B}\Gamma_2}^{+}$, see \cite{Aldrovandi2009}, we obtain due to \cref{DefNonAbChechCohom} immediately the following result.

\begin{proposition}
\label{prop:butterflyco}
Any invertible butterfly between crossed modules $\Gamma_1$ and $\Gamma_2$ induces a bijection
\begin{equation*}
\check{\mathrm{H}}^1(X,\Gamma_1) \cong \check{\mathrm{H}}^1(X,\Gamma_2)\text{.}
\end{equation*} 
\end{proposition}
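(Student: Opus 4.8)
The plan is to reduce the statement directly to the result of Aldrovandi--Noohi \cite{Aldrovandi2009} quoted just above, which identifies invertible butterflies between $\Gamma_1$ and $\Gamma_2$ with isomorphisms of the associated 2-stacks $\underline{\mathscr{B}\Gamma}_1^{+}$ and $\underline{\mathscr{B}\Gamma}_2^{+}$. First I would recall \cref{DefNonAbChechCohom}, namely that $\check{\mathrm{H}}^1(X,\Gamma_i)=\mathrm{h}_0(\underline{\mathscr{B}\Gamma}_i^{+}(X))$, so that the two sides of the claimed bijection are precisely the sets of isomorphism classes of objects of the bicategories $\underline{\mathscr{B}\Gamma}_1^{+}(X)$ and $\underline{\mathscr{B}\Gamma}_2^{+}(X)$.

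Next, given an invertible butterfly, the cited result produces an isomorphism $\Phi:\underline{\mathscr{B}\Gamma}_1^{+}\to \underline{\mathscr{B}\Gamma}_2^{+}$ of presheaves of bicategories. Evaluating $\Phi$ at the fixed manifold $X$ yields an equivalence of bicategories $\Phi_X:\underline{\mathscr{B}\Gamma}_1^{+}(X)\to \underline{\mathscr{B}\Gamma}_2^{+}(X)$. Finally, any equivalence of bicategories descends to a bijection on the level of $\mathrm{h}_0$: essential surjectivity makes the induced map on isomorphism classes of objects surjective, while full faithfulness (of the underlying weak 2-functor) ensures that it preserves and reflects isomorphism of objects, hence injectivity. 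Composing these observations gives the desired bijection $\check{\mathrm{H}}^1(X,\Gamma_1)\cong \check{\mathrm{H}}^1(X,\Gamma_2)$, which by construction is induced by the butterfly.

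There is no genuine obstacle here: the mathematical content is entirely contained in the Aldrovandi--Noohi identification of butterflies with 2-stack isomorphisms, which I would invoke as a black box (precisely as the phrase \emph{we have the following result for free} in the surrounding text suggests). The only points requiring routine care are that evaluation of a morphism of presheaves of bicategories at $X$ preserves the property of being an equivalence, and that an equivalence of bicategories induces a bijection of $\mathrm{h}_0$; both are standard facts about bicategories that I would state without extended proof.
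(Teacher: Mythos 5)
Your proposal is correct and follows exactly the paper's route: the paper also obtains this proposition \quot{for free} from the Aldrovandi--Noohi identification of invertible butterflies with isomorphisms of the 2-stacks $\underline{\mathscr{B}\Gamma}_1^{+}$ and $\underline{\mathscr{B}\Gamma}_2^{+}$, combined with \cref{DefNonAbChechCohom} and passage to $\mathrm{h}_0$. The only difference is cosmetic: the paper later supplements this abstract argument with an explicit cocycle-level description of the induced map (in \cref{sec:cmc}), but that is not part of the proof of the proposition itself.
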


More difficult is to see how the isomorphism of \cref{prop:butterflyco} is described explicitly on a cocycle level; since we have  not found a reference about this in the literature, we shall describe this now. 
Let $(g,a)$ be a cocycle representing a class in $\check {\mathrm{H}}^1(X,\Gamma_1)$ with respect to an open cover $\{U_{\alpha}\}_{\alpha\in A}$. 
\begin{comment}
Thus, $g$ is a collection of smooth maps $g_{\alpha\beta}:U_{\alpha} \cap U_{\beta} \to G_1$ and $a$ is a collection of smooth maps $a_{\alpha\beta\gamma}:U_{\alpha} \cap U_{\beta}\cap U_{\gamma} \to H_1$, such that the cocycle conditions \cref{CocycleConditionsNonAb2} are satisfied. 
\end{comment}
By passing to a smaller open cover, we may assume that $g_{\alpha\beta}$ lift along $p_1:K \to G_1$ to smooth maps $\tilde g_{\alpha\beta}: U_{\alpha}\cap U_{\beta} \to K$. Then, we consider $f_{\alpha\beta} := p_2\circ \tilde g_{\alpha\beta}: U_{\alpha}\cap U_{\beta} \to G_2$. The first cocycle condition for $(g,a)$ shows that
\begin{equation*}
p_1(\tilde g_{\alpha\gamma}\tilde g_{\alpha\beta}^{-1}\tilde g_{\beta\gamma}^{-1}i_1(a   _{\alpha\beta\gamma})^{-1})=1\text{,}
\end{equation*}
and hence, by exactness of the butterfly's NE-SW sequence, there exist unique smooth maps $b_{\alpha\beta\gamma}: U_{\alpha}\cap U_{\beta}\cap U_{\gamma} \to H_2$ such that
\begin{equation*}
i_2(b_{\alpha\beta\gamma}) = \tilde g_{\alpha\gamma}\tilde g_{\alpha\beta}^{-1}\tilde g_{\beta\gamma}^{-1}i_1(a _{\alpha\beta\gamma})^{-1}\text{.}
\end{equation*}
It is straightforward to show that  $(f,b)$ is a cocycle with values in $\Gamma_2$.Suppose another lift $\tilde g'_{\alpha\beta}$ is chosen, and let $(f',b')$ be the corresponding cocycle with values in $\Gamma_2$.
Again  by exactness of the NE-SW sequence there exist smooth maps $e_{\alpha\beta}:U_{\alpha}\cap U_{\beta}\to H_2$ such that $\tilde g'_{\alpha\beta}=i_2(e_{\alpha\beta})\tilde g_{\alpha\beta}$. It i then straightforward to show that the cocycles $(f,b)$ and $(f',b')$ are equivalent via a coboundary $(1,e_{\alpha\beta})$. 
\begin{comment}
Then, $f_{\alpha\beta}'=i(e_{\alpha\beta})f_{\alpha\beta}$. Moreover, we have
\begin{align*}
i_2(b'_{\alpha\beta\gamma}) &= \tilde g'_{\alpha\gamma}\tilde {g'}_{\alpha\beta}^{-1}\tilde {g'}_{\beta\gamma}^{-1}i_1(a _{\alpha\beta\gamma})^{-1}
\\&= i_2(e_{\alpha\gamma})\tilde g_{\alpha\gamma}\tilde g_{\alpha\beta}^{-1}i_2(e_{\alpha\beta})^{-1}\tilde {g'}_{\beta\gamma}^{-1}i_1(a _{\alpha\beta\gamma})^{-1}
\\&= i_2(e_{\alpha\gamma})\tilde g_{\alpha\gamma}\tilde g_{\alpha\beta}^{-1}\tilde {g'}_{\beta\gamma}^{-1}\tilde {g'}_{\beta\gamma}i_2(e_{\alpha\beta})^{-1}\tilde {g'}_{\beta\gamma}^{-1}i_1(a _{\alpha\beta\gamma})^{-1}
\\&= i_2(e_{\alpha\gamma})\tilde g_{\alpha\gamma}\tilde g_{\alpha\beta}^{-1}\tilde {g'}_{\beta\gamma}^{-1}i_2(^{f'_{\alpha\beta}}e_{\alpha\beta})^{-1}i_1(a _{\alpha\beta\gamma})^{-1}
\\&= i_2(e_{\alpha\gamma})\tilde g_{\alpha\gamma}\tilde g_{\alpha\beta}^{-1}\tilde g_{\beta\gamma}^{-1}i_2(e_{\beta\gamma})^{-1}i_2(^{f'_{\alpha\beta}}e_{\alpha\beta})^{-1}i_1(a _{\alpha\beta\gamma})^{-1}
\\&= i_2(e_{\alpha\gamma})\tilde g_{\alpha\gamma}\tilde g_{\alpha\beta}^{-1}\tilde g_{\beta\gamma}^{-1}i_1(a _{\alpha\beta\gamma})^{-1}i_2(e_{\beta\gamma})^{-1}i_2(^{f'_{\alpha\beta}}e_{\alpha\beta})^{-1}
\\&= i_2(e_{\alpha\gamma})i_2(b_{\alpha\beta\gamma})i_2(e_{\beta\gamma})^{-1}i_2(^{f'_{\alpha\beta}}e_{\alpha\beta})^{-1}
\end{align*}
These two equalities show that $(f,b)$ and $(f',b')$ are equivalent cocycles, via equivalence data $(1,e_{\alpha\beta})$.
\end{comment}
Thus, we have a well defined assignment of of cohomology classes in $\check{\mathrm{H}}^1(X,\Gamma_2)$ to cocycles with values in $\Gamma_1$. Next, we suppose that  $(g,a)$ and $(g',a')$ are equivalent cocycles with values in $\Gamma_1$, i.e., there are smooth maps $h_{\alpha}:U_{\alpha}\to G_1$ and $e_{\alpha\beta}:U_{\alpha}\cap U_{\beta} \to H_1$ satisfying \cref{CoboundaryConditionsNonAb}.
Suppose that we have chosen the lifts $\tilde g_{\alpha\beta}$ of $g_{\alpha\beta}$. Let $\tilde h_{\alpha}:U_{\alpha} \to K$ be lifts of $h_{\alpha}$ along $p_1$. Then, $\tilde g'_{\alpha\beta} := i_1(e_{\alpha\beta})\tilde h_{\beta}\tilde g_{\alpha\beta}\tilde h^{-1}_{\alpha}$ is a valid lift for $g'_{\alpha\beta}$. Let $(f',b')$ be the corresponding cocycle with values in $\Gamma_2$.
It is then straightforward to check that $(f,b)$ and $(f',b')$ are equivalent via the coboundary $(h',1)$, where $h'$ consists of the maps $h'_{\alpha}:= p_2 \circ \tilde h_{\alpha}$.
Thus, that we obtain a well-defined map
\begin{equation*}
\check {\mathrm{H}}^1(X,\Gamma_1)\to \check {\mathrm{H}}^1(X,\Gamma_2)\text{.}
\end{equation*}         
Since the invertible butterfly is symmetric, we obtain in the same way a map in the opposite direction. It is easy to see that these maps are inverses of each other.
\begin{comment}
Starting with a cocycle $(g,a)$, we choose lifts $\tilde g_{\alpha\beta}$ of $g_{\alpha\beta}$ along $p_1$ and pass to the cocycle $(f,b)$ with $f_{\alpha\beta} := p_2\circ \tilde g_{\alpha\beta}$ and $i_2(b_{\alpha\beta\gamma}) = \tilde g_{\alpha\gamma}\tilde g_{\alpha\beta}^{-1}\tilde g_{\beta\gamma}^{-1}i_1(a _{\alpha\beta\gamma})^{-1}$. Performing the inverse, we may lift $f_{\alpha\beta}$ to the same functions $\tilde g_{\alpha\beta}$, and obtain back the very same $g_{\alpha\beta}$. Likewise, the equality
\begin{equation*}
i_2(a_{\alpha\beta\gamma}) = \tilde g_{\alpha\gamma}\tilde g_{\alpha\beta}^{-1}\tilde g_{\beta\gamma}^{-1}i_1(b _{\alpha\beta\gamma})^{-1}
\end{equation*} 
which follows from the one above since the images of $i_1$ and $i_2$ in $K$ commute, shows that we get back the very same $a_{\alpha\beta\gamma}$.
\end{comment}

\end{appendix}

\bibliography{bibfile}

\begin{thebibliography}{BCM{\etalchar{+}}02}

\bibitem[AN09]{Aldrovandi2009}
E.~Aldrovandi and B.~Noohi, \quot{Butterflies {I}: morphisms of 2-group
  stacks}.
\newblock {\em Adv. Math.}, 221(3):687--773, 2009.
\newblock \kobiburl{http://arxiv.org/abs/0808.3627}.

\bibitem[BCM{\etalchar{+}}02]{Bouwknegt2002}
P.~Bouwknegt, A.~L. Carey, V.~Mathai, M.~K. Murray, and D.~Stevenson,
  \quot{Twisted K-theory and K-theory of bundle gerbes}.
\newblock {\em Commun. Math. Phys.}, 228(1):17--49, 2002.
\newblock \kobiburl{http://arxiv.org/abs/hep-th/0106194}.

\bibitem[BDR04]{baas1}
N.~A. Baas, B.~I. Dundas, and J.~Rognes, \quot{Two-vector bundles and forms of
  elliptic cohomology}.
\newblock In {\em Topology, geometry and quantum field theory}, volume 308 of
  {\em London Math. Soc. Lecture Note Ser.} Cambridge Univ. Press, 2004.
\newblock \kobiburl{http://arxiv.org/abs/0706.0531}.

\bibitem[BDSPV]{Bartlett2015}
B.~Bartlett, C.~L. Douglas, C.~J. Schommer-Pries, and J.~Vicary, \quot{Modular
  categories as representations of the 3-dimensional bordism 2-category}.
\newblock Preprint.
\newblock \kobiburl{https://arxiv.org/abs/1509.06811}.

\bibitem[Bro03]{Brouwer2003}
R.~M. Brouwer, \quot{A bicategorical approach to Morita equivalence for rings
  and von Neumann algebras}.
\newblock Preprint.
\newblock \kobiburl{https://arxiv.org/abs/math/0301353}.

\bibitem[BS09]{baez8}
J.~C. Baez and D.~Stevenson, \quot{The classifying space of a topological
  2-group}.
\newblock In N.~Baas, editor, {\em Algebraic Topology}, volume~4 of {\em Abel
  Symposia}, pages 1--31. Springer, 2009.
\newblock \kobiburl{http://arxiv.org/abs/0801.3843}.

\bibitem[DK70]{DK70}
P.~Donovan and M.~Karoubi, \quot{Graded {B}rauer groups and {K}-theory with
  local coefficients}.
\newblock {\em Publ. Math. Inst. Hautes \'Etudes Sci.}, 38(1):5--25, 1970.

\bibitem[Ers]{Ershov2016}
A.~V. Ershov, \quot{Morita bundle gerbes}.
\newblock Preprint.
\newblock \kobiburl{https://arxiv.org/abs/1610.05754}.

\bibitem[FHT11]{Freed2011a}
D.~S. Freed, M.~J. Hopkins, and C.~Teleman, \quot{Loop groups and twisted
  K-theory {I}}.
\newblock {\em J. Topology}, 4(4):737--798, 2011.
\newblock \kobiburl{https://arxiv.org/abs/0711.1906}.

\bibitem[Fre12]{Freed2012}
D.~Freed, {\em Lecture notes on twisted K-theory and orientifolds}.
\newblock Lecture notes, Erwin Schr\"odinger International Institute for
  Mathematical Physics, Vienna, 2012.

\bibitem[GR02]{gawedzki1}
K.~Gaw\c{e}dzki and N.~Reis, \quot{{WZW} branes and gerbes}.
\newblock {\em Rev. Math. Phys.}, 14(12):1281--1334, 2002.
\newblock \kobiburl{http://arxiv.org/abs/hep-th/0205233}.

\bibitem[Hat17]{Hatcher2017}
A.~Hatcher, \quot{Vector Bundles \& K-Theory}.
\newblock in preparation, 2017.
\newblock \kobiburl{https://pi.math.cornell.edu/~hatcher/VBKT/VBpage.html}.

\bibitem[KLW]{Kristel2022}
P.~Kristel, M.~Ludewig, and K.~Waldorf, \quot{The insidious bicategory of
  algebra bundles}.
\newblock Preprint.
\newblock \kobiburl{https://arxiv.org/abs/2204.03900}.

\bibitem[KLW22]{StringRep}
P.~Kristel, M.~Ludewig, and K.~Waldorf, \quot{A representation of the string
  2-group}.
\newblock Preprint.
\newblock \kobiburl{https://arxiv.org/abs/2206.09797}.

\bibitem[KV94]{kapranov1}
M.~Kapranov and V.~A. Voevodsky, \quot{2-categories and {Z}amolodchikov
  tetrahedra equations}.
\newblock {\em Proc. Amer. Math. Soc}, 56:177--259, 1994.

\bibitem[KW20]{kristel2020smooth}
P.~Kristel and K.~Waldorf, \quot{Smooth Fock bundles, and spinor bundles on
  loop space}.
\newblock Preprint.
\newblock \kobiburl{https://arxiv.org/abs/2009.00333}.

\bibitem[LM89]{Lawson1989}
H.~B. Lawson and M.-L. Michelsohn, {\em Spin Geometry}.
\newblock Princeton University Press, 1989.

\bibitem[Mac89]{Mackenzie1989}
K.~Mackenzie, \quot{Classification of principal bundles and lie groupoids with
  prescribed gauge group bundle}.
\newblock {\em J. Pure Appl. Algebra}, 58(2):181--208, 1989.

\bibitem[Mer20]{Mertsch2020}
D.~Mertsch, {\em Geometric models for twisted {K}-theory based on bundle gerbes
  and algebra bundles}.
\newblock PhD thesis, Universit\"{a}t Greifswald, 2020.
\newblock
  \kobiburl{http://epub.ub.uni-greifswald.de/frontdoor/index/index/docId/4097}.

\bibitem[Mur96]{Murray1996}
M.~K. Murray, \quot{Bundle gerbes}.
\newblock {\em J. Lond. Math. Soc.}, 54:403--416, 1996.
\newblock \kobiburl{http://arxiv.org/abs/dg-ga/9407015}.

\bibitem[NS11]{nikolaus2}
T.~Nikolaus and C.~Schweigert, \quot{Equivariance in higher geometry}.
\newblock {\em Adv. Math.}, 226(4):3367--3408, 2011.
\newblock \kobiburl{http://arxiv.org/abs/1004.4558}.

\bibitem[NW20]{Nikolause}
T.~Nikolaus and K.~Waldorf, \quot{Higher geometry for non-geometric T-duals}.
\newblock {\em Commun. Math. Phys.}, 374(1):317--366, 2020.
\newblock \kobiburl{https://arxiv.org/abs/1804.00677}.

\bibitem[Pav]{Pavlov}
D.~Pavlov, \quot{Are bundle gerbes bundles of algebras?}
\newblock Mathoverflow.
\newblock
  \kobiburl{https://mathoverflow.net/questions/72690/are-bundle-gerbes-bundles%
-of-algebras}.

\bibitem[Pen]{Pennig2011}
U.~Pennig, \quot{Twisted K-theory with coefficients in C*-algebras}.
\newblock Preprint.
\newblock \kobiburl{https://arxiv.org/abs/1103.4096}.

\bibitem[PR94]{PR95}
R.~Plymen and P.~Robinson, {\em Spinors in Hilbert space}.
\newblock Cambridge Univ. Press, 1994.

\bibitem[Rob]{Roberts2021}
D.~M. Roberts, \quot{Many finite-dimensional lifting bundle gerbes are
  torsion}.
\newblock Preprint.
\newblock \kobiburl{https://arxiv.org/abs/2104.07936}.

\bibitem[Sch06]{Schreiber2006}
U.~Schreiber, \quot{2-vectors in Trondheim}.
\newblock 2006.
\newblock
  \kobiburl{https://golem.ph.utexas.edu/category/2006/10/topology_in_trondheim%
_and_kro.html}.

\bibitem[Sch07]{Schreiber2007}
U.~Schreiber, \quot{Topology in Trondheim and Kro, Baas \& B\"okstedt on
  2-vector bundles}.
\newblock 2007.
\newblock
  \kobiburl{https://golem.ph.utexas.edu/category/2007/11/2vectors_in_trondheim%
.html}.

\bibitem[Sch09]{Schreiber2009}
U.~Schreiber, \quot{AQFT from n-functorial QFT}.
\newblock {\em Commun. Math. Phys.}, 291:357--401, 2009.
\newblock \kobiburl{https://arxiv.org/abs/0806.1079}.

\bibitem[SP09]{pries1}
C.~Schommer-Pries, {\em The classification of two-dimensional extended
  topological field theories}.
\newblock PhD thesis, University of California, Berkeley, 2009.
\newblock \kobiburl{http://arxiv.org/abs/1112.1000}.

\bibitem[SW13]{schreiber2}
U.~Schreiber and K.~Waldorf, \quot{Connections on non-abelian gerbes and their
  holonomy}.
\newblock {\em Theory Appl. Categ.}, 28(17):476--540, 2013.
\newblock \kobiburl{http://arxiv.org/abs/0808.1923}.

\bibitem[Wal07a]{waldorf4}
K.~Waldorf, {\em Algebraic structures for bundle gerbes and the {W}ess-{Z}umino
  term in conformal field theory}.
\newblock PhD thesis, Universit\"{a}t Hamburg, 2007.
\newblock \kobiburl{http://www.sub.uni-hamburg.de/opus/volltexte/2008/3519}.

\bibitem[Wal07b]{waldorf1}
K.~Waldorf, \quot{More morphisms between bundle gerbes}.
\newblock {\em Theory Appl. Categ.}, 18(9):240--273, 2007.
\newblock \kobiburl{http://arxiv.org/abs/math.CT/0702652}.

\bibitem[Wal11]{waldorf13}
K.~Waldorf, \quot{A loop space formulation for geometric lifting problems}.
\newblock {\em J. Aust. Math. Soc.}, 90:129--144, 2011.
\newblock \kobiburl{http://arxiv.org/abs/1007.5373}.

\end{thebibliography}
\bibliographystyle{kobib}

\end{document}